\documentclass[12pt,english]{amsart}
\usepackage{geometry}                
\usepackage{graphicx,graphics,leftindex}
\usepackage{amssymb}
\usepackage{hyperref}
\usepackage{epstopdf}
\usepackage{tikz}
\usepackage{tikz-cd}
\theoremstyle{plain}
\newtheorem{thm}{Theorem}[section]
\newtheorem{prop}[thm]{Proposition}
\newtheorem{lem}[thm]{Lemma}
\newtheorem{conj}[thm]{Conjecture}
\newtheorem{cor}[thm]{Corollary}
\newtheorem{algo}[thm]{Algorithm}
\theoremstyle{definition}
\newtheorem{defn}[thm]{Definition}

\theoremstyle{remark}
\newtheorem{remark}[thm]{Remark}

\newcommand{\li}{\leftindex _}
\newcommand{\lu}{\leftindex[I]^}
\newcommand{\di}{\underline{\hspace{2mm}}}

\usepackage{float}
\usepackage{amscd}

\title[Special Matchings, Brenti's Conjecture, and the CIC]{Special Matchings, Brenti's Conjecture, and the Combinatorial Invariance Conjecture}
\author[Caselli, Marietti]{Fabrizio Caselli, Mario Marietti}

\date{}

\address{Fabrizio Caselli, Dipartimento di Matematica, Universit\`a degli Studi di Bologna,
Piazza di Porta San Donato 5, 40126 Bologna, Italy}
\address{Mario Marietti, Dipartimento  di Ingegneria Industriale e Scienze Matematiche, Universit\`a Politecnica delle Marche, via Brecce Bianche, 60131 Ancona,  Italy}

\email{fabrizio.caselli@unibo.it}
\email{m.marietti@univpm.it}

\subjclass[2020]
{ 05E10 - 05E16 (primary), 20F55  (secondary)}
\keywords{Bruhat order, Coxeter groups, Symmetric groups, Special matchings, Kazhdan--Lusztig polynomials, Combinatorial invariance}

\begin{document}

\maketitle

\begin{abstract}
In this work, we settle a problem that dates back to the early 2000s. We provide a complete characterization of special matchings of arbitrary Bruhat intervals in Coxeter groups of type~$A$ and apply this result to prove a conjecture of Brenti from 2003 concerning the computation of Kazhdan--Lusztig $R$-polynomials via special matchings. This yields new evidence in support of the Combinatorial Invariance Conjecture.
\end{abstract}

\section{Introduction}

The Combinatorial Invariance Conjecture, independently formulated by Lusztig and Dyer in the 1980s, is one of the central open problems in the theory of Kazhdan--Lusztig polynomials. The conjecture predicts that Kazhdan--Lusztig polynomials are entirely determined by the poset structure of Bruhat intervals.

\begin{conj}
\label{comb-inv-con}
The Kazhdan--Lusztig polynomial $P_{u,v}(q)$ depends only on the isomorphism type of the interval $[u,v]$ as a partially ordered set under Bruhat order.
\end{conj}

\noindent
This conjecture is equivalent to the same statement for the Kazhdan--Lusztig $R$-polynomials.

Despite decades of research, the Combinatorial Invariance Conjecture remains widely open. It is known in several remarkable special cases, including intervals of rank at most $6$ \cite{BGL}, intervals of rank at most $8$ in Weyl groups and at most $10$ in type $A$ \cite{EM,EMS}, intervals that are lattices \cite[7.23]{Dyeth}, \cite{Bre94}, intervals in type $\tilde A_2$ \cite{BLP}, and elementary intervals \cite{BG}. Moreover, a parabolic analogue of the conjecture is known to hold for intervals starting from the identity \cite{Mtrans,M}. At the same time, a natural poset-theoretical strengthening of the conjecture is known to fail \cite{BCM2,MJaco}, highlighting the subtle nature of the problem.

A major challenge in attacking the Combinatorial Invariance Conjecture is to find a purely combinatorial way to compute Kazhdan--Lusztig polynomials from the isomorphism class of the underlying interval alone.
Recently, a new combinatorial object, namely that of a hypercube decomposition, was introduced and studied in \cite{BBDVW,DVBBZTTBBJLWHK} as a possible approach to the Combinatorial Invariance Conjecture. This notion has already generated several further developments (see \cite{BG,B-M,EM2,G-W}).

Another less recent attempt in this direction is the theory of special matchings, introduced by Brenti in \cite{Bre04}. Special matchings are purely combinatorial maps on Bruhat intervals that mimic multiplication by simple reflections. Brenti proved that, for intervals starting from the identity element in Coxeter groups of type $A$, special matchings completely recover the recursive structure of $R$-polynomials. 
Indeed, special matchings  can replace multiplication by Coxeter generators in the recursive formula for $R$-polynomials, yielding in particular a constructive proof of combinatorial invariance for intervals starting from the identity element in Coxeter groups of type $A$. 

Motivated by these results (generalized to arbitrary Coxeter groups in \cite{BCM1}), Brenti conjectured in 2003 (see \cite{Bre03}) that the same phenomenon should hold for arbitrary Bruhat intervals in type $A$.

\begin{conj}[Brenti]
Let $W$ be a Coxeter group of type $A$. Let $u,v \in W$, and let $M$ be a special matching of the Bruhat interval $[u,v]$. Then, for all $x,y \in [u,v]$ with $x\le y$, we have
\[
R_{x,y}(q)=
\begin{cases}
    R_{M(x),M(y)}(q)
    & \textrm{if } M(x)\lhd x,\ M(y)\lhd y,\\
    R_{M(x),M(y)}(q)
    & \textrm{if } M(x)\rhd x,\ M(y)\rhd y,\\
    (q-1)R_{x,M(y)}(q)+qR_{M(x),M(y)}(q)
    & \textrm{if } M(x)\rhd x,\ M(y)\lhd y,\\
    q^{-1}R_{M(x),M(y)}(q)+(q^{-1}-1)R_{M(x),y}(q)
    & \textrm{if } M(x)\lhd x,\ M(y)\rhd y.
\end{cases}
\]
\end{conj}

Although special matchings have been the subject of several studies and are well understood for Bruhat intervals starting from the identity \cite{BCM1,CM1,CM2}, their structure on arbitrary Bruhat intervals has remained largely mysterious, and Brenti's conjecture has remained open for more than twenty years.

The main achievement of this paper is a complete classification of special matchings of arbitrary Bruhat intervals in Coxeter groups of type~$A$, and, as a consequence, we prove Brenti's 2003 conjecture. 
More precisely, we show that every special matching arises from a remarkably rigid combinatorial mechanism involving parabolic decompositions.
 In particular, for each special matching $M$ of an arbitrary Bruhat interval $[u,v]$ of a Coxeter system $(W,S)$ of type $A$, there exist a subset $J$ of $S$ and an element $s$ of $J$ such that, for all $x\in [u,v]$, we have
$$M(x)= \li J x s \lu J x,$$
where $x=\li J x  \lu J x$ is the standard left parabolic decomposition. 
An analogous statement holds for the right parabolic decomposition.

Besides proving Brenti's conjecture, our results provide further evidence that special matchings capture fundamental combinatorial features of Bruhat order and constitute a promising framework for approaching the Combinatorial Invariance Conjecture.
Indeed, we state a conjectural special matchings-based algorithm to compute the Kazhdan--Lusztig $R$-polynomial of any type $A$ Bruhat interval starting only from the knowledge of its isomorphism class.

Our results show that this conjectural algorithm is, in fact, equivalent to the Combinatorial Invariance Conjecture for Coxeter groups of type~A.

The paper is organized as follows. 
Section~\ref{preli} reviews the background material. Section~\ref{tuttiquanti} provides some preliminary results that hold for arbitrary Coxeter groups. From Section~\ref{cinque} on, we restrict our attention to the Coxeter groups of type $A$. In Section~\ref{cinque}, we give a characterization of all $J$ and $s$ such that the map $x \mapsto \li J x s \lu J x$ restricts to a special matching of the interval.
In Section~\ref{struttura}, we present several structural properties of special matchings.
In Section~\ref{badcasesI}, we suppose that there is an interval and a special matching of it that do not fall under the desired characterization and we prove several results that will lead to a contradiction in Section~\ref{badcases}.
Finally, in Section~\ref{mainsection}, we present the main results of this work.

\section{Preliminaries}
\label{preli}
In this section, we  review some background material. 

For $n,m\in \mathbb N^+$ with $n\le m$, we let
$[n]=\{1,2,\ldots,n\}$,
$[n,m]=\{n,n+1,\ldots,m\}$,
and $(n,m)=[n,m]\setminus\{n,m\}$.

We fix our notation on a Coxeter system $(W,S)$ in the following list:
\smallskip 

$
\begin{array}{@{\hskip-1.3pt}l@{\qquad}l}
e &  \textrm{the identity of $W$}, 
\\
\ell  &  \textrm{the length function of $W$ with respect to $S$},
\\
w_0 &  \textrm{the longest   element of $W$ (when $W$ is finite)},
\\
T &  \textrm{the set of  reflections of $W$, i.e. } \{ w s w ^{-1} : w \in W, \; s \in S \}, 
\\
\leq & \textrm{the Bruhat order on $W$ (as well as usual order on $\mathbb R$)},
\\
\textrm{$[u,v]$} &  \textrm{the (Bruhat) interval generated by $u,v\in W$, i.e. } \\
& \{ w \in W \, : \; u \leq w \leq v \},
\\
B(W)  &  \textrm{the  Bruhat graph  of $W$},
\\
\ell(u,v) & \ell(v)-\ell(u)
\\
D_L(w) &  \textrm{the left descent set of $w$, i.e. } \{ s \in S : \; \ell( sw) < \ell(w ) \}, 
\\
D_R(w) & \textrm{the right descent set of $w$, i.e. } \{ s \in S : \; \ell(w  s) < \ell(w ) \}, 
\\
W_J & \textrm{the parabolic subgroup generated by a subset $J$ of $S$},
\\
\lu J W & \textrm{the set of left minimal $W_J$-coset representatives, i.e.} \\
& \{w\in W: D_L(w)\cap J = \emptyset\},
\\
W^J & \textrm{the set of right minimal $W_J$-coset representatives, i.e.} \\
& \{w\in W: D_R(w)\cap J = \emptyset\},
\\
w=\li J w \lu Jw & \textrm{the decomposition of $w$ with $\li J w \in W_J$ and $\lu Jw \in \lu J W$},
\\
w=w_J \, w^J & \textrm{the decomposition of $w$ with $w_J \in W_J$ and $w^J \in W^J$},
\\
x\lhd y  &  \textrm{$x$ is covered by $y$ or $y$ covers $x$, i.e. $x<y$ and $[x,y]=\{x,y\}$}.
\end{array}$
\bigskip

The \emph{Bruhat graph} of $W$, denoted $B(W)$, is the edge-labelled directed graph whose vertex set is $W$ and where  $u \xrightarrow{t} v$ if and only if $\ell(u)<\ell(v)$ and $vu^{-1}=t \in T$. The \emph{Bruhat order} is the partial order on $W$ where $u \leq v$ whenever there is a (directed) path from $u$ to $v$ in $B(W)$ (see, for example,  \cite[\S 2.1]{BB}).
There is a well-known characterization of the Bruhat order (see \cite[\S 2.2]{BB}), which we use in this work without explicit mention.
By a {\em subword} of a word $s_{1}s_{2} \cdots  s_{q}$  we mean
a word of the form
$s_{i_{1}} s_{i_{2}} \cdots  s_{i_{k}}$, where $1 \leq i_{1}< \cdots
< i_{k} \leq q$.
Given $w$ in $W$,   a {\em reduced expression} of $w$ is a word $s_1s_2\cdots s_q$ such that $w=s_1s_2\cdots s_q$ and $\ell(w)=q$. 
\begin{thm}[Subword Property]
\label{subword}
Let $u,v \in W$. The following are equivalent:
\begin{itemize}
\item $u \leq v$ in the Bruhat order,
\item  every reduced expression of $v$ has a subword that is 
a reduced expression of $u$,
\item there exists a  reduced expression of $v$ having a subword that is 
a reduced expression of $u$.
\end{itemize}
\end{thm}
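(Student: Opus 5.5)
The plan is the standard argument (cf.\ \cite[\S 2.2]{BB}), taking as the definition of Bruhat order the transitive closure of the Bruhat graph relation $u \xrightarrow{t} v$, with $\ell(u)<\ell(v)$ and $vu^{-1}=t\in T$. The key tool is the Strong Exchange Property. If $w=s_1\cdots s_q$ (not necessarily reduced) and $t\in T$ satisfies $\ell(tw)<\ell(w)$, then $tw=s_1\cdots\widehat{s_i}\cdots s_q$ for some $i$. I would also use the Deletion Property and the fact that $\ell(tw)<\ell(w)$ if and only if $w^{-1}tw$ corresponds to a root sent negative, i.e.\ the reflection cocycle $n(w,t)$ is odd.

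The implication from ``every reduced expression'' to ``some reduced expression'' is trivial. For ``Bruhat $\le$'' $\Rightarrow$ ``every reduced expression contains a reduced subword for $u$'', I would induct on the length of a directed path from $u$ to $v$, so it suffices to treat one edge $x\xrightarrow{t} y$. Fix any reduced expression $y=s_1\cdots s_q$. Strong exchange gives $x=s_1\cdots\widehat{s_i}\cdots s_q$, and the Deletion Property trims this word to a reduced subword. Iterating along the path, a subword of a subword is a subword. Hence $u$ has a reduced expression that is a subword of the given reduced expression of $v$.

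The substantive direction is ``some reduced expression of $v$ contains a reduced subword for $u$'' $\Rightarrow u\le v$. I would induct on $q=\ell(v)$. Write $v=s_1\cdots s_q$ reduced, and let $u=s_{i_1}\cdots s_{i_k}$ be the reduced subword. If $i_k<q$, then $u$ is a subword of the reduced expression $s_1\cdots s_{q-1}$ of $vs_q\lhd v$, so induction gives $u\le vs_q<v$. If $i_k=q$, then $us_q$ is a reduced subword of $vs_q$, so induction gives $us_q\le vs_q$, and $s_q\in D_R(u)\cap D_R(v)$. I then need the lifting-type lemma: if $x\le y$, $s\notin D_R(x)$ and $s\notin D_R(y)$, then $xs\le ys$. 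Applying it with $x=us_q$ and $y=vs_q$ gives $u\le v$.

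The main obstacle is proving that lemma from the graph definition. I would reduce to a single edge $x\xrightarrow{t}y$ and set $t'=sts$, so that $ys=t(xs)$ becomes $ys=(xs)t'$-type, i.e.\ $ys\,(xs)^{-1}=t$ still lies in $T$. If $\ell(xs)<\ell(ys)$, we get an edge $xs\to ys$ directly. Otherwise $\ell(xs)>\ell(ys)$; since lengths differ by an odd amount and $\ell(xs)=\ell(x)+1$, $\ell(ys)=\ell(y)+1$, this forces $\ell(y)<\ell(x)$, contradicting the edge direction. One subtlety is that lengths along a $t$-edge have different parity, so equality is excluded. With the lemma in hand, the induction closes and the three conditions are equivalent.
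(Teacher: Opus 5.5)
The paper does not prove this statement; it quotes it as background from \cite[\S 2.2]{BB}. Your overall plan is the standard one. The direction ``$u\le v$ implies every reduced expression of $v$ contains one of $u$'' is correct, and so is the inductive skeleton of the converse. There is one genuine gap, in how you propose to prove the auxiliary lemma: $x\le y$ and $s\notin D_R(x)\cup D_R(y)$ imply $xs\le ys$.

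\textbf{The gap.} You cannot reduce this lemma to a single edge $x\xrightarrow{t}y$. A Bruhat-graph path $x=x_0\to x_1\to\cdots\to x_m=y$ may pass through vertices with $s\in D_R(x_i)$. On such edges your one-edge argument has no hypothesis to work with. Worse, an edge of the form $x_i\to x_{i+1}=x_is$ actually reverses after right multiplication by $s$, since $x_{i+1}s=x_i<x_{i+1}=x_is$.

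For example, in $S_3$ take $s=s_1$ and the path $e\to s_1\to s_1s_2$. Both endpoints satisfy $s\notin D_R$, but neither edge does. Your lemma is a form of the Lifting Property, and its hypotheses are not preserved under concatenation of paths. So ``$\le$ is the transitive closure of $\to$'' does not give it for free.

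\textbf{The fix.} Use the disjunctive version, which is what \cite{BB} uses: for $x\le y$ and \emph{arbitrary} $s\in S$, either $xs\le y$ or $xs\le ys$.

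\emph{It propagates along paths.} Argue by induction along the path. If $x_0s\le x_i$, then $x_0s\le x_{i+1}$. If $x_0s\le x_is$, apply the one-edge case to $x_i\to x_{i+1}$.

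\emph{The one-edge case.} Let $y=tx$ with $\ell(x)<\ell(y)$, so $ys=t(xs)$ and, by parity, $\ell(xs)\neq\ell(ys)$.
\begin{itemize}
\item If $\ell(xs)<\ell(ys)$, then $xs\to ys$ and you are done.
\item Otherwise $\ell(x)+1\ge\ell(xs)>\ell(ys)\ge\ell(y)-1\ge\ell(x)$. All of these are equalities, so $s\in D_R(y)\setminus D_R(x)$ and $\ell(y)=\ell(x)+1$. Apply Strong Exchange to a reduced word $s_1\cdots s_ps$ of $y$. Deleting any letter other than the final $s$ would make $s$ a right descent of $x$. Hence $x=ys$, i.e.\ $xs=y$.
\end{itemize}

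\emph{Closing the induction.} Apply the disjunctive lemma to $us_q\le vs_q$ with $s=s_q$. This gives $u\le vs_q<v$ or $u\le v$. The induction closes without ever needing the full lifting statement.
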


The Bruhat order provides $W$ with the structure of a graded poset having $\ell$ as its rank function.

Let $u,v \in W$. Given $w\in[u,v]$, we say that $w$ is an atom (respectively, a coatom) of $[u,v]$ provided that $ u\lhd w$ (respectively, $w\lhd v$).
The {\em Hasse diagram} of $[u,v]$ is the directed graph having the elements of $[u,v]$ as vertices and $ \{  x \rightarrow y :  x \lhd y\}$ as its set of directed edges. When we draw a Hasse diagram, we draw edges instead of directed edges implying that the edges point upward. 

We say that a poset is a $k$-crown if its Hasse diagram is isomorphic to the graph depicted in Figure~\ref{esporoma} (the graph has $2k+2$ vertices).  

\begin{figure}[h]
    \centering
\scalebox{.5}{
    \begin{tikzpicture}
    \node (u) at (0,0) {$\bullet$};
    
    \node (a1) at (-5,2) {$\bullet$};
    \node (a2) at (-4,2) {$\bullet$};
    \node (a3) at (-3,2) {$\bullet$};
    \node (b1) at (-5,4) {$\bullet$};
    \node (b2) at (-4,4) {$\bullet$};
    \node (b3) at (-3,4) {$\bullet$};
    
	\node (a6) at (5,2) {$\bullet$};
    \node (a5) at (4,2) {$\bullet$};
    \node (a4) at (3,2) {$\bullet$};
    \node (b6) at (5,4) {$\bullet$};
    \node (b5) at (4,4) {$\bullet$};
    \node (b4) at (3,4) {$\bullet$};    
    
    \node (v) at (0,6) {$\bullet$};
    
    \node at (-0.5,5) {$\ldots$};
    \node at (0.5,5) {$\ldots$};
    
    \node at (-1.5,3) {$\ldots$};
    \node at (0,3) {$\ldots$};
    \node at (1.5,3) {$\ldots$};

    \node at (-0.5,1) {$\ldots$};
    \node at (0.5,1) {$\ldots$};
    
    \path[-]

    (u) edge (a1)
    (u) edge (a2)
    (u) edge (a3)
    
    (u) edge (a4)
    (u) edge (a5)
    (u) edge (a6)
    
    (a1) edge (b1)
    (a1) edge (b2)
    (a2) edge (b2)
    (a2) edge (b3)
    (a3) edge (b3)
    
    (a4) edge (b4)
    (a4) edge (b5)
    (a5) edge (b5)
    (a5) edge (b6)
    (a6) edge (b6)
    
    (a6) edge (b1)
    
    (b1) edge (v)
    (b2) edge (v)
    (b3) edge (v)
    
    (b4) edge (v)
    (b5) edge (v)
    (b6) edge (v);    
\end{tikzpicture}
 }
    \caption{A $k$-crown}
    \label{esporoma}
\end{figure}

\begin{lem}
\label{k_corone}
Let $W$ be an arbitrary Coxeter group. Let $u,v \in W$, with $u \leq v$ and $\ell(v) - \ell(u) =3$. Then $[u,v]$ is a $k$-crown for some $k \geq 2$. If furthermore $W$ is a Coxeter group of type $A$, then $[u,v]$ is a $k$-crown for some $k \in \{2,3,4\}$.
\end{lem}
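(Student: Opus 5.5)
We will assemble the statement from standard structural facts about Bruhat intervals. The general part relies on two classical results, both in \cite[Ch.~2]{BB}. The first is that every interval of length $2$ in Bruhat order has exactly two atoms, so it is a $1$-crown (a diamond). The second is Verma's/Dyer's result that Bruhat intervals are Eulerian and, more precisely, that the order complex of any open interval $(u,v)$ with $\ell(u,v)=3$ is a combinatorial circle; this follows from the shellability (EL- or CL-shellability) of Bruhat intervals due to Bj\"orner--Wachs and Dyer. With these facts, the first part of the lemma reduces to a purely combinatorial description of rank-$3$ graded posets in which every length-$2$ subinterval is a diamond.

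Concretely, let $[u,v]$ have atoms $a_1,\dots,a_k$ and coatoms $b_1,\dots,b_m$. Form the bipartite graph $G$ on these elements with an edge $a_i$--$b_j$ when $a_i<b_j$. Since each $[u,b_j]$ has length $2$, each coatom lies above exactly two atoms. Since each $[a_i,v]$ has length $2$, each atom lies below exactly two coatoms. Hence $G$ is $2$-regular, so it is a disjoint union of even cycles. Thinness alone allows several cycles, so here we use that the proper part $(u,v)$ is a sphere of dimension $1$, i.e.\ connected. Its order complex is exactly $G$ viewed as a graph, so $G$ is a single cycle of length $2k$ with $k=m$. Moreover, two distinct atoms cannot have the same two coatoms above them. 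Otherwise we would have a $2$-cycle in $G$, i.e.\ the cycle would have length $4$, which gives $k=2$; that case is still allowed. A cycle of length $2$ is impossible in a simple bipartite graph, and since $k\ge 2$ it follows that $[u,v]$ is a $k$-crown with $k\ge2$. Alternatively, connectedness of $G$ can be argued directly, avoiding topology: the Möbius function $\mu(u,v)=(-1)^3$ from Verma's formula gives $\tilde\chi$ of the proper part equal to $-1$. That means the number of cycles minus $1$ equals $0$ in reduced Euler characteristic terms (each cycle contributes $\chi=0$, and $\tilde\chi=\#\text{cycles}-1$ for $H_1$, combined with $\mu=-\tilde\chi$ up to the sign convention). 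So there is exactly one cycle.

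For type $A$ we must bound $k\le4$, equivalently bound the number of atoms of a length-$3$ interval. The plan is to use Dyer's result that the isomorphism type of a length-$3$ interval is determined by its Bruhat graph, together with the fact that the number of atoms of $[u,v]$ is at most the number of reflections $t$ with $u<ut\le v$. By Dyer's theorem on reflection subgroups, the edges of the Bruhat graph of $[u,v]$ lie in a reflection subgroup $W'$ of rank at most $3$ (generated by the reflections labelling a maximal chain). In type $A$, every reflection subgroup is again of type $A$, a product of symmetric groups, with rank at most $3$. Thus $[u,v]$ is isomorphic to a length-$3$ interval in one of $A_3$, $A_2\times A_1$, or $A_1^3$. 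Checking these small cases, in the worst case $S_4$, gives at most $4$ atoms. For instance, $[e,s_1s_2s_3]$ is a $3$-crown, the interval $A_1^3$ gives a $3$-crown, $A_2\times A_1$ gives at most $3$, and the maximum $4$ occurs, e.g., for $[s_2,s_2s_1s_3s_2]$.

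The main obstacle is the reduction to a rank-$3$ reflection subgroup. Justifying that $[u,v]$ embeds isomorphically as an interval of the Bruhat order of $W'$ (Dyer's theorem) is the delicate point. If that citation is unavailable, the fallback is a direct count: for $u\le v$ in $S_n$ with $\ell(u,v)=3$, the transpositions $t$ with $u\lhd ut\le v$ involve at most $4$ positions, by analysing how a length-$3$ difference can be realized via the tableau criterion.
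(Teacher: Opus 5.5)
The paper gives no proof of this lemma. It is quoted among the preliminaries as standard background, so your proposal has to stand on its own.

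\textbf{Main line.} Your main argument is correct.

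For the first part you use two facts: length-$2$ intervals have four elements, and the order complex of $(u,v)$ is a sphere of dimension $\ell(u,v)-2$ (Bj\"orner--Wachs; see \cite{BB}). With these the argument is complete. For $\ell(u,v)=3$ that order complex is exactly your bipartite graph $G$, which is $2$-regular and connected. Hence $G$ is a single cycle of length $2k$ with $k\ge 2$, i.e.\ $[u,v]$ is a $k$-crown.

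For type $A$, the reduction to a reflection subgroup $W'$ of rank at most $3$ is the same result of Dyer, \cite[Proposition~2.1]{DyeComp1}, that the paper itself uses in Section~\ref{mainsection} to define heights. Relying on it is therefore legitimate. In type $A$ the inclusion $[u,v]\subseteq W'u$ can even be seen by hand. By Lemma~\ref{quadrati}, the two label pairs of any length-$2$ interval generate the same subgroup. By \cite[Proposition~2.1]{CDM}, any two maximal chains are linked by such local moves. Dyer's theorem on cosets of reflection subgroups is then needed only to transport the order.

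\textbf{Finishing the finite check.} Your list of cases should also include $A_2$, where $[e,w_0]$ is a $2$-crown. The check is quick if you count upper covers of $u$. In $A_2$, $A_2\times A_1$ and $A_1^3$, every element has at most $3$ upper covers. In $S_4$:
\begin{itemize}
\item if $\ell(u)=0$, the atoms are simple reflections, so there are at most $3$;
\item if $\ell(u)=1$, one checks that $s_1$ and $s_3$ have $3$ upper covers and $s_2$ has $4$ (attained by $[s_2,s_2s_1s_3s_2]$, as you say);
\item the cases $\ell(u)\in\{2,3\}$ reduce to $\ell(u)\in\{1,0\}$ via the antiautomorphism $x\mapsto xw_0$, since the dual of a $k$-crown is a $k$-crown.
\end{itemize}

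\textbf{Errors in the auxiliary arguments.} Two of them are wrong and should be removed, and one sentence needs fixing.

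First, the M\"obius-function route to connectedness does not work. If $G$ is a disjoint union of $c$ cycles, it has $f_0$ vertices and $f_1=f_0$ edges. So the reduced Euler characteristic of the proper part is $-1+f_0-f_1=-1$ for every $c$, and $\mu(u,v)=-1$ carries no information. Concretely, take the rank-$3$ poset whose four atoms and four coatoms form two disjoint $4$-cycles. It is thin and Eulerian but not a crown. Connectedness genuinely needs shellability, or the chain-connectivity result of \cite{CDM}.

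Second, the fallback claim that the transpositions $t$ with $u\lhd ut\le v$ involve at most $4$ positions is false. For $[e,s_1s_3s_5]$ in $S_6$ they are $s_1,s_3,s_5$, which involve six positions. So the bound $k\le 4$ cannot come from a position count without the structure of $W'$.

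Finally, your sentence saying that two distinct atoms cannot lie below the same two coatoms contradicts itself. This happens exactly when $k=2$, e.g.\ in $[e,w_0]$ for $A_2$, and it produces a $4$-cycle, not a $2$-cycle. The conclusion you draw afterwards is nevertheless right.
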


The following are two useful ways to decompose elements  of $W$ (see \cite[\S 2.4]{BB}).
\begin{prop}
\label{fattorizzo}
Let  $J \subseteq S$ and $w\in W$. 
\begin{enumerate}
\item[(i)] 
There is a unique factorization $w=w^{J}  w_{J}$ 
with $w^{J} \in W^{J}$ and $w_{J} \in W_{J}$; this factorization satisfies $\ell(w)=\ell(w^J)+\ell(w_J)$.
\item[(ii)] There is a unique factorization $
w={_{J} w} \, {^{J}\! w}$, where ${_{J} w} \in W_{J}$, ${^{J} \!
w} \in \,  ^{J} W$; this factorization satisfies $\ell(w)=\ell({_{J} w}) + \ell( {^{J}\! w})$.
\end{enumerate}
\end{prop}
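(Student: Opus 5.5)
The plan is to derive (i) from the characterization of $W^J$ via descents and the exchange property, and then to deduce (ii) from (i) by inversion.

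\emph{Existence in (i).} Fix $w$ and consider the set $wW_J$. Choose an element $w^J$ of minimal length in this coset. For $s\in J$ we have $w^Js\in wW_J$, so minimality gives $\ell(w^Js)>\ell(w^Js)-1$, that is, $\ell(w^Js)=\ell(w^J)+1$. Hence $D_R(w^J)\cap J=\emptyset$ and $w^J\in W^J$. Set $w_J=(w^J)^{-1}w\in W_J$.

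\emph{Length additivity.} This is the key step, and I expect it to be the main obstacle. The claim is that $\ell(xy)=\ell(x)+\ell(y)$ for every $x\in W^J$ and every $y\in W_J$. I argue by induction on $\ell(y)$, where length is measured in $S$; it is standard that a reduced word in $S$ of an element of $W_J$ uses only letters from $J$, by the subword property or the deletion property. Write $y=y's$ with $s\in J$ and $\ell(y')=\ell(y)-1$, and assume for contradiction that $\ell(xy's)<\ell(xy')$. Take a reduced word for $x$ concatenated with a reduced word for $y'$ in $J$. This concatenation is reduced by the inductive hypothesis. The exchange property then says that $xy's$ is obtained by deleting one letter from this word. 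If the deleted letter lies in the $y'$ part, then $y's$ has length less than $\ell(y')+1$, contradicting $\ell(y)=\ell(y')+1$. If the deleted letter lies in the $x$ part, then $xy's=\hat x y'$ with $\ell(\hat x)=\ell(x)-1$. Hence $\hat x=x\,(y'sy'^{-1})$, with $y'sy'^{-1}\in W_J$, so $\hat x$ lies in the coset $xW_J$ and is shorter than $x$. It remains to check that an element of $W^J$ is the unique minimal-length element of its coset; I would establish this inside the same induction. Concretely, if $z\in xW_J$ with $z=xy''$, the additivity already proved for shorter $y''$ gives $\ell(z)\ge\ell(x)$. This yields the contradiction.

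\emph{Uniqueness in (i).} If $w=xy=x'y'$ with $x,x'\in W^J$, then $x'=x\,(yy'^{-1})$, and by additivity both $x$ and $x'$ have minimal length in the coset $xW_J$. Additivity also forces uniqueness: if $x'=xz$ with $z\neq e$ in $W_J$, then $\ell(x')=\ell(x)+\ell(z)>\ell(x)$, which is a contradiction. So $x=x'$ and consequently $y=y'$.

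\emph{Part (ii).} Apply (i) to $w^{-1}$ and invert. Since $\ell(u^{-1})=\ell(u)$ and $D_L(u)=D_R(u^{-1})$, inversion maps $W^J$ onto ${}^JW$ and $W_J$ onto itself. Therefore ${}_Jw=(w^{-1})_J^{-1}$ and ${}^Jw=((w^{-1})^J)^{-1}$ give the required factorization, together with uniqueness and length additivity.
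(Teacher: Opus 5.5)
There is a genuine gap in your length-additivity step, exactly where you expected trouble. For comparison, the paper gives no proof of this proposition and simply quotes it from Bj\"orner--Brenti \cite[\S 2.4]{BB}. Your existence step is fine, apart from the typo $\ell(w^Js)>\ell(w^Js)-1$, which should read $\ell(w^Js)>\ell(w^J)-1$. Deducing (ii) from (i) by inversion is also fine.

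In the case where the exchanged letter lies in the $x$ part, you get $\hat x = x\,y''$ with $y''=y'sy'^{-1}\in W_J$ and $\ell(\hat x)<\ell(x)$. You then want a contradiction from ``the additivity already proved for shorter $y''$''. But your induction is on $\ell(y)$, and $y''=y'sy'^{-1}$ is in general not shorter than $y=y's$. It is a reflection of $W_J$ whose length can be as large as $2\ell(y')+1$. For instance, in type $A$ with $y'=s_1$ and $s=s_2$, we get $y''=s_1s_2s_1$, of length $3>2=\ell(y)$. So the inductive hypothesis does not cover $\hat x$. The claim that every element of $W^J$ has minimal length in its coset, which is essentially what you are proving, is being used without justification.

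The fix is to reorder the argument.
\begin{enumerate}
\item Run your exchange argument not for an arbitrary $x\in W^J$, but for an element $m$ of minimal length in its coset $mW_J$, such as the $w^J$ you constructed. In the bad case, $\hat m\in mW_J$ is shorter than $m$, which contradicts minimality outright. Hence $\ell(my)=\ell(m)+\ell(y)$ for all $y\in W_J$. In particular $m$ is the unique element of minimal length in its coset: if $m'=my$ is also minimal, then $\ell(y)=0$.
\item Now take an arbitrary $x\in W^J$ and write $x=my_0$, with $m$ minimal in $xW_J$ and $y_0\in W_J$. Suppose $y_0\neq e$, and choose $s\in J$ with $\ell(y_0s)<\ell(y_0)$, for instance the last letter of a reduced word of $y_0$ in $J$. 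Additivity for $m$ gives $\ell(xs)=\ell(m)+\ell(y_0s)<\ell(m)+\ell(y_0)=\ell(x)$. This contradicts $x\in W^J$, so $x=m$.
\end{enumerate}
This yields both uniqueness of the factorization and $\ell(w)=\ell(w^J)+\ell(w_J)$ for every $w$. Your inversion argument then gives (ii) unchanged.
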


These decompositions satisfy the following property.
\begin{prop}
\label{projection}
Let  $J \subseteq S$. Let $u,v \in W$, with $u\leq v$. Then $u^J\leq v^J$ and $\lu J  u \leq \lu J v$.
\end{prop}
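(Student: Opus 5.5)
We prove the statement by reducing it to the subword characterization of Bruhat order, Theorem~\ref{subword}. We treat the right decomposition $w=w^Jw_J$; the left one follows by applying the right case to inverses. Indeed, inversion is an automorphism of Bruhat order, and $(w^{-1})^J=({}^J w)^{-1}$. So it suffices to show that $u\le v$ implies $u^J\le v^J$.

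First we recall why $w^J$ is obtained by deleting letters from a reduced word of $w$. Start from $w$. While $D_R(w)\cap J\neq\emptyset$, pick $s\in D_R(w)\cap J$ and replace $w$ by $ws$. Each step lowers the length by one and keeps the same coset $wW_J$. Since lengths are bounded below, the process stops at an element with no right descents in $J$. By uniqueness in Proposition~\ref{fattorizzo}, this element is $w^J$. By the exchange condition, multiplying on the right by a descent $s$ amounts to deleting one letter from a reduced word of $w$. Hence, inductively, $w^J$ has a reduced word that is a subword of any given reduced word of $w$, so $w^J\le w$.

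The key step is this claim: if $x\le y$ and $s\in J$, then $x^J\le y^J$ follows from the case where one of $x,y$ is replaced by $xs$ or $ys$. Concretely, we use the standard ``lifting property'': if $s\in D_R(y)$, then $x\le y$ implies $\min(x,xs)\le ys$ and $x\le y$ implies $xs\le y$ or $x\le ys$. Combined with the descent-removal process above, we argue by induction on $\ell(v)$.
\begin{itemize}
\item If $v\in W^J$, then $u^J\le u\le v=v^J$ and we are done.
\item Otherwise pick $s\in D_R(v)\cap J$. Then $(vs)^J=v^J$, and $\min(u,us)\le vs$ holds by the lifting property, which follows from the subword property.
\item Moreover $\min(u,us)^J=u^J$, since $u$ and $us$ lie in the same coset $uW_J$.
\item By induction applied to $\min(u,us)\le vs$, we get $u^J\le (vs)^J=v^J$.
\end{itemize}

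The main obstacle is justifying the lifting property $\min(u,us)\le vs$ when $vs<v$. We plan to prove it directly from Theorem~\ref{subword}. Take a reduced word of $v$ ending in $s$. If the subword giving $u$ avoids the last letter, then $u\le vs$. Otherwise the subword uses the final $s$, so $us<u$ and $us$ is a subword of the word of $vs$; hence $us\le vs$. The only subtlety is ensuring that the chosen subword for $u$ is reduced, which the subword property provides. With this in hand the induction above closes, and the left-handed statement $\lu J u\le \lu J v$ follows by inversion.
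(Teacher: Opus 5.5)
Your proof is correct. The paper does not prove this proposition; it quotes it as standard background from Bj\"{o}rner--Brenti. So the relevant comparison is with the usual textbook argument, which is direct rather than inductive, though it uses the same ingredients.

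\textbf{The textbook argument.}
\begin{itemize}
\item Choose a reduced word for $v$ that concatenates reduced words for $v^J$ and $v_J$. This is possible since $\ell(v)=\ell(v^J)+\ell(v_J)$.
\item By the subword property, $u$ has a reduced expression that is a subword of this word.
\item That subword splits as $u=xy$, with $x$ a subword of the $v^J$-part and $y\in W_J$.
\item Hence $u^J=x^J\le x\le v^J$.
\end{itemize}

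\textbf{Your argument.} You reduce to the right-handed case via inversion, which is valid since $(w^{-1})^J=({}^J w)^{-1}$. You then remove one right descent $s\in J\cap D_R(v)$ at a time, combining three facts:
\begin{itemize}
\item $\min(u,us)\le vs$;
\item $(vs)^J=v^J$;
\item $\min(u,us)^J=u^J$.
\end{itemize}
Your derivation of $\min(u,us)\le vs$ from a reduced word of $v$ ending in $s$ is sound. In the case where the subword avoids the final letter, you get $\min(u,us)\le u\le vs$. The base case $v\in W^J$, which includes $v=e$, is handled by $u^J\le u$, and you also establish that inequality.

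\textbf{Comparison.} Both routes rest on the subword property, on $x^J\le x$, and on the invariance of the projection on $W_J$-cosets. The textbook route is shorter: a single splitting of the subword, with no induction and no lifting lemma. Yours replaces that splitting with an induction on $\ell(v)$ plus an explicitly proved lifting inequality. It is longer, but more modular, and it shows monotonicity of the projection as a direct consequence of the lifting property.

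\textbf{A presentational point.} The sentence introducing your ``key step'' is muddled, and the second form of lifting you quote ($xs\le y$ or $x\le ys$) is never used. The bulleted induction is what carries the proof, and you could state it directly.
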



Throughout this work, whenever $(W,S)$ is the Coxeter system of type $A_{n-1}$, we identify $W$ with the symmetric group on $[n]$ and 
$S$ with $\{s_i=(i,i+1): i\in [n-1]\}$,
that is, we consider the standard Coxeter realization of the symmetric group.
When working with permutations, we use both cycle notation and one-line notation interchangeably. Given a permutation $z\in W$, its one-line notation is
\[
z=[z_1,\ldots,z_n],
\]
where $z_k=z(k)$ for all $k\in [n]$. For convenience, we freely identify a permutation with its one-line notation.

The following results hold for Coxeter groups of type $A$.
\begin{lem}
    \label{quadrati}
    Let $W$ be a Coxeter group of type $A$. Let $u,v\in W$, with $u < v$. Let $u \stackrel{t_1}{\rightarrow} x_1\stackrel{r_1}{\rightarrow} v$ and $u \stackrel{t_2}{\rightarrow} x_2\stackrel{r_2}{\rightarrow} v$ be the two paths from $u$ to $v$. Then the following hold.
    \begin{enumerate}
        \item If $t_1 r_1 = r_1 t_1$, then $t_1=r_2$ and $t_2=r_1$.
        \item If $t_1 r_1 \neq r_1 t_1$, then $|\{t_1,r_1\}\cap \{t_2,r_2\}|=1$. Moreover, suppose without loss of generality $t_1=r_2=(a,b)$ and $r_1=(b,c)$, then $t_2=(a,c)$ and either $c<\min(a,b)$ or $c>\max(a,b)$. 
    \end{enumerate}
\end{lem}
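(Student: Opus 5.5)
Since $\ell(u,v)=2$, the interval $[u,v]$ has exactly two atoms $x_1,x_2$ (intervals of length $2$ are diamonds), so the two paths are well defined. We have $vu^{-1}=r_1t_1=r_2t_2$. The plan is to use this single relation in $S_n$, together with the length constraints coming from the Bruhat graph, and argue by the support of the transpositions. We recall the type $A$ description of Bruhat edges: if $t=(a,b)$ with $a<b$, then $w\to tw$ is an edge with $\ell(tw)>\ell(w)$ exactly when $a$ appears to the left of $b$ in the one-line notation of $w$. The cover condition (length jump $1$) is not needed, only the edges.

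\emph{Case 1: $t_1r_1=r_1t_1$.} Since $t_1\neq r_1$ (otherwise $v=u$), the product $r_1t_1$ is a product of two disjoint transpositions, $(a,b)(c,d)$. It moves exactly four points and has cycle type $(2,2)$. The equation $r_2t_2=(a,b)(c,d)$ with transpositions $r_2,t_2$ forces $r_2,t_2$ to be disjoint as well. Indeed, two non-disjoint distinct transpositions multiply to a $3$-cycle. Moreover $\{r_2,t_2\}=\{(a,b),(c,d)\}$, because a double transposition has a unique factorization into two disjoint transpositions up to order. Since $x_2\neq x_1$, we get $t_2\neq t_1$, hence $t_2=r_1$ and $r_2=t_1$.

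\emph{Case 2: $t_1r_1\neq r_1t_1$.} Now $t_1,r_1$ share exactly one point, so $r_1t_1$ is a $3$-cycle on a $3$-element set $\{a,b,c\}$. Then $r_2t_2$ is the same $3$-cycle, so $r_2,t_2$ are two of the three transpositions of $\{a,b,c\}$. For a fixed $3$-cycle $\gamma$ and any transposition $t$ in that set, the transposition $\gamma t^{-1}$ is uniquely determined, so there are exactly three factorizations $\gamma=r t$. Among these, $(r_1,t_1)$ is one and $(r_2,t_2)\neq(r_1,t_1)$ is another. One checks that any two distinct factorizations share exactly one transposition, appearing once as the left factor and once as the right factor. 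This gives $|\{t_1,r_1\}\cap\{t_2,r_2\}|=1$. After relabeling we may take $t_1=r_2=(a,b)$ and $r_1=(b,c)$. Then $t_2=r_2^{-1}r_1t_1=(a,b)(b,c)(a,b)=(a,c)$.

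The main obstacle is the final inequality, $c<\min(a,b)$ or $c>\max(a,b)$, i.e.\ $c$ is not between $a$ and $b$. Suppose for contradiction that $c$ lies strictly between $a$ and $b$, say $a<c<b$ (the case $b<c<a$ is symmetric). Write $p,q,r$ for the positions in $u$ of the values $a,c,b$. From $u\to(a,b)u$ being an edge, $a$ is left of $b$ in $u$, so $p<r$. In $x_1$ the values $a,b$ have swapped, and $x_1\to(b,c)x_1$ requires $c$ to be left of $b$ in $x_1$. Since $b$ now sits at position $p$, this gives $q<p$. Similarly, $u\to(a,c)u$ requires $p<q$. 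This contradicts $q<p$, so $c$ cannot lie between $a$ and $b$. For the symmetric case we redo the same position bookkeeping with the inequalities reversed.
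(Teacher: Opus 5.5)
The paper states this lemma in the preliminaries without proof, as a standard fact about type $A$, so there is no argument of the authors to compare yours against. Your proof is correct and self-contained.

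The factorization analysis of $vu^{-1}=r_1t_1=r_2t_2$ in $S_n$ settles part (1). It also settles the intersection statement in part (2).

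Your ``one checks'' step can be made explicit. The map $t\mapsto \gamma t$ is a fixed-point-free permutation of the three transpositions of $\{a,b,c\}$, hence a $3$-cycle on them. So two distinct factorizations $(\gamma t,t)$ and $(\gamma t',t')$ satisfy exactly one of $t'=\gamma t$ or $t=\gamma t'$, and they share exactly one factor.

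Your position bookkeeping correctly excludes $c$ lying strictly between $a$ and $b$. It rests on the criterion $\ell((a,b)w)>\ell(w)\iff w^{-1}(a)<w^{-1}(b)$ for $a<b$. It uses only the three edges $u\to x_1$, $x_1\to v$ and $u\to x_2$.
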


\begin{thm} [Tableau Criterion] Let $W$ be a Coxeter group of type $A$. Let $u,v \in W$. Let $u_{i,k}$ be the $i$-th smallest 
element in the increasing rearrangement of $u(1),u(2), \ldots, u(k)$, and similarly
define $v_{i,k}$. Then $u \leq v$ if and only if $u_{i,k} \leq v_{i,k}$ for all $k\in [n]$ and $i \in [k]$.
\end{thm}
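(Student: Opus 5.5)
The plan is to reformulate the tableau condition as a rank-matrix condition, check that Bruhat covers preserve it, and then climb from $u$ towards $v$ one transposition at a time.

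\emph{Reformulation.} For $z\in W$, $a\in[n]$ and $b\in[n]$, put $z[a,b]=|\{c\le a : z(c)\ge b\}|$. For fixed $k$, the condition $u_{i,k}\le v_{i,k}$ for all $i\in[k]$ is equivalent to $u[k,b]\le v[k,b]$ for all $b$. Indeed, $u_{i,k}\le v_{i,k}$ for all $i$ says that, for every threshold $b$, the sorted prefix of $v$ has at least as many entries $\ge b$ as that of $u$. So the condition in the statement is equivalent to $u[a,b]\le v[a,b]$ for all $a,b$; write $u\preceq v$ for this. Clearly $\preceq$ is a partial order: antisymmetry holds because the numbers $z[a,b]$ determine $z$. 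The theorem then says that $\preceq$ coincides with the Bruhat order.

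\emph{Bruhat implies $\preceq$.} By the definition of Bruhat order through $B(W)$, it suffices to treat one edge $u\to ut$ with $t=(i,j)$, $i<j$. Right multiplication by $t$ swaps the entries in positions $i$ and $j$, and $\ell(ut)>\ell(u)$ means exactly $u(i)<u(j)$. The swap leaves every prefix set $\{u(1),\dots,u(a)\}$ unchanged for $a<i$ and for $a\ge j$. For $i\le a<j$, it replaces $u(i)$ by the larger value $u(j)$ in the prefix. Hence $ut[a,b]=u[a,b]+1$ when $i\le a<j$ and $u(i)<b\le u(j)$, and $ut[a,b]=u[a,b]$ otherwise. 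So $u\preceq ut$, and by transitivity $u\le v$ implies $u\preceq v$.

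\emph{$\preceq$ implies Bruhat.} Assume $u\preceq v$ with $u\neq v$. I argue by induction on $\ell(v)-\ell(u)$, which is positive: the previous paragraph gives $\ell(z)=\sum_{a,b}z[a,b]-\mathrm{const}$, since each inversion-increasing transposition raises this sum by a positive amount. So $u\preceq v$ and $u\neq v$ force $\ell(u)<\ell(v)$; more precisely, I take $\sum_{a,b}(v[a,b]-u[a,b])$ as the induction measure. It then suffices to produce a transposition $t$ with $u<ut$ and $ut\preceq v$.

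\emph{Choosing $t$.} Let $i$ be the first position where $u(i)\neq v(i)$. Comparing the prefixes of length $i$ (they agree up to $i-1$), the case $k=i$ gives $u(i)<v(i)$. The value $v(i)$ is not among $u(1),\dots,u(i)$, so it occurs in $u$ at some position $>i$. Among positions $j>i$ with $u(i)<u(j)\le v(i)$, pick the smallest $j$, and set $t=(i,j)$; then $u<ut$.

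\emph{The main obstacle} is verifying $ut\preceq v$. By the computation above, I only need $u[a,b]<v[a,b]$ for $i\le a<j$ and $u(i)<b\le u(j)$. I would argue as follows.
\begin{itemize}
\item For such $a$, minimality of $j$ says that no position $c\in(i,a]$ has $u(c)\in(u(i),u(j)]\subseteq(u(i),v(i)]$. Hence $u[a,b]=u[a,v(i)+1]+|\{c\le a: b\le u(c)\le v(i)\}|$, and the second term counts only positions $c<i$.
\item On those positions $u$ and $v$ agree.
\item Position $i$ contributes to $v[a,b]$ (since $v(i)\ge b$) but not to $u[a,b]$.
\item Using $u[a,v(i)+1]\le v[a,v(i)+1]$, these combine to give $v[a,b]\ge u[a,b]+1$.
\end{itemize}
This bookkeeping with the minimality of $j$ is the delicate step. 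Once it is done, induction gives $ut\le v$, hence $u<ut\le v$.
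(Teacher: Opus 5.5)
The paper gives no proof of this statement. It quotes the Tableau Criterion in the preliminaries as a classical fact; it is the criterion of Bj\"orner--Brenti, which they deduce from the rank-matrix criterion $x\le y \iff x[a,b]\le y[a,b]$. Your argument is a correct, self-contained proof along that same classical route:

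\begin{itemize}
\item The reformulation of the row conditions as $u[k,b]\le v[k,b]$ is right.
\item The one-edge computation gives Bruhat $\Rightarrow \preceq$. The paper's edges $x\to tx$ are the same as your $x\to xt'$ with $t'=x^{-1}tx$.
\item Taking $i$ as the first disagreement and $j$ minimal with $u(i)<u(j)\le v(i)$ does yield $u<ut\preceq v$.
\end{itemize}

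There are two small points to fix.

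First, the identity $\ell(z)=\sum_{a,b}z[a,b]-\mathrm{const}$ is false. In fact $\sum_{a,b}z[a,b]=\sum_c (n-c+1)z(c)$. A swap of positions $i<j$ with $u(i)<u(j)$ raises this sum by $(j-i)(u(j)-u(i))$, not by the change in length. For example, in $S_3$ the element $[2,3,1]$ exceeds the identity by $3$ in this sum but has length $2$. This does not damage the proof, because the quantity you actually induct on, $\sum_{a,b}(v[a,b]-u[a,b])$, behaves as needed:
\begin{itemize}
\item it is a nonnegative integer whenever $u\preceq v$;
\item it vanishes only when $u=v$;
\item it strictly decreases when $u$ is replaced by $ut$.
\end{itemize}
So simply drop the claim about $\ell$; it is not used.

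Second, in the key step you phrase the consequence of minimality of $j$ as ``no $c\in(i,a]$ has $u(c)\in(u(i),u(j)]$''. What your decomposition needs is the stronger statement that no $c\in(i,j)$ has $u(c)\in(u(i),v(i)]$. The second term counts values in $[b,v(i)]$, which can exceed $u(j)$. This stronger statement is exactly how you defined $j$, so the bookkeeping goes through.
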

We define the tableau of $u$ by

\[
T(u)=\begin{tabular}{|cccccc|}
\hline
$u_{1,1}$&&&&& \\
\hline
$u_{2,1}$& $u_{2,2}$&&&&\\
\hline 
$u_{3,1}$& $u_{3,2}$& $u_{3,3}$&&&\\
\hline
$\ldots$&$\ldots$&$\ldots$&$\ddots$&&\\
\hline
$u_{n,1}$&$u_{n,2}$&&&&$u_{n,n}$\\
\hline

\end{tabular}
\]
and hence $u\leq v$ if and only if $T(u)\leq T(v)$ entrywise. In our discussion, we will often have to deal with permutations that agree on a given subset $R$ of $[n]$. Whenever this happens, we use the \emph{restricted} one-line notation of a permutation $u$, i.e. we write only the entries $u_k$ for $k\notin R$, separated by a symbol $\di$.
For example, we write $u=[\di a \di b \di c \di]$ and $v=[\di c \di a \di b \di]$ to mean that $u$ and $v$ agree in all positions that are not occupied by $a,b,c$, and that these entries appear in the given order.
Moreover, whenever $u$ and $v$ agree on a subset of $R$, we can consider the restricted tableaux of $u$ and $v$, which is defined as the ordinary tableau but using only the displayed coefficients. For example, if $u$ and $v$ are as above and $a<b<c$, then the restricted tableaux of $u$ and $v$ are  $\begin{tabular}{|ccc|}\hline $a$&&\\\hline $a$&$b$&\\ \hline $a$&$b$&$c$\\ \hline \end{tabular}$ and $\begin{tabular}{|ccc|}\hline $c$&&\\\hline $a$&$c$&\\ \hline $a$&$b$&$c$\\ \hline \end{tabular}$ respectively.
It is straightforward to verify that restricted tableaux behave naturally with respect to the Bruhat order: $u<v$ if and only if the restricted tableau of $u$ is entrywise smaller than the restricted tableau of $v$.

The following is a simple but very useful observation.
\begin{remark}Let $W$ be a Coxeter group of type $A$, $x,y,z,v\in W$, with $x,y\leq v$. If $T(z)\leq \max(T(x),T(y))$ (where the maximum of two tableaux is defined coefficientwise), then $z\leq v$.
\end{remark}

Let $W$ be a Coxeter group, and $u,v \in W$. A {\em matching} of the Bruhat interval $[u,v]$ is an involution
$ M : [u,v]\rightarrow [u,v] $ such that $\{w,M(w)\}$ is an edge in the Hasse diagram of $[u,v]$, for all $w\in [u,v]$.
A matching \( M \) of \( [u,v] \) is {\em special} if\[
x\lhd y\Longrightarrow M(x)\leq M(y),\]
 for all \( x,y\in [u,v] \) such that \( M(x)\neq y \). 

The following result (see \cite[Lemma~2.6]{MJaco}) is a generalization  of the well-known Lifting Property (see \cite[Proposition~2.2.7]{BB}).
\begin{lem}
\label{lpfsm}
 (Lifting Property for special matchings) Let $W$ be a Coxeter group, and $u,v \in W$. Let $M$ be a special matching of $[u,v]$, and let $x, y \in [u,v]$, with $x \leq  y$. Then
\begin{enumerate}
\item if $M(y) \lhd y$ and $M(x) \lhd x$ then $M(x) \leq M(y)$,
\item if  $M(y) \rhd y$ and $M(x) \rhd x$ then $M(x) \leq M(y)$,
\item if $M(y) \lhd y$ and $M(x) \rhd x$ then $M(x) \leq y$ and $x \leq M(y)$.
\end{enumerate}
\end{lem}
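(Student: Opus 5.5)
The plan is to prove all three statements by induction on $\ell(x,y)=\ell(y)-\ell(x)$, using only the definition of a special matching and the fact that $[u,v]$ is graded by $\ell$. In each case I peel off one cover relation at the top or bottom of a maximal chain from $x$ to $y$. I then use either specialness or the length function to control the element obtained. The base case $x=y$ is trivial in (1) and (2). In (3) it cannot occur, since $M(y)\lhd y$ and $M(x)\rhd x$ are incompatible when $x=y$; so in (3) we may assume $x<y$.

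\emph{Case (1).} Assume $x<y$ and pick $z$ with $x\le z\lhd y$.
\begin{itemize}
\item If $z=M(y)$, then $M(x)\lhd x\le z=M(y)$, and we are done.
\item If $z\neq M(y)$, specialness applied to $z\lhd y$ gives $M(z)\le M(y)$.
\begin{itemize}
\item If $M(z)\rhd z$, then $\ell(M(z))=\ell(y)>\ell(M(y))$, which contradicts $M(z)\le M(y)$.
\item Hence $M(z)\lhd z$. The induction hypothesis applied to the pair $x\le z$ gives $M(x)\le M(z)\le M(y)$.
\end{itemize}
\end{itemize}

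\emph{Case (2).} This is the dual argument: pick $z$ with $x\lhd z\le y$.
\begin{itemize}
\item If $z=M(x)$, then $M(x)=z\le y\lhd M(y)$.
\item Otherwise specialness gives $M(x)\le M(z)$.
\begin{itemize}
\item If $M(z)\lhd z$, then $\ell(M(z))=\ell(x)<\ell(M(x))$, a contradiction.
\item Hence $M(z)\rhd z$. Induction on the pair $z\le y$ gives $M(x)\le M(z)\le M(y)$.
\end{itemize}
\end{itemize}

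\emph{Case (3).} The two inequalities are handled separately, each by the same pattern.

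For $M(x)\le y$, pick $x\lhd z\le y$.
\begin{itemize}
\item If $z=M(x)$, we are done.
\item Otherwise specialness gives $M(x)\le M(z)$, and as in (2) the length comparison forces $M(z)\rhd z$.
\item Now $z\le y$ with $M(z)\rhd z$ and $M(y)\lhd y$, and $\ell(z,y)<\ell(x,y)$. The induction hypothesis for (3) gives $M(z)\le y$, hence $M(x)\le y$.
\end{itemize}

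For $x\le M(y)$, pick $x\le z\lhd y$.
\begin{itemize}
\item If $z=M(y)$, we are done.
\item Otherwise $M(z)\le M(y)$ by specialness, and the length comparison of (1) forces $M(z)\lhd z$.
\item Induction for (3) on the pair $x\le z$ yields $x\le M(z)\le M(y)$.
\end{itemize}

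The only delicate point is bookkeeping in (3). The induction must be run simultaneously for the statement (3) on pairs of smaller length difference. In the inductive step the new pair $(z,y)$, respectively $(x,z)$, must again satisfy the hypotheses of (3); this is exactly what the length argument guarantees. The strict inequality $x<z$, respectively $z<y$, follows automatically from the opposite directions of $M$ at the two ends.
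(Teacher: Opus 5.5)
Your proof is correct and complete. Each of the three statements follows by induction on $\ell(x,y)$. The only inputs are the specialness condition applied to a single cover at the top or bottom of a maximal chain, and the grading of Bruhat order by $\ell$, which rules out the wrong direction of $M$ at the intermediate element $z$. You also correctly observe that in (3) the opposite directions of $M$ at the two ends force the new pair to satisfy $x<z$, respectively $z<y$, so the induction hypothesis applies.

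The paper does not prove this lemma itself; it simply cites \cite[Lemma~2.6]{MJaco}. Your argument is a valid self-contained replacement for that citation. It uses nothing specific to Coxeter groups, only that $[u,v]$ is a graded poset, so it shows the lifting property holds for special matchings of any graded poset.
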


\begin{lem}
\label{si_restringe}
Let $W$ be a Coxeter group, and $u,v \in W$.  Let $M$ be a special matching of $[u,v]$. Let $x,y \in [u,v]$ be such that $x<y$, $M(x)\rhd x$ and $M(y)\lhd y$. Then $M$ restricts to a special matching of $[x,y]$.
    
\end{lem}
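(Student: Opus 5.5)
The plan is to reduce everything to Lemma~\ref{lpfsm}. The real content is that $M$ maps $[x,y]$ into itself. Once that holds, the restriction is automatically an involution of $[x,y]$. Each pair $\{z,M(z)\}$ is a cover relation in $[u,v]$, hence also in $[x,y]$, because $[x,y]\subseteq[u,v]$ and cover relations are intrinsic to the Bruhat order. The special condition $z\lhd w\Rightarrow M(z)\leq M(w)$ (for $M(z)\neq w$) is simply inherited from $[u,v]$, since covers in $[x,y]$ are covers in $[u,v]$.

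So fix $z\in[x,y]$. We must show $x\leq M(z)\leq y$, and we split according to whether $M(z)$ covers $z$ or is covered by $z$.

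\emph{Case $M(z)\rhd z$.} The lower bound is immediate: $M(z)>z\geq x$. For the upper bound, apply Lemma~\ref{lpfsm}(3) to the pair $z\leq y$. The hypotheses $M(y)\lhd y$ and $M(z)\rhd z$ hold, so the lemma gives $M(z)\leq y$.

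\emph{Case $M(z)\lhd z$.} The upper bound is immediate: $M(z)<z\leq y$. For the lower bound, apply Lemma~\ref{lpfsm}(3) to the pair $x\leq z$. The hypotheses $M(z)\lhd z$ and $M(x)\rhd x$ hold, so the lemma gives $x\leq M(z)$.

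The degenerate situations $z=x$ or $z=y$ are covered by the same argument, or directly by the hypotheses: $M(x)\rhd x$ with $M(x)\leq y$ by the first case, and $M(y)\lhd y$ with $M(y)\geq x$ by the second.

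No step here is a genuine obstacle, since the whole argument is two applications of the third item of the Lifting Property for special matchings. The only point needing a moment's care is confirming that being a special matching is a property intrinsic to the restricted interval. That holds because Bruhat intervals are graded by $\ell$, so $z\lhd w$ means the same thing in $[x,y]$ as in $[u,v]$.
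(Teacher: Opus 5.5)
Your proof is correct. The paper states this lemma without proof, as a background fact placed right after Lemma~\ref{lpfsm}, and your argument is the natural derivation from it: item (3) applied to the pairs $z\leq y$ and $x\leq z$ gives $M([x,y])\subseteq[x,y]$, and the matching and special properties then carry over because covers in $[x,y]$ are exactly covers in $[u,v]$.
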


Let $W$ be a Coxeter group, and $u,v \in W$.
Given $ s\in D_{L}(v)\setminus D_L(u) $, we denote by  
$\lambda_{s}$ the special matching  of $[u,v]$ 
given by $\lambda_{s}(z)=sz $ for all $ z $ in $[u,v]$, and, given $ s\in D_{R}(v)\setminus D_R(u) $, we denote by  
$\rho_{s}$ the special matching  of $[u,v]$ 
given by $\rho_{s}(z)=zs $ for all $ z $ in $[u,v]$. That these are indeed special matchings follows from the classical Lifting Property for Coxeter groups.

Special matchings of intervals starting from the identity element in arbitrary Coxeter groups have been studied in \cite{BCM1,CM1,CM2}.
In this work, we study special matchings of arbitrary intervals in Coxeter groups of type $A$.

\begin{remark}
In this work, we use the color red in two different ways: in figures and in one-line notations of permutations.
In figures, red edges indicate the edges belonging to the matching under consideration. These edges are also drawn thicker than the others.
In one-line notations, some entries may be written in red to indicate that they are \lq\lq red entries\rq\rq, as defined in Section~\ref{cinque} (note that entries written in black are not necessarily \lq\lq black entries\rq\rq).
In both cases, the use of the color red is not essential for understanding the arguments, but is intended only as a visual aid for the reader.
\end{remark}

\section{Preliminary results for arbitrary Coxeter groups}
\label{tuttiquanti}
In this section, we present some results that hold for all Coxeter groups.
Throughout this section, unless otherwise stated, $(W,S)$ denotes an arbitrary Coxeter system.

\begin{defn}
    Let $J\subseteq S$ and $s\in J$. We let $\lambda_s^J$ be the matching of $W$ given by
\[
\lambda_s^J(w)= \li J w \, s \, \lu J w,
\]
for all $w$ in $W$.
Symmetrically, we let $\rho_s^J$ be the matching of $W$ given by
\[
\rho_s^J(w)=    w^J \, s \,  w_J,
\]
for all $w$ in $W$.
\end{defn}

Note that, given $s \in S$, the three matchings  $\lambda_s^{\{s\}}$, $\rho_s^S$, and $\lambda_s$ coincide, as well as the matchings $\rho_s^{\{s\}}$, $\lambda_s^S$, and $\rho_s$.

Let $u,v \in W$, with $u\leq v$. The map $x\mapsto x^{-1}$ gives a poset isomorphism between $[u,v]$ and $[u^{-1},v^{-1}]$. Hence, we have a bijection between matchings of $[u,v]$ and matchings of $[u^{-1},v^{-1}]$, sending a matching $M$ to the matching $\overline{M}$ defined by $\overline{M}(x^{-1})= (M(x))^{-1}$. This bijection sends $\lambda_s^J$ to $\rho_s^J$ and $\rho_s^J$ to $\lambda_s^J$.
Our analysis here and in the following sections is left-oriented (i.e., $\lambda$-oriented). A right-sided analogue (i.e., a $\rho$-analogue) is implicitly understood for every result.

\begin{lem}
\label{casicover}
	Let $x,y \in W$, with $x\lhd y$. Then one of the following conditions applies:
	\begin{enumerate}
		\item
        \label{casicover1}
        $\li Jx\lhd \li Jy$ and $\lu Jx=\lu Jy$;
		\item 
        \label{casicover2}
        $\li Jx=\li Jy$ and $\lu Jx\lhd \lu Jy$;
		\item 
        \label{casicover3}
        $\li Jy$ is a proper prefix of $\li Jx$, i.e., there exists $z\in W_J\setminus \{e\}$ such that $\li Jx=\li Jyz$ with $\ell(\li Jx)=\ell(\li Jy)+\ell(z)$, and $\lu Jx<\lu Jy$ but $\lu Jx\not \!\!\lhd \lu Jy$.
	\end{enumerate}
\end{lem}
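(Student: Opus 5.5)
The plan is to read off the parabolic components of $x$ from a reduced expression of $y$ that is adapted to the decomposition $y=\li J y\,\lu J y$. By Proposition~\ref{fattorizzo}(ii), $\ell(y)=\ell(\li J y)+\ell(\lu J y)$. So concatenating a reduced expression of $\li J y$ (all letters in $J$) with one of $\lu J y$ gives a reduced expression of $y$. Since $x\lhd y$, the Subword Property (Theorem~\ref{subword}) says $x$ has a reduced expression that is a subword of this word. Because $\ell(x)=\ell(y)-1$, that subword is obtained by deleting exactly one letter. I then split according to where the deleted letter sits.

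\emph{Deleted letter in the $\li J y$ part.} Here $x=w\,\lu J y$, where $w\in W_J$ is given by a reduced word of length $\ell(\li J y)-1$. This word is a subword of a reduced word of $\li J y$, so $w\lhd \li J y$. Since $w\in W_J$ and $\lu J y\in\lu J W$, uniqueness in Proposition~\ref{fattorizzo}(ii) gives $\li J x=w$ and $\lu J x=\lu J y$. This is case~\eqref{casicover1}.

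\emph{Deleted letter in the $\lu J y$ part.} Here $x=\li J y\, z$, where $z$ is obtained from a reduced expression of $\lu J y$ by deleting one letter, and the result is reduced, so $\ell(z)=\ell(\lu J y)-1$ and $\ell(x)=\ell(\li J y)+\ell(z)$. The element $z$ need not lie in $\lu J W$, so I factor it as $z=\li J z\,\lu J z$. Then $x=(\li J y\,\li J z)\,\lu J z$ with $\li J y\,\li J z\in W_J$ and $\lu J z\in \lu J W$, and uniqueness gives $\li J x=\li J y\,\li J z$ and $\lu J x=\lu J z$. For the lengths, combine
\[
\ell(x)=\ell(\li J y)+\ell(\li J z)+\ell(\lu J z)
\]
with the additivity $\ell(x)=\ell(\li J x)+\ell(\lu J x)$ of Proposition~\ref{fattorizzo}(ii). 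This forces $\ell(\li J x)=\ell(\li J y)+\ell(\li J z)$, so $\li J y$ is a prefix of $\li J x$ in the length-additive sense.

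Two subcases follow.
\begin{itemize}
\item If $\li J z=e$, then $\li J x=\li J y$ and $\lu J x=z$. Moreover $z\le \lu J y$ by the Subword Property and $\ell(z)=\ell(\lu J y)-1$, so $\lu J x\lhd\lu J y$. This is case~\eqref{casicover2}.
\item If $\li J z\neq e$, then $\li J y$ is a proper prefix of $\li J x$. Also
\[
\ell(\lu J x)=\ell(z)-\ell(\li J z)\le \ell(\lu J y)-2 .
\]
Proposition~\ref{projection} applied to $x<y$ gives $\lu J x\le\lu J y$, and this is strict with no cover because the lengths differ by at least $2$. This is case~\eqref{casicover3}.
\end{itemize}

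The only delicate step is the length bookkeeping in the second case: it is what both identifies $\li J x$ and makes the prefix property length-additive. It comes directly from comparing the two additive factorizations of $\ell(x)$, so I do not expect a real obstacle.
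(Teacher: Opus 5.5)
Your proposal is correct and takes the same approach as the paper's proof. Both concatenate reduced words for the two parabolic factors of $y$ and delete one letter. Both then split according to whether the letter lies in the $W_J$-part and, if not, whether the resulting element is a minimal coset representative. Your length bookkeeping and your appeal to Proposition~\ref{projection} make explicit the verification of case~(3), which the paper leaves implicit.
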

\begin{proof}
Given a reduced expression of $y$, there exists a letter whose deletion gives a reduced expression of $x$. Choose a reduced expression of $y$ obtained  by concatenating a reduced expression of $\li Jy$ and a reduced expression of $ \lu J y$  (recall  $y=\li Jy \lu Jy$ with $\ell(y) = \ell(\li Jy) + \ell( \lu Jy)$). If the deleted letter belongs to the reduced expression of $\li Jy$, then clearly (\ref{casicover1}) holds.  We may therefore suppose that the deleted letter belongs to the reduced expression of $\lu Jy$, so that $x=\li Jyw$, with $w\lhd \lu Jy$. If $w\in \lu JW$, then $w=\lu Jx$ and (\ref{casicover2}) holds. If $w\notin \lu JW$, then $w=\li Jw \lu Jw$ with $\li Jw\neq e$. In particular,  $\li Jx=\li Jy \li J w$ and $\lu Jx=\lu Jw$, and (\ref{casicover3}) holds.
\end{proof}
For brevity, we say that a pair $(x,y)$ with $x,y\in W$, $x\lhd y$, $M(x)\neq y$, $M(x)\rhd x$ and $M(y)\lhd y$ is a \emph{forbidden edge} for $M$, see Figure \ref{FigForbiddenEdge}.
\begin{figure}[H]
\centering
    \begin{tikzpicture}
    \draw (0,0) node[ thick,draw=black, inner sep=0,outer sep=0.5mm, minimum size=2mm, circle](x) {};
    \draw (-2,2) node[ thick,draw=black, inner sep=0,outer sep=0.5mm, minimum size=2mm, circle](y) {};
    \draw (0,2) node[ thick,draw=black, inner sep=0,outer sep=0.5mm, minimum size=2mm, circle](Mx) {};
    \draw (-2,0) node[ thick,draw=black, inner sep=0,outer sep=0.5mm, minimum size=2mm, circle](My) {};

 \draw[line width=1mm, color=red] (x)--(Mx);
 \draw[line width=1mm, color=red] (y)--(My);
 \draw[thick] (x)--(y);
 
 \node at (0.2,-0.2) {$x$};
 \node at (-2.3,1.8) {$y$};
 \node at (0.6,1.8) {$M(x)$};
 \node at (-2.6,-0.2) {$M(y)$};
 
 \end{tikzpicture}
    \caption{A forbidden edge}
    \label{FigForbiddenEdge}
\end{figure} 

\begin{prop}\label{forbid} 
Let $M$ be a matching of $W$. Let $u,v \in W$ with $u<v$, $M(v)\lhd v$, and $M(u)\rhd u$. Then $M$ restricts to a special matching of $[u,v]$ if and only if $[u,v]$ contains no forbidden edges.	
\end{prop}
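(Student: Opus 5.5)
The implication ``special $\Rightarrow$ no forbidden edges'' is a rank count, and the converse will come from repeatedly applying the hypothesis on length-$2$ intervals, which are diamonds by the same fact used in Lemma~\ref{quadrati}.

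\emph{Special implies no forbidden edges.} Let $(x,y)$ be a forbidden edge in $[u,v]$. Then $x\lhd y$ and $M(x)\neq y$, so specialness requires $M(x)\le M(y)$. But $\ell(M(x))=\ell(x)+1$ while $\ell(M(y))=\ell(y)-1=\ell(x)$, which is impossible.

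\emph{No forbidden edges implies special.} I would prove the converse in two steps.

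The basic tool is a diamond argument. Suppose $x\lhd y$ lie in $[u,v]$, $M(x)\neq y$, both are matched upward, and $M(y)\le v$. The interval $[x,M(y)]$ has length $2$, so it equals $\{x,y,z,M(y)\}$ for a unique $z\neq y$. I claim $M(x)=z$, which gives $M(x)\le M(y)$. Suppose instead $M(x)\neq z$.
\begin{itemize}
\item The edge $(x,z)$ lies in $[u,v]$ and $M(x)\rhd x$. It is not forbidden, so $M(z)\rhd z$.
\item Now consider the edge $(z,M(y))$. We have $M(z)\rhd z$ and $M(M(y))=y\lhd M(y)$. Also $M(z)\neq M(y)$, since $M(M(y))=y\neq z$.
\item Hence $(z,M(y))$ is a forbidden edge in $[u,v]$, a contradiction.
\end{itemize}
Symmetrically, if $x\lhd y$ are both matched downward, $M(x)\neq y$ and $M(x)\ge u$, then the diamond $[M(x),y]$ gives $M(y)\le$ the fourth element, so $M(x)\le M(y)$.

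\emph{Step 1: $M$ preserves $[u,v]$.} I show that $x\in[u,v]$ with $M(x)\rhd x$ implies $M(x)\le v$, by induction on $\ell(v)-\ell(x)$. Note $x\neq v$ since $M(v)\lhd v$. Choose $y$ with $x\lhd y\le v$.
\begin{itemize}
\item If $M(x)=y$ we are done.
\item Otherwise, if $M(y)\lhd y$, then $(x,y)$ would be a forbidden edge in $[u,v]$. So $M(y)\rhd y$.
\item By induction $M(y)\le v$, and the diamond argument gives $M(x)\le M(y)\le v$.
\end{itemize}
The dual statement, that $M(x)\lhd x$ implies $M(x)\ge u$, follows symmetrically using $M(u)\rhd u$ and the downward diamond. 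Hence $M$ restricts to a matching of $[u,v]$.

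\emph{Step 2: specialness.} Take $x\lhd y$ in $[u,v]$ with $M(x)\neq y$.
\begin{itemize}
\item Both matched up: the diamond argument applies, since $M(y)\le v$ by Step 1.
\item Both matched down: the dual diamond argument applies, since $M(x)\ge u$ by Step 1.
\item $x$ matched up and $y$ matched down: this is exactly a forbidden edge, which is excluded.
\item $x$ matched down and $y$ matched up: then $M(x)\lhd x\lhd y\lhd M(y)$, so $M(x)\le M(y)$ trivially.
\end{itemize}

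The main obstacle is Step 1. The hypothesis only forbids one configuration, so closure of $[u,v]$ under $M$ is not automatic. The plan is to interleave the induction with the diamond argument, and to check that every edge examined there, namely $(x,z)$ and $(z,M(y))$, lies inside $[u,v]$. This needs $M(y)\le v$, which is precisely the inductive hypothesis.
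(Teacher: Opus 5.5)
Your proof is correct and uses the same mechanism as the paper's. The paper inducts on $\ell(u,v)$ and applies the induction to $[w,v]$ for each atom $w\neq M(u)$ to get $M(w)\le v$. It then proves $M(u)\lhd M(w)$ by exactly your diamond step: the other middle element of $[u,M(w)]$ would form a forbidden edge either with $u$ or with $M(w)$.

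Your element-wise induction on $\ell(v)-\ell(x)$, the dual downward diamond, and the four-case check spell out the closure and specialness details that the paper leaves implicit. One small wording slip: in the dual diamond you mean that $M(y)$ \emph{equals} the fourth element, not that $M(y)\le$ it.
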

\begin{proof}
It is clear that, if $[u,v]$ contains a forbidden edge, then $M$ does not restrict to a special matching of $[u,v]$. Suppose now that $[u,v]$ contains no forbidden edges and proceed by induction on $\ell(u,v)$ to show that $M$ restricts to a matching of $[u,v]$ and that such restriction is special. 

If $\ell(u,v)\leq 2$, then the result is an easy verification. Suppose $\ell(u,v)>2$. 
Let $w$ be an atom of $[u,v]$ distinct from $M(u)$ (notice that, at this stage, it is not yet guaranteed $M(u)\in [u,v]$). Since $(u,w)$ is not a forbidden edge,  necessarily $M(w)\rhd w$. Moreover, $M(w)\leq v$ by the induction hypothesis applied to the interval $[w,v]$. We need to show $M(u)\lhd M(w)$. Towards a contradiction, suppose otherwise. Since every Bruhat interval of length $2$ in a Coxeter group contains exactly four elements, there exists $y$ in $[u,v]$, with $y\neq w$, such that $u\lhd y\lhd M(w)$. If $M(y)\rhd y$, then $(y,M(w))$ is a forbidden edge. If instead $M(y)\lhd y$, then $(u,y)$ is a forbidden edge. In either case we obtain a contradiction.
\end{proof}
\begin{remark}
According to the proof of Proposition~\ref{forbid}, the non-existence of forbidden edges implies both that $M$ restricts to a matching of $[u,v]$ (i.e., $M$ does not match an element of $[u,v]$ with an element outside $[u,v]$) and that $M$ is special.    
\end{remark}

For all $w\in W$, let $w^s$ denote the projection $w^{\{s\}}$ of $w$ onto $W^{\{s\}}$. To avoid cumbersome notation, we also let $\li Jx^s=(\li Jx)^s$.

\begin{thm}
\label{suff}
Let $s\in J$ and $M=\lambda_s^J$.
Let $u,v \in W $ with $u<v$, $M(u)\rhd u$ and $M(v)\lhd v$. If  $\li Jx^s\leq \li J y ^s$ for all $u\leq x\leq y \leq v$,  then $M$ restricts to a special matching of $[u,v]$. 
\end{thm}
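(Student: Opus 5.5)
By Proposition~\ref{forbid}, it is enough to show that $[u,v]$ contains no forbidden edge for $M=\lambda_s^J$. So I will assume there is a pair $(x,y)$ with $u\le x\lhd y\le v$, $M(x)\neq y$, $M(x)\rhd x$ and $M(y)\lhd y$, and derive a contradiction.

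First I translate the direction of $M$ into a condition on $\li Jx$. We have $M(w)=\li Jw\, s\, \lu Jw$. Since $\li Jw s\in W_J$ and $\lu Jw\in \lu JW$, this is already the parabolic decomposition of $M(w)$, and lengths add by Proposition~\ref{fattorizzo}. Hence $M(w)\rhd w$ if and only if $s\notin D_R(\li Jw)$, and $M(w)\lhd w$ if and only if $s\in D_R(\li Jw)$. For the forbidden edge this gives $\li Jx^s=\li Jx$ and $\li Jy=\li Jy^s\, s$ with $\ell(\li Jy)=\ell(\li Jy^s)+1$. The hypothesis of the theorem applied to $x\le y$ gives
\[
\li Jx=\li Jx^s\le \li Jy^s\lhd \li Jy .
\]

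Next I split the cover $x\lhd y$ according to Lemma~\ref{casicover}.

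In case~(\ref{casicover1}) we have $\lu Jx=\lu Jy$ and $\li Jx\lhd \li Jy$. Then $\ell(\li Jx)=\ell(\li Jy)-1=\ell(\li Jy^s)$. Together with $\li Jx\le\li Jy^s$, equal length forces $\li Jx=\li Jy^s=\li Jy s$. Therefore $x=\li Jys\,\lu Jy=M(y)$, so $M(x)=y$, which contradicts the definition of a forbidden edge.

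In cases~(\ref{casicover2}) and~(\ref{casicover3}) we have $\ell(\li Jx)\ge\ell(\li Jy)$: either $\li Jx=\li Jy$, or $\li Jy$ is a proper prefix of $\li Jx$. But the chain above gives $\ell(\li Jx)\le \ell(\li Jy^s)=\ell(\li Jy)-1$. This is again a contradiction.

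Hence $[u,v]$ has no forbidden edges, and Proposition~\ref{forbid} shows that $M$ restricts to a special matching of $[u,v]$. The only step needing care is the first one: recognizing that $\li Jw\,s\,\lu Jw$ is length-additive, so that the up/down behaviour of $M$ is read off from $s\in D_R(\li Jw)$. Once that is in place, the case analysis is straightforward length counting.
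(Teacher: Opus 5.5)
Your proof is correct and takes essentially the paper's route: you use Proposition~\ref{forbid} to reduce to showing there are no forbidden edges, then split the cover $x\lhd y$ according to Lemma~\ref{casicover}. The only difference is how the cases are closed. You apply the hypothesis at the outset to get $\li Jx=\li Jx^s\le \li Jy^s\lhd \li Jy$ and finish all three cases by length counting, whereas the paper settles the first case with the Lifting Property, the second trivially, and uses the hypothesis only to exclude the third.
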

\begin{proof}
	By Proposition \ref{forbid}, it is enough to show that $[u,v]$ contains no forbidden edges. Let  $u\leq x\lhd y \lhd v$ with $x\lhd M(x)\neq y$. In particular, we have $\li J x\lhd \li J x s$ since $x\lhd M(x)$. We also observe  $\li J xs \neq \li J y$. Indeed, if $\li J xs = \li J y$, we would have 
	\[\ell(\li J x)+\ell(\lu J x) +1=\ell(x)+1=\ell(y)=\ell(\li J y)+\ell(\lu J y)=\ell(\li J x) +1 + \ell(\lu J y)\]
	and therefore $\ell(\lu J x)=\ell (\lu J y)$, which implies $\lu J x=\lu J y$  and hence $y=M(x)$, a contradiction. We have to show that $M(y)\rhd y$, i.e. $\li J y s\rhd \li J y$. We consider the three cases of Lemma~\ref{casicover}. In the first case, since $\li J x\lhd \li J y$ and $\li J x \lhd \li J x s \neq \li J y$, the Lifting Property for Coxeter groups yields $\li J y\lhd \li J y s$. In the second case, the assertion is immediate since $\li J y=\li J x$.
The third case cannot occur. Indeed, we would have $\li J y w=\li Jx$ for some $w\in W_J\setminus\{e\}$ with $\ell (\li J y)+ \ell (w)=\ell(\li J x)$.  Since $\li Jx\, s\rhd \li Jx$, we have $\li Jx=\li Jx^s$. Therefore
\[
\li J y^s \leq \li J y < \li J y w =\li J x=\li J x^s,
\]
contradicting the hypothesis.
\end{proof}
\begin{remark}
Note that the condition $\li J x^s\leq \li J y^s$ implies that the third case of Lemma~\ref{casicover} can occur only if $\li J x=\li J y s$, in which case $M(x)\lhd x$ and $M(y)\rhd y$. The converse is not true: if $M(x)\lhd x$ and $M(y)\rhd y$, then we could also be in the first  case of Lemma~\ref{casicover}.
\end{remark}
\begin{remark}
We observe that the condition given in Theorem \ref{suff} is not necessary. Indeed, let $W$ be the Coxeter group of type $A_{n-1}$. Let $J=\{s_2,s_3,s_4\}$ and $s=s_2$ and consider the Bruhat interval $[u,v]$, where $u=[\color{red}2\color{black},\color{red}3\color{black}, 1,\color{red}5\color{black},\color{red}4\color{black}]$ and $v=[\color{red}3\color{black},\color{red}2\color{black},\color{red}4\color{black},\color{red}5\color{black},1]$. Then $\ell(u,v)=2$ and $\lambda_s^J$ is a special matching of $[u,v]$. Nevertheless, letting $y=\lambda_s^J(v)=[\color{red}2\color{black},\color{red}3\color{black},\color{red}4\color{black},\color{red}5\color{black},1]$, we have $u\lhd y$ but $\li J u^s=[1,\color{red}2\color{black},\color{red}3\color{black},\color{red}5\color{black},\color{red}4\color{black}]$ and $\li J y^s=[1,\color{red}2\color{black},\color{red}3\color{black},\color{red}4\color{black},\color{red}5\color{black}]$ so $\li J u^s\nleq \li J y^s$.
(The reason why some entries are black and some other are red will be clarified later and it is not needed at this stage).
\end{remark}
\begin{prop} \label{freccia}
Let $u,v\in W$, with $u\leq v$. Let $M$ be a special matching of $[u,v]$. Then, for all $x,y$ with $u\leq x\lhd y\leq v$, there exists a reflection $t$ such that $M(x)=tM(y)$. 
\end{prop}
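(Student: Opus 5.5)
The idea is to compare lengths. Since $x\lhd y$, either $M(x)=y$, or specialness gives $M(x)\le M(y)$ together with a length constraint. Only one configuration is not immediate, and in that one we will show that the Bruhat graph of $[M(x),M(y)]$ must contain the edge $M(x)\to M(y)$. Throughout we use that $M(x)=tM(y)$ for some $t\in T$ if and only if $M(x)$ and $M(y)$ are joined by an edge of $B(W)$. This holds because $ab^{-1}\in T$ if and only if $ba^{-1}\in T$.

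\emph{Easy cases.} If $M(x)=y$, then $M(y)=x$. Since $x\lhd y$, we have $y=tx$ for some $t\in T$, that is, $M(x)=tM(y)$.

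Suppose now $M(x)\neq y$, so that $M(x)\le M(y)$ by specialness.
\begin{itemize}
\item If $M(x)\lhd x$ and $M(y)\lhd y$, then $\ell(M(y))-\ell(M(x))=1$. Hence $M(x)\lhd M(y)$, and a covering relation is a Bruhat graph edge.
\item If $M(x)\rhd x$ and $M(y)\rhd y$, the same argument applies.
\item If $M(x)\rhd x$ and $M(y)\lhd y$, then $\ell(M(x))=\ell(y)>\ell(M(y))$. This contradicts $M(x)\le M(y)$, so $(x,y)$ would be a forbidden edge and this case cannot occur.
\end{itemize}

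\emph{The remaining case.} Suppose $M(x)\lhd x$ and $M(y)\rhd y$, and set $z=M(x)$ and $w=M(y)$. Then $z\lhd x\lhd y\lhd w$ and $z\le w$, so $\ell(z,w)=3$.

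We have $M(z)=x\rhd z$ and $M(w)=y\lhd w$. By Lemma~\ref{si_restringe}, $M$ restricts to a special matching of $[z,w]$. By Lemma~\ref{k_corone}, $[z,w]$ is a $k$-crown with $k\ge 2$. Label its atoms $a_1,\dots,a_k$ and its coatoms $b_1,\dots,b_k$ cyclically, so that $b_i$ covers exactly $a_i$ and $a_{i+1}$, with indices taken mod $k$. We may take $x=a_1$ and $y=b_1$.

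We claim $k=2$. Suppose $k\ge 3$.
\begin{itemize}
\item Each $a_i$ with $i\ge 2$ must be matched with a coatom covering it, other than $b_1=M(w)$.
\item Since $b_1$ is taken, $a_2$ is forced to match $b_2$.
\item Since $b_2$ is now taken, $a_3$ is forced to match $b_3$, and so on up to $a_k\mapsto b_k$.
\item Now $a_3\lhd b_2$ and $M(a_3)=b_3\neq b_2$, so specialness requires $M(a_3)=b_3\le M(b_2)=a_2$. This is false.
\end{itemize}
Hence $k=2$.

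It remains to show that a $2$-crown $[z,w]$ of rank $3$ has the Bruhat graph edge $z\to w$. This is the step I expect to need an external input. I would use Deodhar's inequality: in $[z,w]$, the number of $t\in T$ with $z<tz\le w$ is at least $\ell(z,w)=3$. A reflection changes length by an odd amount, so $tz$ has rank $1$ or $3$ in $[z,w]$. There are only two atoms, so $tz=w$ for some $t\in T$. This gives $M(x)=z=tw=tM(y)$.

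If one prefers to avoid Deodhar's inequality, the alternative is to identify a rank-$3$ $2$-crown as isomorphic to a dihedral interval and argue directly. I expect this final step to be the main obstacle; the case analysis above is routine.
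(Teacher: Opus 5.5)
Your proof is correct and follows the paper's argument: the same case split, Lemma~\ref{si_restringe} to restrict $M$ to $[M(x),M(y)]$, and Lemma~\ref{k_corone} plus a crown analysis to force a $2$-crown. Where you differ, your forcing argument rules out all $k\ge 3$ at once (the paper first reduces to $2$- and $3$-crowns and then shows $N(\hat 0)\nleq N(\hat 1)$ in a $3$-crown), and your appeal to Deodhar's inequality makes explicit a step the paper leaves as ``the assertion follows'', namely that the bottom and top of a rank-$3$ $2$-crown differ by a reflection.
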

\begin{proof}
If $M(x)=y$, then  the result is obvious. If $y\neq M(x)\rhd x$ or $x\neq M(y)\lhd y$, then $M(x)\lhd M(y)$ and the result follows. If $M(x)\lhd x$ and $M(y)\rhd y$, then $M$ restricts to a special matching of $[M(x),M(y)]$, by Lemma~\ref{si_restringe}. By Lemma~\ref{k_corone}, the interval $[M(x),M(y)]$ is a  $k$-crown and, since it admits a special matching, it is actually a $2$-crown or a $3$-crown. Moreover, if $N$ is a special matching of a $3$-crown with top element $\hat 1$ and bottom element $\hat 0$, then $N(\hat 0)\nleq N(\hat 1)$. Therefore, the interval $[M(x),M(y)]$ must necessarily be a $2$-crown and the assertion follows.  
\end{proof}
We will show in Proposition \ref{MdaTaT} the following more general result for a type $A$ Coxeter group $W$: if $[u,v]$ is an interval in $W$  and $x,y\in [u,v]$ satisfy $xy^{-1}\in T$, then also $M(x)M(y)^{-1}\in T$.

\section{First results on special matchings of type $A$ Coxeter groups}
\label{cinque}

Throughout this section, we let $W$ be the Coxeter group of type $A_{n-1}$, i.e.,  the symmetric group on $[n]$  with its standard realization as a Coxeter group. Recall that we use both cycle notation and one-line notation interchangeably. 

     We first note the action of the matchings of the form $\lambda_s^J$ for $W$, and set some notations. Let $J=\{s_i,\ldots,s_{j-1}\}$,  with $i<j\leq n$. We refer to the entries in $\{i,\ldots,j\}$ as the \emph{J-red entries} and the other entries as the \emph{J-black entries}. We write simply red entries and black entries when $J$ is clear from the context. Given $w \in W_J$, we let $w=[w_i,\ldots, w_j]$ be the \emph{$J$-one-line notation} of $w$, i.e., the string obtained from the one-line notation of $w$ by deleting the black entries (which are fixed by $w$). 
     We observe that the $J$-one-line notation of $\li J z$ is obtained from the one-line notation of $z$ by deleting the black entries. Moreover, given $s_k \in J$, the one-line notation of  $\lambda_{s_k}^J(z)$ is obtained from the one-line notation of $z$ by swapping the two red entries $(\li J z)_ k$ and $(\li J z)_{k+1}$, i.e., the $(k-i+1)$-th and the $(k-i+2)$-th red entries from left to right.

     For example, if $n=10$, $i=3$, $j=7$ and $k=4$, then $\lambda_{s_k}^J(z)$ is obtained from $z$ by swapping the second and the third red entries; if
     $z=[{\color{red}4},2,{\color{red}6},
     1,{\color{red}3},{\color{red}7},9,8,{\color{red}5},10]$, then the $J$-one-line notation of $\li J z$ is  $[{\color{red}4},{\color{red}6},
     {\color{red}3},{\color{red}7},{\color{red}5}]$ and $\lambda_{s_k}^J(z)=[{\color{red}4},2,{\color{red}3},
     1,{\color{red}6},{\color{red}7},9,8,{\color{red}5},10]$.

     \begin{remark}
     The map $x\mapsto w_0 x w_0$, where $w_0=[n,\ldots,2,1]$,  is an automorphism of $W$ as a group and as a poset which sends $s_i$ to $s_{n-i}$ for all $i\in[n-1]$.  If $M$ is a matching of an interval $[u,v]$, then we denote by $\tilde M$ the matching of $[w_0 uw_0,w_0 v w_0]$ given by $\tilde M(w_0 x w_0)=w_0 M(x)w_0$. It is clear that $M$ is special if and only if $\tilde M$ is special. We call $\tilde M$ the \emph{companion} matching of $M$.
     Note that if $J=\{s_i,\ldots,s_{j-1}\}$ and we let $\tilde J =w_0Jw_0$ (i.e., $\tilde J=\{s_{n-j+1},\ldots,s_{n-i}\}$),  then for all $x= \li J x \lu J x$ we have $\li {\tilde J}{(w_0 xw_0)}=w_0{\li J x}w_0$ and $\lu {\tilde J}(w_0 xw_0)=w_0 {\lu J x}w_0$. In particular, if $M=\lambda_{s_k}^J$, then
     \[
     \tilde M(w_0 x w_0)=w_0 {M(x)}w_0=w_0{ \li J x s_k \lu J x}w_0=\li {\tilde J}(w_0 xw_0) s_{n-k} \lu {\tilde J} (w_0 xw_0).
     \]
     Hence, the companion matching  $\tilde M$ is $\lambda_{s_{n-k}} ^{\tilde J}$. 
     \end{remark}

Note that, in general, $\lambda_{s_k}^J$ has forbidden edges in $W$ so it is not a special matching of the whole group $W$. For example, let $W$ be of type $A_7$, $J=\{s_3,\ldots,s_6\}$, $k=5$, $x=[\color{red}4\color{black},2,\color{red}7\color{black},\color{red}5\color{black},1,\color{red}6\color{black},8,\color{red}3\color{black}]$, and $y=[\color{red}4\color{black},\color{red}3\color{black},\color{red}7\color{black},\color{red}5\color{black},1,\color{red}6\color{black},8,2]$.  Then $x\lhd y$, $\lambda_{s_k}^J(x)=[\color{red}4\color{black},2,\color{red}7\color{black},\color{red}6\color{black},1,\color{red}5\color{black},8,\color{red}3\color{black}]\rhd x$, and $\lambda_{s_k}^J(y)=[\color{red}4\color{black},\color{red}3\color{black},\color{red}5\color{black},\color{red}7\color{black},1,\color{red}6\color{black},8,2]\lhd y$. Hence, $(x,y)$ is a forbidden edge for $\lambda_{s_k}^J$.
\begin{remark}
\label{J_bassoK^alto}
    Let $J=\{s_i,\ldots,s_{j-1}\}$, let $s_k\in J$, and set $K=\{t\in J:\, ts_k=s_kt\}=J \setminus \{s_{k-1}, s_{k+1}\}$. Then the $J$-one-line notation of $\li J z^K$ is obtained from the $J$-one-line notation $[\li J z_i,\ldots,\li J z_j]$ of $\li J z$  by reordering $\li J z_i,\ldots,\li J z_{k-1}$ in increasing order, reordering $\li J z_k$ and $\li J z_{k+1}$ in increasing order, and reordering $\li J z_{k+2},\ldots,\li J z_j$ in increasing order. In the example above, we have $\li J z=[{\color{red}4}|{\color{red}6},
     {\color{red}3}|{\color{red}7},{\color{red}5}]$ and $\li J z^K=[{\color{red}4}|{\color{red}3},
     {\color{red}6}|{\color{red}5},{\color{red}7}]$. 
The bars indicate the blocks that are reordered.
\end{remark}

The next theorem provides a first characterization of the matchings $\lambda_s^J$ that restrict to special matchings of a given Bruhat interval.

\begin{thm}\label{cha}
Let  $i<j\leq n$ and $J=\{s_i,\ldots,s_{j-1}\}$. Suppose $s\in J$ and $K=\{t\in J:\, ts=st\}$. Let $u<v$ with $\lambda_s^J(u)\rhd u$ and $\lambda_s^J(v)\lhd v$.  Then the following are equivalent:
\begin{enumerate}
    \item $\lambda_s^J$   restricts to a special matching of $[u,v]$;
    \item $\li Jx^K\leq \li J y ^K$, for all $x,y\in [u,v]$ with $x\leq y$.
\end{enumerate} 
\end{thm}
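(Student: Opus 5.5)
The plan is to prove the two implications separately. For (2)$\Rightarrow$(1) I follow the proof of Theorem~\ref{suff} with $K$ in place of $\{s\}$. For (1)$\Rightarrow$(2) I first reduce to covering relations and then produce a forbidden edge, or a contradiction with Proposition~\ref{freccia}, in the single bad case of Lemma~\ref{casicover}.

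\emph{(2)$\Rightarrow$(1).} Write $M=\lambda_s^J$ with $s=s_k$, so that
\[
W_K=W_{\{s_i,\ldots,s_{k-2}\}}\times\langle s_k\rangle\times W_{\{s_{k+2},\ldots,s_{j-1}\}}.
\]
By Proposition~\ref{forbid} it suffices to rule out forbidden edges. So take $u\le x\lhd y\le v$ with $x\lhd M(x)\neq y$. Exactly as in the proof of Theorem~\ref{suff}, we get $\li Jx s\neq \li Jy$, and in cases (1) and (2) of Lemma~\ref{casicover} we get $M(y)\rhd y$.

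It remains to treat case (3), where $\li Jy<\li Jx$ and $\li Jy$ is a proper prefix of $\li Jx$. Suppose, for contradiction, that $\li Jy\, s\lhd \li Jy$.
\begin{itemize}
\item Proposition~\ref{projection}, applied inside $W_J$, gives $\li Jy^K\le \li Jx^K$. Hypothesis (2) gives the reverse inequality. Hence $\li Jx^K=\li Jy^K=:w$.
\item Write $\li Jx=wa$ and $\li Jy=wb$ with $a,b\in W_K$, lengths adding. Since $\ell$ is additive on the coset $wW_K$, the relation $\li Jy\le \li Jx$ gives $b\le a$ in $W_K$.
\item Bruhat order on a direct product of parabolic factors is componentwise, so the $\langle s_k\rangle$-components satisfy $b_k\le a_k$.
\item Since $s_k$ commutes with the other two factors, $\li Jx\, s_k\rhd \li Jx$ forces $a_k=e$, while $\li Jy\,s_k\lhd\li Jy$ forces $b_k=s_k$.
\end{itemize}
This contradicts $b_k\le a_k$. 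Hence there are no forbidden edges and (1) holds.

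\emph{(1)$\Rightarrow$(2).} Since Bruhat order is graded and $x\mapsto \li Jx^K$ takes values in a poset, it is enough to prove $\li Jx^K\le \li Jy^K$ for covers $x\lhd y$ in $[u,v]$; the general case then follows by transitivity along a maximal chain. In cases (1) and (2) of Lemma~\ref{casicover} the inequality is immediate from Proposition~\ref{projection} applied in $W_J$.

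So assume case (3) and $\li Jx^K\not\le\li Jy^K$. Since $\li Jy$ is a prefix of $\li Jx$, the argument of the first part shows that the failure must come from the outer blocks of $K$. Concretely, sorting the red entries in positions $i,\ldots,k-1$ and in positions $k+2,\ldots,j$ reverses an inequality between $\li Jx$ and $\li Jy$. I will translate this into restricted one-line notation. Then $y=x\cdot t$ for a transposition $t$ exchanging a red entry with a black entry, and the condition says that this red entry jumps across the block boundary at the $k$-th or $(k+1)$-th red entry.

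Next I compute $M(x)$ and $M(y)$ by swapping the $k$-th and $(k+1)$-th red entries of each. I then check, with the Tableau Criterion, one of two outcomes:
\begin{itemize}
\item $M(x)M(y)^{-1}$ is not a reflection, which contradicts Proposition~\ref{freccia}; or
\item $M(x)\rhd x$ and $M(y)\lhd y$, so that $(x,y)$ is a forbidden edge.
\end{itemize}
When the matching directions at $x$ and $y$ are not the convenient ones, I would first pass to a smaller interval using Lemma~\ref{si_restringe}. I would also use the companion matching $\tilde M$ to treat only the configurations in which the offending red entry sits to the left of position $k$.

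The main obstacle is this last case analysis. One has to show that the failure of the $K$-condition on a cover always produces a forbidden edge, or a non-reflection $M(x)M(y)^{-1}$, inside $[u,v]$ itself. The relevant elements must stay between $u$ and $v$, and this is where the Remark on $T(z)\le\max(T(x),T(y))$ will be needed to certify membership in the interval.
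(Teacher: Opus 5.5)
Your proof of (2)$\Rightarrow$(1) is correct and differs only mildly from the paper's. The paper handles case (3) of Lemma~\ref{casicover} by writing $\li Jx=\li Jy\,z$. It then observes two things:
\begin{itemize}
\item if $z\in W_K$, then $M(x)=\li Jy\,s\,z\,\lu Jx<\li Jy\,s\,\lu Jy=M(y)$ directly;
\item if $z\notin W_K$, then $\li Jy^K<\li Jx^K$, which contradicts (2).
\end{itemize}
You go instead through Proposition~\ref{forbid} and the $\langle s_k\rangle$-component of $W_K$. Your step $b\le a$ is most simply justified by $a=bz$ with $\ell(a)=\ell(b)+\ell(z)$, which is the prefix relation you already have.

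The direction (1)$\Rightarrow$(2) is not proved. You name Proposition~\ref{freccia} as the tool, but you leave the case analysis, which is the whole content of this direction, as an ``obstacle''. You also do not commit to which outcome actually occurs.

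Here is the missing step. Let $r_1,\dots,r_{j-i+1}$ be the red digits of $x$ from left to right, and $s=s_{i+h-1}$, so that $M(x)=(r_h,r_{h+1})x$. In case (3) one has $y=(r_l,d)x$ with $d$ black. Then $\li Jx^K\neq\li Jy^K$ exactly when the rank of $r_l$ among the red digits moves from one of the blocks $[1,h-1]$, $\{h,h+1\}$, $[h+2,j-i+1]$ to another. The middle block is sorted too, since $s\in K$. So moving into or out of $\{h,h+1\}$ is a failure mode, and your description in terms of the outer blocks omits it.

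Up to the companion matching there are three configurations:
\begin{itemize}
\item $l<h$ and $r_l$ lands in the middle block; then $M(y)=\tau y$ with $\tau=(r_l,r_{h+1})$;
\item $l=h$ and $r_h$ lands in $[1,h-1]$; then $\tau=(r_{h-1},r_{h+1})$;
\item $l\le h$ and $r_l$ lands in $[h+2,j-i+1]$; then $\tau=(r_{h+1},r_{h+2})$.
\end{itemize}
In each case $M(x)M(y)^{-1}=(r_h,r_{h+1})(r_l,d)\tau$ is a $4$-cycle, or a $3$-cycle times a disjoint transposition. It is never a reflection, which contradicts Proposition~\ref{freccia} at once.

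Consequently, the forbidden-edge alternative, the reduction via Lemma~\ref{si_restringe}, and the Tableau Criterion membership checks you anticipate are all unnecessary. The elements $x$ and $y$ already lie in $[u,v]$. Proposition~\ref{freccia} applies to every cover there, whether $M$ goes up or down at $x$ and $y$.
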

\begin{proof} 
Clearly, the second statement is equivalent to requiring $\li Jx^K\leq \li J y ^K$ for all $x,y\in [u,v]$ with $x\lhd y$. For short, let $M=\lambda_s^J$.

Let $x,y\in [u,v]$, with $x\lhd y$. If $M(x)=y$, then $\li J x =\li J y s$, hence $\li J x^K =\li J y ^K$. Therefore, it suffices to prove that, whenever $x\lhd y$ and $M(x)\neq y$, we have:
\begin{itemize}
    \item if $\li Jx^K\nleq \li J y ^K$ then $M$ does not restrict to a special matching of $[u,v]$;
    \item if $\li Jx^K\leq \li J y ^K$ then $M(x)\leq M(y)$.
\end{itemize}

Consider the three cases of Lemma~\ref{casicover}.
If $\li J x \lhd \li J y$ (and $\lu Jx=\lu Jy$),  then Lemma~\ref{projection} gives $\li J x ^K\leq \li J y^K$. Since $M(x)\neq y$, we have $\li J x \neq \li J ys$. By the Lifting Property,  $\li J x s< \li J y s$ and so $M(x)< M(y)$.

	If $\li J x= \li J y$ and $\lu J x \lhd \lu J y$, then $\li J x^K= \li J y^K$ and $M(x)\lhd M(y)$.

We may, therefore, suppose that we are in the third case of Lemma~\ref{casicover}, when 
there exists $z\in W_J\setminus \{e\}$ such that $\li Jx=\li Jyz$ with $\ell(\li Jx)=\ell(\li Jy)+\ell(z)$, and $\lu Jx<\lu Jy$ but $\lu Jx\not \!\!\lhd \lu Jy$. If $z\in W_K$, then $\li Jx^K =  \li Jy^K$ and $$M(x)= \li Jx s \lu Jx = \li Jy z s \lu Jx = \li Jy s z  \lu Jx < \li Jy s   \lu Jy =M(y)$$ 
where the third equality follows by $zs=sz$ and the inequality follows by  $z \, \lu Jx \lhd    \lu Jy $ (notice that  $z \, \lu Jx \lhd    \lu Jy $ implies  $w z \, \lu Jx<     w \lu Jy $ for all $w\in W_J$ by a repeated use of the Lifting Property). 

We may suppose $z\notin W_K$. In this case, $\li Jy^K <  \li Jx^K$, which means that the triple of sets of red digits in the three blocks considered in Remark~\ref{J_bassoK^alto} differ for $\li J x$ and $\li J y$. 

Let $h$ be such that $s=s_{i+h-1}$ and $r_1,\ldots, r_{h}, r_{h+1}, \ldots, r_{j-i+1}$ be the red digits in the order they appear in $x$. Note $M(x)=(r_{h}, r_{h+1})x$. 
There exists $l\in [j-i+1]$ and a black digit $d$ such that $y=(r_l,d)x$.

Up to passing to the companion matching, one of the following conditions applies:
\begin{enumerate}
    \item $l<h$ and $d$ sits to the right of $r_h$ and (if $h<j-i$) to the left of $r_{h+2}$;
    \item $l=h$ and $d$ sits to the left of $r_{h-1}$; 
    
    \item  $l\leq h$ and $d$ sits to the right of $r_{h+2}$; 
\end{enumerate}

In each of these cases, we apply Proposition~\ref{freccia} and deduce that $M$ is not special by showing  $M(x) M(y)^{-1}\notin T$.
In the first case, $M(y)=(r_l,r_{h+1})y$ and so $M(x) M(y)^{-1}= (r_l,r_{h+1}) (r_l,d)(r_h,r_{h+1})$, which is not a transposition.
Similarly, in the second and third case, when, respectively, $M(y)=(r_{h-1},r_{h+1})y$ and $M(y)=(r_{h+1},r_{h+2})y$.
    \end{proof}
    \begin{remark}
    \label{4.4rmk}
       The proof of the last result shows that $\lambda_{s_k}^J$ is a special matching of the interval $[u,v]$ if and only if, for every covering relation $x\lhd y$ in the interval, at least one of the following is satisfied:
    \begin{enumerate}
        \item $\li J x\lhd \li J y$;
        \item $\{(\li J x)_k,(\li Jx)_{k+1}\} =\{(\li J y)_k, (\li J y)_{k+1}\}$.
        \end{enumerate} 
    \end{remark}

We end this section by showing that any special matching of an interval of length $1$ or $2$ is of the form $\lambda_s^J$ for suitable $s$ and $J$.
From now  on, we identify a subset $J=\{s_i, \ldots, s_{j-1}\}$ of $S$
 with the subset $\{i,\ldots, j\}$ of $[n]$.

\begin{prop}
\label{lunghi1}
	Let $u, v \in W$ with $u \lhd v$. Let $a,b\in [n]$ be such that $v= (a,b)u$. Let $J$ be a subinterval of $[n]$.
    Then $v = \lambda_{s_k}^{J}(u)$ if and only if
    \begin{enumerate}
    \item $J$  contains $a$ and $b$ and does not contain any digit that appears between $a$ and $b$ in  $u$;
 \item $k=\li J u^{-1}(a)$.
 \end{enumerate}
\end{prop}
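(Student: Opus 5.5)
The plan is to work entirely with the one-line description of $\lambda_{s_k}^J$ given at the start of this section: $\lambda_{s_k}^J(u)$ is obtained from $u$ by swapping the $(k-i+1)$-th and $(k-i+2)$-th $J$-red entries, counted from left to right. Here $J=\{i,\ldots,j\}$ is the set of red entries. Since $v=(a,b)u$ as a left multiplication, $v$ is obtained from $u$ by exchanging the values $a$ and $b$ in their positions. So $v=\lambda_{s_k}^J(u)$ holds exactly when this value swap coincides with the swap of two red entries that are consecutive among the red entries. The point is to translate each side into the same condition on positions of red entries.

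For the forward direction, assume $v=\lambda_{s_k}^J(u)$. Then $\lambda_{s_k}^J(u)=(r,r')u$, where $r,r'$ are the two red entries in slots $k-i+1$ and $k-i+2$ of the $J$-one-line notation of $\li J u$. Since $\lambda_{s_k}^J(u)\neq u$, the transposition $(r,r')$ equals $(a,b)$. Hence $\{a,b\}=\{r,r'\}\subseteq J$.

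Next I use that $r$ and $r'$ are consecutive red entries in left-to-right order. Then no red digit sits strictly between $a$ and $b$ in $u$, which is condition (1).

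For condition (2), recall that the $J$-one-line notation of $\li J u$ lists the red entries of $u$ in order. So the red entry in slot $k-i+1$ is $(\li J u)_k$, whose value is $a$ or $b$. Take $a$ to be the leftmost of the two, which is the convention implicitly fixed by the statement (otherwise $k+1$ would be the inverse image). Then $(\li J u)_k=a$, i.e.\ $k=\li J u^{-1}(a)$.

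For the converse, assume (1) and (2). Since $a,b\in J$ and no red digit lies between them in $u$, they are consecutive red entries. By (2) they occupy slots $k$ and $k+1$ of $\li J u$, so $\lambda_{s_k}^J(u)=(a,b)u=v$.

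I expect no genuine obstacle, only bookkeeping. The one delicate point is the orientation convention in (2): which of $a,b$ is the left one, and the fact that $\li J u$ is indexed by positions $i,\ldots,j$ while entries of $u$ sit in arbitrary positions. I would make explicit that $\li J u$, as a permutation of $\{i,\ldots,j\}$, records the relative order of the red values of $u$, so $\li J u^{-1}(a)$ is $i-1$ plus the rank of $a$'s position among red positions.
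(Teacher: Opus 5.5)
Your proposal is correct and is essentially the paper's argument: the paper simply says the statement follows from the opening description of $\lambda_{s_k}^J$ as swapping the $(k-i+1)$-th and $(k-i+2)$-th red entries, which is exactly what you spell out. Your handling of condition (2) is also right. It requires $a$ to be the left one of the two entries, and since $u\lhd v$ this is the same as requiring $a<b$, which is the implicit convention of the statement.
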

\begin{proof}
Follows by the analysis at the very beginning of this section. 
\end{proof} 
We illustrate Proposition~\ref{lunghi1} with an example. Consider the Coxeter group of type $A_7$ and the two elements $u=[7,5,2,3,1,8,6,4]$ and $v=[7,5,2,6,1,8,3,4]$, which satisfy $v= (3,6)u$. We let $J=\{3,4,5,6\}$. Note that 3 and 6 occupy positions 2 and 3 among the entries in $\{3,4,5,6\}$. Note also $\li J u=[1,2,5,3,6,4,7,8]$, $\lu J u=[7,3,2,4,1,8,5,6]$, and $v = \lambda_{s_k}^{J}(u)$ for $k=4$. Another possible choice could have been $J'=\{2,3,4,5,6,7\}$ and $k=3$.

In order to achieve our goal to prove that special matchings of all intervals in Coxeter groups of type $A$ are of the form $\lambda_s^J$, we need to show the existence of a common choice for $J$ and $k$ in Proposition~\ref{lunghi1} throughout the interval. The following proposition settles the case of intervals of length 2.
\begin{prop}
	Let $[u,v]$ be an interval of length 2 and $M$ be a special matching of $[u,v]$. Then there exist $J$ and $k$ such that $M(x)= \lambda_{s_k}^{J}(x)$ for all $x\in [u,v]$.
\end{prop}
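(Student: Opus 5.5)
Since $\ell(u,v)=2$, the interval $[u,v]$ has exactly two atoms $a_1,a_2$, and its Hasse diagram is a square. Every matching of $[u,v]$ pairs $u$ with one atom, say $M(u)=a_1$, and $v$ with the other, $M(v)=a_2$. Conversely, every such matching is special, which is a trivial check. Since $\lambda_{s_k}^J$ is an involution of $W$, it therefore suffices to find a single subinterval $J$ and a single $k$ with $\lambda_{s_k}^J(u)=a_1$ and $\lambda_{s_k}^J(a_2)=v$. By Proposition~\ref{lunghi1} this means the following. Write $a_1=(a,b)u$ and $v=(c,d)a_2$ with $u\lhd a_1$ and $a_2\lhd v$. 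Then we need:
\begin{itemize}
\item $J$ contains $a,b,c,d$;
\item $J$ contains no value lying positionally between $a$ and $b$ in $u$;
\item $J$ contains no value lying positionally between $c$ and $d$ in $a_2$;
\item the index of $a$ among the $J$-entries of $u$ equals the index of $c$ among the $J$-entries of $a_2$.
\end{itemize}
Here the labels are chosen so that $a$ and $c$ are the leftmost of the swapped pair.

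I would organise the argument through Lemma~\ref{quadrati}, applied to the two paths $u\to a_1\to v$ and $u\to a_2\to v$ with $t_1=(a,b)$ and $r_2=(c,d)$.

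\emph{Commuting case.} Here $r_2=t_1$, so $(c,d)=(a,b)$, and $t_2=r_1$ is disjoint from $\{a,b\}$. So $a_2=t_2u$ and $v=(a,b)a_2$. The natural candidate is $J=[\min(a,b),\max(a,b)]$. Because $u\lhd (a,b)u$, no value strictly between $a$ and $b$ sits positionally between them in $u$. The same holds in $a_2$, because $a_2\lhd (a,b)a_2=v$. So the no-between conditions hold for this $J$. For the index condition: the $J$-entries of $a_2$ are those of $u$ with the values of $t_2$ swapped. If both values of $t_2$ lie in $J$, or both lie outside $J$, then the index of $a$ is unchanged. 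The delicate subcase is when exactly one value of $t_2$ is in $J$ and the positions of $t_2$ straddle $a$ and $b$. Here I would show that this configuration is impossible, using the cover conditions $u\lhd t_2u$ and $a_2\lhd v$ together with the fact that $[u,v]$ has only two atoms. Failing that, I would enlarge $J$ to an interval containing both values of $t_2$ and recheck the no-between conditions via the Tableau Criterion.

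\emph{Non-commuting case.} Lemma~\ref{quadrati}(2) gives, after relabelling, $t_1=r_2=(a,b)$, $r_1=(b,c)$ and $t_2=(a,c)$, with $c$ outside the range of $a,b$. Then $a_2=(a,c)u$ and $v=(a,b)a_2$, and again the same transposition $(a,b)$ matches on both sides. The same choice $J=[\min(a,b),\max(a,b)]$ should work here. Since $c\notin J$, the $J$-entries of $a_2$ are obtained from those of $u$ by replacing $a$ with nothing at $a$'s position and inserting $a$ at $c$'s old position. The index condition then reduces to showing that no $J$-entry lies positionally between the old and new positions of $a$. I would derive this from the cover $u\lhd (a,c)u$, which forbids values between $a$ and $c$ in that positional range, all of $J\setminus\{a\}$ being such values.

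Finally, I would handle the other choice of $M(u)$ by symmetry between $a_1$ and $a_2$, and use the companion matching to reduce left/right orientation cases. The main obstacle I expect is the index-consistency condition on $k$ in the commuting case when $t_2$ has exactly one value inside $J$. This is where a careful positional case check, or an enlargement of $J$, is needed.
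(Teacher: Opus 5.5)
Your reduction via Proposition~\ref{lunghi1} is sound: it suffices to find one pair $(J,k)$ with $\lambda_{s_k}^J(u)=M(u)$ and $\lambda_{s_k}^J(M(v))=v$. The commuting case also works with $J=[\min(a,b),\max(a,b)]$, because the delicate subcase is impossible. Write $t_2=(e,f)$ with, say, $e\in(a,b)$ and $f\notin[a,b]$, and suppose $e$ crosses the position of $a$. If $f<a$, then $a$ itself violates $u\lhd t_2u$. If $f>b$, then $u\lhd t_2u$ keeps $b$ outside the positions of $e,f$, so $e$ lands strictly between $a$ and $b$ in $a_2$, contradicting $a_2\lhd v$.

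The genuine gap is in the non-commuting case. You write ``after relabelling, $t_1=r_2$'', but your labelling was already fixed by $a_1=M(u)=t_1u$. The ``without loss of generality'' in Lemma~\ref{quadrati}(2) is exactly the freedom of choosing which path is the first, and you have already spent it. In a non-commuting square only one of the two matchings uses the shared transposition on both edges. In the normalized labels $t_1=r_2=(a,b)$, $r_1=(b,c)$, $t_2=(a,c)$, the other special matching pairs $u$ with $t_2u=(a,c)u$ and $t_1u$ with $v=(b,c)\,t_1u$. Its two matched edges carry different transpositions, so $J=[\min(a,b),\max(a,b)]$ cannot work; it does not even contain $c$. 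Your closing appeal to the ``symmetry between $a_1$ and $a_2$'' does not reach this matching, since that symmetry is the one you used for the normalization. The companion map does not help either, because it preserves the property that both matched edges use the same transposition.

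Concretely, take $S_3$ with $u=123$ and $v=312$, and the matching $123\leftrightarrow213$, $132\leftrightarrow312$. It uses $(1,2)$ at the bottom and $(1,3)$ at the top. Your recipe gives $J=\{1,2\}$, i.e.\ $\lambda_{s_1}$, which sends $132$ to $231\notin[u,v]$. The correct choice is $J=\{1,2,3\}$, $k=1$; this matching is $\rho_{s_1}$.

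This is exactly where the paper's proof uses a larger $J$. For $u=[\di a\di b\di c\di]$ with $a<b<c$, $M(u)=(a,b)u$ and $v=(a,c)M(v)$, it takes $J=\{a,\ldots,c\}$. For $a<c<b$ with $M(u)=(a,b)u$ and $v=(b,c)M(v)$, it takes $J=\{a,\ldots,b\}$.

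To repair your argument, take $J=[\min(a,b,c),\max(a,b,c)]$ in this subcase.
\begin{itemize}
\item In your normalized labels, both swaps act on the positions that $a$ and $c$ occupy in $u$.
\item Since only $a,b,c$ are permuted, the $J$-entries sit in the same positions in $u$ and in $t_1u$, so the index condition is automatic.
\item The no-between conditions must then be extracted from the four covers. In the paper's case $a<b<c$ above, values of $(a,b)$ are excluded by $u\lhd M(u)$, values of $(b,c)$ by $M(u)\lhd v$, and in $M(v)$ values of $(a,c)$ by $M(v)\lhd v$.
\end{itemize}

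There is also a smaller slip in your same-transposition subcase. The claim that all of $J\setminus\{a\}$ lies between $a$ and $c$ fails when $c<a<b$ or $b<a<c$. There, the absence of $J$-entries between the old and new positions of $a$ comes instead from the top cover $a_2\lhd(a,b)a_2=v$.
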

\begin{proof}
By Lemma~\ref{quadrati}, the four permutations $u,M(u),M(v),v$ agree everywhere but in either $4$ or $3$ positions. 

In the former case,  there exist distinct $a,b,c,d$, with $a<b$, such that
$M(u)=(a,b) u$, $M(v)=(c,d)u$, $v=(a,b)M(v)$. Setting $J=\{a,\ldots,b\}$ and $k=\li J u^{-1}(a)$, we get the assertion.

In the latter case, we call $a,b,c$ the three elements in the three positions where they may differ. We may suppose $u= [\di a \di b \di c \di]$ and, up to considering the companion matching, that either $M(u)=(a,b)u$ or $M(u)=(a,c)u$. Notice that $a\neq \max\{a,b,c\}$ and $c\neq \min\{a,b,c\}$.

If $a<b<c$, then $M(u)=(a,b) u$ and $M(v)=(b,c)u$ because $(a,c)u\not\rhd u$. We have either $M(v)=(a,c)v$, when we let $J=\{a,\ldots,c\}$,  or $M(v)=(a,b)v$, when we let $J=\{a,\ldots,b\}$.
In both cases, we let $k=\li J u^{-1}(a)$, and we get the assertion.

If $a<c<b$, then either $M(u)=(a,b) u$, $M(v)=(a,c)u$, $M(v)=(b,c)v$, 
or
$M(u)=(a,c) u$, $M(v)=(a,b)u$, $M(v)=(a,c)v$. In the former case, we let $J=\{a,\ldots,b\}$. In the latter case, we let  $J=\{a,\ldots,c\}$. In both cases, we let $k=\li J u^{-1}(a)$, and we get the assertion.

If $b<a<c$, then $M(u)=(a,c) u$, $M(v)=(b,c)u$, $M(v)=(a,c)v$.
We let $J=\{a,\ldots,c\}$ and $k=\li J u^{-1}(a)$, and we get the assertion.
\end{proof}

\section{Structural properties of special matchings}
\label{struttura}
In this section, we fix a Bruhat interval $[u,v]$ in the Coxeter group of type $A_{n-1}$, and a special matching $M$ of $[u,v]$.
\begin{lem}
 \label{abovunque}
 Let $a,b \in [n]$ be such that $M(x)= (a,b)  x$ for each atom $x$ of $[u,v]$. Then $M(x)= (a,b)  x$ for each element $x$ of $[u,v]$. 
\end{lem}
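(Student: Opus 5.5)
We argue by induction on $\ell(u,x)$, showing that $M(x)=(a,b)x$ for every $x\in[u,v]$. For $x$ an atom this is the hypothesis. For $x=u$ we first show $M(u)=(a,b)u$. Since $[u,v]$ has at least two atoms when $\ell(u,v)\ge 2$, pick an atom $x\neq M(u)$. Then $M(x)\rhd x$: otherwise $M(x)=u$, i.e.\ $x=M(u)$, which is excluded. Thus $M(u)\lhd M(x)$ by speciality. Now $M(x)=(a,b)x$, and Proposition~\ref{freccia} applied to $u\lhd x$ gives $M(u)=tM(x)$ for some reflection $t$. We want to force $t=(a,b)x u^{-1}(a,b)$, i.e.\ $M(u)=(a,b)u$. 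Every atom other than $M(u)$ is matched upward by the left transposition $(a,b)$, so $(a,b)x\rhd x$ for all such atoms. If $M(u)\neq(a,b)u$, we would compare the atom $M(u)$ with the atoms $x$: both $u\lhd x\lhd (a,b)x$ and $u\lhd M(u)\lhd(a,b)x$ hold. Using Lemma~\ref{quadrati} on these length-two intervals and varying $x$, we aim for a contradiction. The case $\ell(u,v)=1$ is trivial, and $\ell(u,v)=2$ follows from the previous proposition's analysis.

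For the inductive step, take $y\in[u,v]$ with $\ell(u,y)\ge 2$ and assume $M(z)=(a,b)z$ for all $z<y$ in $[u,v]$. Suppose first that $y$ covers some $z$ with $M(z)\rhd z$ and $M(z)\neq y$. Speciality gives $M(z)\le M(y)$, and $M(z)=(a,b)z$. Since $\ell(M(y))=\ell(y)\pm1$ and $\ell(M(z))=\ell(y)$, we must have $M(y)\rhd y$, and so $M(z)\lhd M(y)$. If $M(y)\rhd y$, the interval $[z,M(y)]$ has length $2$ and contains $y$, $(a,b)z$ and $M(y)$. Lemma~\ref{quadrati} then describes its two maximal chains; one is $z\to(a,b)z\to M(y)$ and the other is $z\to y\to M(y)$. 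If $y=(c,d)z$ with $\{c,d\}\cap\{a,b\}=\emptyset$ (commuting case), then $M(y)=(a,b)(c,d)z=(a,b)y$. The non-commuting case needs more care, because Lemma~\ref{quadrati}(2) allows $M(y)\neq(a,b)y$. Here we would use a second lower cover $z'$ of $y$ (which exists since $\ell(u,y)\ge2$ and $[u,y]$ has at least two coatoms) to pin down $M(y)$ uniquely as $(a,b)y$.

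We must also rule out $M(y)\lhd y$. Suppose every lower cover $z$ of $y$ has $M(z)\rhd z$. Then there are at least two of them, and if $M(y)\lhd y$ the element $M(y)$ is one of them. Any other lower cover $z$ satisfies $M(z)\neq y$, so the previous paragraph applies and yields $M(y)\rhd y$, a contradiction. Lower covers $z$ with $M(z)\lhd z$ must be handled too: then $M(z)=(a,b)z\lhd z\lhd y$. Since the matching by $(a,b)$ is upward on $u$, we can descend to some element below $z$ that is matched upward. We would use Lemma~\ref{lpfsm}(3) and Lemma~\ref{si_restringe} to restrict $M$ to $[M(z),y]$ or a similar interval where the base case applies.

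The main obstacle is the non-commuting case of Lemma~\ref{quadrati}. There a length-two interval containing $z$, $y=(c,d)z$ and $(a,b)z$ with $|\{a,b\}\cap\{c,d\}|=1$ has top element $(a,b)y$ only in specific configurations of the values. We plan to settle this using the tableau criterion and by exploiting two distinct lower covers of $y$, both matched upward by $(a,b)$. Their images must both lie below $M(y)$, and we expect this to force $M(y)=(a,b)y$.
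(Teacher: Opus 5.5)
Your proposal is a plan rather than a proof, and it stops at the decisive step. Take the case $y\lhd M(y)$, with a lower cover $z$ of $y$ such that $M(z)=(a,b)z\rhd z$ and $M(z)\neq y$. You settle the commuting configuration, but in the non-commuting one you only expect a second lower cover, perhaps with the Tableau Criterion, to force $M(y)=(a,b)y$.

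The missing observation is that Lemma~\ref{quadrati} already determines $z$ from $M(y)$. If $M(y)\neq(a,b)y$, the two chains $z\lhd y\lhd M(y)$ and $z\lhd (a,b)z\lhd M(y)$ form a non-commuting square whose common label is that of $z\lhd y$. So, up to exchanging $a$ and $b$, $M(y)=(b,c)y$ with $c\neq a$, and necessarily $z=(a,c)y$. Hence at most one such lower cover can exist, and two distinct ones give the contradiction.

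You also do not secure two such lower covers. That $[u,y]$ has at least two coatoms is not enough: a coatom of $[u,y]$ may be matched downward (for instance $M(u)$, when it lies below $y$ and $\ell(u,y)=2$), and then it says nothing about $M(y)$. The paper gets them by restricting $M$ to $[u,M(y)]$ via Lemma~\ref{si_restringe}. Every coatom $w\neq y$ of $[u,M(y)]$ satisfies $M(w)\lhd w$ and $M(w)\lhd y$, and $w=(a,b)M(w)$ by induction, so $M(w)$ is a lower cover of the required kind. Two of them exist as soon as $[u,M(y)]$ is not dihedral, and the dihedral case is checked directly. Nothing in your outline plays the role of this dichotomy.

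Two further points remove most of your case analysis. First, the step $M(u)=(a,b)u$, which you leave unfinished, is immediate: $M(u)$ is itself an atom, so the hypothesis gives $u=M(M(u))=(a,b)M(u)$. Second, there is nothing to rule out when $M(y)\lhd y$, which occurs for many $y$. Then $M(y)<y$, and the induction hypothesis applied to $M(y)$ gives $y=(a,b)M(y)$ at once. So the plan to descend below downward-matched lower covers and restrict to $[M(z),y]$ is unnecessary. The only genuine case is $y\lhd M(y)$, and there the argument above is what is needed.
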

\begin{proof}
Note that the hypotheses imply $M(u)= (a,b)  u$. 
Let $x \in [u,v]$. We proceed by induction on $\ell(u,x)$. If  $\ell(u,x) \leq 1 $, the result is true by the hypotheses. 
Let $\ell(u,x) > 1 $. We may suppose $x\lhd M(x)$. Thus, $M$ restricts to a special matching of $[u,M(x)]$. 

If $[u,M(x)]$ is a dihedral interval, then the assertion can be easily verified.
Suppose $[u,M(x)]$ is not a dihedral interval. Then there exist $y_i$ in $[u,M(x)]$, for $i\in \{1,2\}$,  with $y_i \lhd M(x)$, $y_i\neq x$. For $i\in\{1,2\}$, we have $M(y_i)\lhd y_i$ and $M(y_i)\lhd x$ by the Lifting Property, and $M(y_i)= (a,b) y_i$ by the induction hypothesis
(see Figure~\ref{FigLemmaabovunque}; as in all other figures in this work, edge labels are written without parentheses or commas)

\begin{figure}[H]
\centering
    \begin{tikzpicture}
    \draw (-2,2) node[ thick,draw=black, inner sep=0,outer sep=0.5mm, minimum size=2mm, circle](y) {};
    \draw (2,2) node[ thick,draw=black, inner sep=0,outer sep=0.5mm, minimum size=2mm, circle](x) {};
    \draw (-2,4) node[ thick,draw=black, inner sep=0,outer sep=0.5mm, minimum size=2mm, circle](My) {};
    \draw (0,4) node[ thick,draw=black, inner sep=0,outer sep=0.5mm, minimum size=2mm, circle](x') {};
    \draw (2,4) node[ thick,draw=black, inner sep=0,outer sep=0.5mm, minimum size=2mm, circle](Mx) {};
    \draw (0,6) node[ thick,draw=black, inner sep=0,outer sep=0.5mm, minimum size=2mm, circle](Mx') {};
 \draw[line width=1mm, color=red] (x)--(Mx);
 \draw[line width=1mm, color=red] (y)--(My);
 \draw[line width=1mm, color=red] (x')--(Mx');
 \draw[thick] (x)--(x');
 \draw[thick] (y)--(x');
 \draw[thick] (Mx)--(Mx');
 \draw[thick] (My)--(Mx');
 \node at (2.3,3.8) {$y_2$};
 \node at (-2.3,3.8) {$y_1$};
 \node at (0.3,4.1) {$x$};
 
 \node at (0.3,5) {$bc$};
 
 \node at (1,2.6) {$a c$};
 \node at (-1,2.6) {$a c$};
 \node at (2.3,3) {$ab$};
 \node at (-2.4,3) {$a b$};
 \end{tikzpicture}
    \caption{Proof of Lemma \ref{abovunque}}
    \label{FigLemmaabovunque}
\end{figure} 

If $M(x)\neq (a,b)x$ then, without loss of generality, we can assume $M(x)=(b,c)x$, for some $c\neq a$. By Lemma~\ref{quadrati}, it follows  $M(y_i)=(a,c)x$ for $i\in \{1,2\}$, which is clearly a contradiction. 
\end{proof}
 
 \begin{defn}
We let $i(M)$ be the minimum index $i$ such that there exist $z\in [u,v]$ and $l\in [n]$ such that $M(z)=(i,l)z$. We say that $i(M)$ is the smallest index \emph{moved} by $M$. Similarly, we define $j(M)$ to be the greatest index moved by $M$. We also let 
 \begin{itemize}
     \item $J(M)=\{i(M),i(M)+1,\ldots,j(M)-1,j(M)\}$,
     \item $s(M)=s_{i(M)+h-1}$,
 \end{itemize}
where $h$ is the number of digits in $J(M)$ to the left of $b$ in $u$, and $M(u)=(a,b)u$ with $a<b$.
 
 \end{defn}

 Recall that, given $J\subseteq [n]$, we refer to the numbers in $J$ and to the numbers in $[n]\setminus J$ as, respectively, red and black digits. In this section, having fixed a special matching $M$, the concept of red and black digits refers to the set $J(M)$.
 \begin{prop}\label{ubase}
    Let $a,b \in [n]$ be such that $M(u) = (a,b)u$ and $a<b$.
    Then there are no red digits between $a$ and $b$ in $u$.
    In particular, $$M(u)=\lambda_{s}^{J}(u),$$
    where $J=J(M)$ and $s=s(M)$.
\end{prop}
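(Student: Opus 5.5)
The plan is to argue by contradiction, using what the covering $u\lhd M(u)=(a,b)u$ already forces. Since $M(u)\rhd u$ and $a<b$, the digit $a$ sits to the left of $b$ in $u$, and no digit with value strictly between $a$ and $b$ sits between them in position. So if some red digit $c\in J(M)$ sits between $a$ and $b$ in $u$, then either $c<a$ or $c>b$. Passing to the companion matching $\tilde M$ (which reverses values and positions, and sends $J(M)$ to $w_0J(M)w_0$) exchanges these two cases. So we may assume $c<a$, which gives $i(M)\le c<a$. The "in particular" part is then immediate. Once no red digit lies between $a$ and $b$ in $u$, Proposition~\ref{lunghi1} with $J=J(M)$ gives $M(u)=\lambda^{J}_{s_k}(u)$ with $k=\li J u^{-1}(a)$. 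Unwinding the definition of $s(M)$ (the number $h$ of red digits to the left of $b$ in $u$, which is one more than the number to the left of $a$) shows $s_k=s(M)$.

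For the main claim, choose $z\in[u,v]$ of minimal length such that $M(z)$ moves a digit $d\le c$, say $M(z)=(d,l)z$. Note $z\neq u$, since $M(u)$ moves $a$ and $b$, both larger than $c$. If $M(z)\lhd z$, replace $z$ by $M(z)$; it has the same moved pair, so we may assume $z\lhd M(z)$. Pick a coatom $y$ of $[u,z]$, that is, $u\le y\lhd z$. By minimality, $M(y)=(p,q)y$ with $p,q>c$.

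Now apply Proposition~\ref{freccia} to $y\lhd z$: there is a reflection $t$ with $M(y)=tM(z)$. Equivalently, the four elements $y,z,M(y),M(z)$ form a length-$2$ configuration (a square, or a $2$-crown in the bad case) whose edges are labelled by transpositions. Lemma~\ref{quadrati} then says the labels around this square share digits: either opposite labels coincide, or three digits $\{x_1,x_2,x_3\}$ appear with the prescribed extremal condition. Since $(d,l)$ moves $d\le c$ while $(p,q)$ moves only digits larger than $c$, we cannot be in case (1). In case (2), the digit $d$ must appear in the label of $y\lhd z$ or of $M(y)\lhd M(z)$, and the extremal condition $c<\min$ or $c>\max$ of Lemma~\ref{quadrati} pins down the relative values. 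Iterating this step down a maximal chain from $z$ to $u$, I expect to show the following: the digit $d$ and the pair moved at each level must eventually interlock with $\{a,b\}$ at $u$. Specifically, $d$ would have to be moved together with $a$ or $b$ in a square at the bottom, with $d<a$.

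The final step, which I expect to be the main obstacle, is to rule out this bottom configuration using the position of $c$. The aim is to show that the required element, obtained from $u$ or $M(u)$ by a transposition involving $d$ and one of $a,b$, either fails to cover the right element or violates Lemma~\ref{quadrati}(2). The reason it should fail is that $c<a$ sits between $a$ and $b$: a transposition of $d\le c$ with $a$ or $b$ across $c$ is then not a cover. To organize this I would use the Tableau Criterion with restricted tableaux on the digits $d,c,a,b$, and the Remark that $T(z)\le\max(T(x),T(y))$ implies $z\le v$. If the chain argument becomes unwieldy, the fallback is to do the induction directly on $\ell(u,z)$, with the inductive statement "every $M(z)$ moves only digits that are not smaller than every red digit between $a$ and $b$", and to verify the inductive step by the same square analysis.
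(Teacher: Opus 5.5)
\textbf{There is a genuine gap: the main claim is not proved.} Your reduction to a red digit $c<a$ via the companion matching is fine, and so is your derivation of the ``in particular'' part from Proposition~\ref{lunghi1}. But the step that should produce the contradiction is only announced (``I expect to show''), and the mechanism you sketch cannot supply it.

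At the minimal $z$, your square analysis (when $M(y)\rhd y$) yields only $z=(\alpha,d)y$, $M(y)=(\alpha,l)y$, $M(z)=(d,l)z$ with $\alpha,l>c$. This local picture is perfectly legitimate. It is exactly what happens at $u\lhd (a,a_1)u$ along a left basic chain (Proposition~\ref{chains_ij}), so no contradiction can come from it alone.

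By minimality, $M$ moves $d$ nowhere below $z$. So applying Lemma~\ref{quadrati} to lower squares tells you nothing about $d$, and ``iterating down a maximal chain'' carries no information from $z$ to $u$, where $c$ sits. Nor is there any reason for $d$ to end up swapped with $a$ or $b$: along a basic chain a small digit $a_m$ is reached through intermediate digits $a_1,\ldots,a_{m-1}$ and is never swapped with $a$.

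Your heuristic for the last step is also off. The digit $d$ may lie to the left of $a$, and then $(d,a)$ crosses nothing. The real obstruction is that the \emph{subsequent} matched pair $(d,b)$ would have to cross $c$. To get there you must control how the window of positions strictly between $a$ and $b$ in $u$ is inherited further up, and your argument never does this. (Moreover, if $M(y)\lhd y$ then $[M(y),M(z)]$ has length $3$, so Lemma~\ref{quadrati} does not apply there.)

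The missing idea is to move the bottom of the interval upward rather than descend from $z$. Prove by induction on $\ell(u,v)$ the stronger statement $u([u^{-1}(a),u^{-1}(b)])\cap[i(M),a-1]=\emptyset$; the companion matching handles the other side. The steps are as follows.

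First, if some $y\notin\{v,M(v)\}$ has $M(y)$ moving $i(M)$, restrict $M$ to $[u,\max\{y,M(y)\}]$. Otherwise $M(v)$ moves $i(M)$, so $i(M)$ is unchanged on every $[x,v]$.

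Second, if every atom $x$ satisfies $M(x)=(a,b)x$, Lemma~\ref{abovunque} gives $J(M)=\{a,\ldots,b\}$ and you are done.

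Otherwise, take an atom $x=(p,f)u$ with $f\in\{a,b\}$ and $M(x)=(p,\bar f)x$, where $\{f,\bar f\}=\{a,b\}$. In each of the cases $a<b<p$, $p<a<b$, $a<p<b$, the covers $u\lhd x\lhd M(x)$ force the pair moved at $x$ to occupy exactly the positions $u^{-1}(a),u^{-1}(b)$, with the same entries strictly in between. The restriction of $M$ to $[x,v]$ is special by Lemma~\ref{si_restringe}. The induction hypothesis on $[x,v]$, together with the cover $x\lhd M(x)$ when $p<a$, then gives the claim for $u$.
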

\begin{proof}
Note that, since $u\lhd M(u)$, each entry that in the one-line notation of $u$ sits between $a$ and $b$ (i.e., each entry in the set $u([u^{-1}(a)+1,u^{-1}(b)-1])$) is smaller than $a$ or greater than $b$.  The assertion is equivalent to the stronger fact that each such entry is smaller than $i$ or greater than $j$.

We use induction on $\ell(u,v)$ to show  

\begin{equation*}
\label{***}
    u([u^{-1}(a),u^{-1}(b)]) \cap [i,a-1] = \emptyset.
\end{equation*} 
This is enough to prove the proposition since it implies  $u([u^{-1}(a),u^{-1}(b)]) \cap [b+1,j] = \emptyset$ by considering the companion matching and we already know  $u([u^{-1}(a),u^{-1}(b)]) \cap [a,b] = \emptyset$.

If $\ell(u,v) \leq 1$, then the assertion is trivial.

Suppose $\ell(u,v) > 1$. If $M(x)= (a,b)  x$ for each atom $x$ of $[u,v]$, then $M(x)= (a,b)  x$ for each element $x$ of $[u,v]$ by Lemma~\ref{abovunque}: hence $i=a$, $j=b$ and the assertion holds not only for $u$ but for each $x$ in $[u,v]$. 

We may suppose that there exists an atom $z$ with $M(z)\neq (a,b)z$. By Lemma \ref{quadrati}, we have 
\begin{itemize}
    \item \label{ao}
$z = (c,d)u$, for some $c\notin\{a,b\}$ and $d\in\{a,b\}$,

   \item \label{ei}
$M(z) = (c,\bar{d})z$, where $\bar{d}\in\{a,b\}\setminus\{d\}$. 

\end{itemize}

If there exists $y$ in $[u,v]\setminus\{v, M(v)\}$ such that $M(y)y^{-1}(i) \neq i$, then we conclude by the induction hypothesis by considering the restriction of $M$ on $[u,\max\{y,M(y)\}]$.  We may suppose that such $y$ does not exist and, hence, $M(v)v^{-1}(i)\neq i$.

We use the induction hypothesis on the restriction of $M$ on $[z,v]$. 
Since $a <b$, either $a<b<c$, or $c<a<b$, or $a<c<b$.

\emph{Case $a<b<c$.} 
In this case, $u^{-1}(c)> u^{-1}(b)$ and $(d,\bar{d})=(b, a)$ since $u\lhd z\lhd M(z)$. By the induction hypothesis, $z([z^{-1}(a),z^{-1}(c)]) \cap [i,a-1] = \emptyset$. Since $z^{-1}(a)= u^{-1}(a)$, $z^{-1}(c)=u^{-1}(b)$, and $z(m)=u(m)$ for all $m\in (z^{-1}(a),z^{-1}(c))$, the assertion is proved.

\emph{Case $c<a<b$.} 
In this case, $u^{-1}(c)< u^{-1}(a)$ and $(d,\bar{d})=(a, b)$ since $u\lhd z\lhd M(z)$.
By the induction hypothesis, $z([z^{-1}(c),z^{-1}(b)]) \cap [i,c-1] = \emptyset$. Furthermore, $z([z^{-1}(c),z^{-1}(b)]) \cap [c,b] = \emptyset$ since $z\lhd M(z)$.
Since $z^{-1}(c)= u^{-1}(a)$, $z^{-1}(b)=u^{-1}(b)$, and $z(m)=u(m)$ for all $m\in (z^{-1}(c),z^{-1}(b))$, the assertion is proved.

\emph{Case $a<c<b$.} 
In this case, $u^{-1}(c) \notin [u^{-1}(a), u^{-1}(b)]$ since $u\lhd M(u)$. If $u^{-1}(c) < u^{-1}(a)$, then  $(d,\bar{d})=(b, a)$ since $u\lhd z\lhd M(z)$ and we may conclude as in the case $a<b<c$.
If $u^{-1}(c) > u^{-1}(b)$,  then  $(d,\bar{d})=(a, b)$ since $u\lhd z\lhd M(z)$ and we may conclude as in the case $c<a<b$.
\end{proof}

Let $x,y\in [u,v]$, with $x\lhd y$ and $y\neq M(x)\rhd x$. Let $M(x)=t_1x$ and $y=t_2x$, with $t_1t_2\neq t_2 t_1$. It is useful to assign a type to such pair $(x,y)$  depending on the local configuration. As established by the next result, Figure~\ref{FigAtoms} (whose notation is motivated by the following sections) shows all possible types, that we call \textnormal{(t1)--(t4)},\textnormal{(t1$^*$)--(t4$^*$)}  (where \textnormal{(tr$^*$)} denotes the companion of \textnormal{(tr)}).
\begin{lem}\label{atoms}
    Let $x,y \in [u,v]$ be such that $x\lhd y=(f,g)x$ and $x\lhd M(x)=(a,b)x$, with $a<b$ . Assume $|\{a,b\}\cap \{f,g\}|=1$. Then the pair $(x,y)$ falls in one of the types \textnormal{(t1)--(t4)} or \textnormal{(t1$^*$)--(t4$^*$)}.
\end{lem}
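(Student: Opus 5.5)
The proof will be a finite case enumeration on the relative positions and values of three entries of $x$, reduced by the companion symmetry; no induction is needed. Since $|\{a,b\}\cap\{f,g\}|=1$, I write $\{f,g\}=\{c,d\}$ with $d\in\{a,b\}$ and $c\notin\{a,b\}$. Then $x$, $M(x)$ and $y$ agree outside the three positions $x^{-1}(a),x^{-1}(b),x^{-1}(c)$. So the local configuration is determined by two things: the relative order of the values $a,b,c$, and the order in which these three values occur in the one-line notation of $x$. I will therefore work throughout with the restricted one-line notation of $x$ on these three entries, e.g. $x=[\di a\di b\di c\di]$.

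The first step is to use the type $A$ covering criterion. For $p<q$, we have $x\lhd (p,q)x$ if and only if $p$ sits to the left of $q$ in $x$ and no entry with value in $(p,q)$ sits between them. Applied to $x\lhd M(x)=(a,b)x$ with $a<b$, this says that $a$ sits to the left of $b$, with no value of $(a,b)$ between them. That leaves three positions for $c$: to the left of $a$, between $a$ and $b$, or to the right of $b$. It also leaves three value regimes: $c<a$, $a<c<b$, or $c>b$. Together with the two choices of $d$, this gives eighteen a priori configurations.

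The second step is to discard the configurations that violate one of the two covering conditions. For instance, $a<c<b$ with $c$ between $a$ and $b$ contradicts $x\lhd M(x)$. Likewise, $y=(c,d)x$ covers $x$ only if the smaller of $c,d$ sits to the left of the larger and no entry with value strictly between them lies between them. This eliminates, for example, $c<a$, $d=a$ with $c$ to the right of $a$. In several survivors I also need to check that the third entry does not spoil the second cover. For instance, if $d=b$ and $a$ lies between $c$ and $b$, then $a$ must not lie strictly between $c$ and $b$ in value.

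The third step is to pair up the survivors under the companion map $x\mapsto w_0xw_0$. This map reverses positions and replaces each value $p$ by $n+1-p$, so it preserves the pattern "$a<b$ with $a$ left of $b$" while exchanging $c<a$ with $c>b$ and $d=a$ with $d=b$. After this identification, the remaining configurations should correspond exactly to (t1)--(t4), and their images under the companion map to (t1$^*$)--(t4$^*$). Finally, I will check that each surviving pattern does occur, so that the list is sharp.

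The main obstacle I expect is bookkeeping rather than an idea. The point to be careful about is the cover condition for $y$ when the remaining entry of $\{a,b\}$ lies between $c$ and $d$ in position. There, the relative value of that entry decides whether $x\lhd y$ holds, and I must make sure no case is silently dropped or double counted. To control this, I will organize the enumeration first by $d\in\{a,b\}$ and then by the position of $c$, tabulating the constraint on the value of $c$ in each of the six position/$d$ cells.
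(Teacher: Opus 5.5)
\textbf{Gap: your plan never determines $M(y)$.} Each type in Figure~\ref{FigAtoms} is not just a pattern of $(x,y)$. It also prescribes $M(y)$ (the label on the edge from $y$ to $M(y)$), with $y\lhd M(y)$ and $M(x)\lhd M(y)$. Your enumeration uses only the two covers $x\lhd y$ and $x\lhd M(x)$, and never uses that $M$ is special. So it can at best produce the six admissible $(x,y)$-patterns, i.e.\ three companion pairs. These cannot ``correspond exactly'' to four types per side. Indeed, (t1) and (t3) have the identical pattern $x=[\di p\di a\di b\di]$, $p<a<b$, $y=(p,a)x$, and differ only in whether $M(y)=(p,b)y$ or $M(y)=(a,b)y$. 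More importantly, the substantive claims are never proved: that $M(y)=(a,b)y$ in (t4), and that $M(y)=(\alpha,b)y$ in (t2). This is exactly how the lemma is used later, e.g.\ in Proposition~\ref{chains_ij} (``$M(x)=(a,f)x$ with $f\neq b$, i.e.\ type (t1$^*$) or (t2$^*$)'').

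\textbf{The missing step.} Write $y=(c,d)x$ with $d\in\{a,b\}$, and let $\bar d$ be the other element of $\{a,b\}$.
\begin{enumerate}
\item Since $x\lhd y\neq M(x)\rhd x$ and $M$ is special, $(x,y)$ is not a forbidden edge. Hence $M(x)\le M(y)$, which forces $y\lhd M(y)$ and $M(x)\lhd M(y)$. So $[x,M(y)]=\{x,y,M(x),M(y)\}$.
\item The reflections $(c,d)$ and $(a,b)$ do not commute, so Lemma~\ref{quadrati} applied to $[x,M(y)]$ gives two options. Either $M(y)=(a,b)y$, or $M(y)=(c,d)M(x)=(c,\bar d)y$.
\item In each of your surviving configurations with $d=a$, test which candidate is an upward cover of $y$:
\begin{itemize}
\item For $x=[\di a\di c\di b\di]$ with $c>b$, we get $(c,b)y\lhd y$, so only $M(y)=(a,b)y$ remains. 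This is (t4).
\item For $x=[\di a\di b\di c\di]$ with $a<c<b$, we get $(a,b)y\lhd y$, so $M(y)=(c,b)y$. This is (t2).
\item For $x=[\di c\di a\di b\di]$ with $c<a$, both candidates can occur, giving (t1) and (t3).
\end{itemize}
\item The case $d=b$ is the companion, giving (t1$^*$)--(t4$^*$).
\end{enumerate}
With this added, your case analysis becomes essentially the paper's proof.
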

\begin{proof}
In this proof, we use several times Lemma~\ref{quadrati} without explicit mention.

Suppose $y=(a,f)x$ for some $f\neq \{a,b\}$. If $x=[\di f \di a \di b \di]$, then we are in case (t1) or case (t3). If $x=[\di a \di f \di b\di ]$, then $f>a$ since $y\rhd x$ and $f>b$ since $M(x)\rhd x$. These conditions force $M(y)=(a,b)y$, and we are in case (t4). If $x=[\di a \di b \di f \di ]$, then $a<f<b$ since $y\rhd x$, and $M(y)=(f,b)y$ since $(a,b)y\lhd y$; hence we are in case (t2).

The case $y=(b,f)x$ is the companion of the previous case, and we obtain types (t1$^*$)--(t4$^*$). 

\end{proof}

   \begin{figure}[h] 
\centering
    \begin{tikzpicture}
    \draw[thick](0,9)--(15,9);
    \draw[thick](0,12)--(15,12);
    \draw[thick](0,15)--(15,15);
    \draw[thick](0,18)--(15,18);
    \draw[thick](0,21)--(15,21);
    \draw[thick](0,9)--(0,21);
    \draw[thick](15,9)--(15,21);
    \draw[thick](7.5,9)--(7.5,21);
      \draw (1.8,18.3) node[ thick,draw=black, inner sep=0,outer sep=0.5mm, minimum size=2mm, circle](u) {};
      \draw (3,19.5) node[thick, draw=black, inner sep=0,outer sep=0.5mm, minimum size=2mm, circle](Mu) {};
      \draw (0.5,19.5) node[thick, draw=black, inner sep=0,outer sep=0.5mm, minimum size=2mm, circle](x) {};
      \draw (1.8,20.7) node[thick, draw=black, inner sep=0,outer sep=0.5mm, minimum size=2mm, circle](Mx) {};

\draw[line width=1mm, color=red] (u)--(Mu);
\draw[thick] (u)--(x);
\draw[line width=1mm, color=red] (x)--(Mx);
\draw[thick] (Mx)--(Mu);

\node at (4,20.65) {(t1)};

\node at (1.5,18.2) {$x$};
\node at (0.2,19.6) {$y$};
\node at (2.7,18.8) {$ab$};
\node at (0.8,18.8) {$a_1a$};
\node at (0.9,20.3) {$a_1b$};
\node at (5.5,19.6) {$x=[\di a_1\di  a \di b\di] $,};
\node at (5.5,19) {with $a_1<a<b$.};

\node at (4,17.65) {(t2)};

      \draw (1.8,15.3) node[ thick,draw=black, inner sep=0,outer sep=0.5mm, minimum size=2mm, circle](u2) {};
      \draw (3,16.5) node[thick, draw=black, inner sep=0,outer sep=0.5mm, minimum size=2mm, circle](Mu2) {};
      \draw (0.5,16.5) node[thick, draw=black, inner sep=0,outer sep=0.5mm, minimum size=2mm, circle](x2) {};
      \draw (1.8,17.7) node[thick, draw=black, inner sep=0,outer sep=0.5mm, minimum size=2mm, circle](Mx2) {};

\draw[line width=1mm, color=red] (u2)--(Mu2);
\draw[thick] (u2)--(x2);
\draw[line width=1mm, color=red] (x2)--(Mx2);
\draw[thick] (Mx2)--(Mu2);

\node at (1.5,15.2) {$x$};
\node at (0.2,16.6) {$y$};
\node at (2.7,15.8) {$ab$};
\node at (0.8,15.8) {$a \alpha$};
\node at (0.9,17.3) {$\alpha b$};
\node at (5.5,16.6) {$x=[\di a\di  b \di \alpha \di ]$,};
\node at (5.5,16) {with $a<\alpha<b$.};
\node at (4,14.65) {(t3)};

      \draw (1.8,12.3) node[ thick,draw=black, inner sep=0,outer sep=0.5mm, minimum size=2mm, circle](u3) {};
      \draw (3,13.5) node[thick, draw=black, inner sep=0,outer sep=0.5mm, minimum size=2mm, circle](Mu3) {};
      \draw (0.5,13.5) node[thick, draw=black, inner sep=0,outer sep=0.5mm, minimum size=2mm, circle](x3) {};
      \draw (1.8,14.7) node[thick, draw=black, inner sep=0,outer sep=0.5mm, minimum size=2mm, circle](Mx3) {};

\draw[line width=1mm, color=red] (u3)--(Mu3);
\draw[thick] (u3)--(x3);
\draw[line width=1mm, color=red] (x3)--(Mx3);
\draw[thick] (Mx3)--(Mu3);

\node at (1.5,12.2) {$x$};
\node at (0.2,13.6) {$y$};
\node at (2.7,12.8) {$ab$};
\node at (0.8,12.8) {$pa$};
\node at (0.9,14.3) {$ab$};
\node at (5.5,13.6) {$x=[\di p\di  a \di b\di ]$,};
\node at (5.5,13) {with $p<a<b$.};

\node at (4,11.65) {(t4)};
      \draw (1.8,9.3) node[ thick,draw=black, inner sep=0,outer sep=0.5mm, minimum size=2mm, circle](u4) {};
      \draw (3,10.5) node[thick, draw=black, inner sep=0,outer sep=0.5mm, minimum size=2mm, circle](Mu4) {};
      \draw (0.5,10.5) node[thick, draw=black, inner sep=0,outer sep=0.5mm, minimum size=2mm, circle](x4) {};
      \draw (1.8,11.7) node[thick, draw=black, inner sep=0,outer sep=0.5mm, minimum size=2mm, circle](Mx4) {};

\draw[line width=1mm, color=red] (u4)--(Mu4);
\draw[thick] (u4)--(x4);
\draw[line width=1mm, color=red] (x4)--(Mx4);
\draw[thick] (Mx4)--(Mu4);

\node at (1.5,9.2) {$x$};
\node at (0.2,10.6) {$y$};
\node at (2.7,9.8) {$ab$};
\node at (0.8,9.8) {$ac$};
\node at (0.9,11.3) {$ab$};
\node at (5.5,10.6) {$x=[\di a\di  c \di b\di] $,};
\node at (5.5,10) {with $a<b<c$.};

\node at (11.5,20.65) {(t1$^*$)};
      \draw (9.3,18.3) node[ thick,draw=black, inner sep=0,outer sep=0.5mm, minimum size=2mm, circle](u7) {};
      \draw (10.5,19.5) node[thick, draw=black, inner sep=0,outer sep=0.5mm, minimum size=2mm, circle](Mu7) {};
      \draw (8,19.5) node[thick, draw=black, inner sep=0,outer sep=0.5mm, minimum size=2mm, circle](x7) {};
      \draw (9.3,20.7) node[thick, draw=black, inner sep=0,outer sep=0.5mm, minimum size=2mm, circle](Mx7) {};

\draw[line width=1mm, color=red] (u7)--(Mu7);
\draw[thick] (u7)--(x7);
\draw[line width=1mm, color=red] (x7)--(Mx7);
\draw[thick] (Mx7)--(Mu7);

\node at (9,18.2) {$x$};
\node at (7.7,19.6) {$y$};
\node at (10.2,18.8) {$ab$};
\node at (8.3,18.8) {$bb_1$};
\node at (8.4,20.3) {$ab_1$};
\node at (13,19.6) { $x=[\di a\di  b \di b_1\di ]$,};
\node at (13,19) {with $a<b<b_1$.};

\node at (11.5,17.65) {(t2$^*$)};
      \draw (9.3,15.3) node[ thick,draw=black, inner sep=0,outer sep=0.5mm, minimum size=2mm, circle](u8) {};
      \draw (10.5,16.5) node[thick, draw=black, inner sep=0,outer sep=0.5mm, minimum size=2mm, circle](Mu8) {};
      \draw (8,16.5) node[thick, draw=black, inner sep=0,outer sep=0.5mm, minimum size=2mm, circle](x8) {};
      \draw (9.3,17.7) node[thick, draw=black, inner sep=0,outer sep=0.5mm, minimum size=2mm, circle](Mx8) {};

\draw[line width=1mm, color=red] (u8)--(Mu8);
\draw[thick] (u8)--(x8);
\draw[line width=1mm, color=red] (x8)--(Mx8);
\draw[thick] (Mx8)--(Mu8);

\node at (9,15.2) {$x$};
\node at (7.7,16.6) {$y$};
\node at (10.2,15.8) {$ab$};
\node at (8.3,15.8) {$\alpha b$};
\node at (8.4,17.3) {$a \alpha$};
\node at (13,16.6) {$x=[\di \alpha\di  a \di b\di ]$,};
\node at (13,16) {with $a<\alpha<b$.};

\node at (11.5,14.65) {(t3$^*$)};
      \draw (9.3,12.3) node[ thick,draw=black, inner sep=0,outer sep=0.5mm, minimum size=2mm, circle](u9) {};
      \draw (10.5,13.5) node[thick, draw=black, inner sep=0,outer sep=0.5mm, minimum size=2mm, circle](Mu9) {};
      \draw (8,13.5) node[thick, draw=black, inner sep=0,outer sep=0.5mm, minimum size=2mm, circle](x9) {};
      \draw (9.3,14.7) node[thick, draw=black, inner sep=0,outer sep=0.5mm, minimum size=2mm, circle](Mx9) {};

\draw[line width=1mm, color=red] (u9)--(Mu9);
\draw[thick] (u9)--(x9);
\draw[line width=1mm, color=red] (x9)--(Mx9);
\draw[thick] (Mx9)--(Mu9);

\node at (9,12.2) {$x$};
\node at (7.7,13.6) {$y$};
\node at (10.2,12.8) {$ab$};
\node at (8.3,12.8) {$bc$};
\node at (8.4,14.3) {$a b$};
\node at (13,13.6) {$x=[\di a\di  b \di c\di ]$,};
\node at (13,13) {with $a<b<c$.};

\node at (11.5,11.65) {(t4$^*$)};
      \draw (9.3,9.3) node[ thick,draw=black, inner sep=0,outer sep=0.5mm, minimum size=2mm, circle](u10) {};
      \draw (10.5,10.5) node[thick, draw=black, inner sep=0,outer sep=0.5mm, minimum size=2mm, circle](Mu10) {};
      \draw (8,10.5) node[thick, draw=black, inner sep=0,outer sep=0.5mm, minimum size=2mm, circle](x10) {};
      \draw (9.3,11.7) node[thick, draw=black, inner sep=0,outer sep=0.5mm, minimum size=2mm, circle](Mx10) {};

\draw[line width=1mm, color=red] (u10)--(Mu10);
\draw[thick] (u10)--(x10);
\draw[line width=1mm, color=red] (x10)--(Mx10);
\draw[thick] (Mx10)--(Mu10);

\node at (9,9.2) {$x$};
\node at (7.7,10.6) {$y$};
\node at (10.2,9.8) {$ab$};
\node at (8.3,9.8) {$pb$};
\node at (8.4,11.3) {$a b$};
\node at (13,10.6) {$x=[\di a\di  p \di b\di ]$,};
\node at (13,10) {with $p<a<b$.};

\end{tikzpicture}
    \caption{Local configurations of a special matching}
    \label{FigAtoms}
\end{figure}  

Next result will be extremely useful.

\begin{lem}\label{absednero} Let $J=J(M)$ and $s=s(M)$. Let $x,y\in [u,v]$ and $a,b,c,d\in [n]$, with $d$ black, be such that $y=(c,d)x$ and $M(x)=(a,b)x$.
Then $M(y)=(a,b)y$.
\end{lem}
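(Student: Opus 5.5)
The plan is to reduce to the case of a covering relation and then handle covers through the local analysis of Lemma~\ref{quadrati} and Lemma~\ref{atoms}. Two preliminary observations will be used throughout. By definition of $J=J(M)$, for every $z\in[u,v]$ we have $M(z)=(p,q)z$ with $p,q\in J$. In particular $a,b$ are red, so $d\notin\{a,b\}$, and $d$ is never among the entries moved by $M$. Moreover $y\neq M(x)$, since $M(x)x^{-1}=(a,b)$ does not involve $d$. By symmetry (exchanging $x$ and $y$, and applying the same argument to $M(y)$) we may assume $\ell(x)<\ell(y)$.

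\emph{Cover case $x\lhd y$.} First suppose $\{c,d\}\cap\{a,b\}=\emptyset$, so $(c,d)$ commutes with $(a,b)$. If $(x,y)$ is not of the form $M(x)\lhd x$, $M(y)\rhd y$, then Lemma~\ref{lpfsm} gives $M(x)$ and $M(y)$ comparable and adjacent. We get two length-two paths between the bottom and top of $\{x,y,M(x),M(y)\}$. Lemma~\ref{quadrati}(1), applied with the commuting pair $(a,b),(c,d)$, forces the remaining label to be $(a,b)$, i.e.\ $M(y)=(a,b)y$. In the exceptional configuration we use Lemma~\ref{si_restringe} and the proof of Proposition~\ref{freccia}: $[M(x),M(y)]$ is a $2$-crown with the chain $M(x)\xrightarrow{(a,b)}x\xrightarrow{(c,d)}y\to M(y)$. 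Writing $M(y)=(p,q)y$ with $p,q$ red, the only way the labels of a $2$-crown can close up without $M$ moving $d$ is $(p,q)=(a,b)$.

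Next suppose $|\{c,d\}\cap\{a,b\}|=1$, so $c\in\{a,b\}$. Then Lemma~\ref{atoms} places $(x,y)$ in one of the types (t1)--(t4), (t1$^*$)--(t4$^*$), with $d$ playing the role of the third entry ($a_1,\alpha,p,c,b_1$). In types (t1), (t2), (t1$^*$), (t2$^*$) the matching edge at $y$ is a transposition involving that third entry, namely $d$. This contradicts the fact that $M$ only moves red entries, so these types cannot occur. In types (t3), (t4) and their companions we have $M(y)=(a,b)y$, as required.

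\emph{General case.} We argue by induction on $\ell(x,y)$. When $\ell(x,y)>1$, let $c=x(k)$ and $d=x(m)$. The goal is to find a chain from $x$ to $y$ inside $[x,y]\subseteq[u,v]$ in which every step is a transposition of the form $(\cdot,d)$, or one swapping a value with $d$ at a position. Concretely, choose an entry $e$ lying between positions $k$ and $m$ in $x$, with value between $c$ and $d$. Then $(c,d)x$ is reached from $x$ via $(c,e)$, $(e,d)$, $(c,e)$ moves of smaller length difference, the standard decomposition of a Bruhat edge into covers. At each step the black entry $d$, or the displaced entry, is involved, and we propagate $M(\cdot)=(a,b)\cdot$ step by step using the induction hypothesis or the cover case. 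The main obstacle is exactly this step. The intermediate steps such as $(c,e)$ need not involve $d$, and $e$ may be red, so we cannot apply the statement blindly. I would first try to choose the saturated chain so that every step is a transposition $(e,d)$ with $d$ moving. This is possible for a Bruhat edge $x\to(c,d)x$ in $S_n$ by successively moving $d$ to the next entry of intermediate value, since all intermediate elements lie in $[x,y]$. If that fails, I would fall back on the tableau criterion to control which transpositions occur.
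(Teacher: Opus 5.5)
\paragraph{Where the proposal breaks.} Your reductions are fine. Exchanging $x$ and $y$ is legitimate: $M(y)=(a',b')y$ for some red $a',b'$, and the case in which the lower element is known then forces $(a',b')=(a,b)$. The cover case is essentially right, with one omission. Lemma~\ref{atoms} only applies when $x\lhd M(x)$. So for $c\in\{a,b\}$ and $M(x)\lhd x$ you still have to apply Lemma~\ref{quadrati} directly to the square with bottom $M(x)$ and to the $2$-crown $[M(x),M(y)]$.

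The genuine gap is the non-cover case, which is the real content of the lemma. You name the obstacle but do not overcome it, and the fix you propose is impossible: for $\ell(x,y)>1$ there is \emph{no} saturated chain from $x$ to $y=(c,d)x$ all of whose steps are transpositions involving $d$. To see this, say $c<d$ (the other case is symmetric), with $c$ in position $k$ to the left of $d$ in position $m$.
\begin{itemize}
\item By the Tableau Criterion, every element of $[x,y]$ keeps the values outside $[c,d]$ in their positions.
\item Hence every upward step $(e,d)$ inside $[x,y]$ has $e<d$, and so moves $d$ strictly to the left.
\item Then $d$ never returns to position $m$, and the entry placed in position $m$ at the first step is never touched again.
\item Since $y(m)=c$, the first step would have to be $x\to(c,d)x$ itself, which is a cover only if $\ell(x,y)=1$.
\end{itemize}
Concretely, let $c_1,\ldots,c_s$ be the entries between $c$ and $d$ with values in $(c,d)$, so that $\ell(x,y)=2s+1$. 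Moving $d$ leftwards through $c_s,\ldots,c_1$ and then $c$ only reaches $x_{s+1}=[\di d\di c\di c_1\ldots c_s\di]$, at distance $s+1$ from $x$. The remaining steps up to $y$ move $c$, not $d$.

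\paragraph{The missing idea.} Your cover case propagates $M(\cdot)=(a,b)\,\cdot$ along covers in either direction, so you may use a non-monotone path, combined with induction on a shorter edge. The paper proceeds as follows.
\begin{itemize}
\item Go up from $x$ to $x_{s+1}$ by $d$-moves. Note $x_{s+1}\le y$ by the Tableau Criterion, so everything stays in $[u,v]$.
\item Go back down by the $d$-moves $z_i=(c_i,d)z_{i-1}$, starting from $z_0=x_{s+1}$. This arrives at $z_s=(c,c_1)x$.
\item Set $y'=(c,d)z_s=(c_1,d)y\lhd y$. Since $\ell(z_s,y')=\ell(x,y)-2$, the induction hypothesis gives $M(y')=(a,b)y'$.
\item The final cover $y'\lhd y$, labelled by $(c_1,d)$, gives $M(y)=(a,b)y$.
\end{itemize}
In terms of your decomposition, the paper writes $(c,d)=(c_1,d)(c,d)(c,c_1)$ instead of $(c,e)(e,d)(c,e)$. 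The factor $(c,c_1)$, which does not involve $d$, is bypassed by the detour through $x_{s+1}$. The middle factor is a shorter edge handled by induction. The factor $(c_1,d)$ is a single cover involving $d$.
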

\begin{proof}

Up to considering the interval $[vw_0,uw_0]$ and the special matching $M'$ of it given by $M'(xw_0)=M(x)w_0$, we can suppose $x<y$, and up to considering the companion of $M$, we can also suppose $c<d$. 

We proceed by induction on $\ell(x,y)$. Let $\ell(x,y)=1$. If $M(x)\rhd x$ then $M(y)\rhd y$. Since $d$ is black, necessarily $M(y)=(a,b)y$ (note that $d\notin \{a,b\}$ since $d$ is black, while we may have $c\in \{a,b\}$). If $M(x)\lhd x$ and $M(y)\lhd y$, then the argument is similar. If $M(x)\lhd x$ and $M(y)\rhd y$, then $[M(x),M(y)]$ is a 2-crown and, since $d$ is black, we have $M(y)=(a,b)y$.

Now suppose $\ell(x,y)=2s+1>1$, and let $c_1,\ldots,c_s$ be such that $c<c_i<d$ for each $i\in [s]$ and $x=[\di c \di c_1 \ldots c_s\di d \di ]$. Let $x_0=x$ and define recursively $x_{i+1}=(c_{s-i},d)x_i$ (see Figure~\ref{lemabsednero} for an illustration with $s=3$; note that, although in Figure~\ref {lemabsednero} we have depicted $M(z_i)\rhd z_i$, 
this is not necessarily the case) for each $i\in [s]$, where we set $c_0=c$. Note that $x_{i+1}\rhd x_i$ and 
\[
x_{s+1}=[\di d \di c \di c_1 \ldots c_s \di ].
\]
Note also $x_{s+1}\leq y$ by the Tableau Criterion, and so in particular $x_{s+1}\in [u,v]$; by the first part of the proof, $M(x_i)=(a,b)x_i$ for each $i\in [s+1]$.

\begin{figure}[h] 
\centering
    \begin{tikzpicture}
 \draw (0,0) node[ thick,draw=black, inner sep=0,outer sep=0.5mm, minimum size=2mm, circle](x) {};
 \draw (0,1.5) node[ thick,draw=black, inner sep=0,outer sep=0.5mm, minimum size=2mm, circle](Mx) {};
 \draw (-1.5,1.5) node[ thick,draw=black, inner sep=0,outer sep=0.5mm, minimum size=2mm, circle](z3) {};
 \draw (-1.5,3) node[ thick,draw=black, inner sep=0,outer sep=0.5mm, minimum size=2mm, circle](Mz3) {};
 \draw (-3,3) node[ thick,draw=black, inner sep=0,outer sep=0.5mm, minimum size=2mm, circle](z2) {};
\draw (-3,4.5) node[ thick,draw=black, inner sep=0,outer sep=0.5mm, minimum size=2mm, circle](Mz2) {};
\draw (-4.5,4.5) node[ thick,draw=black, inner sep=0,outer sep=0.5mm, minimum size=2mm, circle](z1) {};
\draw (-4.5,6) node[ thick,draw=black, inner sep=0,outer sep=0.5mm, minimum size=2mm, circle](Mz1) {};
\draw (0,6) node[ thick,draw=black, inner sep=0,outer sep=0.5mm, minimum size=2mm, circle](x4) {};
\draw (0,7.5) node[ thick,draw=black, inner sep=0,outer sep=0.5mm, minimum size=2mm, circle](Mx4) {};
 \draw[line width=1mm, color=red] (x)--(Mx);
 \draw[line width=1mm, color=red] (z3)--(Mz3);
 \draw[line width=1mm, color=red] (z2)--(Mz2);
 \draw[line width=1mm, color=red] (z1)--(Mz1);
\draw[line width=1mm, color=red] (x4)--(Mx4);
\draw[thick] (z1)--(x4);
\draw[thick] (z2)--(z1);
\draw[thick] (z3)--(z2);
\draw[thick] (Mz3)--(Mz2);
\draw[thick] (Mz1)--(Mz2);

\node at (0,-0.4) {$x=x_0$};
\node at (1.9,1.4) {$x_1$};
\node at (3.4,2.9) {$x_{2}$};
\node at (4.9,4.4) {$x_{3}$};
\node at (-1.9,1.4) {$z_3$};
\node at (-3.4,2.9) {$z_{2}$};
\node at (-4.9,4.4) {$z_{1}$};
\node at (0,5.4) {$x_4=z_0$};

\node at (0.3,0.8) {$ab$};
\node at (-1.2,2.2) {$ab$};
\node at (-2.7,3.7) {$ab$};
\node at (-4.8,5.2) {$ab$};

\node at (1,0.5) {$c_3d$};
\node at (2.5,2) {$c_2d$};
\node at (4,3.5) {$c_1d$};
\node at (2,5.6) {$cd$};

\node at (-2.5,2) {$c_3d$};
\node at (-4,3.5) {$c_2d$};
\node at (-2,5.6) {$c_1d$};
\node at (1.2,2.2) {$ab$};
\node at (2.7,3.7) {$ab$};
\node at (4.2,5.2) {$ab$};
\node at (0.3,6.5) {$ab$};
\draw (1.5,1.5) node[ thick,draw=black, inner sep=0,outer sep=0.5mm, minimum size=2mm, circle](x1) {};
 \draw (1.5,3) node[ thick,draw=black, inner sep=0,outer sep=0.5mm, minimum size=2mm, circle](Mx1) {};
 \draw (3,3) node[ thick,draw=black, inner sep=0,outer sep=0.5mm, minimum size=2mm, circle](x2) {};
\draw (3,4.5) node[ thick,draw=black, inner sep=0,outer sep=0.5mm, minimum size=2mm, circle](Mx2) {};
\draw (4.5,4.5) node[ thick,draw=black, inner sep=0,outer sep=0.5mm, minimum size=2mm, circle](x3) {};
\draw (4.5,6) node[ thick,draw=black, inner sep=0,outer sep=0.5mm, minimum size=2mm, circle](Mx3) {};
 \draw[line width=1mm, color=red] (x1)--(Mx1);
 \draw[line width=1mm, color=red] (x2)--(Mx2);
 \draw[line width=1mm, color=red] (x3)--(Mx3);
\draw[thick] (x)--(x1);
\draw[thick] (x1)--(x2);
\draw[thick] (x2)--(x3);
\draw[thick] (Mx)--(Mx1);
\draw[thick] (Mx1)--(Mx2);
\draw[thick] (Mx2)--(Mx3);
\draw[thick] (x3)--(x4);
\draw[thick] (Mx3)--(Mx4);
\draw[thick] (Mz1)--(Mx4);

\end{tikzpicture}
    \caption{Proof of Lemma~\ref{absednero} for $s=3$}
    \label{lemabsednero}
\end{figure} 

Now let $z_0=x_{s+1}$, and $z_i=(c_i,d)z_{i-1}$ for each $i\in [s]$. Note that $z_{i}\lhd z_{i-1}$, and, by the first part of the proof, $M(z_i)=(a,b)z_i$. Notice 
\[
z_s=[\di c_1 \di c \di c_2 \ldots c_s \di d \di ]
\]
and $z_s=(c_1,c)x\rhd x$. Letting $y'=(c,d) z_s$, we have $y'=[\di c_1 \di d \di c_2 \ldots c_s \di c \di ]=(c_1,d)y\lhd y$. In particular, $\ell(z_s,y')=\ell(x,y)-2$ and so, by induction hypothesis, $M(y')=(a,b)y'$.   Since $y=(c_1,d)y'$,  the first part of the proof implies $M(y)=(a,b)y.$
\end{proof}

The following result is crucial for the classification of special matchings.

\begin{prop}[Lemma of the basic chains]\label{chains_ij}
    Let $a,b \in [n]$ be such that $M(u)=(a,b)u$, with $a<b$. Then there exist $a_1,\ldots,a_r, b_1, \ldots, b_s\in [n]$ with \[i(M)=a_r<\cdots<a_1<a<b<b_1<\cdots <b_s=j(M)\] such that the following conditions are satisfied:
    \begin{enumerate}
\item $u=[\di a_r \ldots a_1 \di a \di b \di b_1 \ldots b_s\di ]$; 
\item letting $a_0=a$, $u_0=u$ and $u_m=(a_{m-1},a_m) u_{m-1}$ for each $m \in [r]$, we have 
\begin{itemize}
    \item $u_m\in [u,v]$, 
    \item $u_m\rhd u_{m-1}$, 
    \item $M(u_m)=(a_m,b)u_m$, 
    \end{itemize}
    for each $m\in [r]$;
\item letting $b_0=b$, $u'_0=u$ and $u_m'=(b_{m-1},b_m)u_{m-1}'$ for each $m\in [s]$, we have 
\begin{itemize}
    \item $u'_m\in [u,v]$, 
    \item $u'_m\rhd u'_{m-1}$,  
    \item $M(u_m)=(a,b_m)u_m$,
     \end{itemize}
      for each $m\in [s]$.
    \end{enumerate}
\end{prop}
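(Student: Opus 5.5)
The plan is induction on $\ell(u,v)$, following the pattern of the proof of Proposition~\ref{ubase}. By the companion symmetry it suffices to build the chain $a_r<\cdots<a_1<a$ together with $u_1,\ldots,u_r$, and to show that $a_r=i(M)$. The chain on the $b$ side is then obtained from the companion matching $\tilde M$, since conjugation by $w_0$ reverses values and positions and exchanges the roles of $a$ and $b$.

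\emph{Base case and trivial case.} If $M(x)=(a,b)x$ for every atom $x$, then Lemma~\ref{abovunque} gives $M(x)=(a,b)x$ on all of $[u,v]$. Hence $i(M)=a$ and $j(M)=b$, and we take $r=s=0$. When $\ell(u,v)\le 1$ this case always applies.

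\emph{Inductive step.} Otherwise, let $i=i(M)<a$; the case $i=a$ again gives $r=0$. As in the proof of Proposition~\ref{ubase}, there are two possibilities.
\begin{enumerate}
\item Some $y\notin\{v,M(v)\}$ moves $i$. Then we restrict $M$ to $[u,\max\{y,M(y)\}]$, whose smallest moved index is still $i$, and apply induction there. The chain we obtain lies in the smaller interval, so it lies in $[u,v]$, and its matching data is inherited.
\item Otherwise we pick an atom $z=(c,d)u$ with $M(z)=(c,\bar d)z$, as in Lemma~\ref{quadrati}, and apply induction to $[z,v]$ with $M(z)=(c,\bar d)z$ in place of $M(u)$.
\end{enumerate}

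The key subcase of (2) is $c<a$, where $z=(c,a)u$ is of type (t1). Here $u=[\di c\di a\di b\di]$ with $M(z)=(c,b)z$, and we set $a_1=c$ and $u_1=z$. The chain for $[z,v]$ starting from $M(z)=(c,b)z$ has the form $a'_{r'}<\cdots<c$ with $z_m\rhd z_{m-1}$ and $M(z_m)=(a'_m,b)z_m$. Prepending $a$ gives the desired chain for $u$. It remains to check condition (1), namely that $u$ has the pattern $a_r\ldots a_1\di a$. This holds because $z$ and $u$ differ only by moving $a$ and $c$. Note that the recursion $u_m=(a_{m-1},a_m)u_{m-1}$ matches $z_{m-1}=(a'_{m-1},a'_m)\cdots$ once the indices are shifted.

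\emph{Remaining subcases.} The other configurations of Lemma~\ref{atoms}, namely (t2), (t3), (t4) and their duals, must be reduced to this one or shown to contribute nothing new on the left side. In (t3), (t4) and (t2), the new index moved by $M(z)$ lies between $a$ and $b$ or beyond $b$, or $M(z)=(a,b)z$. In those cases induction on $[z,v]$ yields a chain whose left part starts at $a$ or at an $\alpha$ with $a<\alpha<b$, and we must transport it back to $u$.

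\emph{Main obstacle.} I expect the main difficulty to be exactly this transfer. One has to show that the elements $u_m$ built from $u$ lie above $u$ and below $v$, and that $M(u_m)=(a_m,b)u_m$ rather than some other transposition. For this I would first try Lemma~\ref{absednero}: the chain elements for $z$ and for $u$ differ by a transposition involving a digit that is black for the relevant sub-matching. To get membership in $[u,v]$ I would use the tableau remark, comparing each $T(u_m)$ with $\max(T(z_m),T(u))$. The pattern in (1), with the $a_m$ increasing to the left and no red digits in between, should follow from Proposition~\ref{ubase} applied at each $u_{m-1}$, since $M(u_m)=(a_m,b)u_m$ is a cover.
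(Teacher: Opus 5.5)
Your reductions match the paper: induction on $\ell(u,v)$, restricting to $[u,\max\{y,M(y)\}]$ when $i(M)$ is moved below the top pair, and checking condition (2) after prepending in the (t1) case. The core of the inductive step, however, is missing. You never prove that a (t1) atom exists when $i(M)<a$. The configurations (t2)--(t4) and their duals are left open, and the proposed transport of chains from such atoms back to $u$ is neither carried out nor needed. The missing idea is the paper's first claim. Let $x$ be an atom with $M(x)=(a,f)x$, $f\neq b$, or with $M(x)=(a,b)x$. Induction on $[x,v]$ is valid because $v$ still moves $i(M)<a$. It gives $x'\rhd x$ with $M(x')=(a_1,f)x'$ (resp.\ $M(x')=(a_1,b)x'$) and $a_1<a$. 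The interval $[u,M(x')]$ is then a $3$-crown. Lemma~\ref{quadrati}, together with Lemma~\ref{abovunque} in the second case, forces its third atom to be $y=(a,a_1)u$ with $M(y)=(a_1,b)y$, i.e.\ a (t1) atom. If instead every atom other than $M(u)$ is of type (t2), all edge labels of $[u,v]$ are transpositions of values in $[a,b]$, so $i(M)=a$.

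Second, even in the (t1) case your justification of condition (1) is not valid. The chain for $z=(c,a)u$ puts $a'_1$ to the left of $c$ in $z$, i.e.\ to the left of $u^{-1}(a)$. Condition (1) for $u$, however, needs $a'_1$ to the left of $u^{-1}(c)=z^{-1}(a)$, and nothing you say excludes $u=[\di c\di a'_1\di a\di b\di]$. For instance, take $u=[2,1,3,4]$, $v=[3,2,4,1]$, $M=\rho_{s_3}$, so $a=3$, $b=4$, $i(M)=1$. Prepending the chain of the (t1) atom $z=[3,1,2,4]$ gives $3>2>1$, which satisfies (2) but violates (1); the correct chain is $3>1$. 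Your case (1) happens to intercept this example, but you give no argument excluding the configuration in case (2). Proposition~\ref{ubase} cannot supply one: it controls the digits between $a_m$ and $b$ in $u_m$, not the relative order of the $a_k$ in $u$.

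The paper handles exactly this in its second claim. Suppose the next value $p$ sits between $a_m$ and $a_{m-1}$ in $u_{m-1}$. Then $u'_m=(a_{m-1},p)u_{m-1}$ covers $u_{m-1}$. Moreover $z=(a_{m-1},a_m)u'_m$ satisfies $M(z)=(p,b)z$, so Lemma~\ref{quadrati} on $[u'_m,M(z)]$ gives $M(u'_m)=(p,b)u'_m$. Thus $a_m$ is replaced by $p$ rather than extended; since the last value strictly decreases, the process stops at $i(M)$. In your framework, the same square argument shows that choosing the (t1) atom with minimal $c$ avoids the bad configuration.
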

The statement of Proposition~\ref{chains_ij} is depicted in Figure~\ref{fig_chains_ij}.

\begin{proof}
  Up to considering the companion matching, it is enough to prove the existence of $a_1,\ldots,a_r$. We proceed by induction on $\ell(u,v)$. Let $z\in [u,v]$ be such that $M(z)=(i(M),k)z$, for some $k$. If $z\notin \{v,M(v)\}$,
    then the result clearly follows by induction. We can suppose $z=v$. 
    
    We first claim that, if $a\neq i(M)$, then $[u,v]$ has an atom $y$ such that the pair $(u,y)$ is of type (t1), see  Figure~\ref{FigAtoms}. 

    Suppose that there is an atom $x$ such that $M(x)=(a,f)x$ with $f\neq b$, i.e., $(u,x)$ is of type (t1$^*$) or (t2$^*$). Then, by induction hypothesis, there exist $a_1 \in [a]$ and $x'\in [u,v]$ such that $x'\rhd x$, and $M(x')=(a_1,f) x'$. The interval $[u,M(x')]$ is a 3-crown. Call $y$ the atom distinct from $x$ and $M(u)$. By applying Lemma~\ref{quadrati} to the interval $[u,x']$, we deduce $y=(a,a_1)u$. We also have $M(y)\neq (a,b)y$, by Lemma~\ref{quadrati} applied to $[y,M(x')]$, since $M(x')=(a_1,f)x'$. We conclude that $M(y)=(a_1,b) y$ by Lemma~\ref{quadrati} applied to $[u,M(y)]$. Hence $(u,y)$ is of type (t1).

    If $(u,x)$ is of type (t2) for every atom $x\neq M(u)$, then all covering relations of $[u,v]$ are given by transpositions involving numbers in $\{a,\ldots,b\}$. Hence $i(M)=a$, and there is nothing to prove. 
    
    By the classification in Figure~\ref{FigAtoms}, it remains to treat the cases when there is an atom $x$ with $M(x)=(a,b)x$. By induction hypothesis, there exist $a_1 \in [a]$ and $x'\in [u,v]$ such that $x'\rhd x$, and $M(x')=(a_1,b) x'$. The interval $[u,M(x')]$ is necessarily a 3-crown. Call $y$ the atom distinct from $x$ and $M(u)$. By the first part of the proof, we may suppose that $(u,y)$ is not of type (t1$^*$) or (t2$^*$). Moreover, $M(y)\neq (a,b) y$ by applying Lemma~\ref{abovunque} to $[u,M(x')]$. So $(u,y)$ is necessarily of type (t1) or (t2), by the classification in Figure~\ref{FigAtoms},
    Actually, since $a_1<a<\alpha$, the pair $(u,y)$  cannot be of type (t2) by   Lemma~\ref{quadrati} applied to $[u,x']$,  see Figure~\ref{ProofLemmaChains}.
We conclude that $y$ is necessarily of type (t1). The claim is proved and we have achieved the first step obtaining the element $u_1$.
    
\begin{figure}[h]
\centering
    \begin{tikzpicture}
    \draw (0,0) node[ thick,draw=black, inner sep=0,outer sep=0.5mm, minimum size=2mm, circle](u) {};
    \draw (-2,2) node[ thick,draw=black, inner sep=0,outer sep=0.5mm, minimum size=2mm, circle](y) {};
    \draw (0,2) node[ thick,draw=black, inner sep=0,outer sep=0.5mm, minimum size=2mm, circle](Mu) {};
    \draw (2,2) node[ thick,draw=black, inner sep=0,outer sep=0.5mm, minimum size=2mm, circle](x) {};
    \draw (-2,4) node[ thick,draw=black, inner sep=0,outer sep=0.5mm, minimum size=2mm, circle](My) {};
    \draw (0,4) node[ thick,draw=black, inner sep=0,outer sep=0.5mm, minimum size=2mm, circle](x') {};
    \draw (2,4) node[ thick,draw=black, inner sep=0,outer sep=0.5mm, minimum size=2mm, circle](Mx) {};
    \draw (0,6) node[ thick,draw=black, inner sep=0,outer sep=0.5mm, minimum size=2mm, circle](Mx') {};
    \draw[line width=1mm, color=red] (u)--(Mu);
 \draw[line width=1mm, color=red] (x)--(Mx);
 \draw[line width=1mm, color=red] (y)--(My);
 \draw[line width=1mm, color=red] (x')--(Mx');
 \draw[thick] (u)--(x);
 \draw[thick] (u)--(y);
 \draw[thick] (x)--(x');
 \draw[thick] (y)--(x');
 \draw[thick] (Mu)--(Mx);
 \draw[thick] (Mu)--(My);
 \draw[thick] (Mx)--(Mx');
 \draw[thick] (My)--(Mx');
 \node at (0.2,-0.2) {$u$};
 \node at (2.2,1.8) {$x$};
 \node at (-2.2,1.8) {$y$};
 \node at (0.3,4.2) {$x'$};
 \node at (0.3,1.2) {$ab$};
 \node at (2.3,3) {$ab$};
 \node at (0.3,5) {$a_1b$};
 \node at (-1.4,1) {$a \alpha$};
 \node at (-2.4,3) {$\alpha b$};
 \node at (0.3,3.3) {$a_1 a$};
 \node at (1.2,5.2) {$a_1 a$};
 \node at (-0.8,3.6) {$a_1 \alpha$};
 \node at (-1.1,5.3) {$a_1 \alpha$};
 \node at (-0.7,2.4) {$a \alpha$};
 
 \end{tikzpicture}
    \caption{Proof of Proposition \ref{chains_ij}}
    \label{ProofLemmaChains}
\end{figure}

    Another claim that we prove is the following: if there is a sequence $a_m<\cdots<a_1<a_0=a$ for some $m\geq 1$ and $a_m> i(M)$, with $u=[\di a_m \ldots a_1 \di a \di b \di]$ which satisfies condition (2) of the statement, then one of the following two conditions apply:
    \begin{itemize}
      \item[(i)] there exists $a_{m+1}<a_m$ such that the sequence $a_{m+1}<\cdots <a_1<a_0$ is such that $u=  [\di a_{m+1} \ldots a_1 \di a \di b \di]$ and satisfies condition (2) of the statement;
      \item[(ii)] there exists $a_m'<a_m$ such that the sequence $a'_{m}<a_{m-1}<\cdots <a_1<a_0$ is such that $u=  [\di a'_{m} \di a_{m-1} \ldots a_1 \di a \di b \di]$ and satisfies condition (2) of the statement.
    \end{itemize}
    This will conclude the proof.
    
    So we have that $u_{m-1}=[\di a_m \di a_{m-1} \di b \di] $ (with $M(u_{m-1})=(a_{m-1},b)u_{m-1}$) and $u_m=[\di a_{m-1} \di a_m \di b \di ]$ with $M(u_m)=(a_m,b)u_m$. By the first part of the proof, there exists
    $p \in [a_m]$ with either $u_m=[\di a_{m-1} \di  p \di  a_m \di b \di ]$ or $u_m=[\di p \di a_{m-1} \di a_m \di b\di ]$ such that, letting $z=(a_m,p)u_m$, we have $z\rhd u_m$ and $M(z)=(p,b)z$.
    In the latter case, we let $a_{m+1}=p$ and we are in case (i).
    In the former case, we let $a_m'=p$ and we have $u_{m-1}=[\di a_m \di a'_{m} \di  a_{m-1} \di  b \di ]$. Consider the element  $u_m'= [\di a_m \di a_{m-1} \di  a'_{m} \di  b \di ]$. Observe $u_m'\rhd u_{m-1}$ since $u_{m-1}\lhd u_m$ and $z\rhd u_m$.  Since $z=(a_{m-1},a_m)u_m'$, necessarily  $M(u_m')=(a'_{m},b)u_m'$, thus yielding case (ii). 
\end{proof}
\color{black}

\begin{figure}[h] 
\centering
    \begin{tikzpicture}
 \draw (0,0) node[ thick,draw=black, inner sep=0,outer sep=0.5mm, minimum size=2mm, circle](u) {};
 \draw (0,1.5) node[ thick,draw=black, inner sep=0,outer sep=0.5mm, minimum size=2mm, circle](Mu) {};
 \draw (-1.5,1.5) node[ thick,draw=black, inner sep=0,outer sep=0.5mm, minimum size=2mm, circle](u1) {};
 \draw (-1.5,3) node[ thick,draw=black, inner sep=0,outer sep=0.5mm, minimum size=2mm, circle](Mu1) {};
 \draw (-4,4) node[ thick,draw=black, inner sep=0,outer sep=0.5mm, minimum size=2mm, circle](ur1) {};
\draw (-4,5.5) node[ thick,draw=black, inner sep=0,outer sep=0.5mm, minimum size=2mm, circle](Mur1) {};
\draw (-5.5,5.5) node[ thick,draw=black, inner sep=0,outer sep=0.5mm, minimum size=2mm, circle](ur) {};
\draw (-5.5,7) node[ thick,draw=black, inner sep=0,outer sep=0.5mm, minimum size=2mm, circle](Mur) {};
 \draw[line width=1mm, color=red] (u)--(Mu);
 \draw[line width=1mm, color=red] (u1)--(Mu1);
 \draw[line width=1mm, color=red] (ur1)--(Mur1);
 \draw[line width=1mm, color=red] (ur)--(Mur);
\draw[thick] (u)--(u1);
\draw[thick] (ur1)--(ur);
\draw[thick] (Mu)--(Mu1);
\draw[thick] (Mur1)--(Mur);
\draw[thick, dotted] (u1)--(ur1);
\draw[thick, dotted] (Mu1)--(Mur1);
\node at (0,-0.4) {$u_0=u_0'$};
\node at (-1.8,1.4) {$u_1$};
\node at (-4.5,3.9) {$u_{r-1}$};
\node at (-5.8,5.4) {$u_r$};
\node at (-1.1,0.7) {$a_1a_0$};
\node at (-5.3,4.7) {$a_ra_{r-1}$};
\node at (0.3,0.8) {$ab$};
\node at (-1.2,2.2) {$a_1b$};
\node at (-3.5,4.5) {$a_{r-1}b$};
\node at (-5.2,6.2) {$a_{r}b$};

\draw (1.5,1.5) node[ thick,draw=black, inner sep=0,outer sep=0.5mm, minimum size=2mm, circle](u1') {};
 \draw (1.5,3) node[ thick,draw=black, inner sep=0,outer sep=0.5mm, minimum size=2mm, circle](Mu1') {};
 \draw (4,4) node[ thick,draw=black, inner sep=0,outer sep=0.5mm, minimum size=2mm, circle](ur1') {};
\draw (4,5.5) node[ thick,draw=black, inner sep=0,outer sep=0.5mm, minimum size=2mm, circle](Mur1') {};
\draw (5.5,5.5) node[ thick,draw=black, inner sep=0,outer sep=0.5mm, minimum size=2mm, circle](ur') {};
\draw (5.5,7) node[ thick,draw=black, inner sep=0,outer sep=0.5mm, minimum size=2mm, circle](Mur') {};
 \draw[line width=1mm, color=red] (u1')--(Mu1');
 \draw[line width=1mm, color=red] (ur1')--(Mur1');
 \draw[line width=1mm, color=red] (ur')--(Mur');
\draw[thick] (u)--(u1');
\draw[thick] (ur1')--(ur');
\draw[thick] (Mu)--(Mu1');
\draw[thick] (Mur1')--(Mur');
\draw[thick, dotted] (u1')--(ur1');
\draw[thick, dotted] (Mu1')--(Mur1');
\node at (1.8,1.4) {$u'_1$};
\node at (4.5,3.9) {$u'_{s-1}$};
\node at (5.8,5.4) {$u_s$};
\node at (1.1,0.6) {$b_0b_1$};
\node at (5.3,4.6) {$b_{s-1}b_s$};
\node at (1.2,2.2) {$ab_1$};
\node at (3.5,4.5) {$ab_{s-1}$};
\node at (5.2,6.2) {$ab_s$};
\end{tikzpicture}
    \caption{The left and right basic chains}
    \label{fig_chains_ij}
\end{figure} 

We call the sequence $u_0,u_1,\ldots,u_r$ and the sequence $u'_0,u'_1,\ldots,u'_s$ in the statement of Proposition~\ref{chains_ij}, respectively, a \emph{left basic chain} and  a \emph{right basic chain}.

Proposition \ref{chains_ij} has the following notable consequence.
\begin{cor}\label{iMjM}
    There exists $z\in [u,v]$ such that $M(z)=(i(M),j(M))z$.
\end{cor}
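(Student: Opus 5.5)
We plan to combine the left and right basic chains of Proposition~\ref{chains_ij}. Write $M(u)=(a,b)u$ with $a<b$, and take the chains $u=u_0\lhd u_1\lhd\cdots\lhd u_r$ and $u=u'_0\lhd u'_1\lhd\cdots\lhd u'_s$. Along these chains we have $M(u_r)=(i(M),b)u_r$ and $M(u'_s)=(a,j(M))u'_s$. If $r=0$ or $s=0$, then $a=i(M)$ or $b=j(M)$. In that case we only need one chain, and the last element of the other chain (or $u$ itself) already gives the required $z$. So the real content is to perform both moves simultaneously.

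The key observation is that the two chains act on disjoint sets of digits. The left chain only moves the digits $a_r,\dots,a_1,a$, which lie to the left of $b$ in $u$. The right chain only moves $b,b_1,\dots,b_s$, which lie to the right of $a$. Consider the element
\[
z=(b_{s-1},b_s)\cdots(b_0,b_1)(a_{r-1},a_r)\cdots(a_0,a_1)u .
\]
In one-line notation, $z$ is obtained from $u$ by cyclically rotating the block $a_r\ldots a_1\,a$ and, independently, the block $b\,b_1\ldots b_s$. We claim $z\in[u,v]$ and $M(z)=(a_r,b_s)z=(i(M),j(M))z$.

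\emph{Membership in $[u,v]$.} First, $z\geq u$ is clear, since each step is an upward covering. To get $z\le v$, we would like to show that $z$ lies below the join-like bound $\max(T(u_r),T(u'_s))$ entrywise and apply the Remark after the Tableau Criterion. This holds because the changes affect disjoint value ranges: values below $a$ versus values above $b$, in positions to the left versus positions to the right. Hence each entry of $T(z)$ coincides with the corresponding entry of either $T(u_r)$ or $T(u'_s)$.

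\emph{Determining $M(z)$.} We proceed by induction along the right chain starting from $u_r$. Set $w_0=u_r$ and $w_m=(b_{m-1},b_m)w_{m-1}$. We aim to show $M(w_m)=(a_r,b_m)w_m$. The model for this is the argument of Proposition~\ref{chains_ij} and Lemma~\ref{quadrati}. The element $w_m$ covers both $w_{m-1}$ and the element $w'_m$ obtained by the same move from $u'$ on the side $a_{r-1}$ (inductively on $r+s$). These two together form a length-2 interval $[\cdot,w_m]$ with labels as in the square of Figure~\ref{fig_chains_ij}. By the induction hypothesis we know $M(w_{m-1})=(a_r,b_{m-1})w_{m-1}$ and $M(\text{that other element})=(a_{r-1},b_m)(\cdot)$. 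Applying Lemma~\ref{quadrati} to the resulting 2-crowns then pins down $M(w_m)=(a_r,b_m)w_m$. Concretely, we do a double induction on $(m,k)$ over the grid of elements $w_{k,m}$, obtained from $u$ by doing the first $k$ left moves and the first $m$ right moves. We prove $M(w_{k,m})=(a_k,b_m)w_{k,m}$, with the boundary cases $k=0$ or $m=0$ given by Proposition~\ref{chains_ij}.

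The main obstacle is the inductive step in this grid. There we have $M(w_{k-1,m})=(a_{k-1},b_m)w_{k-1,m}$ and $M(w_{k,m-1})=(a_k,b_{m-1})w_{k,m-1}$, and we must exclude the alternative values of $M(w_{k,m})$. We would first try to use that $w_{k,m}$ covers both neighbours and that $M$ preserves these covers (going up in both cases). This yields $M(w_{k-1,m})\lhd M(w_{k,m})$ and $M(w_{k,m-1})\lhd M(w_{k,m})$. Then Proposition~\ref{freccia} together with Lemma~\ref{quadrati} should force the transposition $(a_k,b_m)$: it is the only transposition $t$ such that $tw_{k,m}$ covers both $(a_{k-1},b_m)w_{k-1,m}$ and $(a_k,b_{m-1})w_{k,m-1}$. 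To ensure that no element ever matches downward and that all grid elements lie in $[u,v]$, we use the tableau argument above for each $w_{k,m}$. Taking $z=w_{r,s}$ completes the proof.
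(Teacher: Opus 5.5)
Your proposal is correct and is essentially the paper's proof. The paper also builds the grid $u_{p,q}$ from the left and right basic chains. It shows $u_{r,s}\le v$ because each row of its tableau agrees with the corresponding row of $T(u_r)$ or of $T(u'_s)$; the operative reason is that all left-chain positions precede all right-chain positions, not that the value ranges are disjoint. It then proves $M(u_{p,q})=(a_p,b_q)u_{p,q}$ by induction on $p+q$, applying Lemma~\ref{quadrati} to $[u_{p,q-1},M(u_{p,q})]$ and $[u_{p-1,q},M(u_{p,q})]$ and intersecting the two pairs of options. Proposition~\ref{freccia} is not needed for that step. Your check that $M(u_{p,q})$ goes up, using the special-matching condition on the cover $u_{p-1,q}\lhd u_{p,q}$, makes explicit a point the paper leaves implicit.
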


\begin{proof}
Keep the notations of Proposition~\ref{chains_ij}. For $p\in [r]$ and $q\in [s]$, let $u_{p,0}=u_p$ and $u_{0,q}=u'_q$, and define recursively $u_{p,q}=(a_{p-1},a_{p})u_{p-1,q}=(b_{q-1},b_{q})u_{p,q-1}$, i.e. 
\[
u_{p,q}=[\di a_r\ldots a_{p+1}\di  a_{p-1}\cdots a_0 \di a_p \di b_q \di b_0 \ldots b_{q-1}\di b_{q+1}\ldots b_s\di ].
\]
Note $u_{p,q}\rhd u_{p-1,q}$ and $u_{p,q}\rhd u_{p,q-1}$. We have
\[
u_{r,s}=[\di  a_{r-1}\cdots a_0 \di a_r \di b_s \di b_0 \ldots b_{s-1}].
\]
The first $r+1$ rows of $T(u_{r,s})$ agree with those of $T(u_{r,0})$ while the last $s+1$ rows of $T(u_{r,s})$ agree with those of $T(u_{0,s})$. We deduce that $u_{r,s}\in [u,v]$, and hence $u_{p,q}\in [u,v]$ for all $p\in [r]$ and $q \in [s]$.

We claim that $M(u_{p,q})=(a_p,b_q)u_{p,q}$ for all $p\in [r]$ and $q \in [s]$, and we prove this by induction on $p+q$. If $p=0$ or $q=0$, then this follows from Proposition~\ref{chains_ij}. Suppose $p,q>0$, see Figure~\ref{FigCoriMjM}.
\begin{figure}[h]
\centering
    \begin{tikzpicture}
    \draw (-2,2) node[ thick,draw=black, inner sep=0,outer sep=0.5mm, minimum size=2mm, circle](y) {};
    \draw (2,2) node[ thick,draw=black, inner sep=0,outer sep=0.5mm, minimum size=2mm, circle](x) {};
    \draw (-2,4) node[ thick,draw=black, inner sep=0,outer sep=0.5mm, minimum size=2mm, circle](My) {};
    \draw (0,4) node[ thick,draw=black, inner sep=0,outer sep=0.5mm, minimum size=2mm, circle](x') {};
    \draw (2,4) node[ thick,draw=black, inner sep=0,outer sep=0.5mm, minimum size=2mm, circle](Mx) {};
    \draw (0,6) node[ thick,draw=black, inner sep=0,outer sep=0.5mm, minimum size=2mm, circle](Mx') {};
 \draw[line width=1mm, color=red] (x)--(Mx);
 \draw[line width=1mm, color=red] (y)--(My);
 \draw[line width=1mm, color=red] (x')--(Mx');
 \draw[thick] (x)--(x');
 \draw[thick] (y)--(x');
 \draw[thick] (Mx)--(Mx');
 \draw[thick] (My)--(Mx');
 \node at (2.7,1.8) {$u_{p-1,q}$};
 \node at (-2.7,1.8) {$u_{p,q-1}$};
 \node at (0.5,4.2) {$u_{p,q}$};
 \node at (2.7,3) {$a_{p-1}b_q$};
 \node at (-2.7,3) {$a_{p}b_{q-1}$};
 \node at (0.3,3.1) {$a_{p-1} a_p$};
 \node at (-0.7,2.5) {$b_{q-1}b_q$};
 
 \end{tikzpicture}
    \caption{Proof of Corollary \ref{iMjM}}
    \label{FigCoriMjM}
\end{figure}
By Lemma~\ref{quadrati} applied to the interval $[u_{p,q-1},M(u_{p,q})]$, we have either $M(u_{p,q})=(a_p, b_{q-1}) u_{p,q}$ or $M(u_{p,q})=(a_p, b_{q}) u_{p,q}$.

By Lemma~\ref{quadrati} applied to the interval $[u_{p-1,q},M(u_{p,q})]$, we have either $M(u_{p,q})=(a_{p-1}, b_{q}) u_{p,q}$ or $M(u_{p,q})=(a_p, b_{q}) u_{p,q}$. 
Since $a_{p-1},a_p, b_{q-1}, b_q$ are distinct, the claim is proved.

The assertion follows since we can choose the element $z=u_{r,s}$.
\end{proof}

\section{Towards a contradiction I}
\label{badcasesI}

In this section, we begin the study of the structure of an interval admitting a special matching that does not fall into the desired classification. This will eventually lead to a contradiction.

Throughout this section, we fix a special matching $M$ of an interval $[u,v]$ in a Coxeter group $W$ of type $A$. We set $J=J(M)$ and $s=s(M)$. Suppose that $M(x)\neq \lambda_s^J(x)$ for some $x\in [u,v]$, and let $z$ be minimal with the property that
\[
M(z)\neq \lambda_s^J(z)
\qquad\text{and}\qquad
M(z)\lhd z.
\]
Recall that this means that there exists $h\in [n]$ such that, for all $x<z$ with $x\neq M(z)$, the matching $M$ swaps the digits occupying positions $h$ and $h+1$ among the red digits, i.e., the digits in $J$ (recall that $h$ satisfies $s=s_{h+i-1}$, where $i=\min J$).

The first result shows that $M$ is given by left multiplication by a fixed transposition on the whole subinterval $[u,z]$.

\begin{prop}
\label{abababab}
There exist $a,b$ in $[n]$ such that, for all $x$ in $[u,z]$, we have $M(x)= (a,b)x$.
\end{prop}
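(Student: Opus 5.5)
The plan is to reduce everything to Lemma~\ref{abovunque}, applied to the interval $[u,z]$. First I would check that $M$ restricts to a special matching of $[u,z]$. We have $M(z)\lhd z$. If $M(u)\rhd u$ (which always holds when $u\neq z$, since $u$ is the bottom element), then Lemma~\ref{si_restringe} applied with $x=u$, $y=z$ shows that $M$ restricts to a special matching of $[u,z]$. Call this restriction $M'$. Once this is done, it suffices to find a transposition $(a,b)$ such that $M(x)=(a,b)x$ for every atom $x$ of $[u,z]$. Lemma~\ref{abovunque} then extends this to all of $[u,z]$.

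\textbf{Step 1: the atoms matched upward.} By the minimality of $z$, every $x\in[u,z)$ with $M(x)\lhd x$ satisfies $M(x)=\lambda_s^J(x)$. Atoms $x\neq M(u)$ of $[u,z]$ satisfy $M(x)\rhd x$, since $M(x)\geq u$ and $x$ is an atom. Hence $M(x)\lhd M(x)$'s partner relation places $M(x)$, which lies in $[u,z]$, in the range where $M$ is matched downward. If $M(x)<z$, minimality gives $M(x)=\lambda_s^J(M(x))^{\text{-partner}}$, that is, $M$ agrees with $\lambda_s^J$ on the pair $\{x,M(x)\}$. The exception is the unique atom $x$ with $M(x)=z$, which occurs only if $\ell(u,z)=2$.

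\textbf{Step 2: finding a common transposition.} Let $M(u)=(a,b)u$; by Proposition~\ref{ubase}, $M(u)=\lambda_s^J(u)$. Take any atom $x=(f,g)u$ with $M(x)=\lambda_s^J(x)$. The aim is to show that $M(x)=(a,b)x$, meaning the $h$-th and $(h+1)$-th red digits of $x$ are again $a,b$. Suppose instead that this fails. By Lemma~\ref{quadrati} and the classification of Lemma~\ref{atoms}, the pair $(u,x)$ is then of type (t1), (t2), (t1$^*$) or (t2$^*$), with $M(x)=(a_1,b)x$, $(\alpha,b)x$, and so on. In each of these types the move $u\to x$ changes which red digits occupy red positions $h,h+1$. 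That means $x$ and $M(x)$ differ from $u$ by moving a red digit across the pair. I would then look at the element $w=\max$ of the corresponding square, namely $M(x)$, and at the atom structure below $z$. The idea is to use the special-matching condition together with Proposition~\ref{freccia}: for $x\lhd y$ in $[u,z]$, $M(x)M(y)^{-1}\in T$.

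\textbf{Step 3: contradiction with $z$, and the main obstacle.} The goal is to show that $z$ cannot then satisfy $M(z)\neq\lambda_s^J(z)$. The $\lambda_s^J$-behaviour that changes transposition along chains from $u$ already reproduces the left and right basic chains of Proposition~\ref{chains_ij}. Those chains force $M$ to coincide with $\lambda_s^J$ on the elements above them, via Lemma~\ref{quadrati} squares as in Corollary~\ref{iMjM}. Concretely, I would pick a coatom $y$ of $[u,z]$ with $y\neq M(z)$. By the Lifting Property, $M(y)\lhd y$ and $M(y)\lhd M(z)$, and by minimality $M(y)=\lambda_s^J(y)$. Let $M(z)=tz$. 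Comparing the square $[M(y),z]$ via Lemma~\ref{quadrati}, $t$ must share a letter with the transposition of $\lambda_s^J$ at $y$. If two coatoms give different $\lambda_s^J$-transpositions, as happens when Step~2's non-constancy is present, then $t$ is forced to equal $\lambda_s^J(z)z^{-1}$, which is a contradiction. This case analysis over types (t1)--(t4$^*$), propagated up from the atoms to the coatoms, is the step I expect to be hardest. I would organize it by induction on $\ell(u,z)$, using that every proper subinterval $[u,z']$ with $z'<z$ has $M=\lambda_s^J$ downward matchings. Once constancy of the transposition on the atoms is established, Lemma~\ref{abovunque} finishes the proof.
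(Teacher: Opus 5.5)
\textbf{There is a genuine gap.} Your Step~1 is correct, and so is the restriction of $M$ to $[u,z]$ via Lemma~\ref{si_restringe}. The decisive step, however, is never proved, and the atom-first strategy points the wrong way. Atoms of $[u,z]$ whose transpositions differ from that of $u$ are not contradictory in themselves. Configurations of type (t1), (t2), (t1$^*$), (t2$^*$) at the bottom are exactly what $\lambda_s^J$ produces along the basic chains of Proposition~\ref{chains_ij}. So any contradiction must come from the hypothesis $M(z)\neq\lambda_s^J(z)$ at the top.

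Your Step~3 does move to the coatoms, but it falls short in three ways:
\begin{enumerate}
\item The claim that two coatoms with different transpositions force $M(z)z^{-1}=\lambda_s^J(z)z^{-1}$ is asserted, not proved. Knowing that $t$ ``shares a letter'' with the transposition at $y$ is far from enough.
\item You never say how information on the coatoms reaches the atoms, which is where your final appeal to Lemma~\ref{abovunque} needs it.
\item The proposed induction on $\ell(u,z)$ has no inductive statement: $z$ is a fixed minimal element, and below it $M$ already agrees with $\lambda_s^J$.
\end{enumerate}

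\textbf{The missing idea} is to work from the top with the upside-down version of Lemma~\ref{abovunque}, and to show that a \emph{single} deviating coatom already gives a contradiction.
\begin{enumerate}
\item Let $M(z)=(a,b)z$ and let $r\lhd z$ with $r\neq M(z)$. Specialness gives $M(r)\le M(z)$, hence $M(r)\lhd r$. Minimality of $z$ then gives $M(r)=\lambda_s^J(r)$.
\item Suppose $M(r)\neq(a,b)r$. Lemma~\ref{quadrati} applied to the length-$2$ interval $[M(r),z]$ forces, up to exchanging $a$ and $b$, that $r=(c,b)z$ and $M(r)=(a,c)r$.
\item Because $M(r)=\lambda_s^J(r)$, the digits $a$ and $c$ are red and occupy red positions $h,h+1$ of $r$. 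The digit $b$ is also red, since $M$ moves it and so $b\in J(M)$.
\item Hence $z$ is obtained from $r$ by exchanging two red values. The red positions are unchanged, so $a$ and $b$ occupy red positions $h,h+1$ of $z$. This gives $\lambda_s^J(z)=(a,b)z=M(z)$, a contradiction.
\end{enumerate}
Therefore every coatom of $[u,z]$ is matched by $(a,b)$. The upside-down Lemma~\ref{abovunque} on $[u,z]$ then gives the statement. This is the paper's argument, and it needs no case analysis over (t1)--(t4$^*$) and no induction.
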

\begin{proof}
Let $a,b\in [n]$ be such that $M(z)= (a,b)z$. By Lemma~\ref{abovunque} (up-side down version), it is enough to show that $M(x)= (a,b)x$ holds for all $x$ in $[u,v]$ such that $x\lhd z$. Towards a contradiction, let $r$ in $[u,v]$ with $r\lhd z$ and $M(r)\neq (a,b)r$. By Lemma~\ref{quadrati} and the symmetry between $a$ and $b$, we may suppose $M(r)= (a,c) r$, $r= (c,b)z$, and $M(r)= (c,b)M(z)$. Hence, $c\in J$, i.e., $c$ is red, and $z$ is obtained from $r$ by swapping two red entries. Since $M(r)=\lambda_s^J(r)$, it follows $M(z)=\lambda_s^J(z)$, which is a contradiction.  
\end{proof}

For the rest of this section, let $a$ and $b$ denote the digits such that $M(x)= (a,b)x$ for all $x$ in $[u,z]$. We may suppose $a<b$.

\begin{lem}
\label{coatomi_z}
Let $w$ be a coatom of $[u,z]$, with $w\neq M(z)$. Suppose $w=(f,g)z$, with  $f,g \in [n] \setminus \{a,b\}$. Then
\begin{enumerate}
\item 
\label{coatomi_z_1}
$|\{f,g\}\cap J|=1$ (i.e., among $f$ and $g$, one is red and the other is black);
\item 
\label{coatomi_z_2}
in the one-line notation of $z$, the digits $f$ and $g$ cannot both appear before $a$ and $b$, nor both between $a$ and $b$, nor both after $a$ and $b$;
\item 
\label{riassunto}
one of the following holds:
\[
a<b<\min\{f,g\}<\max\{f,g\}
\]
or
\[
\min\{f,g\}<\max\{f,g\}<a<b.
\]
\end{enumerate}
\end{lem}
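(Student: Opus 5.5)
Throughout, I use the setup of this section: $z$ is minimal with $M(z)\neq\lambda_s^J(z)$ and $M(z)\lhd z$, and by Proposition~\ref{abababab} we have $M(x)=(a,b)x$ for every $x\in[u,z]$. Consider a coatom $w=(f,g)z$ of $[u,z]$ with $w\neq M(z)$ and $\{f,g\}\cap\{a,b\}=\emptyset$. Then $M(w)=(a,b)w\lhd w$ and $M(z)=(a,b)z$. So $z$, $w$, $M(z)$, $M(w)$ form a square in which the transpositions $(a,b)$ and $(f,g)$ commute, matching case (1) of Lemma~\ref{quadrati}. The plan is to use minimality of $z$ (which gives $M(w)=\lambda_s^J(w)$, since $w<z$) against $M(z)\neq\lambda_s^J(z)$. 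Each of the three claims then follows by analysing which positions the digits $f,g$ occupy relative to $a,b$ and to the red block.

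\textbf{Part (1).} Suppose first that $f,g$ are both black. Then Lemma~\ref{absednero}, applied with the black digit $d=g$, gives directly that $M(w)=(a,b)w$ forces $M(z)=(a,b)z$. To reach a contradiction I need more: $\lambda_s^J(w)$ and $\lambda_s^J(z)$ swap the red entries in the same two red slots. Swapping two black digits does not change the relative order of the red digits, so $\li J w=\li J z$. Since $\lambda_s^J(w)=(a,b)w$, the digits $a,b$ occupy red slots $h,h+1$ in $w$, hence also in $z$. This gives $\lambda_s^J(z)=(a,b)z=M(z)$, a contradiction.

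Suppose next that $f,g$ are both red. Exchanging $f$ and $g$ permutes the red slots they occupy but leaves the slots of $a$ and $b$ fixed. So again $a,b$ sit in red slots $h,h+1$ in $z$, and $M(z)=\lambda_s^J(z)$, a contradiction. In both cases I also need $a,b$ to be red, which holds because $M(w)=\lambda_s^J(w)=(a,b)w$.

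\textbf{Part (2).} Now $f$ is red and $g$ is black (or vice versa). If $f$ and $g$ lie in the same region relative to $a$ and $b$ (both before, both between, or both after), then exchanging them does not change the number of red digits to the left of $a$ or to the left of $b$. The red-slot argument above then applies verbatim and gives $M(z)=\lambda_s^J(z)$, a contradiction.

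\textbf{Part (3).} Here I use the Bruhat covering conditions. Since $z\rhd w=(f,g)z$ and $z\rhd(a,b)z$, no digit between the positions of $f,g$ in $z$ has value between $f$ and $g$, and likewise for $a,b$. By part (2), exactly one of $a,b$ sits between $f$ and $g$ in position (or the positions interleave). Combined with the covering conditions, this rules out the value of $a$ or $b$ lying between $f$ and $g$, and the value of $f$ or $g$ lying between $a$ and $b$. The remaining value patterns are the two claimed orderings, together with possibly a nested pattern such as $f<a<b<g$. Such an interleaved or nested pattern should contradict the commuting-square structure: in that case the four elements $z$, $w$, $M(z)$, $M(w)$ would not form a length-2 interval with exactly those four elements, since $(a,f)z$ or a similar element would also appear. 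If that fails, I would instead exclude the bad pattern by invoking the forbidden-edge/Lemma~\ref{atoms} configuration at $M(w)\lhd M(z)$.

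The main obstacle will be this last case analysis in (3), especially excluding the nested values $f<a<b<g$.
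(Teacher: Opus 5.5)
Your treatment of (1) and (2) is correct and is exactly the paper's argument. Swapping two digits of the same color, or a red and a black digit lying in the same region relative to $b$ and $a$, preserves the property that $b$ and $a$ are the $h$-th and $(h+1)$-st red digits. Such a swap would therefore carry $M(w)=\lambda_s^J(w)$ over to $M(z)=\lambda_s^J(z)$. The appeal to Lemma~\ref{absednero} is superfluous, since $M(z)=(a,b)z$ is already known.

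Part (3), however, has a genuine gap. You claim that the two covering conditions together with (2) rule out the value of $a$ or $b$ lying between $f$ and $g$, and the value of $f$ or $g$ lying between $a$ and $b$. This is false. Take $a<f<g<b$ with $z=[\di g\di b\di a\di f\di]$, or $a<f<b<g$ with $z=[\di b\di g\di a\di f\di]$. In both, $(a,b)z\lhd z$ and $(f,g)z\lhd z$ can hold, and $f,g$ lie in different regions.

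What excludes these patterns is an ingredient you never use in (3): $J=J(M)$ is an interval of integers containing $a$ and $b$, so every value strictly between $a$ and $b$ is red.
\begin{itemize}
\item If $a<f<g<b$, both $f$ and $g$ are red, contradicting (1).
\item If $a<f<b<g$, the coverings and (2) leave only $z=[\di b\di g\di a\di f\di]$. Then $w$ has the red digit $f$ between $b$ and $a$, contradicting $M(w)=\lambda_s^J(w)$.
\item The pattern $f<a<g<b$ is the companion case.
\end{itemize}

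Conversely, the nested pattern $f<a<b<g$, which you single out as the main obstacle, is the easy one. Since the values $a,b$ lie in $(f,g)$, the covering $(f,g)z\lhd z$ forbids $a$ and $b$ from sitting between $g$ and $f$. So $f,g$ are in the same region, contradicting (2). Your proposed route for this case could not work anyway. $[M(w),z]$ is an interval of length $2$, so it has exactly four elements, namely $M(w),w,M(z),z$; no further element such as $(a,f)z$ can appear in it.
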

\begin{proof}
We may suppose $f<g$. \\
Let us prove (\ref{coatomi_z_1}) and (\ref{coatomi_z_2}).
The one-line notation of $w$ has exactly $h-1$ red digits to the left of $b$, and $a$ is the first red digit to the right of $b$. This is still true after swapping two elements having the same color, and  (\ref{coatomi_z_1}) follows. This is also true after swapping  two elements both appearing before $a$ and $b$, both appearing between $a$ and $b$, or both appearing after $a$ and $b$,  so (\ref{coatomi_z_2}) follows.\\
Let us prove (\ref{riassunto}). 
If $a<f<g<b$ holds, then $f$ and $g$ are red by definition, which contradicts (\ref{coatomi_z_1}).
If $f<a<b<g$, then, since $(f,g)z\lhd z$, in the one-line notation of $z$ the digit $g$ is to the left of $f$, and neither $a$ nor $b$ are between $f$ and $g$. This contradicts (\ref{coatomi_z_2}).
If $a<f<b<g$, then, since $(f,g)z\lhd z$, in the one-line notation of $z$ the digit $g$ is to the left of $f$, and $b$ is not between $g$ and $f$. Since $(a,b)z\lhd z$, the digit $f$ is not between $b$ and $a$. Furthermore, $f$ to the left of $b$ contradicts (\ref{coatomi_z_2}). Suppose that $f$ is to the right of $a$. Since $b$ is not between $g$ and $f$,  the digit  $g$ is not to the left of $b$. If $g$ is between $b$ and $a$, then $f$ is between $b$ and $a$ in the one-line notation of $w$, which is impossible since $M(w)=\lambda_s^J(w)$ and $f$ is red  being $a<f<b$. The only remaining case, i.e., $g$ to the right of $a$, contradicts (\ref{coatomi_z_2}).
If $f<a<g<b$, the companion of $M$ falls in the previous case. 

The only remaining possibilities are those listed in the statement.
\end{proof}

\begin{lem}
\label{coatomi_z_a}
Let $w$ be a coatom of $[u,z]$, with $w\neq M(z)$. Suppose $w=(f,g)z$, with  $f,g \in [n]$. 
\begin{enumerate}
\item 
\label{coatomi_z_a1}
Let $g=b$. Then 
\begin{itemize}
    \item $f<a$, 
    \item the digit $f$ appears in $z$ between $b$ and $a$,
    \item  there is at least one red digit that appears in $z$ between $b$ and $f$ with $f$ included.
    \end{itemize}
\item 
\label{coatomi_z_a2}
Let $f= a$. Then  
\begin{itemize}
    \item $g>b$, 
    \item  the digit $g$ appears in $z$ between $b$ and $a$, 
    \item  there is at least one red digit that appears in $z$ between $a$ and $g$ with $g$ included.
    \end{itemize}
\end{enumerate}
\end{lem}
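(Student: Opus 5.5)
We reduce (\ref{coatomi_z_a2}) to (\ref{coatomi_z_a1}) by passing to the companion matching $\tilde M$. Conjugation by $w_0$ sends the digits $a<b$ to $n+1-b<n+1-a$, reverses the value order and preserves the left-to-right order of positions. So the roles of ``$g=b$'' and ``$f=a$'' are exchanged, as are the conditions $f<a$ and $g>b$. Moreover, $J(\tilde M)$ is the image of $J(M)$, so red digits go to red digits. It therefore suffices to prove (\ref{coatomi_z_a1}).

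\emph{Setup.} Since $M(z)=(a,b)z\lhd z$ with $a<b$, the digit $b$ appears to the left of $a$ in $z$. Let $w=(f,b)z\lhd z$ be a coatom with $w\neq M(z)$, so $f\neq a$. Because $w\in[u,z]$, Proposition~\ref{abababab} gives $M(w)=(a,b)w$. By minimality of $z$ (and since $M(w)\lhd w$, which follows from the Lifting Property, Lemma~\ref{lpfsm}), we also have $M(w)=\lambda_s^J(w)$. Since $M$ is special and $w\lhd z$ with $w\neq M(z)$, we get $M(w)\lhd M(z)$. A direct computation gives $M(w)=(a,b)(f,b)z=(f,a)M(z)$. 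Hence $[M(w),z]$ is a length-two interval with the two middle elements $w$ and $M(z)$, and its edges are labelled $(f,b)$, $(a,b)$ (twice) and $(f,a)$. This is case (2) of Lemma~\ref{quadrati}, and I will use it to restrict the relative positions of $f$, $a$, $b$.

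\emph{Step 1: $f<a$ and $f$ sits between $b$ and $a$.} I will split according to the value of $f$ relative to $a<b$, and in each case read off the positions from two facts. First, $w\lhd z$ means that the positions of $f$ and $b$ in $z$ form an inversion with no intermediate values between them. Second, $M(w)\lhd M(z)$ means the same for $f$ and $a$ in $M(z)$, where $a$ occupies $b$'s old position.
\begin{itemize}
\item If $a<f<b$, then $f\in J$. In $w$ the digit $f$ then sits strictly between $b$ and $a$ as a red digit, or else the covering conditions fail. This contradicts $M(w)=\lambda_s^J(w)=(a,b)w$, which requires $a$ and $b$ to be consecutive among the red digits of $w$.
\item If $f>b$, then $f$ lies to the left of $b$ in $z$. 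I expect either a failure of the covering condition for $(f,a)$ in $M(z)$, or a red digit (namely $b$, or $f$ itself when $f\le j(M)$) trapped between $a$ and $b$ in $w$; the latter again contradicts $M(w)=\lambda_s^J(w)$.
\end{itemize}
Thus $f<a$. Then $(f,b)z\lhd z$ and $(f,a)M(z)\lhd M(z)$ force $f$ to lie to the right of $b$, and not to the right of $a$. So $f$ lies between $b$ and $a$.

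\emph{Step 2: a red digit occurs between $b$ and $f$, with $f$ included.} Suppose not. Then $f$ is black and no red digit lies strictly between $b$ and $f$ in $z$. In that case the red digits of $z$ and of $w=(f,b)z$ occupy the same relative order: only $b$ moves, and it passes no red digit. Hence $\li Jz$ and $\li Jw$ coincide up to the identification of positions, and $\lambda_s^J$ acts on both by swapping the same pair of red digits. Since $\lambda_s^J(w)=(a,b)w$, this gives $\lambda_s^J(z)=(a,b)z=M(z)$, contradicting the choice of $z$. (Equivalently, when $f$ is black one can invoke Lemma~\ref{absednero}.)

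\emph{Main obstacle.} I expect the hardest part to be the case $f>b$ in Step 1. The positional bookkeeping there needs care: I would first try to show that the covering condition for $(f,a)$ in $M(z)$ forces a red value between $a$ and $b$ to sit between them in $w$. If that fails, I would fall back on the argument of Lemma~\ref{coatomi_z}(\ref{riassunto}), applied to the pair obtained from $[M(w),z]$ via Lemma~\ref{quadrati}.
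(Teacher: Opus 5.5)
\textbf{There is a genuine gap: the case $f>b$ in Step~1.} You leave this case as an expectation, and the expectation is false.

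The rest is fine. Your reduction to (1) works. So does the setup: $M(w)\lhd w$, $M(w)=(a,b)w=\lambda_s^J(w)$, and $M(w)=(f,a)M(z)\lhd M(z)$. You correctly exclude $a<f<b$; there the covering conditions in fact always fail, since $(a,b)z\lhd z$ and $(f,b)z\lhd z$ force $f$ to the right of $a$, so $a$ precedes $b$ in $w$, contradicting $M(w)=(a,b)w\lhd w$. Your location of $f$ once $f<a$ is known is correct. Step~2 is also correct and is exactly the last sentence of the paper's proof.

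Now the gap. When $f>b$ we have $z=[\di f\di b\di a\di]$ and $w=[\di b\di f\di a\di]$. Take $f$ black, with $f,b,a$ in consecutive positions of $z$. Then $(f,b)z\lhd z$, $(a,b)w\lhd w$ and $(f,a)M(z)\lhd M(z)$ all hold. The only digit between $b$ and $a$ in $w$ is the black digit $f$. So neither of your two mechanisms fires: the covering relation for $(f,a)$ does not fail, and no red digit is trapped. Your fallback does not apply either, since Lemma~\ref{coatomi_z} concerns coatoms $(f,g)z$ with $f,g\notin\{a,b\}$. Also, $b$ itself can never be ``trapped'' between $b$ and $a$; the only candidates are $f$ and the digits lying between $f$ and $b$ in $z$.

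The missing idea is that here the contradiction must come from the defining property $M(z)\neq\lambda_s^J(z)$ of $z$, not from any covering relation. This is how the paper disposes of ``$f$ to the left of $b$'':
\begin{itemize}
\item If $f$, or some digit between $f$ and $b$ in $z$, is red, then it lies between $b$ and $a$ in $w$. This contradicts $M(w)=\lambda_s^J(w)$.
\item Otherwise, going from $w$ to $z$ the digit $b$ passes only black digits. So $z$ and $w$ have the same sequence of red digits, and $b,a$ are still the $h$-th and $(h+1)$-st red digits of $z$. Hence $\lambda_s^J(z)=(a,b)z=M(z)$, a contradiction.
\end{itemize}
The second point is literally your Step~2 argument applied in the other position, so the repair is short, but as written Step~1 is not proved. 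The paper then derives $f<a$ from $(a,f)M(z)\lhd M(z)$ after fixing the position of $f$, the reverse of your order; that difference is harmless.

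Finally, drop your parenthetical appeal to Lemma~\ref{absednero} in Step~2. That lemma only yields $M(z)=(a,b)z$, which you already know, and says nothing about $\lambda_s^J(z)$.
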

\begin{proof}
The second statement is equivalent to the first applied to the companion matching. 

Let us prove (\ref{coatomi_z_a1}). The digit $f$ is not to the right  of $a$ in $z$ because otherwise $(a,b)w\rhd w$ would hold. Towards a contradiction, suppose that $f$ is  to the left  of $b$ in $z$. 
Then $f$ and all digits between $f$ and $b$ must be black; otherwise, there would be red digits between $b$ and $a$ in $w$, which is a contradiction since $M(w)=\lambda_s^J(w)$. Then, in  $z$, the digits $b$ and $a$ are, as in $w$, the $h$-th and $(h+1)$-st red digits. This contradicts $M(z)\neq \lambda_s^J(z)$. 
The only remaining possibility is that $f$ appears between $b$ and $a$ in $z$.

Let us show $f<a$. Since $M(z)=(a,b)z\lhd z$ and $M(w)=(a,b)w\lhd w$, we have $M(w)= (a,f)M(z)$, by Lemma~\ref{quadrati}. Since $(a,f)M(z)\lhd M(z)$, it follows $f<a$.

Since  $M(w)=\lambda_s^J(w)$ whereas $M(z)\neq \lambda_s^J(z)$, at least one digit between $b$ and $f$ with $f$ included must be red.
\end{proof}

\begin{prop}
\label{coatomi_di_z}
Let $w$ be a coatom of $[u,z]$, with $w\neq M(z)$. Then exactly one of the following holds:
\begin{enumerate}
\item[(c1)] $w=(c,d)z$, with $a<b<c<d$, the digit $c$ is red, $d$ is black, and  $z=[\di d \di b \di a \di c \di ]$;
\item[(c2)] $w=(c,d)z$, with $a<b<c<d$, the digit $c$ is red, $d$ is black, and  $z=[\di d \di b \di c \di a \di]$;
\item[(c3)] $w=(b,p)z$, with $p<a<b$, $z=[\di  b \di p \di a \di ]$ and  at least one digit appearing between $b$ and $p$, with $p$ included, is red;

\item[(c1$^*$)] $w=(p,q)z$, with $p<q<a<b$, the digit $q$ is red, $p$ is black, and  $z=[\di q \di b \di a \di p \di ]$;
\item[(c2$^*$)] $w=(p,q)z$, with $p<q<a<b$, the digit $q$ is red, $p$ is black, and $z=[\di b \di q \di a \di p \di ]$;
\item[(c3$^*$)] $w=(a,c)z$, with $a<b<c$, $z=[\di  b \di c \di a \di ]$ and at least one digit appearing between $c$ and $a$, with $c$ included, is red.

\end{enumerate}
\end{prop}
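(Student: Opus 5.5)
The plan is to split according to how the transposition $(f,g)$ with $w=(f,g)z$ meets $\{a,b\}$. By Lemma~\ref{quadrati} applied to the length-two interval $[M(w),z]$ (recall $M(w)=(a,b)w\lhd w$ and $M(z)=(a,b)z\lhd z$, both by Proposition~\ref{abababab}), either $\{f,g\}\cap\{a,b\}=\emptyset$ or $|\{f,g\}\cap\{a,b\}|=1$. Both $f,g$ cannot coincide with $\{a,b\}$, since $w\neq M(z)$.

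\emph{Case $\{f,g\}\cap\{a,b\}=\emptyset$.} By Lemma~\ref{coatomi_z}, exactly one of $f,g$ is red, and either both exceed $b$ or both are below $a$. Take the first alternative, $b<c<d$, and set $\{f,g\}=\{c,d\}$. Since $c\in J$ and $J$ is an interval containing $a$ and $b$, the larger element $d$ must be the black one; this gives the colouring claimed in (c1),(c2). Because $w\lhd z$ and $c<d$, the digit $d$ appears to the left of $c$ in $z$, with no value in $(c,d)$ between them.

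Next, locate $c,d$ relative to $b$ and $a$. I expect $z$ to read $b$ before $a$, since $M(z)=(a,b)z\lhd z$ and $a<b$. Lemma~\ref{coatomi_z}(\ref{coatomi_z_2}) forbids $c,d$ from lying in the same of the three zones (before $b$, between $b$ and $a$, after $a$). With $d$ left of $c$, this leaves three placements:
\begin{itemize}
\item $d$ before $b$ and $c$ after $a$, which is (c1);
\item $d$ before $b$ and $c$ between $b$ and $a$, which is (c2);
\item $d$ between $b$ and $a$ and $c$ after $a$.
\end{itemize}
The third placement must be ruled out. There, in $w=(c,d)z$, the red digit $c$ sits between $b$ and $a$, yet $M(w)=(a,b)w=\lambda_s^J(w)$ forces no red digit between $b$ and $a$ in $w$ (compare Proposition~\ref{lunghi1}). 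This is a contradiction. The alternative with both digits below $a$ is the companion statement and yields (c1$^*$),(c2$^*$).

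\emph{Case $|\{f,g\}\cap\{a,b\}|=1$.} Here Lemma~\ref{quadrati}(2) says the shared digit is $a$ or $b$, and the companion symmetry lets us treat just one. If $b\in\{f,g\}$, the other digit $p$ must be smaller than $b$. The option $p=a$ is excluded, and Lemma~\ref{quadrati} forces the configuration $g=b$ with $f=p$. Lemma~\ref{coatomi_z_a}(\ref{coatomi_z_a1}) then gives directly $p<a$, the placement $z=[\di b\di p\di a\di]$, and the red digit between $b$ and $p$; this is (c3). I still have to check that the alternative pairing, where $a$ is shared, lands in Lemma~\ref{coatomi_z_a}(\ref{coatomi_z_a2}) rather than producing a new configuration. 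That case yields (c3$^*$).

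Finally, exclusivity of the six cases is immediate, since they are distinguished by the relative values and positions of the digits involved. The main obstacle I expect is the positional bookkeeping in the disjoint case: confirming that $b$ precedes $a$ in $z$, and excluding the third placement cleanly using $M(w)=\lambda_s^J(w)$, which holds by minimality of $z$.
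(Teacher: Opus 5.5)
Your proposal is correct and follows essentially the same route as the paper: Lemma~\ref{coatomi_z} plus the three admissible placements, with the third excluded because $M(w)=\lambda_s^J(w)$ forbids the red digit $c$ between $b$ and $a$ in $w$; the companion matching for (c1$^*$),(c2$^*$); and Lemma~\ref{coatomi_z_a} for the shared-digit cases. There are a few harmless loose ends: the detour through Lemma~\ref{quadrati} is unnecessary, since two distinct transpositions are either disjoint or share exactly one digit, and your unjustified claim $p<b$ is not needed, because Lemma~\ref{coatomi_z_a}(\ref{coatomi_z_a1}) applies to any coatom $(p,b)z$ and itself yields $p<a$; likewise the case where $a$ is shared is exactly Lemma~\ref{coatomi_z_a}(\ref{coatomi_z_a2}), so nothing remains to check there.
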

\begin{proof}
We let $w=(f,g)z$, with $f<g$.

Suppose $f,g \in [n] \setminus \{a,b\}$. By Lemma~\ref{coatomi_z}, (\ref{riassunto}), we have $a<b<f<g$ or $f<g<a<b$.
Suppose first $a<b<f<g$. By Lemma~\ref{coatomi_z},  (\ref{coatomi_z_1}), the digit $f$ is red and $g$ is black. By Lemma~\ref{coatomi_z},  (\ref{coatomi_z_2}), one of the following holds:
\begin{enumerate}
\item
$z=[\di g \di b \di a \di f \di ]$, or 
\item $z=[\di g \di b \di f \di a \di ]$, or 
\item $z=[\di b \di g \di a \di f \di ]$.
\end{enumerate}
These are the only possibilities, since $(a,b)z \lhd z$ and  $(f,g)z \lhd z$. In the first case, we get (c1). In the second case, we get (c2). The third case is impossible; the one-line notation of $w$ would have $f$ between $b$ and $a$, which contradicts $M(w)=\lambda_s^J(w)$ since $f$ is red. If $f<g<a<b$, then we can consider the companion matching of $M$ and we get (c1$^*$) and (c2$^*$).

By Lemma~\ref{coatomi_z_a},  if $f=b$ we get (c3), and if $g=a$ we get  (c3$^*$).
\end{proof}

We now study the compatibility of the different types of coatoms. 

\begin{lem}
\label{com_q}
Let $w_1$ and $w_2$ be two coatoms of $[u,z]$ such that there exists $t$ in $[u,z]$ with  $t \lhd w_i$, for $i\in \{1,2\}$. Let $w_i = (f_i,g_i)z$, for $i\in \{1,2\}$. Then $|\{f_1,g_1,f_2,g_2\} \setminus \{a,b\}|<4$. 
\end{lem}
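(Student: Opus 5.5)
We argue by contradiction: assume $\{f_1,g_1\}$ and $\{f_2,g_2\}$ are disjoint and both disjoint from $\{a,b\}$, so that $w_1=(f_1,g_1)z$ and $w_2=(f_2,g_2)z$ with four distinct digits outside $\{a,b\}$. The first step is to identify $t$ explicitly. Since $t\lhd w_1$ and $t\lhd w_2$, the interval $[t,z]$ has length $2$, and by Lemma~\ref{quadrati} the two paths from $t$ to $z$ involve commuting transpositions (case (1)), because the labels $(f_1,g_1)$ and $(f_2,g_2)$ are disjoint. Hence $t=(f_1,g_1)(f_2,g_2)z$, $w_1=(f_2,g_2)t$ and $w_2=(f_1,g_1)t$. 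Moreover, $t<z$ lies in $[u,z]$, so by Proposition~\ref{abababab} we have $M(x)=(a,b)x$ for every $x\in\{t,w_1,w_2,z\}$.

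Next, we apply Proposition~\ref{coatomi_di_z} to each $w_i$. Since $f_i,g_i\notin\{a,b\}$, each $w_i$ must be of type (c1), (c2), (c1$^*$) or (c2$^*$). Thus each pair consists of one red digit and one black digit, both greater than $b$ or both smaller than $a$, with the prescribed relative positions in $z$: the black one lies to the left of $b$ (for types (c1),(c2)) or to the right of $a$ (for types (c1$^*$),(c2$^*$)), while the red one lies strictly after $b$. The key observation is that, in $z$, the digits $b$ and $a$ are not the $h$-th and $(h+1)$-st red digits with nothing red between them. By contrast, in each $w_i$ this configuration is restored, since $M(w_i)=\lambda_s^J(w_i)$. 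Swapping $(f_i,g_i)$ exchanges a black digit with a red one, and this is exactly what moves one red digit across $b$ or out of the segment between $b$ and $a$.

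The main step is then a counting argument on red digits in $t$. We compare the number of red digits to the left of $b$, and the red digits between $b$ and $a$, in $z$, $w_1$, $w_2$ and $t$. Passing from $z$ to $w_i$ changes the red pattern around $b,a$ in a specific way, determined by the type (c1)/(c2)/(c1$^*$)/(c2$^*$). Since the two swaps act on disjoint sets of positions, their effects on the counts are additive when both are performed to obtain $t$. We want to show that if each single swap corrects the defect of $z$, then applying both overcorrects it. The two possible outcomes in $t$ are the following. Either $t$ has a red digit strictly between $b$ and $a$, which together with $M(t)=(a,b)t\lhd t$ contradicts $M(t)=\lambda_s^J(t)$; here we use that $t<z$, so the minimality of $z$ gives $M(t)=\lambda_s^J(t)$. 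Or $b$ is no longer preceded by exactly $h-1$ red digits in $t$, again contradicting $M(t)=\lambda_s^J(t)$.

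The obstacle is the case analysis over the type pairs (ten unordered combinations, halved by the companion symmetry). We also have to ensure that the two swaps genuinely act on disjoint positions and do not interact with each other's placement relative to $b$ and $a$. For mixed cases, such as (c1) with (c1$^*$), I would first use the covering conditions $t\lhd w_i$, which forbid intermediate values between swapped positions, together with the Tableau Criterion. These show that certain relative orders are impossible, for instance a black $d_1>b$ sitting between $q_2$ and $b$. The remaining cases are then handled by the red-count argument.
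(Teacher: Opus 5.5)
Your core argument is the paper's. Since $a,b\notin\{f_1,g_1,f_2,g_2\}$ and these four digits are distinct, the two swaps act on disjoint positions and never move $b$ or $a$, so their effects on the number of red digits to the left of $b$ and between $b$ and $a$ simply add. The paper uses this to carry the property ``$b,a$ are the $h$-th and $(h+1)$-st red digits'' from $t$, through $w_1,w_2$, to $z$; you run it backwards, finding that the defect of $t$ is the negative of the nonzero defect of $z$, which contradicts $M(t)=\lambda_s^J(t)$.

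This additivity handles every configuration at once. The appeal to Proposition~\ref{coatomi_di_z}, the case analysis over type pairs and the Tableau Criterion are therefore unnecessary, and so is your worry about the swaps interacting. Your remark that the red digit always lies after $b$ is false for type (c1$^*$), but nothing depends on it.
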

\begin{proof}
Towards a contradiction, suppose $|\{f_1,g_1,f_2,g_2\} \setminus \{a,b\}|=4$. Consider the one-line notation of $t$, where $b$ and $a$ are, respectively, the $h$-th and $(h+1)$-st red digits. 
This is still true after one swap of $f_i$ and $g_i$, for some $i\in \{1,2\}$. Hence, the same property holds after performing the two swaps in succession, and therefore for the one-line notation of $z$, yielding a contradiction.  
\end{proof}

\begin{lem}
\label{quali_q}
Let $w_1$ and $w_2$ be two coatoms of $[u,z]$. The following holds.
\begin{enumerate}
\item
\label{quali_q1}
Let $w_1$ be of type \textnormal{(c1)}. Then $w_2$ is neither of type \textnormal{(c2)}, nor \textnormal{(c3)}, nor \textnormal{(c3$^*$)}. 
\item
\label{quali_q1*}
Let $w_1$ be of type \textnormal{(c1$^*$)}. Then $w_2$ is neither of type \textnormal{(c3)}, nor \textnormal{(c2$^*$)}, nor \textnormal{(c3$^*$)}.   
\item
\label{quali_q2}
Let $w_1$ be of type \textnormal{(c2)}. Then $w_2$ is neither of type \textnormal{(c1)}, nor of type \textnormal{(c3$^*$)}. Moreover, when both $w_1$ and $w_2$ are of type \textnormal{(c2)}, then  $w_1 = (c_1,d_1)z$ and $w_2 = (c_2,d_2)z$, with $c_1=c_2$.
\item
\label{quali_q2*}
Let $w_1$ be of type \textnormal{(c2$^*$)}. Then $w_2$ is neither of type \textnormal{(c3)}, nor of type \textnormal{(c1$^*$)}. Moreover, when both $w_1$ and $w_2$ are of type \textnormal{(c2$^*$)}, then  $w_1 = (p_1,q_1)z$ and $w_2 = (p_2,q_2)z$, with $q_1=q_2$.
\item
\label{quali_q3}
Let $w_1$ be of type \textnormal{(c3)}. Then  $w_2$ is neither of type \textnormal{(c1)}, nor \textnormal{(c1$^*$)}, nor \textnormal{(c2$^*$)}, nor \textnormal{(c3$^*$)}. 
\item
\label{quali_q3*}
Let $w_1$ be of type \textnormal{(c3$^*$)}. Then  $w_2$ is neither of type \textnormal{(c1)}, nor \textnormal{(c2)}, nor \textnormal{(c3)}, nor \textnormal{(c1$^*$)}. 
\end{enumerate}
\end{lem}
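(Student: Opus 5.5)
By the companion symmetry, items (\ref{quali_q1*}), (\ref{quali_q2*}) and (\ref{quali_q3*}) follow from (\ref{quali_q1}), (\ref{quali_q2}) and (\ref{quali_q3}) applied to $\tilde M$. Indeed, conjugation by $w_0$ exchanges (c$r$) and (c$r^*$), and it preserves the minimality of $z$ and the structure of $J$ and $s$. So I only prove (\ref{quali_q1}), (\ref{quali_q2}) and (\ref{quali_q3}). Each incompatibility comes from one of two sources.

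\textbf{Positional contradictions.} Proposition~\ref{coatomi_di_z} fixes the relative order of $a$, $b$ and the other moved digits in the one-line notation of $z$. Some pairs of types demand incompatible positions for the same digit.
\begin{itemize}
\item (c1) against (c2): both are of the form $(c_i,d_i)z$ with $a<b<c_i<d_i$ and $c_i$ red. In (c1) the red digit $c_1$ sits after $a$. In (c2) the red digit $c_2$ sits between $b$ and $a$.
\item (c3) against (c3$^*$): (c3) needs $z=[\di b\di p\di a\di]$ with a red digit in the segment from $b$ to $p$. (c3$^*$) needs $z=[\di b\di c\di a\di]$ with a red digit in the segment from $c$ to $a$.
\end{itemize}
I first test these pairs purely in terms of positions. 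Where positions alone do not decide, I look at the red digits lying between $b$ and $a$ in $z$, since each coatom $w\neq M(z)$ satisfies $M(w)=\lambda_s^J(w)$.

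\textbf{The interval $[t,z]$.} For the remaining pairs I take the element $t=(f_2,g_2)w_1$, or the analogous element, which is the bottom of the length-$2$ interval $[t,z]$ containing $w_1$ and $w_2$; if needed I form $t$ through the tableau criterion. I then use three facts.
\begin{itemize}
\item By Proposition~\ref{abababab}, $M(t)=(a,b)t$, and the same holds for $w_1$ and $w_2$.
\item By Lemma~\ref{quadrati} applied to $[t,z]$, the transpositions are forced: either they commute, or they share a digit and the third digit is extremal.
\item By Lemma~\ref{com_q}, the two coatoms cannot involve four distinct digits outside $\{a,b\}$.
\end{itemize}

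Lemma~\ref{com_q} settles (c1) against (c2) when $\{c_1,d_1\}\cap\{c_2,d_2\}=\emptyset$. If the pairs do share a digit, Lemma~\ref{quadrati} says the shared digit is a common red $c$ or a common black $d$. A common $c$ cannot sit both after $a$ and between $b$ and $a$. A common $d$ forces $t=(c_1,d)(c_2,d)\cdots$, a $3$-cycle, and this needs the extremality of Lemma~\ref{quadrati}(2). That extremality fails because $d>c_1,c_2$ with the wrong positional order. The same argument gives the claim $c_1=c_2$ for two coatoms of type (c2): they cannot be disjoint by Lemma~\ref{com_q}, and they cannot share only $d$ by the extremality argument.

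For (c3) against (c1), (c1$^*$) and (c2$^*$), one transposition is $(b,p)$ with $p<a$ and the other avoids $\{a,b\}$. I count the red digits to the left of $b$ and between $b$ and $a$ in $t$. The point is that $M(t)=\lambda_s^J(t)$, because $t<z$ and $t\neq M(z)$. Carrying this count back to $z$ through $w_1$ should force the red digits between $b$ and $a$ in $z$ to coincide with those in $t$. That would give $M(z)=\lambda_s^J(z)$, contradicting the choice of $z$. This is the same mechanism as in the proof of Lemma~\ref{com_q}.

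\textbf{Main obstacle.} I expect the hardest pairs to be (c3) against (c2$^*$) and (c1) against (c3$^*$). There the transpositions share a digit other than $a$ or $b$, and the red/black bookkeeping interacts with the requirement that each cover reverses an inversion with no intermediate values in between. I will handle these by explicit restricted one-line notation of $z$ and $t$, for instance $z=[\di b\di q\di a\di p\di]$ together with $p'$. For each possible placement I check either that $t\not\lhd w_i$ or that $t$ inherits the correct positions of the $h$-th and $(h+1)$-st red digits.
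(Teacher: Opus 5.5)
There is a genuine gap. Your reduction to items (1), (3), (5) via the companion matching is fine, and you name the right ingredient (each coatom $w\neq M(z)$ satisfies $M(w)=\lambda_s^J(w)$). But the core of your argument rests on a tool that is not available here.

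\textbf{The auxiliary element $t$.} The lemma is about two \emph{arbitrary} coatoms of $[u,z]$, and Proposition~\ref{class_finale} applies it to arbitrary pairs. Lemma~\ref{com_q} and Lemma~\ref{quadrati} on $[t,z]$, however, need some $t\in[u,z]$ with $t\lhd w_1$ and $t\lhd w_2$. Such a common lower cover need not exist: opposite coatoms of a $4$-crown have none, and the same happens in larger intervals that do admit special matchings. You also cannot construct one. The element $t=(f_2,g_2)w_1$ need not be covered by $w_1$ or $w_2$, and above all nothing gives $u\le t$; the Tableau Criterion cannot supply this without knowing $u$. This is exactly why the paper applies Lemma~\ref{com_q} only to coatoms linked through Proposition~\ref{catena_di_quadrati}. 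As a result, three parts of your argument are unsupported: the disjoint case of (c1) against (c2), the claim $c_1=c_2$ for two coatoms of type (c2), and (c3) against (c1), (c1$^*$), (c2$^*$).

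\textbf{Other errors.} Even when $t$ exists, your shared-$d$ argument fails. In Lemma~\ref{quadrati}(2) the digit required to be extremal is the non-shared one ($c_1$ or $c_2$), not $d$, and the condition is easily met. For example, $z=[\di d\di b\di c_1\di c_2\di a\di]$ with $c_1<c_2<d$ and $t=[\di c_1\di b\di c_2\di d\di a\di]$ satisfies it. Also, (c3) against (c3$^*$) is not a positional contradiction, since $p$ and $c$ may both lie between $b$ and $a$.

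\textbf{The missing idea.} No auxiliary $t$ is needed: compare each coatom separately with $z$. In every coatom $w\neq M(z)$, the digits $b$ and $a$ are the $h$-th and $(h+1)$-st red digits, and $w$ differs from $z$ only in the two positions fixed by Proposition~\ref{coatomi_di_z}. This is how the paper argues.

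If $w_1=(c_1,d_1)z$ is of type (c1), then $d_1$ lies left of $b$ and $c_1$ right of $a$. So the red witness of $w_2$ stays strictly between $b$ and $a$ in $w_1$, which is impossible. That witness is $c_2$ for (c2), a red digit of the segment from $b$ to $p$ for (c3), or one from $c$ to $a$ for (c3$^*$).

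For two coatoms of type (c2) with $c_1\neq c_2$, the red digit $c_2$ is not moved by $w_1$, so it stays between $b$ and $a$ in $w_1$.

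For (c3) against (c3$^*$), the swap $(b,p)$ does not change the set of digits to the left of $a$, and $(a,c)$ does not change those to the left of $b$. Hence in $z$ the digit $b$ is the $h$-th red digit and $a$ the $(h+1)$-st, i.e.\ $M(z)=\lambda_s^J(z)$, a contradiction.

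\textbf{A uniform bookkeeping.} Since every such coatom has exactly $h-1$ red digits to the left of $b$, record how passing from $z$ to $w$ changes that count. It goes up by one for (c1) and (c2), by at least one for (c3), stays the same for (c2$^*$) and (c3$^*$), and drops by one for (c1$^*$). Together with the analogous count to the left of $a$, this settles every pair. It also covers the sub-cases where the only red witness is one of the two digits moved by $w_1$ (e.g.\ $c=c_1$ in (c2) against (c3$^*$)). In particular, your ``hard'' pairs are routine: in (c1) against (c3$^*$) the two transpositions cannot even share a digit.
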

\begin{proof}
(\ref{quali_q1}).
Let $w_1 = (c_1,d_1)z$, with $c_1$ red and $d_1$ black. Then $z=[\di d_1 \di b \di a \di c_1 \di]$.  

Suppose that $w_2$ is of type (c2).  Let  $w_2 = (c_2,d_2)z$, with $c_2$ red and $d_2$ black. Then  $z=[\di d_i \di d_j \di b \di c_2 \di a \di c_1 \di ]$ with $\{i,j\}=\{1,2\}$. Then $c_2$ is between $b$ and $a$ in the one-line notation of $w_1$, which contradicts $M(w_1)=\lambda_s^J(w_1)$.

Suppose that $w_2$ is of type (c3). Let  $w_2 = (b,p)z$. Then 
 $z=[\di d_1  \di b \di p \di a \di c_1 \di ]$ and at least one digit between $b$ and $p$ with $p$ included is red. Then there is a red digit between $b$ and $a$ in  $w_1$, which is a contradiction.

Suppose  that $w_2$ is of type (c3$^*$). Let  $w_2 = (a,c)z$. Then 
 $z=[\di d_1  \di b \di c \di a \di c_1 \di ]$ and at least one digit between $c$ and $a$ with $c$ included is red. Then there is a red digit between $b$ and $a$ in $w_1$, which is a contradiction. 

(\ref{quali_q1*}). It is obtained from (\ref{quali_q1}) by considering the companion matching of $M$.

(\ref{quali_q2}).
Let $w_1 = (c_1,d_1)z$, with $c_1$ red and $d_1$ black. Then $z=[\di d_1 \di b \di c_1 \di a \di ]$.  

By (\ref{quali_q1}), the type of $w_2$ is not (c1).

Suppose  that $w_2$ is of type (c3$^*$). Let  $w_2 = (a,c)z$. Then either $z=[\di d_1  \di b \di c \di c_1 \di a  \di ]$ or
 $z=[\di d_1  \di b \di c_1 \di c \di a  \di ]$ and, in both cases, at least one digit between $c$ and $a$ with $c$ included is red. Then there is a red digit between $b$ and $a$ in $w_1$, which is a contradiction. 

Let  $w_2$ be of type (c2). Let $w_2 = (c_2,d_2)z$, with $c_2$ red and $d_2$ black. Towards a contradiction, suppose $c_1\neq c_2$. Then  $z=[\di d_{i} \di d_{j} \di b \di c_{i'} \di c_{j'} \di a \di]$,  with $\{i,j\}=\{i',j'\}=\{1,2\}$. Then $c_1$ (respectively, $c_2$) appears between $b$ and $a$ in $w_2$ (respectively, $w_1$), which is a contradiction.

(\ref{quali_q2*}). It is obtained from (\ref{quali_q2}) by considering the companion matching of $M$.

(\ref{quali_q3}).
Let $w_1 = (b,p)z$. Then $z= [\di b \di p \di a \di ]$.  By (\ref{quali_q1}), (\ref{quali_q1*}), (\ref{quali_q2*}), we need to show that the type of  $w_2$ is not (c3$^*$).

Towards a contradiction, suppose that $w_2$ is of type (c3$^*$). Let $w_2= (a,c)z$. 
In both $w_1$ and $w_2$,
the digits $b$ and $a$ are the $h$-th and $(h+1)$-st red digits. Hence in $(b,p)w_1$, the digit $a$ is the $(h+1)$-st red digit, and in $(a,c)w_2$ the digit $b$ is the $h$-th red digit. But both $(b,p)w_1$ and $(a,c)w_2$ are $z$, which contradicts $M(z)\neq \lambda_s^J(z)$. 

(\ref{quali_q3*}). It is obtained from (\ref{quali_q3}) by considering the companion matching of $M$.
\end{proof}

The following lemma holds for every interval in every Coxeter group.
\begin{prop}
\label{catena_di_quadrati}
Let $[x,y]$ be a Bruhat interval in an arbitrary Coxeter group. Let $w_1$ and $w_2$ be two coatoms of $[x,y]$. Then there exist $r \in \mathbb N$ and a sequence of coatoms $c_0,c_1,\ldots,c_r$ such that $c_0=w_1$, $c_r=w_2$, and, for each $i\in [r]$, there exists $t_i$ in $[x,y]$ such that $t_i \lhd c_{i-1}$ and $t_i \lhd c_{i}$. 
\end{prop}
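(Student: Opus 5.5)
The plan is to reduce the statement to connectivity of a graph on the coatoms. Let $\Gamma$ be the graph whose vertices are the coatoms of $[x,y]$, with $c$ and $c'$ adjacent if some $t\in[x,y]$ satisfies $t\lhd c$ and $t\lhd c'$. The statement is exactly that $\Gamma$ is connected. We use induction on $\ell(x,y)$. If $\ell(x,y)\le 1$, there is at most one coatom and we take $r=0$. If $\ell(x,y)=2$, the interval has exactly four elements, so it has two coatoms, and both cover $x$; take $r=1$ and $t_1=x$.

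For $\ell(x,y)\geq 3$ we use shellability of Bruhat intervals, or more concretely the fact that the dual order complex of the open interval $(x,y)$ is a sphere of dimension $\ell(x,y)-2\ge 1$, hence connected. Two coatoms $w_1,w_2$ are vertices of this complex. A path between them in the $1$-skeleton passes through elements of $(x,y)$. Every element $p$ of the path lies below some coatom. Consecutive elements $p<p'$ of the path both lie below any coatom above $p'$. So we get a sequence of coatoms in which consecutive ones share a common lower element $q\in(x,y)$. It remains to handle this situation: two coatoms $c,c'\ge q$ should be joined in $\Gamma$.

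This last step, and the main step overall, goes by induction applied to the interval $[q,y]$. Its coatoms are the coatoms of $[x,y]$ lying above $q$, and $\ell(q,y)<\ell(x,y)$. By the induction hypothesis, $c$ and $c'$ are joined by a chain of coatoms of $[q,y]$. In this chain, consecutive coatoms share an element $t_i\in[q,y]\subseteq[x,y]$ of corank $2$ that is covered by both. These are adjacencies in $\Gamma$ as well, so concatenating gives the required sequence $c_0,\dots,c_r$.

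Alternatively, one can avoid topology with a direct argument: take $q$ to be an atom below a chosen coatom and use that any two elements of $[x,y]$ lie above some atom. This suggests inducting on intervals $[q,y]$ with $q$ an atom, but linking different atoms' subintervals still needs a connectivity input. So the key input is the connectivity of $(x,y)$ for $\ell(x,y)\ge3$. This follows from Björner--Wachs EL-shellability (the order complex is a sphere), or more elementarily from the fact that every length-$2$ interval has exactly four elements. Using that fact, chains of covers between two maximal chains can be modified by diamond moves, which yields connectivity of the maximal-chain graph. The obstacle I expect is justifying this connectivity cleanly. I would first cite the sphere result from \cite{BB}, and fall back on the diamond-move argument if a self-contained proof is preferred.
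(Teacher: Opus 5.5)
Your argument is correct, but it takes a different route from the paper.

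\textbf{The paper's route.} The paper derives the statement in one step from \cite[Proposition~2.1]{CDM}: any two maximal chains of $[x,y]$ are joined by a sequence of maximal chains in which consecutive chains differ in a single element. Take chains through $w_1$ and $w_2$. Whenever the coatom changes, the rest of the chain stays fixed, so the element of length $\ell(y)-2$ on it serves as $t_i$.

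\textbf{Your route.} You instead use only that the order complex of each open interval $(q,y)$ of length at least $3$ is connected, which follows from the Bj\"orner--Wachs sphere theorem. You then upgrade ``common lower element'' to ``common lower cover'' by induction. That induction is sound: $q$ lies in the open interval, so $\ell(q,y)<\ell(x,y)$. Moreover, the coatoms of $[q,y]$ are exactly the coatoms of $[x,y]$ lying above $q$.

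\textbf{Comparison.} The paper's route is immediate once chain-connectivity is available. Yours relies on a more standard topological input, at the cost of an induction. Your induction is essentially the usual argument that connected open intervals yield a connected maximal-chain graph.

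\textbf{A caveat on your fallback.} The fact that every length-$2$ interval has exactly four elements does not by itself give connectivity under diamond moves. For example, take the face poset of two disjoint triangles with $\hat 0$ and $\hat 1$ adjoined. Every length-$2$ interval in it has four elements, yet its proper part is disconnected. An elementary proof needs an EL-labelling, such as Dyer's labelling via reflection orders. With such a labelling, any non-increasing maximal chain can be replaced, at a descent, by a lexicographically smaller chain via a diamond move. Hence every maximal chain is connected to the unique increasing chain.
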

\begin{proof}
The result follows by \cite[Proposition~2.1]{CDM}, which states that, given  two complete chains $C$ and $C'$ from $x$ to $y$, 
there exist $k \in \mathbb N$ and a sequence $C =C_1, C_2, \ldots , C_k= C'$ of complete chains from $x$ to $y$ such that $C_i$ and $C_{i+1}$ differ by one element, for all $i \in [1,k-1]$.
\end{proof}

\begin{prop}
\label{class_finale}
One of the following mutually exclusive statements holds.
\begin{enumerate}
\item \label{class_finale_1}
Every coatom of $[u,z]$ distinct from $M(z)$ is of type  (c1).
\item \label{class_finale_2} Every coatom of $[u,z]$ distinct from $M(z)$ is of type (c2) or of type (c3), and when two coatoms $w_1$ and $w_2$ are both of type (c2), they satisfy  $w_1 = (c_1,d_1)z$ and $w_2 = (c_2,d_2)z$, with $c_1=c_2$.
\item \label{class_finale_1*} Every coatom of $[u,z]$ distinct from $M(z)$ is of type  (c1$^*$).
\item \label{class_finale_2*} Every coatom of $[u,z]$ distinct from $M(z)$ is either of type  (c2$^*$) or of type (c3$^*$), and when two coatoms $w_1$ and $w_2$ are both of type (c2$^*$), they satisfy  $w_1 = (p_1,q_1)z$ and $w_2 = (p_2,q_2)z$, with $q_1=q_2$.
\end{enumerate}
\end{prop}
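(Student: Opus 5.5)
We will combine the pairwise incompatibilities of Lemma~\ref{quali_q} with the connectivity statement of Proposition~\ref{catena_di_quadrati}. Regard the set $\mathcal C$ of coatoms of $[u,z]$ distinct from $M(z)$; by Proposition~\ref{coatomi_di_z} each has exactly one of the six types. Two coatoms $w_1,w_2$ are \emph{adjacent} if some $t\in[u,z]$ satisfies $t\lhd w_1$ and $t\lhd w_2$. The first step is to check that, apart from $M(z)$ itself, the chains provided by Proposition~\ref{catena_di_quadrati} can be taken inside $\mathcal C$, or more simply, to handle $M(z)$ directly: every coatom adjacent to $M(z)$ shares with it a common lower cover $t$, and since $M$ is left multiplication by $(a,b)$ on all of $[u,z]$ (Proposition~\ref{abababab}), the square $t, M(t), w, z$ shows that $M(t)$ and $w$ are adjacent via $t$ and $M(w)$ and $M(z)$ are adjacent too; hence any chain through $M(z)$ can be rerouted, replacing $M(z)$ by a path in $\mathcal C$ (using Lemma~\ref{quadrati} on the length-two intervals $[t,z]$, whose two middle elements are $w$ and either $M(z)$ or another coatom). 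Thus the adjacency graph on $\mathcal C$ is connected.

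The second step uses Lemma~\ref{com_q}: adjacent coatoms $w_i=(f_i,g_i)z$ cannot involve four distinct digits outside $\{a,b\}$. For types (c1),(c2),(c1$^*$),(c2$^*$) both digits lie outside $\{a,b\}$, so two adjacent such coatoms share a digit; for (c3) and (c3$^*$) only one digit lies outside. Combined with Lemma~\ref{quali_q}, which already forbids all the mixed pairs except (c1)–(c1), (c2)–(c2), (c2)–(c3), (c3)–(c3), (c1$^*$)–(c1$^*$), (c2$^*$)–(c2$^*$), (c2$^*$)–(c3$^*$), (c3$^*$)–(c3$^*$), (c1)–(c1$^*$)? — here one must check the remaining cross pairs: (c1)–(c1$^*$) and (c2)–(c2$^*$), (c2)–(c1$^*$), (c1)–(c2$^*$). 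Those I expect to rule out with Lemma~\ref{com_q}: in (c1)/(c2) both digits exceed $b$, in the starred versions both are less than $a$, so four distinct digits appear and the coatoms cannot be adjacent.

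Actually Lemma~\ref{quali_q} is stated for arbitrary pairs, not just adjacent ones, so the cleaner argument is: the allowed type-sets that are pairwise compatible are exactly those in (1)–(4), provided the cross pairs between unstarred and starred types are excluded. Those cross pairs are only excluded for adjacent coatoms (via Lemma~\ref{com_q}), so here the connectivity matters: along a chain, consecutive coatoms are adjacent, so the chain never crosses from the unstarred family $\{$(c1),(c2),(c3)$\}$ to the starred one; with connectivity, all coatoms lie in one family. Within the unstarred family, (c1) is incompatible with (c2),(c3) outright, giving (1) or (2), and the extra condition $c_1=c_2$ is Lemma~\ref{quali_q}(\ref{quali_q2}); symmetrically for the starred family. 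Mutual exclusivity is immediate since types are distinct and $\mathcal C\neq\emptyset$ (if $\ell(u,z)\ge2$; the degenerate case is vacuous and should be noted). The main obstacle is the rerouting around $M(z)$ in the connectivity step, which I would settle via the explicit $(a,b)$-square structure.
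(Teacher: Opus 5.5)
\textbf{Gap: connectivity of the coatoms other than $M(z)$.} Your overall strategy is the paper's. Lemma~\ref{quali_q} disposes of every mixed pair except (c1)/(c2) against (c1$^*$)/(c2$^*$). Those four cross pairs are then to be excluded by applying Lemma~\ref{com_q} to consecutive coatoms in a sequence supplied by Proposition~\ref{catena_di_quadrati}.

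The paper does exactly this and never mentions $M(z)$, so you have identified a real weak point. Your rerouting, however, does not repair it. For every $w\in\mathcal C$, the element $M(w)=(a,b)w$ lies in $[u,z]$ and satisfies $M(w)\lhd w$ and $M(w)\lhd M(z)$ (apply specialness to $w\lhd z$). Hence $w,M(z),w'$ is always an admissible sequence, and Proposition~\ref{catena_di_quadrati} alone gives no information. Your square argument is empty exactly here: with $t=M(w)$ one has $M(t)=w$. More generally, you never construct a path inside $\mathcal C$ from $w_1$ to $w_2$, so ``thus the adjacency graph on $\mathcal C$ is connected'' is unproved. It is in fact true when $\ell(u,z)\ge 3$, but it needs a genuine argument, for instance a topological one.

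\textbf{A direct fix, with no connectivity needed.} Let $w_1=(c,d)z$ be of type (c1) or (c2) and $w_2=(p,q)z$ of type (c1$^*$) or (c2$^*$). Then $p<q<a<b<c<d$. Put $t=(p,q)(c,d)z$.

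\begin{enumerate}
\item Exchanging the values $c,d$ places no value of $(p,q)$ between $q$ and $p$. Exchanging $p,q$ places no value of $(c,d)$ between $d$ and $c$. Hence $t\lhd w_1$ and $t\lhd w_2$.
\item In each row of the tableau, the replacement $d\mapsto c$ only decreases the entries of the sorted row whose values lie in $[c,d]$. Likewise $q\mapsto p$ only decreases entries with values in $[p,q]$. These are disjoint blocks of the sorted row, so $T(t)=\min(T(w_1),T(w_2))$ entrywise.
\item Since $u\le w_1$ and $u\le w_2$, the Tableau Criterion gives $u\le t$.
\end{enumerate}

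Thus $t\in[u,z]$ is a common lower cover of $w_1$ and $w_2$, and the two transpositions involve four distinct digits outside $\{a,b\}$. This contradicts Lemma~\ref{com_q} directly. With this replacement your argument is complete, and Proposition~\ref{catena_di_quadrati} is not needed.

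For mutual exclusivity, note that $\ell(u,z)\ge 2$ holds automatically, so $\mathcal C\neq\emptyset$. Indeed, if $\ell(u,z)=1$ then $M(z)=u$, so $z=M(u)=\lambda_s^J(u)$ and $M(z)=\lambda_s^J(z)$, contradicting the choice of $z$.
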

\begin{proof}
Let $w$ be a coatom of $[u,z]$, with $w\neq M(z)$.

Suppose first that $w$ is of type (c1) and let $w'$ be another coatom. By Lemma~\ref{quali_q}, (\ref{quali_q1}), the type of $w'$ is neither (c2), nor (c3), nor (c3$^*$). Towards a contradiction, suppose that $w'$ is of type either (c1$^*$) or (c2$^*$). Then, along the sequence whose existence is guaranteed by Proposition~\ref{catena_di_quadrati}, there is an interval of length 2 with $z$ as the top element and containing two coatoms $r_1$ and $r_2$, with $r_1$ of type (c1) and $r_2$ of type (c1$^*$) or (c2$^*$). This contradicts  Lemma~\ref{com_q}.
Hence, (\ref{class_finale_1}) holds. 

Suppose now that $w$ is of type (c2) and let $w'$ be another coatom. By Lemma~\ref{quali_q}, (\ref{quali_q2}), we need to show that $w'$ can be neither of type (c1$^*$), nor of type (c2$^*$). Towards a contradiction, suppose that $w'$ is of type either (c1$^*$) or (c2$^*$). Then, as above, we notice that, along the sequence whose existence is guaranteed by Proposition~\ref{catena_di_quadrati}, there is an interval of length 2 with $z$ as the top element and containing two coatoms $r_1$ and $r_2$, with $r_1$ of type (c2) and $r_2$ of type (c1$^*$) or (c2$^*$). This contradicts  Lemma~\ref{com_q}.
Hence, (\ref{class_finale_2}) holds. 

Suppose now that $w$ is of type (c3). Then (\ref{class_finale_2}) holds by Lemma~\ref{quali_q}), (\ref{quali_q3}).

When $w$ is of type (c1$^*$), (c2$^*$), or (c3$^*$),  the result follows by the previous discussion applied to the companion matching.  
\end{proof}

\section{Towards a contradiction II} \label{badcases}

As in the previuos section, we fix a special matching $M$ of an interval $[u,v]$ in a Coxeter group $W$ of type $A$. We set $J=J(M)$ and $s=s(M)$ and suppose that $M(x)\neq \lambda_s^J(x)$ for some $x\in [u,v]$. Let $z$ be minimal with the property that
\[
M(z)\neq \lambda_s^J(z)
\qquad\text{and}\qquad
M(z)\lhd z.
\]

In this section, we show that actually none of the mutually exclusive conditions in Proposition~\ref{class_finale} can occur, finding a contradiction and thus proving that all special matchings are of the form $\lambda_s^J$.

We will use the following well-known result several times without explicit mention.
\begin{lem}
Let $x,y \in W$. The group generated by $\{cy^{-1} :  c\in [x,y], c\lhd y\}$ coincides with the group generated by $\{pq^{-1} : p,q \in [x,y], p\lhd q\}$.
\end{lem}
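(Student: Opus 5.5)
We prove the two inclusions separately. Write $G$ for the group generated by $\{cy^{-1} : c\in[x,y],\ c\lhd y\}$ and $H$ for the group generated by $\{pq^{-1} : p,q\in[x,y],\ p\lhd q\}$. Since every coatom $c$ satisfies $c\lhd y$, each generator of $G$ is a generator of $H$, so $G\subseteq H$.

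For $H\subseteq G$, it suffices to show that $pq^{-1}\in G$ for every covering relation $p\lhd q$ in $[x,y]$. We argue by induction on $\ell(q,y)$.

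If $\ell(q,y)=0$, then $q=y$ and $p$ is a coatom, so $pq^{-1}$ is a generator of $G$.

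Suppose now $\ell(q,y)\geq 1$, and choose $q'$ with $q\lhd q'\le y$. The interval $[p,q']$ has length $2$, so it consists of exactly four elements $p,q,r,q'$. The two covers $q\lhd q'$ and $r\lhd q'$ both have top element $q'$, and $\ell(q',y)<\ell(q,y)$. Hence, by the induction hypothesis, $r_1:=qq'^{-1}$ and $r_2:=rq'^{-1}$ lie in $G$. These are reflections, and they are distinct because $q\neq r$. Set also $t_1:=pq^{-1}$ and $t_2:=pr^{-1}$, so that the two paths from $p$ to $q'$ are labelled $p\xrightarrow{t_1}q\xrightarrow{r_1}q'$ and $p\xrightarrow{t_2}r\xrightarrow{r_2}q'$.

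We now apply Lemma~\ref{quadrati}, which is available since we are in type $A$. There are two cases.

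\begin{itemize}
\item If $t_1r_1=r_1t_1$, the lemma gives $t_1=r_2\in G$.
\item Otherwise, the lemma says the labels $t_1,r_1,t_2,r_2$ are, up to relabelling, the three transpositions $(a,b),(b,c),(a,c)$ for some distinct $a,b,c$. The distinct transpositions $r_1$ and $r_2$ are two of these three. Any two distinct transpositions on $\{a,b,c\}$ generate the whole symmetric group on $\{a,b,c\}$, which contains all three transpositions. In particular $t_1\in\langle r_1,r_2\rangle\subseteq G$.
\end{itemize}

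In either case $pq^{-1}=t_1\in G$, which completes the induction and the proof.

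The only substantive step is the last one: controlling the label of the lower edge $p\lhd q$ using only the two upper labels of the length-$2$ interval. This is exactly what Lemma~\ref{quadrati} provides in type $A$. For an arbitrary Coxeter group one would instead need Dyer's result that a length-$2$ interval lies in a coset of a dihedral reflection subgroup generated by its two upper edge labels; I would avoid that here by working only in type $A$.
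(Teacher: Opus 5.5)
Your proof is correct. The paper gives no argument for this lemma: it is quoted as a well-known fact and then used without mention. So your proof is a genuinely self-contained route rather than a variant of the paper's.

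Your downward induction on $\ell(q,y)$ transfers membership in $G$ from the two upper edges of the length-$2$ interval $[p,q']$ to its lower edge $p\lhd q$, using Lemma~\ref{quadrati}. In this use, Lemma~\ref{quadrati} amounts to the identity $r_1t_1=r_2t_2$: this product is a double transposition or a $3$-cycle in $S_n$, and two distinct reflections among its factors generate every other factor. The argument proves exactly the type~$A$ statement the paper needs, since the lemma sits in a section where $W$ is of type $A$.

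Your closing remark is wrong, however. The type~$A$ hypothesis is essential rather than a convenience, because the lemma fails for general Coxeter groups. Take $B_2$ with $(st)^4=e$ and the interval $[s,sts]=\{s,st,ts,sts\}$.
\begin{itemize}
\item The coatom labels are $st\cdot(sts)^{-1}=ststs=tst$ and $ts\cdot(sts)^{-1}=s$. They generate only the Klein four-group $\{e,s,tst,stst\}$.
\item The lower labels are $s(st)^{-1}=sts$ and $s(ts)^{-1}=t$, and neither lies in that group.
\end{itemize}
So a length-$2$ interval need not lie in a coset of the reflection subgroup generated by its two upper labels. Dyer's theorem only supplies some dihedral reflection subgroup, which here is all of $B_2$. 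You should drop that remark, or replace it by the observation that the statement is genuinely type-$A$ specific.
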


\subsection{Coatoms of type (c1)}
In this subsection, we suppose that all coatoms of $z$ distinct from $M(z)$ are of type (c1). In particular, we have
\begin{equation}\label{zc1}
    z=[\di d_1 \ldots d_r \di \color{red}b \color{black}\di \color{red}a \color{black}\di \color{red}c_1 \color{black}\ldots \color{red}c_s \color{black}\di],
\end{equation}
for some $r,s\geq 1$, with $a<b<c_1<\cdots<c_s<d_1<\cdots < d_r$ where $c_p$ is red for all $p\in [s]$ and $d_q$ is black for all $q\in [r]$, with the further condition that for all $p$ there exists $q$ such that $(c_p, d_q)z$ is a coatom and, similarly, for all $q$ there exists $p$ such that  $(c_p, d_q)z$ is a coatom. We observe that all elements in $[u,z]$ are obtained by a permutation of the entries $c_n$ and $d_m$, and possibly swapping $a$ and $b$. Moreover, since $M(x)=(a,b)x=\lambda_s^J(x)$ for all $x$ in $[u,z]\setminus \{z,M(z)\}$, all such elements have exactly one element $c_p$ in the positions occupied by $d_1,\ldots,d_r$ in $z$ (they must have the same number of red digits to the left of $a$).

\begin{lem}
\label{uinc1}
Being 
    $z=[\di d_1 \ldots d_r \di \color{red}b \color{black}\di \color{red}a \color{black}\di \color{red}c_1 \color{black}\ldots \color{red}c_s \color{black}\di]$,
we have \[u= [\di \color{red}c_1 \color{black}\di d_1 \ldots d_{r-1}\di \color{red}a \color{black}\di \color{red}b \color{black}\di \color{red}c_2 \color{black}\ldots \color{red}c_s \color{black}\di d_r \di ].\]
\end{lem}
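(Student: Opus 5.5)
The plan is to pin down $u$ in two stages. First I determine which values occupy which blocks of positions in $u$. Then I show that within each block the entries are arranged as low as possible in Bruhat order.

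\emph{Stage 1: block contents.} As observed after \eqref{zc1}, every element of $[u,z]$ differs from $z$ only by a permutation of the entries $c_p,d_q$ among their positions in $z$, and possibly a swap of $a$ and $b$. Every element of $[u,z]\setminus\{z,M(z)\}$ has exactly one $c_p$ among the $r$ positions occupied by $d_1,\ldots,d_r$ in $z$; call these positions the $d$-block, and call the positions of the $c$'s the $c$-block. Since $M(u)=(a,b)u\rhd u$ and $a<b$, the digit $a$ precedes $b$ in $u$. So $u$ is determined by:
\begin{itemize}
\item which $c_p$ sits in the $d$-block;
\item which $d_q$ has been pushed into the $c$-block;
\item the orders within the two blocks.
\end{itemize}
I would first show that $u\ge u_0$, where $u_0$ is the permutation displayed in the statement. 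By restricted tableaux, $u_0$ is the entrywise minimum among all permutations of this shape. Its $d$-block carries the smallest possible set $\{c_1,d_1,\ldots,d_{r-1}\}$, sorted increasingly, and its $c$-block carries $\{c_2,\ldots,c_s,d_r\}$, also sorted increasingly. Every row of the restricted tableau of $u_0$ is therefore bounded by the corresponding row of any competitor.

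\emph{Stage 2: $u=u_0$.} It remains to prove $u_0\in[u,z]$, that is, $u\le u_0$; this is the main obstacle. My first attempt is to argue by contradiction using minimality of $u$ together with the structural results. Suppose $u\neq u_0$. Then within the $d$-block or the $c$-block, or between them, there is a transposition of entries $c_p,d_q$ (or $c_p,c_{p'}$, or $d_q,d_{q'}$) producing a cover $u'\lhd u$. I would then use Lemma~\ref{absednero} (with the black digit $d_q$) and the basic chains of Proposition~\ref{chains_ij} to show that such a configuration forces an element of $[u,z]$ violating the shape constraint above.

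The basic chains are useful here because they locate $j(M)$ and the red digits $b_1<\cdots<b_s$ to the right of $b$ in $u$. Since the $d$'s are black, this shows $j(M)<d_1$ and that the $c$'s must lie to the right of $b$ in $u$, except for the single one in the $d$-block. The hypothesis that every $c_p$ and every $d_q$ occurs in some coatom $(c_p,d_q)z$ supplies enough coatoms below which $u$ must lie. I would compare the restricted tableaux of $u$ with those of these coatoms, row by row, at the rows ending at the last position of the $d$-block and at the position of $d_r$ in $u_0$. The goal is to force $c_1$, and not some other $c_p$, into the $d$-block, and to force $d_r$ out of it.

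Once the block contents agree with those of $u_0$, the internal orders should follow quickly. An inversion inside a block of $u$ involving two $d$'s, or two $c$'s, would be removable by a swap of two same-colored digits. Such a swap preserves the counts of red digits, so Lemma~\ref{absednero} and Remark~\ref{4.4rmk} would keep $M=(a,b)\cdot$ on the resulting element. If I can also show that this element stays in $[u,z]$, it lies strictly below $u$, contradicting that $u$ is the minimum of $[u,z]$. This step of staying inside $[u,z]$ is where I am least sure the argument closes, so I would check it first.
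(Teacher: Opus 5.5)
\textbf{There is a genuine gap in Stage~2.}

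Your Stage~1 is correct, but it only gives the easy inequality $u_0\le u$. It also never uses the hypothesis that every $c_p$ and every $d_q$ occurs in some coatom $(c_p,d_q)z$, and without that hypothesis the statement fails. So the whole content of the lemma is the reverse inequality $u\le u_0$, and Stage~2 does not establish it.

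The mechanism you propose there cannot work. An element obtained from $u$ by removing an inversion, or any $u'\lhd u$, lies strictly below $u$, so it is automatically outside $[u,z]$. The fact that $u$ is the bottom of $[u,v]$ places no restriction on elements below it, so no contradiction can arise this way. In particular, the step ``check that it stays inside $[u,z]$'' asks for something that is always false. Lemma~\ref{absednero} and the basic chains of Proposition~\ref{chains_ij} are not the relevant tools here.

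\textbf{The missing idea.} It is the one you mention only in passing: $u\le w$ for every coatom $w=(c_p,d_q)z$. Read row by row through the restricted tableaux and combined with the shape constraint, these upper bounds determine $u$ entry by entry, so no lower bound is needed.
\begin{enumerate}
\item Let $k$ be the position of $d_1$ in $z$. Some coatom $(c_p,d_1)z$ has $c_p$ in position $k$, so the first row gives $u(k)\le c_p$. Hence $u(k)$ is a $c$, and it is the only $c$ in the $d$-block of $u$.
\item Some coatom $(c_1,d_q)z$ has first $q$ displayed entries $d_1,\ldots,d_{q-1},c_1$, with minimum $c_1$. The corresponding minimum for $u$ is $u(k)$, so row $q$ forces $u(k)=c_1$.
\item Inductively, for $m=1,\ldots,r-1$, a coatom $(c_p,d_{m+1})z$ has first $m+1$ displayed entries $\{c_p,d_1,\ldots,d_m\}$. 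Those of $u$ are $c_1,d_1,\ldots,d_{m-1}$ and one further $d$, and row $m+1$ forces that $d$ to be $d_m$.
\item The entry $a$ precedes $b$ in $u$ because $u\lhd(a,b)u$.
\item The $c$-block is handled by the mirror-image argument, reading final segments of the one-line notation (this is what passing to the companion matching amounts to). A coatom $(c_s,d_q)z$ forces the last displayed entry of $u$ to be at least $d_q$, hence a $d$, hence $d_r$. Then, for each $m$, a coatom $(c_{s-m},d_q)z$ forces the entry of $u$ in the position of $c_{s-m}$ to be $c_{s-m+1}$.
\end{enumerate}
This is the paper's proof. Once it is carried out, both your Stage~1 and the inversion-removal step become unnecessary.
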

\begin{proof}
Let $k$ be the position of $d_1$ in $z$, i.e., $k= z^{-1}(d_1)$, and note by the previous discussion that $u(k)\in \{c_1,\ldots,c_s,d_1,\ldots,d_r\}$. Since there is a coatom $w_1$ such that $w_1(k)=c_p$ for some $p$, necessarily $u(k)=c_l$ for some $l$, otherwise $u\nleq w_1$ by the Tableau Criterion. Moreover, since there exists a coatom $w_2=(c_1,d_q)z$ for some $q$, necessarily $l=1$, otherwise $u\nleq w_2$ by the Tableau Criterion.
Considering a coatom $w_3=(d_2,c_p)z$ for some $p$, we deduce $u= [\di c_1 \di d_1 \di ]$ with $u^{-1}(d_1)=z^{-1}(d_2)$, and an inductive argument shows $u= [\di c_1 \di d_1\ldots d_{r-1} \di]$, with $u^{-1}(d_{m})=z^{-1}(d_{m+1})$ for all $m=1,\ldots,r-1$. 
This argument applied to the companion matching completes the proof.
\end{proof}
\begin{prop}\label{prop(c1)}
It is not possible that all coatoms of $[u,z]$ distinct from $M(z)$ are of type \textnormal{(c1)}.
\end{prop}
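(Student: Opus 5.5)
We argue by contradiction. Assume every coatom of $[u,z]$ other than $M(z)$ is of type (c1), so that $z$ has the form \eqref{zc1} and, by Lemma~\ref{uinc1},
\[
u= [\di \color{red}c_1 \color{black}\di d_1 \ldots d_{r-1}\di \color{red}a \color{black}\di \color{red}b \color{black}\di \color{red}c_2 \color{black}\ldots \color{red}c_s \color{black}\di d_r \di ].
\]
The plan is to contradict one of two facts about $u$: the shape of the basic chains of Proposition~\ref{chains_ij}, or the identity $M(u)=\lambda_s^J(u)$ from Proposition~\ref{ubase}. Since $u<z$ and $u\neq M(z)$, we know $M(u)=(a,b)u\rhd u$, and $a,b$ occupy the $h$-th and $(h+1)$-st red positions of $u$. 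The problem is that the red digit $c_1$ lies to the left of $a,b$ in $u$, whereas in $z$ the digits to the left of $b$ are $d_1,\dots,d_r$ (black) and the block preceding them. So I will first compare the number of red digits to the left of $a$ in $u$ with the number to the left of $b$ in $z$.

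\textbf{Step 1 (counting red digits).} Outside the positions displayed in \eqref{zc1}, the permutations $u$ and $z$ agree. In $u$, the red digits to the left of $a$ are the common prefix reds together with $c_1$. In $z$, the red digits to the left of $b$ are only the common prefix reds. Hence the index of $b$ among the red digits of $z$ is one less than the index of $a$ among the red digits of $u$, namely $h-1$ versus $h$. Consequently $M(z)=(a,b)z$ swaps the $(h-1)$-st and $h$-th red digits of $z$. This is consistent with $M(z)\neq\lambda_s^J(z)$, so it is not yet a contradiction; it only pins down the discrepancy.

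\textbf{Step 2 (exploiting $j(M)$ and the chains).} Every element of $[u,z]$ is matched by $(a,b)$. By the definition of $J(M)$, the digits $c_p$ are red, so they lie in $[i(M),j(M)]$, and the black digits $d_q$ satisfy $d_q>j(M)$. I will apply Proposition~\ref{chains_ij} to $u$, which produces a right basic chain $u=u'_0\lhd u'_1\lhd\cdots$ with $b_1<\cdots<b_s=j(M)$. These $b_m$ appear in $u$ to the right of $b$ in increasing order, and each $u'_m$ lies in $[u,v]$.

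The key observation is that $c_1$ is red and sits to the left of $a$ in $u$. I want to show it can serve as a red digit that any special matching must treat consistently. Concretely, consider $x=(c_1,d_q)u$ or a similar small modification inside $[u,z]$. Lemma~\ref{absednero} then forces $M$ to act by $(a,b)$ along moves involving the black digits $d_q$. Alternatively, I will look at the element $y$ obtained from $z$ by moving $c_1$ back into the $d$-block, which is a coatom of type (c1). On $y$ the matching is $\lambda_s^J$, so $a,b$ are the $h$-th and $(h+1)$-st reds of $y$. But $y=(c_p,d_q)z$ shifts exactly one red into the left block, so $b$ is the $h$-th red of $y$. This gives consistency, and the contradiction has to come from elsewhere.

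\textbf{Step 3 (the main obstacle).} The real contradiction should come from a length-2 or length-3 subinterval near $u$ on which $M$ cannot be special. I will take the atom $x=(c_1,d_1)u$, obtained by moving $c_1$ rightward past black digits. This is a cover because $c_1<d_1$ and $c_1$ precedes $d_1$ in $u$, and $x\in[u,z]$ by the Tableau Criterion. In $x$ the number of reds to the left of $a$ drops by one, so $\lambda_s^J(x)\neq(a,b)x$, while $M(x)=(a,b)x$. This contradicts the fact that $M$ agrees with $\lambda_s^J$ strictly below $z$, by the minimality of $z$. The delicate point is justifying $x\in[u,z]$ and $x\neq z$, which follows when $r\ge 1$ and $\ell(u,z)>1$. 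If $x$ turns out not to be in the interval, I will instead use the basic chain together with Lemma~\ref{absednero} applied to the black digit $d_r$, which sits to the right of $b$ in $u$. That chain should force $j(M)\ge d_r$, contradicting that $d_r$ is black.
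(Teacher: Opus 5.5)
Your proposal has a genuine gap. Steps 1 and 2 are correct, but as you note they yield no contradiction, and the decisive Step 3 fails.

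\textbf{Why Step 3 fails.} The claim that the number of red digits to the left of $a$ drops in $x=(c_1,d_1)u$ is false when $r\ge 2$. By Lemma~\ref{uinc1}, $d_1$ sits in $u$ at the position that $d_2$ occupies in $z$, which is still to the left of $a$. So after the swap, $c_1$ remains to the left of $a$. Hence, even if $x\in[u,z]$, we get $\lambda_s^J(x)=(a,b)x=M(x)$, and nothing is contradicted.

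When $r=1$, $d_1$ is the last displayed entry of $u$, and there are two subcases.
\begin{itemize}
\item If $s\ge 2$, then $c_2\in(c_1,d_1)$ lies between $c_1$ and $d_1$, so $x$ is not an atom.
\item If $s=1$, then $x=(c,d)u$ is exactly $M(z)=(a,b)z$. This element is excluded from the minimality of $z$, since its $M$-partner $z$ lies above it.
\end{itemize}

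This is not a technicality. As observed right after \eqref{zc1}, every element of $[u,z]\setminus\{z,M(z)\}$ has exactly one $c_p$ in the positions of $d_1,\ldots,d_r$. So $(a,b)$ and $\lambda_s^J$ agree on all of them, and no contradiction can be extracted from $[u,z]$ alone. Your fallback, that the chain ``should force $j(M)\ge d_r$'', is not an argument.

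\textbf{The missing idea.} You must leave $[u,z]$, using the right basic chain of Proposition~\ref{chains_ij}. You invoke this chain in Step 2 but never exploit it.
\begin{itemize}
\item For $s=1$: $c$ is red and lies to the left of $a,b$ in $u$, so it is not a chain digit. Hence $b_{m-1}<c<b_m$ for some $m$, and $M(u'_m)=(a,b_m)u'_m$.
\item For $s>1$: take $m$ with $b_{m-1}<c_2\le b_m$ and show that $b_m=c_2$.
\end{itemize}
Then construct an explicit $\alpha$ from $u'_m$ by a sequence of upward covers involving only the black digits $d_i$. This construction gives three things:
\begin{itemize}
\item $\alpha\ge u'_m\ge u$, and Lemma~\ref{absednero} transports the matching to give $M(\alpha)=(a,b_m)\alpha$;
\item $\alpha\le v$, verified through $T(\alpha)\le\max(T(z),T(M(u'_m)))$;
\item a digit strictly between $a$ and $b_m$ in value (namely $c$, resp.\ $b$) sits between $a$ and $b_m$ in the one-line notation of $\alpha$.
\end{itemize}
Consequently $(a,b_m)\alpha$ is not adjacent to $\alpha$ in the Hasse diagram, contradicting that $M$ is a matching. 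None of these ingredients appears in your proposal: the construction of $\alpha$, the tableau bound, and the transport along moves involving black digits.
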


\begin{proof}Omitting (here and elsewhere) the symbols $\di$ for the sake of clarity of notation, we have
\[
z=[d_1d_2\ldots d_r \color{red}b a c_1c_2\color{black}\ldots \color{red}c_s\color{black}]
\]
and 
\[
u=[\color{red}c_1 \color{black}d_1\ldots d_{r-1} \color{red}a b c_2\cdots c_s \color{black}d_r]
\]
by Lemma \ref{uinc1}.
We first consider the case $s=1$, i.e.
\[
u=[\color{red}c  \color{black}d_1\ldots d_{r-1} \color{red} a  b  \color{black}d_r ]
\]
and
\[
z= [d_1 d_2\ldots d_r\color{red} b a c\color{black}]
\]
where $a<b<c<d_1<\cdots<d_r$ with $c$ red and $d_q$ black for all $q$. Since $c$ is red, by Proposition \ref{chains_ij} there exists a right basic chain given by $b_1,\ldots, b_m$ (all red) with $b_1<\cdots <b_{m-1}<c<b_m$ such that 
\[
u=[\color{red}c  \color{black}d_1  \ldots  d_{r-1}  \color{red}a  b  b_1 \ldots  b_{k} \color{black}d_r  \color{red}b_{k+1} \ldots b_m \color{black}]
\]
and so
\[
z=[d_1d_2 \ldots d_r \color{red}b a b_1\color{black}\ldots \color{red}b_kcb_{k+1} \ldots b_m\color{black}]
\]
where we observe that $k< m$ (i.e., $b_m$ is to the right of $d_r$ in $u$) because $(c,d_1)z\lhd z$.
For notational convenience, let $b_0=b$. Lemma \ref{chains_ij} provides that the element
\[
u'_m=[\color{red}c \color{black}d_1\ldots d_{r-1} \color{red}a  b_m b_0\color{black}\ldots \color{red}b_{k-1} \color{black}d_r \color{red}b_k \cdots b_{m-1}\color{black}]
\]
satisfies $M(u'_m)=(a,b_m)u'_m$.
Now consider the element
\[
\alpha =[d_1 d_2\ldots d_r \color{red}a c b_0 \color{black}\ldots \color{red}b_{k-1}b_m b_{k}\color{black}\ldots \color{red}b_{m-1}\color{black}].
\]
The restricted tableaux have $r+m+3$ rows. We observe that $T(\alpha)$ agrees with $T(z)$ in the first $r$ rows and with $T(M(u'_m))$ in the last $m-k$ rows. The other rows are shown in the following table:

\begin{center}\begin{tabular}{|l|l|l|}
\hline
$T(\alpha)$    &$T(z)$&$T(M(u'_n))$  \\
\hline \hline
    $ad_1\ldots d_r$ &$b_0d_1\ldots d_r$ & $c b_m d_1\ldots d_{r-1}$ \\
    \hline
    $acd_1\ldots d_r$ &$ab_0d_1\ldots d_r$ & $ac b_m d_1\ldots d_{r-1}$ \\
    \hline
    $ab_0cd_1\ldots d_r$ &$ab_0b_1d_1\ldots d_r$ & $ab_0c b_m d_1\ldots d_{r-1}$ \\
    \hline
    $\ldots$&$\ldots$ & $\ldots$\\
    \hline
    $ab_0\ldots b_{k-1}cd_1\ldots d_r$ &$ab_0\ldots b_kd_1\ldots d_r$ & $ab_0\ldots b_{k-1}c b_m d_1\ldots d_{r-1}$ \\
    \hline
    $ab_0\ldots b_{k-1}cb_m d_1\ldots d_r$ &$ab_0\ldots b_kcd_1\ldots d_r$ & $ab_0\ldots b_{k-1}c b_m d_1\ldots d_{r}$ \\
    \hline
\end{tabular}\end{center}

and so we can deduce that $T(\alpha)$ is smaller than the maximum of the tableaux of $z$ and $M(u_m')$, and in particular $\alpha \leq v$.
We also observe that $\alpha$ is obtained from $u_m'$ by a sequence of multiplications on the left by transpositions that involve elements $d_i$, namely:
\[\alpha=(c,d_r)(c,d_{r-1})\cdots (c,d_1)(b_m,d_r)u'_m,
\]
which increases in Bruhat order at every step.
This implies that $\alpha\geq u_m'$ and in particular $\alpha \in [u,v]$. 

Lemma~\ref{absednero} implies $M(\alpha)=(a,b_m)\alpha$, but this is a contradiction since $(a,b_m)\alpha$ does not cover $\alpha$, because $a<c<b_m$.

Now we tackle the case $s>1$. Since $c_2$ is red, there exists a right basic chain, i.e. $m\geq 1$ and $b_1,\ldots,b_m$ (all red) with $b_{m-1}<c_2\leq b_m$,
with 
\[
u_k'=(b_{k-1},b_k)u_{k-1}'\rhd u_{k-1}'
\]
and $M(u_k')=(a,b_k)u_k'$ for all $k\in [m]$.
Assume $b_m>c_2$. We observe that $b_m$ cannot sit after $c_2$ in $u$ otherwise we cannot have the covering relation  $u_{m-1}'\lhd 
u_m'$. But if $b_m$ sits before $c_2$ in $u$ then it would sit before $c_2$ also in $z$, and so 
$(c_2,d_q)z \not \!\! \lhd z$ for all $q$, which is a contradiction.

We deduce  $c_2=b_m$ and so 
\[
u=[\color{red}c_1 \color{black}d_1 \ldots d_{r-1} \color{red} a b_0 \ldots b_{m-1} c_2 \color{black}\ldots \color{red}c_s \color{black}d_r]
\]
and
\[
z=[d_1d_2\ldots d_r \color{red}b_0 a b_1 \color{black}\ldots \color{red}b_{m-1} c_1 \ldots c_{s-1}c_s \color{black}]
\]
We observe $b_1,\ldots,b_{m-1}<c_1$ otherwise $(c_1,d_q)z \not \!\!\lhd z$ for all $q\in [r]$  and so we have
\[
a<b_0<\cdots<b_{m-1}<c_1<b_m=c_2<\cdots c_s<d_1<\cdots d_r.
\]
By construction and Lemma \ref{chains_ij} (recall that $b_m=c_2)$, we have
\[u'_m= [\color{red}c_1 \color{black}d_1\ldots d_{r-1}\color{red}a c_2 b_0
b_1  \color{black}\ldots \color{red}b_{m-1}c_3 \ldots \color{red}c_s  \color{black}d_r]
\]
and recall that  $M(u'_m)=(a,c_2)u'_m$.
Consider now the element 
\[
\alpha=[\color{red}c_1  \color{black}d_1\ldots d_{r-1} \color{red}a  \color{black}d_r \color{red}b_0 \color{black}\ldots \color{red}b_{m-1}c_2  \color{black}\ldots \color{red}c_s \color{black}].
\]
Note that $T(\alpha)$ agrees with $T(u'_m)$ in the first $r$ rows and with $T(z)$ in the last $s-1$ rows. The remaining rows are shown in the following table
\begin{center}

\begin{tabular}{|l|l|l|}
\hline
$T(\alpha)$    &$T(z)$&$T(M(u'_m))$  \\
\hline \hline
$ac_1d_1\ldots d_{r-1}$ & $b_0 d_1\ldots d_r$& $c_1 c_2 d_1\ldots d_{r-1}$\\ 
\hline
$ac_1d_1\ldots d_r$ & $ab_0 d_1\ldots d_r$& $ac_1 c_2 d_1\ldots d_{r-1}$\\ 
\hline
$ab_0c_1d_1\ldots d_r$ & $ab_0 b_1 d_1\ldots d_r$& $ab_0c_1 c_2 d_1\ldots d_{r-1}$\\ 
\hline
$\ldots$ & $\ldots$ &$\ldots$ \\
\hline
$ab_0\ldots b_{m-2}c_1d_1\ldots d_r$ & $ab_0 \ldots b_{m-1} d_1\ldots d_r$& $ab_0\ldots b_{m-2} c_1 c_2 d_1\ldots d_{r-1}$\\ 
\hline
$ab_0\ldots b_{m-1}c_1d_1\ldots d_r$ & $ab_0 \ldots b_{m-1} c_1d_1\ldots d_r$& $ab_0\ldots b_{m-1} c_1 c_2 d_1\ldots d_{r-1}$\\ 
\hline
\end{tabular}
\end{center}

We can deduce that $\alpha \leq v$.
Note also that $\alpha=(c_2,d_r) \cdots (c_s,d_r)u_m'$ and we deduce that $\alpha\in [u,v]$ and by Lemma~\ref{absednero}, $M(\alpha)=(a,c_2)\alpha$, which is a contradiction since $a<b_0<c_2$.

\end{proof}
\subsection{Coatoms of type (c2$^*$)}
In this subsection, we suppose that all atoms of $[u,z]$ distinct from $M(z)$ are of type (c2$^*$). By Lemma \ref{quali_q}, there exists $r>0$ such that
\[z=[\color{red}b q a \color{black}p_1\ldots p_r]
\]
with $p_1<\cdots<p_r <q<a<b$ where $p_1,\ldots,p_r$ are black and $q$ is red.
Moreover, the coatoms of $[u,z]$ are $M(z)$ and $w_l=(p_l,q)z$ for $l\in [r]. $
\begin{lem}
Being $z=[\color{red}b q a \color{black}p_1\ldots p_r]$, we have 
\[u=[\color{red}a \color{black}p_1\color{red}b\color{black}p_2\ldots p_r \color{red}q\color{black}].\] 
\end{lem}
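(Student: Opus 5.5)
The plan is to follow the proof of Lemma~\ref{uinc1}, which handles the companion-like situation: use the shape of the coatoms to restrict the set of entries of $u$, then pin down $u$ position by position using $u\le w_l$ for every $l$ together with the Tableau Criterion.

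\emph{Step 1: where each entry can sit.} The coatoms of $[u,z]$ are $M(z)=(a,b)z$ and $w_l=(p_l,q)z$ for $l\in[r]$. By the well-known lemma quoted at the beginning of Section~\ref{badcases}, every $x\in[u,z]$ is obtained from $z$ by left multiplication by an element of the group generated by $(a,b)$ and the $(p_l,q)$. This group is $\mathrm{Sym}\{a,b\}\times \mathrm{Sym}\{q,p_1,\ldots,p_r\}$. Hence every $x\in[u,z]$ agrees with $z$ outside the positions of $b,q,a,p_1,\ldots,p_r$. Moreover, the two positions occupied by $b$ and $a$ in $z$ (first and third among the displayed ones) carry $\{a,b\}$, and the remaining displayed positions carry $\{q,p_1,\ldots,p_r\}$.

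Next, for $x\in[u,z]\setminus\{z,M(z)\}$ we have $M(x)=(a,b)x=\lambda_s^J(x)$. So $a$ and $b$ must be consecutive red digits in $x$, which forces the red digit $q$ out of the second displayed position. In particular this holds for $u$. Since $M(u)=(a,b)u\rhd u$ and $a<b$, the digit $a$ precedes $b$ in $u$, so $u=[a\,\ast\,b\,\ast\cdots\ast]$ with a black $p_j$ in the second displayed position.

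\emph{Step 2: the second position.} Compare the restricted tableaux of $u$ and $w_l$ in the row corresponding to the first two displayed positions. For $u$ this row is the sorted pair $\{p_j,a\}$, and for $w_l$ it is $\{p_l,b\}$. The condition $u\le w_l$ for all $l$ gives $p_j\le p_l$ for every $l$, hence $p_j=p_1$.

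\emph{Step 3: the remaining positions, from the right.} Here I use the dual form of the Tableau Criterion: $u\le w$ if and only if every suffix set of $u$, sorted, is entrywise $\ge$ the corresponding sorted suffix set of $w$.

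First take the last displayed position. The coatom $w_r$ has $q$ there, so $u\le w_r$ forces the entry of $u$ at that position to be $\ge q$. Since it lies in $\{q,p_2,\ldots,p_r\}$ and all $p$'s are smaller than $q$, it equals $q$.

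Then proceed inductively leftwards, as in Lemma~\ref{uinc1}. Comparing with the coatom $w_{m}$, which carries $q$ at the position of $p_{m}$ and the $p$'s in increasing order elsewhere, the suffix conditions force the position of $p_{m}$ in $z$ to be occupied in $u$ by $p_{m+1}$, for $m=r-1,\ldots,2$. This yields $u=[a\,p_1\,b\,p_2\ldots p_r\,q]$.

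The main obstacle is the bookkeeping in Step 3. The suffix comparisons with $w_m$ must be organised so that at each stage exactly one value is admissible, and the already-fixed entries $a,p_1,b$ in the first displayed positions must not interfere with the suffix rows. If this inductive bookkeeping gets messy, the fallback is to apply Lemma~\ref{uinc1}'s argument verbatim to the companion matching composed with the antiautomorphism $x\mapsto x w_0$ (as in the proof of Lemma~\ref{absednero}), which exchanges prefixes with suffixes.
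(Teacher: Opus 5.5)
Your proof is correct and follows essentially the paper's argument: you restrict the entries using the description of the coatoms, fix $a$ and $b$ via $M(u)=(a,b)u\rhd u$, obtain $p_1$ in the second slot from $u\le w_1$, and then pin down the remaining entries one position at a time by comparing $u$ with the coatoms $w_m$ through the Tableau Criterion. The only difference is that you sweep right to left using the suffix form of the criterion ($u\le w_m$ forces the entry of $u$ in the position of $p_m$ to be at least $p_{m+1}$, or at least $q$ when $m=r$), whereas the paper sweeps left to right using prefixes; your red-digit argument in Step 1 is harmless but redundant, since Step 2 alone already rules out $q$ in the second slot.
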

\begin{proof}
  By the description of the coatoms, along the interval $[u,z]$, the entries  $$p_1,\ldots,p_r,q$$ are permuted among themselves, and possibly $a$ is swapped with $b$,  while all other entries remain in the same positions as in $z$. It follows that $u^{-1}(a)=z^{-1}(b)$ and $u^{-1}(b)=z^{-1}(a)$. Since $u\leq w_1$, we have $u^{-1}(p_1)=z^{-1}(q)$ and one can prove with a simple inductive argument that $u^{-1}(p_l)=z^{-1}(p_{l-1})$ for all $l\in [2,r]$, since $u\leq w_{l-1}$.   
\end{proof}
\begin{prop}
\label{prop(c2)}
It is not possible that all coatoms of $[u,z]$ distinct from $M(z)$ are of type \textnormal{(c2$^*$)}.
\end{prop}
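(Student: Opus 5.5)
The plan is to argue exactly as in the proof of Proposition~\ref{prop(c1)}, with left basic chains in place of right ones, since type (c2$^*$) is the companion-type situation for the digits smaller than $a$. By the preceding lemma we have
\[
z=[\color{red}b q a \color{black}p_1\ldots p_r], \qquad u=[\color{red}a \color{black}p_1\color{red}b\color{black}p_2\ldots p_r \color{red}q\color{black}],
\]
with $p_1<\cdots<p_r<q<a<b$, the $p_l$ black and $q$ red. In particular $i(M)\le q<a$. The idea is to produce an element $\alpha\in[u,v]$ on which $M$ is forced, by Lemma~\ref{absednero}, to act as a transposition that does not give a covering relation of $\alpha$.

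\emph{Step 1 (locating $q$ relative to the chain).} Proposition~\ref{chains_ij} gives a left basic chain $i(M)=a_k<\cdots<a_1<a_0=a$ with $u=[\di a_k\ldots a_1\di a\di b\di]$, elements $u_m\rhd u_{m-1}$ in $[u,v]$, and $M(u_m)=(a_m,b)u_m$. All chain digits sit to the left of $a$ in $u$. Since $q$ sits to the right of $a$ in $u$, $q$ is not a chain digit. As $a_k\le q<a$, I would choose $m$ with $a_m<q<a_{m-1}$.

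The chain digits occupy positions untouched along $[u,z]$, so they also sit to the left of $b$ in $z$. I expect that if some $a_j$ satisfies $q<a_j<a$, then one obtains a contradiction with $(p_l,q)z\lhd z$ or with $M(w_l)=\lambda_s^J(w_l)$. This would be the analogue of the step ``$b_m=c_2$'' in the (c1) case, and would yield $m=1$, i.e.\ $a_1<q$.

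\emph{Step 2 (the element $\alpha$).} I would then define $\alpha$ from $u_m$ by moving the entries $q,p_1,\ldots,p_r$ through successive left multiplications by transpositions of the form $(\cdot,p_l)$. Each of these involves a black digit and raises Bruhat order, so $\alpha\ge u_m\ge u$. The positions would be arranged so that $q$ ends up strictly between $a_m$ and $b$ in $\alpha$.

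To get $\alpha\le v$, I would compare restricted tableaux. The aim is for the first rows of $T(\alpha)$ to agree with those of $T(M(u_m))$, for the last rows to agree with those of $T(z)$, and for the middle rows to be checked in a small table as before. Then $T(\alpha)\le\max(T(z),T(M(u_m)))$ entrywise, and $\alpha\le v$ follows from the Remark on tableaux.

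\emph{Step 3 (contradiction).} Since $\alpha$ is reached from $u_m$ by transpositions each involving a black digit $p_l$, Lemma~\ref{absednero} gives $M(\alpha)=(a_m,b)\alpha$. But $q$ sits between $a_m$ and $b$ in $\alpha$, and $a_m<q<b$. Hence $(a_m,b)\alpha$ neither covers nor is covered by $\alpha$, a contradiction.

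The main obstacle is Step 2: choosing the precise positions in $\alpha$ so that the entrywise tableau inequality holds. This is delicate because $z$ and $M(u_m)$ put the small entries $p_l$ in very different places. I would first try placing $p_1,\ldots,p_r$ in $\alpha$ exactly where they sit in $z$, and the remaining chain digits where they sit in $M(u_m)$, then adjust from there. Step 1 is secondary but may also need a short case analysis.
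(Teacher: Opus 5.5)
Your plan is the paper's proof: truncate the left basic chain at the $m$ with $a_m<q<a_{m-1}$, move $q$ leftward past the black digits $p_l$, and conclude with Lemma~\ref{absednero}. Your first try in Step~2 is exactly the element the paper uses,
\[
\alpha=(p_1,q)(p_2,q)\cdots(p_r,q)\,u_m=[a_{m-1}\ldots a_0\,a_m\,q\,b\,p_1\ldots p_r],
\]
and the obstacle you anticipate does not arise. The $p_l$ occupy the same positions in $\alpha$ as in $z=[a_m\ldots a_1\,b\,q\,a\,p_1\ldots p_r]$, so the last $r$ rows of $T(\alpha)$ coincide with those of $T(z)$. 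The first $m$ rows coincide with those of $T(M(u_m))$, where $M(u_m)=[a_{m-1}\ldots a_0\,b\,p_1\,a_m\,p_2\ldots p_r\,q]$. The three middle rows of $T(\alpha)$ have entry sets $\{a_m,\ldots,a_0\}$, $\{a_m,\ldots,a_0,q\}$, $\{a_m,\ldots,a_0,q,b\}$. They are dominated by the corresponding rows of $T(z)$, with entry sets $\{a_m,\ldots,a_1,b\}$, $\{a_m,\ldots,a_1,b,q\}$, $\{a_m,\ldots,a_0,b,q\}$, simply because $a_0=a<b$. Hence $\alpha\le v$, the whole chain from $u_m$ to $\alpha$ lies in $[u,v]$, and Step~3 goes through as you wrote it.

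The one thing to remove is the reduction to $m=1$ in Step~1. It is not needed: the argument above works for any $m$, including when $q<a_{m-1}<\cdots<a_1<a$, which the paper explicitly allows. Your suggested justification would also not work. The chain digits sit to the left of $b$ in $z$, so they affect neither the coverings $(p_l,q)z\lhd z$ nor the fact that $b$ and $a$ are consecutive red digits in $w_l=(p_l,q)z$.
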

\begin{proof}
   Since $q$ is red, there exist $a_1,\ldots,a_m$ (all red) with $a_m<q<a_{m-1}<\cdots<a_1<a_0=a$ as in Proposition~\ref{chains_ij}. In particular, we have
\[
u=[\color{red}a_m\color{black}\ldots \color{red}a_1a_0 \color{black}p_1 \color{red}b \color{black}p_2\ldots p_r \color{red}q\color{black}]
\]
and 
\[
u_m=[\color{red}a_{m-1}\color{black}\ldots \color{red}a_0 a_m \color{black}p_1 \color{red}b \color{black}p_2\ldots p_r \color{red}q\color{black}]
\]
with $M(u_m)=(a_m,b)u_m$.
Consider the element
\[
\alpha=(p_1,q)(p_2,q)\cdots (p_r,q)u_m.
\]
Then $\alpha>u_m$ and 
\[
\alpha=[\color{red}a_{m-1}\color{black}\ldots \color{red}a_0 a_m q b \color{black}p_1\ldots p_r].
\]
Note that $T(\alpha)$ agrees with $T(M(u_m))$ in the first $m$ rows and with $T(z)$ in the last $r$ rows. The remaining rows of $T(\alpha)$ and $T(z)$ are shown in the following table:

\begin{center}   
\begin{tabular}{|l|l|}
\hline
$T(\alpha)$    &$T(z)$  \\
\hline \hline
$a_m\ldots a_0$& $a_m\ldots a_1 b$ \\
\hline 
$a_mqa_{m-1}\ldots a_0$& $a_mqa_{m-1}\ldots a_1 b$ \\
\hline 
$a_mq a_{m-1}\ldots a_0 b$& $a_mq a_{m-1}\ldots a_0 b$ \\
\hline 
\end{tabular}
\end{center}

We deduce $\alpha \in [u,v]$. By Lemma~\ref{absednero}, we have $M(\alpha)=(a_m,b) \alpha$, which is a contradiction since $a_m<q<b$ and hence $(a_m,b) \alpha \not\rhd \alpha$. 
\end{proof}

\subsection{Coatoms of type (c3$^*$)}

In this subsection, we suppose that all atoms of $[u,z]$ distinct from $M(z)$ are of type (c3$^*$). 
Hence, there exists $s>0$ such that
\[
z=[\color{red} b\color{black}c_1\ldots c_s \color{red}a\color{black}]
\]
with $a<b<c_1<\cdots < c_s$, and there is at least one red entry between $c_l$ and $a$, $c_l$ included, for all $l\in [s]$. Moreover, the coatoms of $[u,v]$ are  $M(z)$ and $w_l=(a,c_l)z$ for $l\in [s]$.

\begin{lem}
Being $z=[\color{red} b\color{black}c_1\ldots c_s \color{red}a\color{black}]$, we have 
\[u=[\color{red}ab \color{black}c_1\ldots c_s].\]
\end{lem}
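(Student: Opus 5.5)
The plan is to mirror the argument of the (c2$^*$) lemma. We determine $u$ position by position, using the explicit list of coatoms of $[u,z]$ together with the Tableau Criterion.

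First, the description of the coatoms shows that every covering relation in $[u,z]$ moves only the entries $a,b,c_1,\ldots,c_s$. Indeed, the coatoms are $M(z)=(a,b)z$ and $w_l=(a,c_l)z$. By the well-known lemma recalled at the start of this section, the group generated by all $pq^{-1}$ with $p\lhd q$ in $[u,z]$ is then generated by $(a,b)$ and the $(a,c_l)$. Hence every element of $[u,z]$, and in particular $u$, agrees with $z$ outside the set of positions $P=\{z^{-1}(b),z^{-1}(c_1),\ldots,z^{-1}(c_s),z^{-1}(a)\}$. On $P$, it is a rearrangement of $\{a,b,c_1,\ldots,c_s\}$. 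So it suffices to work with restricted one-line notation on $P$, where
\[
z=[b\,c_1\ldots c_s\,a], \qquad w_l=[b\,c_1\ldots c_{l-1}\,a\,c_{l+1}\ldots c_s\,c_l], \qquad M(z)=[a\,c_1\ldots c_s\,b].
\]

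Next, we pin down the entries of $u$ from left to right using $u\le M(z)$ and $u\le w_l$ for all $l$, via restricted tableaux.
\begin{enumerate}
\item First position. From $u\le M(z)$, the first row gives $u$'s first entry $\le a$, so it equals $a$, the minimum.
\item Second position. From $u\le w_1=[b\,a\,c_2\ldots c_s\,c_1]$, the second row $\{a,b\}$ forces the first two entries of $u$ to be $\{a,b\}$, so the second entry is $b$.
\item Inductive step. Suppose the first $k+1$ entries of $u$ are $a,b,c_1,\ldots,c_{k-1}$. Compare with $w_{k+1}$, whose first $k+2$ entries are $b,c_1,\ldots,c_k,a$ minus $c_{k+1}$ plus $a$, i.e.\ the set $\{a,b,c_1,\ldots,c_k\}$. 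Its row $k+2$ is the set of the $k+2$ smallest available values, which forces the $(k+2)$-nd entry of $u$ to be $c_k$.
\end{enumerate}
Alternatively, we can simply note that $[a\,b\,c_1\ldots c_s]$ is the identity arrangement on $P$. Any element whose tableau rows $k$ are bounded by those of all $w_l$ and $M(z)$ must have as its $k$-th row the $k$ smallest values. This is exactly $u$, because rows of $w_{k}$ for $k\le s$ contain the minimal sets.

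The main subtlety is justifying that the $w_l$ and $M(z)$ are the only coatoms. This is the standing assumption of the subsection, so the point that requires care is the first step: that nothing outside these entries moves. That follows from the generation lemma. We also need that $u$ lies in the coset determined on $P$, which holds since the subgroup generated by these transpositions preserves all positions outside $P$.
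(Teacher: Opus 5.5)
Your proposal is correct and takes essentially the same approach as the paper. You restrict to the positions of $a,b,c_1,\ldots,c_s$, which you justify via the generation lemma applied to the coatoms $(a,b)z$ and $(a,c_l)z$, a step the paper leaves implicit. You then use $u\le M(z)$ to place $a$, $u\le w_1$ to place $b$, and $u\le w_{k+1}$ inductively to place $c_k$; this is exactly the paper's recursion $u^{-1}(c_l)=z^{-1}(c_{l+1})$. Your description of the first $k+2$ entries of $w_{k+1}$ is garbled (``minus $c_{k+1}$ plus $a$'' mixes two descriptions), but the set $\{a,b,c_1,\ldots,c_k\}$ you arrive at is correct.
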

\begin{proof}
By the description of the coatoms, along the interval $[u,z]$ the entries
\[
a,b,c_1,\ldots,c_s
\]
are permuted among themselves, while all other entries remain in the same positions as in $z$.
Since $u\leq M(z)$, we have $u^{-1}(a)=z^{-1}(b)$. The condition $u\leq w_1$ then implies $u^{-1}(b)=z^{-1}(c_1)$, and then one can recursively verify that $u^{-1}(c_l)=z^{-1}(c_{l+1})$ for all $l\in [s-1]$ since $u\leq w_{l+1}$.
\end{proof}
\begin{prop}\label{prop(c3)}
It is not possible that all coatoms of $[u,z]$ distinct from $M(z)$ are of type \textnormal{(c3$^*$)}.
\end{prop}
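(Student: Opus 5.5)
The plan is to follow the template of Propositions~\ref{prop(c1)} and~\ref{prop(c2)}. We derive a contradiction by building an element $\alpha\in[u,v]$ on which Lemma~\ref{absednero} forces $M(\alpha)=(a,b_m)\alpha$ for a digit $b_m$ coming from a basic chain, even though $(a,b_m)\alpha$ does not cover $\alpha$. The data are
\[
z=[\color{red}b\color{black}c_1\ldots c_s\color{red}a\color{black}],\qquad u=[\color{red}ab\color{black}c_1\ldots c_s],
\]
with $a<b<c_1<\cdots<c_s$ and coatoms $M(z)$ and $w_l=(a,c_l)z$. Everything else stays in the positions it occupies in $z$. In particular, every element $x\in[u,z]\setminus\{z,M(z)\}$ satisfies $M(x)=(a,b)x=\lambda_s^J(x)$, so $b$ and $a$ are the $h$-th and $(h+1)$-st red digits of $x$.

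\textbf{Locating a red digit.} The hypothesis for $l=s$ gives a red digit $e$ weakly between $c_s$ and $a$ in $z$, with $c_s$ allowed. Since $M(w_s)=\lambda_s^J(w_s)$, no red digit lies between $b$ and $a$ in $w_s=[b\,c_1\ldots c_{s-1}\,a\,\ldots\,c_s]$. Hence $e$ is either $c_s$ itself, or a fixed digit strictly between $c_s$ and $a$ in $z$. In the latter case we repeat the argument with $w_{s-1}$ and so on, to pin down the smallest index $l_0$ with $c_{l_0}$ red. Red digits are confined to $[i(M),j(M)]$, and $c_{l_0}>b$. So Proposition~\ref{chains_ij} yields a right basic chain $b=b_0<b_1<\cdots<b_m$ with $b_{m-1}<c_{l_0}\le b_m$, elements $u'_k\in[u,v]$, and $M(u'_k)=(a,b_k)u'_k$.

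\textbf{Identifying $b_m$.} As in the case $s>1$ of Proposition~\ref{prop(c1)}, the chain digits must lie in $u$ to the right of $b$ and in increasing left-to-right order. Comparing with the coverings $u\lhd\cdots\lhd z$ through the $c_l$ should force $b_m=c_{l_0}$. It should also force the $b_k$ with $k<m$ to be exactly the $c_l$ with $l<l_0$ that are red (possibly none), and the chain must be compatible with $w_{l}\lhd z$. This gives explicit one-line notations of $u'_m$ and $M(u'_m)=(a,c_{l_0})u'_m$.

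\textbf{The element $\alpha$.} We take $\alpha=[\ldots a\,\ldots\, b\ldots c_{l_0}\ldots]$, obtained from $u'_m$ by successively moving the black entries among the $c_l$ (and the fixed black entries) past $c_{l_0}$. Concretely, $\alpha=(c_{l_0},d_1)\cdots(c_{l_0},d_t)u'_m$ with each $d_i$ black and each step increasing in Bruhat order, so that $\alpha\ge u'_m\ge u$. Then we check by restricted tableaux that $T(\alpha)\le\max(T(z),T(M(u'_m)))$, exactly as in the tables of the previous subsections: rows before the block agree with $M(u'_m)$, rows after agree with $z$, and the middle rows are compared directly. By the Remark after the Tableau Criterion, $\alpha\le v$. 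Lemma~\ref{absednero}, applied along the chain of transpositions with black $d_i$, then gives $M(\alpha)=(a,c_{l_0})\alpha$. We arrange the construction so that $b$ sits between $a$ and $c_{l_0}$ in $\alpha$. Since $a<b<c_{l_0}$, it follows that $(a,c_{l_0})\alpha\not\!\!\rhd\alpha$ and $(a,c_{l_0})\alpha\not\!\!\lhd\alpha$, a contradiction.

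\textbf{Main obstacle.} The hardest step is the bookkeeping: pinning down exactly how the basic chain interleaves with $c_1,\ldots,c_s$ and the fixed red digits, and verifying the middle rows of the tableau comparison. If the chain is not forced as claimed, I would first try taking $\alpha$ built from $u'_1$ only, i.e.\ from the first chain step, and redo the tableau check.
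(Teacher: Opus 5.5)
Your first deduction is correct. Since $M(w_s)=(a,b)w_s=\lambda_s^J(w_s)$, no red digit lies between $b$ and $a$ in $w_s$. So the red digit $e$ given by the hypothesis is either $c_s$ or a digit fixed along $[u,z]$ lying strictly between $c_s$ and $a$ in $z$.

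The reduction to a red $c_{l_0}$ that follows does not work. The element $w_s$ already shows that $c_1,\ldots,c_{s-1}$ are black, so ``repeating with $w_{s-1}$'' gives nothing new. When $e$ is a fixed digit there may be no red $c_l$ at all. For example, take $s=1$, $u=[a\,b\,b_1\,c_1]$, $z=[b\,c_1\,b_1\,a]$ with $b_1$ red and $c_1$ black; this is exactly a configuration the paper has to exclude, and your construction has no starting point there. Moreover, $(a,c_s)z\lhd z$ forces such a fixed $e$ to satisfy $e<a$ or $e>c_s$. The case $e<a$ is invisible to right basic chains (one may have $j(M)=b$). The paper handles it with the \emph{left} basic chain $a_m<e<a_{m-1}<\cdots<a_0=a$ and the element $\alpha=(b,c_k)\cdots(b,c_1)u_m$, in which $e$ sits between $a_m$ and $b$. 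Then the value $M(\alpha)=(a_m,b)\alpha$ forced by Lemma~\ref{absednero} (the $c_l$ involved being black) is not a cover. Your proposal never treats red digits below $a$.

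For red digits above $b$, the step ``the coverings should force $b_m=c_{l_0}$'' is not a deduction, and it hides the real difficulty. What the covers $u'_{k-1}\lhd u'_k$ actually give is a dichotomy on $b_1$.

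If $b_1$ lies to the right of $c_s$ in $u$, then no digit of $(b,b_m]$ occupies a position between $b$ and $c_s$ in $u$, with $c_s$ included. Hence there is no red digit above $b$ there at all; in particular $c_s$ is black. This is an immediate contradiction, and in this branch your target configuration $b_m=c_{l_0}$ never arises.

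If instead $b_1$ lies between $b$ and $c_s$ in $u$, then $b_1\neq c_1$: otherwise $u'_1\le z$, whence $M(u'_1)=(a,b)u'_1$. So $b_1$ is a fixed digit with $b_1>c_1$, sitting between $b$ and $c_1$ in $u$. This is the core case, and your plan produces no non-cover element for it. The paper uses a double-matching argument instead:
\begin{itemize}
\item Along the chain $\alpha_0=u'_1$, $\alpha_k=(b,c_k)\alpha_{k-1}$, the value $M(\alpha_k)=(a,b_1)\alpha_k$ propagates. The transpositions $(a,b_1)$ and $(b,c_k)$ are disjoint, so Lemma~\ref{quadrati} leaves no other option.
\item On the other hand, $\alpha_s=(c_1,b_1)M(z)\rhd M(z)$ forces $M(\alpha_s)=(a,b)\alpha_s$, a contradiction.
\item Membership $\alpha_s\in[u,v]$ comes from $\alpha_s\lhd\beta=[b\,b_1\,c_1\ldots c_s\,a]\le v$.
\end{itemize}
Without the left-chain case and this double-matching argument, your proof does not close.
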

\begin{proof}
Recall
\[
u=[\color{red}ab\color{black}c_1\ldots c_{s-1}c_s]
\]
and 
\[
z=[\color{red}b\color{black}c_1c_2\ldots c_s \color{red}a \color{black}].
\]

Consider the following claim: between $c_1$ and $a$ in $z$, with $c_1$ included, there are no red digits. If the claim holds, then the result also holds. Indeed, since there are no red digits between $a$ and $b$ in $u$, there are no red digits between $b$ and $c_1$ in $z$. The claim implies that there are no red digits between $a$ and $b$ in $z$. It would follow $M(z)=\lambda_s^J$, which is a contradiction. 

Let us prove the claim.
We first show that between $c_1$ and $a$ in $z$ there are no red digits greater than $a$. 
Towards a contradiction, suppose that there are such red digits. Clearly, such digits cannot be smaller than $b$. Hence, by Proposition~\ref{chains_ij}, there exists a right basic chain given by $b_1, \ldots, b_s$. In particular, $b_1$  sits to the right of $b$ in $u$, and satisfies $b_1>b$, as well as $u_1'=(b,b_1)u\rhd u$, and $M(u_1')=(a,b_1)u_1'$.

\color{black}

 Suppose that $b_1$ sits between $b$ and $c_s$ in $u$. Observing that $b_1\neq c_1$ (otherwise $u_1'\leq z$ and hence $M(u_1')=(a,b)z$), we deduce that $b_1$ sits between $c_1$ and $a$ in $z$ so $b_1>c_1$; thus, $b_1$ sits between $b$ and $c_1$ in $u$, because $u_1'\rhd u$. Therefore, we have
\[
u=[\color{red}abb_1 \color{black} c_1\ldots c_{s-1} c_s],
\]
\[
u_1'=[\color{red}ab_1b \color{black} c_1\ldots c_{s-1} c_s],
\]
and
\[
z=[\color{red}b\color{black}c_1\color{red}b_1\color{black}c_2\ldots c_s \color{red}a\color{black}]
\]
Consider the element 
$\beta=[\color{red}b b_1 \color{black}c_1\ldots c_s \color{red}a\color{black}]$. The restricted tableau of $\beta$ agrees with the tableau of $z$ in the last $s$ rows. Comparing the first three rows of the tableaux of $\beta$, $z$ and $M(u_1')$, we deduce $\beta\in [u,v]$.
Now consider the following chain of elements: let $\alpha_0=u_1'$ and, for $k\in [s]$, let $\alpha_k=(b,c_k) \alpha_{k-1}$. Note $\alpha_{k}\rhd \alpha_{k-1}$  and 
\[\alpha_s=[\color{red}ab_1\color{black}c_1\ldots c_s \color{red}b\color{black}]
\]
and, in particular, $\beta=(a,b)\alpha_s$, with $\alpha_s\lhd \beta$. Thus,  $\alpha_k\in [u,v]$, for all $k\in [s]$. We have $M(\alpha_k)=(a,b_1)\alpha_k$, for all $k\in [s]$ (since $a,b_1,b,c_k$ are distinct for all $k$). 

On the other hand, since $\alpha_s= (c_1,b_1)M(z)$ and $\alpha_s\rhd M(z)$, we should have $M(\alpha_s)=(a,b)\alpha_s$, since $a,b,b_1,c_1$ are distinct. This is a contradiction.

We have therefore proved that $b_1$ sits to the right of $c_s$  in $u$ and so there are no elements smaller than $b_1$ and greater than $b$ between $b$ and $c_s$ in $u$. 

By the covering relations in the chain given by the $b_m$, we have that, for all $m$,  there are no digits belonging to $[b_{m-1}, b_{m}]$ between $b$ and $c_s$ in $u$.

Hence, there are no red entries greater than $a$ between $b$ and $c_s$ in $u$, and in particular $c_1,\ldots,c_s$ are all black.
To conclude the proof of the claim, we have to show that, if there is $p$ smaller than $a$ and appearing between $b$ and $c_s$ in $u$,  then $p$ is black. Towards a contradiction, let $p$ be a such red digit. By Proposition~\ref{chains_ij}, we can consider a left basic chain: let $a_1,\ldots, a_m$ with $a_m<p<a_{m-1}\cdots <a_1<a_0=a$ with 
\[
u=[\color{red}a_m\color{black}\ldots \color{red}a_1 a_0 b \color{black}c_1 \ldots c_{k-1} \color{red}p \color{black}c_{k}\ldots  c_s]
\]
for some $k\in [s]$, and 
\[
u_m=[\color{red}a_{m-1}\color{black}\ldots \color{red}a_0 a_m b \color{black}c_1 \ldots c_{k-1} \color{red}p \color{black}c_{k}\ldots  c_s]
\]
with $M(u_m)=(a_m,b) u_m$.
Consider the element 
\[
\alpha=[\color{red}a_{m-1}\color{black}\ldots \color{red}a_0 a_m \color{black}c_1\ldots c_k \color{red}p b \color{black}c_{k+1} \ldots c_s].
\]
Note that $\alpha=(b,c_k)\cdots (b,c_1)u_m$. Note also that the first $m$ rows 
and the last $s-k$ rows of the tableau of $\alpha$ agree with those of $T(M(u_m))$. The remaining rows of $T(\alpha), T(z)$ and $T(M(u_m))$ are shown in the following table:

\begin{center}  
\begin{tabular}{|l|l|l|}
\hline
$T(\alpha)$    &$T(z)$&$T(M(u_m))$  \\
\hline \hline
$a_m\ldots a_0 c_1$& $a_m\ldots a_1 b c_1$& $ a_m\ldots a_0 b$\\
\hline
$a_m\ldots a_0 c_1c_2$& $a_m\ldots a_1 b c_1c_2$& $ a_m\ldots a_0 bc_1$\\
\hline
$\ldots$ &$\ldots$ &$\ldots$ \\
\hline
$a_m\ldots a_0 c_1\ldots c_k$& $a_m\ldots a_1 b c_1\ldots c_k$& $ a_m\ldots a_0 bc_1\ldots c_{k-1}$\\
\hline
$a_mp a_{m-1}\ldots a_0 c_1\ldots c_k$& $a_mp a_{m-1}\ldots a_1 b c_1\ldots c_k$& $ a_m p a_{m-1}\ldots a_0 bc_1\ldots c_{k-1}$\\
\hline
$a_mp a_{m-1}\ldots a_0 b c_1\ldots c_k$&  $a_mp a_{m-1}\ldots a_1 b c_1\ldots c_k c_{k+1}$ & $ a_m p a_{m-1}\ldots a_0 bc_1\ldots c_{k}$\\
\hline

\end{tabular}
\end{center}
where $c_{k+1}=a$ if $k=s$.
It follows that $\alpha \in [u,v]$ and, by Lemma~\ref{absednero}, since $c_1,\ldots,c_k$ are black, we have $M(\alpha)=(a_m,b)\alpha$, which is a contradiction since $a_m<p<b$.

The claim is proved, and hence the result follows.
\end{proof}

\subsection{Coatoms of type (c2$^*$) and (c3$^*$)}

We finally consider the case where $[u,z]$ has simultaneously coatoms of type (c2$^*$) and coatoms of type (c3$^*$). So let $w=(p,q)z$ be a coatom of type (c2$^*$) (so $q$ is a red entry) and $w'=(a,c)z$ be a coatom of type (c3$^*$). If  $z=[\di b \di q \di c \di a \di p\di ]$, then $w'=[\di b\di q\di a\di c\di p]$ and so $M$ would not swap two consecutive red entries in $w$. Hence, $z=[\di b \di c \di q \di a \di p \di ]$ and the letter $c$ must be black otherwise $M$ would not swap two consecutive red entries in $w$. By Lemma~\ref{quali_q}, (\ref{quali_q2*}), there exist $r,s\geq 1$ such that $z$ has the following form:
\[
z=[\color{red} b \color{black}c_1\ldots c_s \color{red}q  a \color{black}p_1\ldots p_r]
\]
with $p_1<\cdots<p_r <q<a<b<c_1<\cdots <c_s$, where all $p_1,\ldots,p_r,c_1,\ldots,c_s$ are black and $q$ is red. Moreover, the coatoms of $[u,z]$ are $M(z)=(a,b)z$, $(p_i,q)z$ for all $i\in [r]$, and $(a,c_j)z$ for all $j\in [s]$.
\begin{lem}
Being $z=[\color{red} b \color{black}c_1\ldots c_s \color{red}q  a \color{black}p_1\ldots p_r]$, we have 
\[u=[\color{red}ab \color{black}c_1\ldots c_{s-1}p_1 c_s p_2 \ldots p_r \color{red}q\color{black}].\] 
\end{lem}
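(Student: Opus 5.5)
We follow the pattern of the analogous lemmas for types (c2$^*$) and (c3$^*$). The first step is to pin down which entries can move inside $[u,z]$. By the description of the coatoms, every coatom of $[u,z]$ is obtained from $z$ by one of the transpositions $(a,b)$, $(p_i,q)$ or $(a,c_j)$. By the well-known lemma on the group generated by the coatom reflections, every $x\in[u,z]$ is obtained from $z$ by a permutation of the entries in
\[
\{a,b,q,c_1,\ldots,c_s,p_1,\ldots,p_r\}.
\]
All other entries stay in their positions in $z$. Hence it suffices to work with restricted one-line notation and restricted tableaux on the $r+s+3$ displayed positions. Denote these positions by $\pi_1<\cdots<\pi_{r+s+3}$, where $z(\pi_1)=b$, $z(\pi_{1+j})=c_j$, $z(\pi_{s+2})=q$, $z(\pi_{s+3})=a$, and $z(\pi_{s+3+i})=p_i$.

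We then determine $u$ position by position from left to right, using the Tableau Criterion against the coatoms. This is the same style of argument as in Lemma~\ref{uinc1} and the two preceding lemmas.

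\textbf{Position $\pi_1$.} Since $u\le M(z)=(a,b)z$, and $M(z)$ has $a$ in position $\pi_1$, the first row of the restricted tableau forces $u(\pi_1)\le a$. Every element of $[u,z]$ other than $z$ and $M(z)$ is matched via $(a,b)=\lambda_s^J$, so it must keep the red digits $b,a$ in consecutive red slots. Together with $u\lhd M(u)=(a,b)u$, this shows that the entry in $\pi_1$ cannot be $q$ or some $p_i$. Indeed, the coatoms $(a,c_j)z$ all have $b$ in $\pi_1$, and the coatoms $(p_i,q)z$ also have $b$ there, while $M(z)$ has $a$. Taking the entrywise minimum of the first tableau rows over the coatoms gives the candidates, and the consecutiveness constraint eliminates all but $a$. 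So $u(\pi_1)=a$.

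\textbf{Positions $\pi_2,\ldots,\pi_{s+1}$.} We compare with $w'_1=(a,c_1)z$, which carries $b$ in position $\pi_1$ and $c_1$ in position $\pi_{s+3}$. Combined with $u(\pi_1)=a$, the second tableau row gives $u(\pi_2)\in\{b\}\cup\{\text{smaller entries}\}$. The smaller entries $q,p_i$ are excluded because $(p_i,q)z$ still has $c_1$ there. A rank argument settles it: since $\ell(u,z)$ must match, and each coatom fixes all but two slots, we get $u(\pi_2)=b$. Inductively, comparing with $(a,c_{j})z$ gives $u(\pi_{j+2})=c_j$ for $j\le s-1$. This reproduces the shift $u^{-1}(c_l)=z^{-1}(c_{l+1})$ of the (c3$^*$) lemma.

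\textbf{Positions $\pi_{s+2},\ldots,\pi_{r+s+3}$.} This is the companion argument, run from the right, following the (c2$^*$) lemma. Comparing with $(p_1,q)z$ forces $u(\pi_{r+s+3})=q$. Comparing successively with $(p_{i},q)z$ forces the entry in slot $\pi_{s+3+i}$ to be $p_{i+1}$ for $i<r$.

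\textbf{The two middle slots.} The remaining entries are $c_s$ and $p_1$, and they must occupy $\pi_{s+2}$ and $\pi_{s+3}$. The order is fixed by $u\le (a,c_s)z$ together with $u\le (p_1,q)z$. In one slot the tableau row computed from the left bounds the entry by $p_1$; the other arrangement would make $u$ exceed $(a,c_s)z$ in the $(s+2)$-nd row. This yields
\[
u=[\,a\,b\,c_1\ldots c_{s-1}\,p_1\,c_s\,p_2\ldots p_r\,q\,].
\]

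\textbf{Consistency check and main obstacle.} Finally, I would verify that $\ell(z)-\ell(u)$ matches: the inversions gained are $(a,b)$, the $s$ pairs with the $c$'s shifting, and the $r$ pairs with the $p$'s shifting. I would also verify that this $u$ lies below every listed coatom, as a consistency check. The main obstacle is the junction step at $\pi_{s+2},\pi_{s+3}$, where the left-driven and right-driven inductions meet. There one must carefully use both families of coatoms simultaneously in the tableau comparison, instead of a single one as in the pure cases.
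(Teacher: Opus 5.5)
There is a genuine gap. Apart from red-consecutiveness, every constraint you derive comes from $u\le w$ for a coatom $w$. Each one is therefore an \emph{upper} bound on prefixes of $u$, and such bounds cannot single out the bottom element.

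The steps meant to exclude the smaller entries do not work. ``$(p_i,q)z$ still has $c_1$ there'' gives another upper bound, not an exclusion. The ``rank argument'' and the final check on $\ell(z)-\ell(u)$ are circular, since $\ell(u,z)$ is not known independently. Consecutiveness of the red digits $b,a$ says nothing against a black $p_i$ in slot $\pi_1$.

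Concretely, take $u'=(c_s,q)u=[a\,b\,c_1\ldots c_{s-1}\,p_1\,q\,p_2\ldots p_r\,c_s]$. It is a permutation of the displayed entries and has $a,b$ in the same slots as $u$, so $(a,b)u'\rhd u'$ and $a,b$ are consecutive reds. Also $u'<u$, because $c_s>q$ precede each other in that order in $u$. Hence $u'$ passes every tableau comparison with every coatom, from the left or the right, and nothing in your proposal distinguishes it from $u$. Separately, for $r\ge 2$ the coatom $(p_1,q)z$ has $p_r$, not $q$, in the last slot. Even $(p_r,q)z$ only gives $u(\pi_{r+s+3})\ge q$, which $c_s$ also satisfies.

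The missing idea is the full strength of the lemma you cite. $uz^{-1}$ lies in the group generated by the coatom reflections $(a,b)$, $(a,c_j)$, $(p_i,q)$, and that group is $\mathrm{Sym}(\{a,b,c_1,\ldots,c_s\})\times\mathrm{Sym}(\{q,p_1,\ldots,p_r\})$. Left multiplication acts on values, so in $u$:
\begin{itemize}
\item the slots $\pi_1,\ldots,\pi_{s+1},\pi_{s+3}$ carry exactly $a,b,c_1,\ldots,c_s$;
\item the slots $\pi_{s+2},\pi_{s+4},\ldots,\pi_{r+s+3}$ carry exactly $q,p_1,\ldots,p_r$.
\end{itemize}
This block structure supplies the lower-bound information you lack. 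With it, a single left-to-right sweep turns each upper bound into an equality, which is the paper's argument:
\begin{itemize}
\item $M(z)$ forces $u(\pi_1)=a$;
\item $(a,c_l)z$ forces $u(\pi_{l+1})=c_{l-1}$ for $l\in[s]$, with $c_0=b$;
\item $(p_1,q)z$ forces $u(\pi_{s+2})=p_1$;
\item slot $\pi_{s+3}$ receives the leftover $c_s$;
\item $(p_{i+1},q)z$ forces $u(\pi_{s+3+i})=p_{i+1}$ for $i\in[r-1]$, leaving $q$ for the last slot.
\end{itemize}
The ``junction'' you flag as the main obstacle is then trivial, because $\pi_{s+2}$ and $\pi_{s+3}$ lie in different blocks.
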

\begin{proof}    
    By the above description of the coatoms, along the interval $[u,z]$ the entries $a,b,c_1,\ldots,c_s$ are permuted among themselves and $q,p_1,\ldots,p_r$ are also permuted among themselves, while all other entries remain in the same positions as in $z$.
    Let $k_1=z^{-1}(b)$. By the previous observation,  $u(k_1)\in \{a,b,c_1,\ldots,c_s\}$, and, since $u<M(z)$, necessarily $u(k)=a$.
    Let $k_2=z^{-1}(c_1)$. We have $u(k_2)\in \{a,b,c_1,\ldots,c_s\}$, and, since $u<(a,c_1)z$, we have $u(k_2)\leq b$ and so $u(k_2)=b$. 
    Next we show that, letting $k_l=z^{-1}(c_{l-1})$ for all $l\in [3,s+1]$, we have $u(k_l)=c_{l-2}$. Indeed,  $u(k_l)\in \{a,b,c_1,\ldots,c_s\}$, and, since $u<(a,c_{l-1})z$, we have $u(k_l)\leq c_{l-1}$, which forces $u(k_l)=c_{l-1}$.
    Now let $k_{s+2}=z^{-1}(q)$. Considering the coatom $(p_1,q)z$, we have $u(k_{s+2})\leq p_1$, and so $u(k_{s+2})=p_1$. For $k_{s+3}=z^{-1}(a)$,  necessarily $u(k_{s+3})=c_s$, since $c_s$ is the only remaining digit of $\{a,b,c_1,\ldots,c_s\}$. It remains to show that, letting $k_l=z^{-1}(p_{l-s-3})$ for $l\in [s+4,s+r+2]$,  we have $u(k_l)=p_{l-s-2}$. This can be obtained with an inductive argument using the coatoms $(p_{l-s-2},q)z$.
\end{proof}
\begin{prop}
\label{prop(c23)}
The element $z$ cannot have coatoms of type \textnormal{(c2$^*$)} and coatoms of type \textnormal{(c3$^*$)}. 
\end{prop}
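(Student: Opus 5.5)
We follow the same strategy as in Propositions~\ref{prop(c2)} and \ref{prop(c3)}. We produce an element $\alpha\in[u,v]$ that is obtained from an element of a basic chain by multiplications by transpositions involving only black digits. Lemma~\ref{absednero} then forces $M(\alpha)$ to be a left multiplication by a transposition $(a_m,b)$ that does not give a covering of $\alpha$, which is a contradiction. The starting data are the explicit forms
\[
z=[\color{red} b \color{black}c_1\ldots c_s \color{red}q\, a \color{black}p_1\ldots p_r],\qquad u=[\color{red}ab \color{black}c_1\ldots c_{s-1}p_1 c_s p_2 \ldots p_r \color{red}q\color{black}],
\]
from the preceding lemma, with $p_1<\cdots<p_r<q<a<b<c_1<\cdots<c_s$, all $p_i,c_j$ black and $q$ red.

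Since $q$ is red and $q<a$, Proposition~\ref{chains_ij} gives a left basic chain $a_m<\cdots<a_1<a_0=a$ with $i(M)=a_m$. We first locate $a_m$ relative to $q$. We expect $a_m<q<a_{m-1}$ to be forced, exactly as in Proposition~\ref{prop(c2)}. Indeed, red digits below $a$ lying in $[a_m,a_{m-1}]$ cannot lie between consecutive chain entries in $u$, because of the covering relations $u_{k-1}\lhd u_k$. The $a_k$ sit to the left of $a$ in $u$, so
\[
u=[\color{red}a_m\ldots a_1 a_0 b\color{black}c_1\ldots c_{s-1}p_1c_s p_2\ldots p_r\color{red}q\color{black}],\qquad u_m=(a_{m-1},a_m)\cdots u,
\]
with $M(u_m)=(a_m,b)u_m$.

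Next we define
\[
\alpha=(p_1,q)(p_2,q)\cdots(p_r,q)\,(b,c_s)\cdots(b,c_1)\,u_m .
\]
This moves $b$ to the right past the black $c_j$'s and $q$ to the left past the black $p_i$'s. The result is $\alpha=[\color{red}a_{m-1}\ldots a_0 a_m\color{black}c_1\ldots c_s\,\color{red}q\,b\color{black}\,p_1\ldots p_r]$, or a similar arrangement. We choose the order of the factors so that each step is a Bruhat cover increase, which gives $\alpha>u_m\ge u$. To prove $\alpha\le v$, we compare restricted tableaux. $T(\alpha)$ should agree with $T(M(u_m))$ in the first rows and with $T(z)$ in the last $r$ rows. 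In the middle rows it should be dominated entrywise by $\max(T(z),T(M(u_m)))$, and we conclude by the Remark following the Tableau Criterion. Since every transposition used involves a black digit ($c_j$ or $p_i$), Lemma~\ref{absednero} gives $M(\alpha)=(a_m,b)\alpha$. But in $\alpha$ the digit $q$, with $a_m<q<b$, sits between $a_m$ and $b$, so $(a_m,b)\alpha$ neither covers nor is covered by $\alpha$, which is a contradiction.

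The main obstacle is the tableau verification in the middle block of rows. The $c_j$'s are large, and pushing $b$ rightward past them could make $T(\alpha)$ exceed $T(z)$ in rows where $z$ has $b$ early. If the direct $\alpha$ fails, we will keep $b$ in place, that is, use only the factors $(p_i,q)$ as in Proposition~\ref{prop(c2)}. We will then check whether $q$ already lies between $a_m$ and $b$ in $\alpha$, because $u$ has $p_1$ and $c_s$ interleaved. As a fallback, we will run the argument of Proposition~\ref{prop(c3)} via the right basic chain starting at $b$ and an auxiliary element $\beta$.
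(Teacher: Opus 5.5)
Your main line is the paper's proof: the same $u_m$ from the left basic chain, the same $\alpha=(p_1,q)\cdots(p_r,q)(b,c_s)\cdots(b,c_1)u_m=[a_{m-1}\ldots a_0\,a_m\,c_1\ldots c_s\,q\,b\,p_1\ldots p_r]$, the same tableau comparison against $\max(T(z),T(M(u_m)))$, and Lemma~\ref{absednero} giving $M(\alpha)=(a_m,b)\alpha$, which contradicts $a_m<q<b$. The obstacle you flag does not arise, since the entry sets of rows $m+1$ through $m+s+2$ of $T(\alpha)$ are those of $T(z)$ with $b$ replaced by the smaller $a$, so no fallback is needed; also, your justification of $a_m<q<a_{m-1}$ with $a_m=i(M)$ is muddled but unnecessary, because you can simply stop the basic chain at the first $a_m<q$ (as the paper does), and in any case only $a_m<q$ is used.
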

\begin{proof}
Since $q$ is red, 
by Proposition~\ref{chains_ij} there exists a left basic chain given by $a_1,\ldots,a_m$ with $a_m<q<a_{m-1}<\cdots <a_0=a$ such that
\[
u=[\color{red}a_m\color{black}\ldots \color{red}a_0 b \color{black}c_1\ldots c_{s}p_1 c_s p_2 \ldots p_r \color{red}q\color{black}].
\]
We consider
\[
u_m=[\color{red}a_{m-1}\ldots a_0 a_m b \color{black}c_1\ldots c_{s-1}p_1 c_s p_2 \ldots p_r \color{red}q\color{black}],
\]
wich satifies $M(u_m)=(a_m,b)u_m$, and
\[\alpha=(p_1,q)\cdots(p_r,q)(b,c_s)\cdots (b,c_1)u_m,\] that is,
\[
\alpha=[\color{red}a_{m-1}\color{black}\ldots \color{red}a_0 a_m \color{black}c_1 \cdots c_{s}q\color{red}b \color{black}p_1 \ldots p_r].
\]
As in the previous cases, $\alpha >u_m$. We note that $T(\alpha)$ agrees with $T(M(u_m))$ in the first $m$ rows and with $T(z)$ in the last $r$ rows. The remaining rows are shown in the following table:

\begin{center}  
\begin{tabular}{|l|l|l|}
\hline
$T(\alpha)$    &$T(z)$&$T(M(u_n))$  \\
\hline \hline
$a_m\ldots a_0$& $a_m \ldots a_1 b$& $ a_{m-1}\ldots a_0 b$ \\
\hline 
$a_m\ldots a_0c_1$& $a_m \ldots a_1 b c_1 $& $ a_{m}\ldots a_0 b$ \\
\hline
$\ldots$ & $\ldots$ &$\ldots$ \\
\hline
$a_m\ldots a_0c_1\ldots c_s$& $a_m \ldots a_1 b c_1\ldots c_s $& $ a_{m}\ldots a_0 b c_1 \ldots c_{s-1}$ \\
\hline
$a_mq a_{m-1}\ldots a_0c_1\ldots c_s$& $a_m q a_{m-1}\ldots a_1 b c_1\ldots c_s $& $ p_1 a_{m}\ldots a_0 b c_1 \ldots c_{s-1}$ \\
\hline
$a_mq a_{m-1}\ldots a_0b c_1\ldots c_s$& $a_m q a_{m-1}\ldots a_0 b c_1\ldots c_s $& $ p_1 a_{m}\ldots a_0 b c_1 \ldots c_{s}$ \\
\hline
\end{tabular}
\end{center}

We conclude $\alpha \in [u,v]$. By Lemma~\ref{absednero}, $M(\alpha)=(a_m,b)\alpha$, since $c_1,\ldots,c_s,p_1,\ldots,p_r$ are black. This is a contradiction since $a_m<q<b$.
\end{proof}

\section{Main Results}
\label{mainsection}
In this section, we present the main results of this work, namely a complete classification of special matchings of arbitrary Bruhat intervals in Coxeter groups of type $A$, together with several consequences:
\begin{itemize}
\item we prove that every special matching induces an automorphism of the underlying unoriented Bruhat graph;
\item we prove Brenti's Conjecture;
\item we formulate a conjectural special matchings-based algorithm for computing $R$-polynomials and show that it is equivalent to the Combinatorial Invariance Conjecture.
\end{itemize}

Throughout this section, $(W,S)$ denotes a Coxeter system of type $A$, and $u,v \in W$, with $u\leq v$.

\begin{thm}
\label{main}
Let $M$ be a special matching of $[u,v]$. Then 
\begin{itemize}
    \item there exist $J$ and $s$ such that $$M=\lambda_s^J;$$
    \item there exist $J'$ and $s'$ such that $$M=\rho_{s'}^{J'}.$$
\end{itemize}
\end{thm}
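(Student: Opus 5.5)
We argue by contradiction for the first item, taking $J=J(M)$ and $s=s(M)$ as in Section~\ref{struttura}. Suppose $M(x)\neq\lambda_s^J(x)$ for some $x\in[u,v]$. We first reduce to the setting of Sections~\ref{badcasesI} and~\ref{badcases}, where $z$ is taken minimal with $M(z)\neq\lambda_s^J(z)$ and $M(z)\lhd z$. Such a $z$ exists because $M$ and $\lambda_s^J$ are both involutions on the relevant set. Indeed, if $M(x)\neq\lambda_s^J(x)$, then also $M(M(x))=x\neq\lambda_s^J(M(x))$: otherwise $\lambda_s^J(x)=M(x)$, by the involutivity of $\lambda_s^J$. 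Hence among $x$ and $M(x)$ the one on top of the edge $\{x,M(x)\}$ satisfies both conditions, and the set of candidates is nonempty.

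By Proposition~\ref{ubase}, $M(u)=\lambda_s^J(u)$, so $z\neq u$ and $[u,z]$ has coatoms. Let $a<b$ be as in Proposition~\ref{abababab}. Suppose $M(z)$ is the only coatom of $[u,z]$. Then $[u,z]=\{u,M(z),z\}$ forces $u=M(z)$ and $\ell(u,z)=1$. In that case $M(u)=z\neq\lambda_s^J(u)$, which contradicts Proposition~\ref{ubase}.

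Otherwise there is a coatom different from $M(z)$, and Proposition~\ref{class_finale} yields exactly four mutually exclusive possibilities.
\begin{itemize}
\item Case~(\ref{class_finale_1}) is excluded by Proposition~\ref{prop(c1)}.
\item Case~(\ref{class_finale_2*}) splits into three subcases: all remaining coatoms of type (c2$^*$), all of type (c3$^*$), or both types present. These are excluded by Propositions~\ref{prop(c2)}, \ref{prop(c3)} and~\ref{prop(c23)} respectively.
\item Cases~(\ref{class_finale_1*}) and~(\ref{class_finale_2}) are the companions of the previous two. Passing to the companion matching $\tilde M$ of $[w_0uw_0,w_0vw_0]$ turns them into cases~(\ref{class_finale_1}) and~(\ref{class_finale_2*}) for $\tilde M$. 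The remark on companions gives $\tilde{\lambda_s^J}=\lambda_{\tilde s}^{\tilde J}$, and $J(\tilde M)=\tilde J$, so the minimal counterexample $z$ corresponds to $w_0zw_0$. The same propositions therefore apply.
\end{itemize}
All cases are impossible, so $M=\lambda_s^J$ on $[u,v]$.

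For the second item, we apply the first to the matching $\overline M$ of $[u^{-1},v^{-1}]$ defined by $\overline M(x^{-1})=M(x)^{-1}$. This is special because inversion is a poset isomorphism. We obtain $\overline M=\lambda_{s'}^{J'}$ for some $J'$ and $s'$. Since the bijection $M\mapsto\overline M$ exchanges $\lambda_{s'}^{J'}$ and $\rho_{s'}^{J'}$, this gives $M=\rho_{s'}^{J'}$.

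The main obstacle is the bookkeeping that the companion-symmetry reductions are legitimate. One must check that $J(\tilde M)$, $s(\tilde M)$ and the minimal $z$ transform correctly, so that the mutually exclusive classification of Proposition~\ref{class_finale} is exhausted by the four impossibility propositions. One must also verify that the subcase where every coatom in case~(\ref{class_finale_2}) is of type (c3) alone is covered by the companion of Proposition~\ref{prop(c3)}. I would check these reductions first.
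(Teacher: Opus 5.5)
Your proposal is correct and follows the same route as the paper. You take $J=J(M)$, $s=s(M)$ and a minimal $z$ with $M(z)\lhd z$ and $M(z)\neq\lambda_s^J(z)$. You then apply Proposition~\ref{class_finale}, rule out two cases by Propositions~\ref{prop(c1)}, \ref{prop(c2)}, \ref{prop(c3)}, \ref{prop(c23)} and the other two via the companion matching, and finally obtain the $\rho$-version from $\overline M$.

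Your additional bookkeeping is sound and fills details the paper leaves implicit. You show that $z$ exists using the involutivity of $M$ and $\lambda_s^J$. You also dispose of the degenerate case where $M(z)$ is the only coatom of $[u,z]$ by means of Proposition~\ref{ubase}; in that case the four statements of Proposition~\ref{class_finale} hold only vacuously, and the impossibility propositions do not apply since they assume $r,s\geq 1$.
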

\begin{proof}
Let $J=J(M)$ and $s=s(M)$. By Lemma~\ref{ubase}, we have $M(u)=\lambda_s^J(u)$. If $M\neq \lambda_s^J$ on $[u,v]$ there exists $x\in [u,v]$ such that $M(x)\neq \lambda_s^J(x)$. If this is the case, let $z\in [u,v]$ be minimal in Bruhat order such that $M(z)\lhd z$ and $M(z)\neq \lambda_s^J(z)$. Then $z$ satisfies one of the four mutually exclusive conditions of Proposition \ref{class_finale}. But Propositions~\ref{prop(c1)},~\ref{prop(c2)},~\ref{prop(c3)}, and~\ref{prop(c23)} show that two of these possibilities cannot actually occur. The same propositions applied to the companion matching of $M$ show that also the remaining two conditions are impossible.

The second statement follows by the first statement 
applied to the matching $\overline{M}$ of $[u^{-1},v^{-1}]$ defined by $\overline{M}(x^{-1})= (M(x))^{-1}$.
\end{proof}
\begin{remark}
\label{canonici}
    Given a special matching $M$ of $[u,v]$, there can be more than one  pair $(J,s)$ such that $M=\lambda_s^J$. 
    The canonical choice for $J$ and $s$ is  $J=J(M)$ and $s=s(M)$. Notice that $J(M)$ is also the minimal choice. The canonical choice for $J'$ and $s'$ is $J'=J(\overline M)$ and $s'=s(\overline M)$, where $\overline M$ is the special matching of $[u^{-1},v^{-1}]$ defined in the proof of Theorem \ref{main}.
\end{remark}

Theorem~\ref{main} can be reformulated as follows.
\begin{thm}
\label{prohek}
    Let $M$ be a special matching of $[u,v]$. Then the following hold.
    \begin{itemize}
        \item 
    There exist $i,j,h$ with $1\le h\le j-i$ such that $M$ acts on each $x\in [u,v]$ as follows:
consider the one-line notation of $x$ and the digits $r$ satisfying $i\le r\le j$; among these digits, $M$ swaps the two occupying the $h$-th and $(h+1)$-st positions from left to right.
    
        \item  
        There exist $i',j',h'$ with $1\le h'\le j'-i'$ such that $M$ acts on each $x\in [u,v]$ as follows:
consider the digits occupying positions from $i'$ to $j'$ in the one-line notation of $x$; among these digits, $M$ swaps the $h'$-th smallest and the $(h'+1)$-st smallest.
\end{itemize}
\end{thm}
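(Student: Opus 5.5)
The plan is to read both bullets off Theorem~\ref{main} by unwinding the definitions of $\lambda_s^J$ and $\rho_{s'}^{J'}$ in the permutation realization. For the first bullet, take $J=J(M)=\{i,\ldots,j\}$ and $s=s(M)=s_{i+h-1}$ as in Remark~\ref{canonici}. By the discussion at the beginning of Section~\ref{cinque}, for every $x\in W$ the one-line notation of $\lambda_{s_k}^J(x)=\li Jx\, s_k\, \lu Jx$ is obtained from that of $x$ by swapping the $(k-i+1)$-th and $(k-i+2)$-th red entries, that is, the $h$-th and $(h+1)$-st digits in $[i,j]$ read from left to right, where $h=k-i+1$. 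Since $s_k\in J$ means $i\le k\le j-1$, we get $1\le h\le j-i$. I would also recheck briefly why this holds: $\lu Jx$ has its red digits in increasing order from left to right, and left multiplication by $\li Jx\in W_J$ only permutes the values in $[i,j]$ as a block. So $\li Jx\,s_k\,\lu Jx = \li Jx\,s_k(\li Jx)^{-1}\,x$ is left multiplication by the transposition $((\li Jx)_k,(\li Jx)_{k+1})$, and these two values are exactly the $h$-th and $(h+1)$-st red digits of $x$.

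For the second bullet, apply Theorem~\ref{main} to get $M=\rho_{s'}^{J'}$ with $J'=\{i',\ldots,j'\}$ and $s'=s_{i'+h'-1}$. The plan is to pass through inversion. Since $x^{-1}=(w^{J'})^{-1}$-type factorizations swap under inversion, we have $(x^{-1})=\,(x_{J'})^{-1}(x^{J'})^{-1}$ with $(x_{J'})^{-1}\in W_{J'}$ and $(x^{J'})^{-1}\in \lu{J'}W$. Hence $\rho_{s'}^{J'}(x)^{-1}=\lambda_{s'}^{J'}(x^{-1})$. By the first paragraph, $\lambda_{s'}^{J'}(x^{-1})$ swaps the values $k,l\in[i',j']$ that occupy the $h'$-th and $(h'+1)$-st places, in left-to-right order, among the values of $[i',j']$ in the one-line notation of $x^{-1}$. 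Translating back, the positions of $x^{-1}$ holding values in $[i',j']$ are the values $x(p)$ for $p\in[i',j']$, and left-to-right order there is increasing order of $x(p)$. Swapping values $k,l$ in $x^{-1}$ corresponds to swapping positions $k,l$ in $x$, i.e.\ right multiplication by $(k,l)$. Thus $M(x)$ is obtained from $x$ by exchanging the $h'$-th smallest and $(h'+1)$-st smallest among the entries in positions $i',\ldots,j'$.

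All the content is in Theorem~\ref{main}; the argument here is pure bookkeeping. The only step needing care is the inversion dictionary in the second bullet, namely checking that left-to-right order of values in $x^{-1}$ becomes size order of entries of $x$ in a window of positions. I would verify this on a small example, such as the one after Proposition~\ref{lunghi1}, to get the indices right. A direct alternative also works: $x^{J'}$ is $x$ with the entries in positions $[i',j']$ sorted increasingly, and $x_{J'}$ records their pattern, so $x^{J'}s'x_{J'}=x\,(x_{J'}^{-1}s'x_{J'})$ swaps the positions of the $h'$-th and $(h'+1)$-st smallest entries.
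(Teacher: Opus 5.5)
Your proposal is correct and matches the paper. The paper gives no separate argument and simply presents Theorem~\ref{prohek} as a reformulation of Theorem~\ref{main}. The first bullet is the description of $\lambda_{s_k}^J$ from the start of Section~\ref{cinque}, with the canonical interval $J(M)$ and $s(M)$ of Remark~\ref{canonici}. The second bullet comes from the inversion $M\mapsto\overline M$ exchanging $\lambda^{J'}_{s'}$ and $\rho^{J'}_{s'}$, exactly as you do. Your index bookkeeping is also right: $h=k-i+1\in[1,j-i]$, and left-to-right order of the values in $[i',j']$ in $x^{-1}$ becomes size order of the entries of $x$ in positions $i',\ldots,j'$.
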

\begin{remark}
\label{canonici2}
    The canonical choice is $i=i(M)$, $j= j(M)$, $h=h(M)=k-i(M)+1$, where $k$ is such that $s(M)=s_k$, and $i'=i(\overline M)$, $j'=j(\overline M)$, $h'=h(\overline M)$.
\end{remark}

The following consequence of Theorem~\ref{main}
shows that every special matching induces an
automorphism of the underlying unoriented Bruhat graph.

\begin{prop}\label{MdaTaT}
Let $M$ be a special matching of $[u,v]$. Let $x,y\in [u,v]$ and suppose that $y\neq M(x)$ and $x\rightarrow y$ is an arc in the Bruhat graph of $W$. Then also $M(x)\rightarrow M(y)$ is an arc in the Bruhat graph of $W$. 
\end{prop}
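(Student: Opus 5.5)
Since Theorem~\ref{main} gives $M=\lambda_s^J$ on $[u,v]$ for some $J=\{i,\ldots,j\}$ and $s=s_k$, the statement reduces to a property of $\lambda_s^J$ for permutations. We must show that if $y=tx$ with $t=(f,g)$ a transposition and $\ell(x)<\ell(y)$, then $M(y)M(x)^{-1}\in T$ and $\ell(M(x))<\ell(M(y))$. Write $M(x)=(\alpha,\beta)x$, where $\alpha,\beta$ are the red digits in red positions $h,h+1$ of $x$ (here $h=k-i+1$), and similarly $M(y)=(\alpha',\beta')y$. Then $M(y)M(x)^{-1}=(\alpha',\beta')\,t\,(\alpha,\beta)$, a product of three transpositions. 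We will check case by case that this product is a single transposition whenever $y\neq M(x)$.

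\emph{Case split on the colours of $f,g$.} If both $f,g$ are black, the red subsequence is unchanged, so $\{\alpha',\beta'\}=\{\alpha,\beta\}$ and the product is $t$. If exactly one of them, say $g$, is black, the red subsequence of $y$ is that of $x$ with $f$ replaced by $g$ in the same red position, so $(\alpha',\beta')$ is $(\alpha,\beta)$ conjugated by $t$. The product is then $t(\alpha,\beta)t\cdot t\cdot(\alpha,\beta)=t$. (This is exactly Lemma~\ref{absednero}.) If both are red, then $y$ is obtained from $x$ by swapping two red entries in red positions $p<q$. Inside $W_J$ this is $\li J y=\li J x\,(p,q)$, so the set $\{\alpha',\beta'\}$ is obtained by permuting positions. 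We then have $M(y)M(x)^{-1}=\li J y\, s\,\li J y^{-1}\, t\, \li J x\, s\,\li J x^{-1}$, which reduces to a computation in $W_J$: namely whether $\li Jx\,(p,q)^{s}$-type products $s\,(p,q)\,s\,(p,q)$ are reflections. If $\{p,q\}\cap\{h,h+1\}=\emptyset$ or $\{p,q\}=\{h,h+1\}$, we get $t$ (or $y=M(x)$, which is excluded). If $|\{p,q\}\cap\{h,h+1\}|=1$, the product of three transpositions on three letters is again a transposition. So in all cases $M(y)M(x)^{-1}$ is a reflection, and $M(x)\neq M(y)$.

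\emph{Orientation.} It remains to show that $\ell(M(x))<\ell(M(y))$. Lengths along a $T$-relation differ by an odd number, so they are never equal, and we must rule out $\ell(M(x))>\ell(M(y))$. Here I would use the special matching property rather than the formula. If $M(x)\rhd x$ and $M(y)\lhd y$, we need $\ell(x)+1<\ell(y)-1$ (since $\ell(x,y)$ is odd, the only bad case is $\ell(x,y)=1$). A cover $x\lhd y$ with $y\neq M(x)$ is a forbidden edge, so this case cannot occur. If $M$ moves $x$ and $y$ in the same direction, then $\ell(M(y))-\ell(M(x))=\ell(y)-\ell(x)>0$. If $M(x)\lhd x$ and $M(y)\rhd y$, then $\ell(M(y))-\ell(M(x))=\ell(y)-\ell(x)+2>0$. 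The remaining case is $M(x)\rhd x$ and $M(y)\lhd y$ with $\ell(x,y)\ge 3$; then $\ell(M(y))-\ell(M(x))=\ell(x,y)-2\ge 1$. So orientation is automatic. The elements here must lie in $[u,v]$, but $x,y$ do by hypothesis, and $M(x),M(y)$ do because $M$ is a matching of $[u,v]$.

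The main obstacle is the red–red case, in particular ensuring the bookkeeping of red positions matches the group-theoretic formula $\li J y=\li Jx\,(p,q)$. I would handle it by working entirely in $W_J$ via $x=\li Jx\,\lu Jx$. Left multiplication by a red-red transposition $t$ acts on $\li Jx$ only, by the uniqueness in Proposition~\ref{fattorizzo}. This reduces the claim to the elementary fact that $s\,t'\,s\,t'$ with $t'$ a transposition in $S_{J}$ and $t'\ne s$ is a reflection or trivial exactly as computed above.
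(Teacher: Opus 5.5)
Your skeleton matches the paper's: split on the colours of $f,g$, use Lemma~\ref{absednero} when a black digit is involved, and check directly when both are red. Your orientation paragraph is correct and fills in a point the paper's proof leaves implicit.

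However, the case where exactly one of $f,g$ is black is argued incorrectly. Left multiplication by $t=(f,g)$ swaps the \emph{values} $f$ and $g$, so the red digit $f$ moves to the position previously occupied by $g$. The red subsequence of $y$ is therefore a reordering of that of $x$ (no black digit ever enters it), not ``that of $x$ with $f$ replaced by $g$''. In particular, $(\alpha',\beta')$ can never equal $t(\alpha,\beta)t$ when $f\in\{\alpha,\beta\}$, since that transposition involves the black digit $g$. What Lemma~\ref{absednero} actually gives is $M(y)=(\alpha,\beta)y$. Hence $M(y)M(x)^{-1}=(\alpha,\beta)\,t\,(\alpha,\beta)$, which is a reflection but in general not $t$. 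For example, take $M=\lambda_{s_1}$ on $[e,s_1s_2]$ in $S_3$ with $x=e$ and $y=s_2$: one gets $(1,3)\neq(2,3)$. With this correction the case goes through exactly as in the paper.

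Relatedly, your opening claim that the statement ``reduces to a property of $\lambda_s^J$ for permutations'' is false, and that is why this case cannot be settled by a formula. In $S_4$ take $J=\{1,2,3\}$, $s=s_1$, $x=e$ and $y=(1,4)x=[4,2,3,1]$. Then $\lambda_s^J(x)=(1,2)x$ and $\lambda_s^J(y)=(2,3)y$, and $(2,3)(1,4)(1,2)$ is a $4$-cycle. Beyond Theorem~\ref{main}, the special-matching hypothesis is used precisely through Lemma~\ref{absednero} (and in your orientation argument). That is where the weight of the proof lies, not in the red--red case.

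The red--red case is in fact immediate. Since $t\in W_J$, uniqueness of the parabolic factorization gives $\li J y=t\,\li J x$ and $\lu J y=\lu J x$, hence $M(y)=tM(x)$. Your closing ``elementary fact'' is also misstated: $s\,t'\,s\,t'$ is a $3$-cycle when $s$ and $t'$ do not commute. The product that actually arises, up to conjugation by $\li J x$, is $(t'st')\,t'\,s=t'$.
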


\begin{proof}
Let $c,d \in [n]$ be such that $y=(c,d)x$. Let $a,b \in [n]$ be such that $M(x)= (a,b)x$ and let $h,h+1$ be the positions of the digits $a$ and $b$ in the one-line notation of $x$ among red digits. If $M(y)=(a,b)y$, then the assertion follows since $M(y)=(a,b)(c,d)(a,b)M(x)$ and $(a,b)(c,d)(a,b)\in T$ (notice that this transposition is not necessarily $(c,d)$ since $\{c,d\} \cap \{a,b\}$ could be nonempty). We may suppose $M(y)\neq (a,b)y$ and $d=a$.  By Lemma~\ref{absednero}, we have that $c$ is red.
The act of multiplying $x$ by $(c,a)$ put $c$ and $b$ in positions $h$ and $h+1$ in the one line notation of $y$ among red digits. Hence, $M(y)=(c,b)y$. Thus, $M(y)=(c,b)(c,a)(a,b)M(x)$. The assertion is proved since $(c,b)(c,a)(a,b)= (c,a)\in T$.
\end{proof}

 As a first major application of Theorem~\ref{main},
we obtain Brenti's conjecture.

\begin{thm}\label{SMcompute}
Let $W$ be a Coxeter group of type $A$. Let $u,v \in W$, and let $M$ be a special matching of the interval $[u,v]$. Then, for all $x,y$  in $[u,v]$ with $x\leq y$, we have
\[R_{x,y}(q)=\begin{cases}
    R_{M(x),M(y)}(q)& \textrm{if $M(x)\lhd x,\,M(y)\lhd y$ }\\
  R_{M(x),M(y)}(q)  & \textrm{if $M(x)\rhd x,\,M(y)\rhd y$ }\\
    (q-1) R_{x,M(y)}(q)+qR_{M(x),M(y)}(q)& \textrm{if $M(x)\rhd x,\,M(y)\lhd y$ }\\
    q^{-1}R_{M(x),M(y)}(q)+(q^{-1}-1)R_{M(x),y}(q)& \textrm{if $M(x)\lhd x,\,M(y)\rhd y$}
\end{cases}
\]
\end{thm}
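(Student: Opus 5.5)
The plan is to reduce the statement to a known result. Brenti's recursion for $R$-polynomials holds for special matchings of the form $\lambda_s^J$ restricted to an interval, and Theorem~\ref{main} says every special matching of $[u,v]$ has this form. Concretely, Theorem~\ref{main} gives $J$ and $s\in J$ with $M=\lambda_s^J$ on $[u,v]$. In the literature, the formula in Theorem~\ref{SMcompute} is known to hold for any special matching $M$ of $[u,v]$ satisfying a ``multiplication-like'' property, in particular one that behaves as left multiplication by $s$ on the $W_J$-component of the parabolic decomposition. The natural tool is the parabolic factorization of $R$-polynomials via Kazhdan--Lusztig theory of $W_J$ acting on cosets. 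So the first step is to fix $x\le y$ in $[u,v]$ and write $x=\li Jx\,\lu Jx$ and $y=\li Jy\,\lu Jy$, so that $M(x)=\li Jx\, s\,\lu Jx$ and $M(y)=\li Jy\, s\,\lu Jy$.

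The route I would try first is the classical recursion for $R_{x,y}$ with respect to left multiplication by a simple reflection. Note that $\li Jx\, s\,\lu Jx = (\li Jx\, s\, \li Jx^{-1})\,x$, so $M$ acts on $x$ as left multiplication by the reflection $t_x=\li Jx\, s\, \li Jx^{-1}$, which depends on $x$. I would use the dependence of $R$ only on the coset structure. Writing $x=w\,\lu Jx$ with $w\in W_J$, I would use the standard identity $R_{x,y}=\sum_{\cdots}$ expressing $R$-polynomials of $W$ via parabolic $R$-polynomials (Deodhar) indexed by $\lu JW$, with coefficients given by $R$-polynomials of $W_J$. In $W_J$, the map $w\mapsto ws$ is right multiplication by a simple reflection, for which Brenti's formula is the classical recursion. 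The cases $M(x)\lhd x$ or $M(x)\rhd x$ correspond exactly to $s\in D_R(\li Jx)$ or $s\notin D_R(\li Jx)$.

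The alternative, possibly cleaner, route is to invoke the existing literature directly. The cited results (\cite{BCM1}, and Brenti's original argument in \cite{Bre04}) show that the formula holds for any special matching of $[u,v]$ that extends to a compatible family of matchings over all subintervals $[x,y]$ and satisfies the Bruhat-graph compatibility. Proposition~\ref{MdaTaT} provides exactly this: $M$ induces an automorphism of the undirected Bruhat graph restricted to $[u,v]$. I would then prove the identity by induction on $\ell(x,y)$, using the well-known characterization of $\widetilde R_{x,y}$ as a generating function over paths in the Bruhat graph (Dyer's formula with a reflection ordering). Since $M$ preserves Bruhat graph edges and their up/down orientation except along matching edges, it transports paths from $x$ to $y$ to paths between $M(x)$ and $M(y)$. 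This gives the first two cases. The mixed cases follow by splitting paths according to whether they pass through the matching edge at an endpoint, using Lemma~\ref{si_restringe} to restrict $M$ to $[x,y]$ or to $[M(x),M(y)]$.

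The main obstacle is the path-transport step. Dyer's formula requires a reflection ordering, and $M$ permutes the edge labels $t\mapsto t_{M(x)}\,t\,t_x$ in a way that varies with the vertex, so increasing paths are not obviously sent to increasing paths. I would try to circumvent this by working with the Deodhar parabolic factorization instead, where the only nontrivial ingredient is the classical simple-reflection recursion in $W_J$ with $\lu Jx$ and $\lu Jy$ fixed. The difficulty then shifts to checking that terms with $\lu Jx\neq\lu Jy$ transform correctly. For that I would use the no-forbidden-edge criterion of Theorem~\ref{cha}: $\li Jx^K\le\li Jy^K$, with $K=\{t\in J: ts=st\}$, guarantees that the relevant $W_J$-components remain comparable after multiplication by $s$.
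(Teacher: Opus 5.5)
\textbf{There is a genuine gap.} Your proposal does not yet contain a proof: both routes stop exactly at the step that carries the content.

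Route~2 relies on a result that is not in the literature. \cite{BCM1} and Brenti's work prove the recursion only for special matchings of lower intervals $[e,w]$. Its extension to arbitrary intervals is precisely the statement to be proved, and nothing there says that compatibility with the Bruhat graph suffices. Proposition~\ref{MdaTaT} only gives preservation of unlabeled Bruhat-graph adjacency. No known result deduces equality of $R$-polynomials from that; since, by Dyer, the poset determines the Bruhat graph of an interval, such a deduction would essentially be combinatorial invariance itself.

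More basically, the path transport fails before reflection orderings even enter, because $M$ does not map $[x,y]$ into $[M(x),M(y)]$. Take $M=\lambda_{s_1}$ on $[e,w_0]$ in $S_3$, with $x=s_1$ and $y=w_0$. The element $s_2s_1\in[x,y]$ goes to $w_0\not\le s_2s_1=M(y)$. So even the identity $R_{x,y}=R_{M(x),M(y)}$ is not induced by a bijection of paths.

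Route~1 has the same hole in another place. There is no standard factorization of $R_{x,y}$ into Deodhar parabolic $R$-polynomials times $R$-polynomials of $W_J$, and the natural Hecke-algebra attempt breaks down. Let $H_J$ denote the Hecke subalgebra of $W_J$. If $s\in D_R(\li Jy)$, then $\overline{T_y}=\overline{T_{\li Jy\,s}}\,T_s^{-1}\,\overline{T_{\lu Jy}}$. Here $T_s^{-1}$ sits to the \emph{left} of the $H_J$-components of the terms of $\overline{T_{\lu Jy}}$, whereas $M$ multiplies the $W_J$-component on the \emph{right}.

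No interval-independent identity can repair this. $\lambda_s^J$ has forbidden edges in $W$ (Section~\ref{cinque}), and on such an edge the third case of the formula fails: the right-hand side is $0$ while $R_{x,y}=q-1$. So the condition of Theorem~\ref{cha} must enter essentially. You leave exactly that part unresolved, namely the cross terms with $\lu Jx\neq\lu Jy$.

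\textbf{The missing idea} is to reduce to the lower-interval case rather than transport anything. The paper proceeds as follows.
\begin{itemize}
\item It uses a double induction whose base case $\ell(u)=0$ is \cite{BCM1}, and reduces to $y=v$, $x\in\{u,M(u)\}$.
\item It writes $M=\lambda_s^J$ and sets $K=\{t\in J: ts=st\}$. If $\li Jv^K=e$, then Theorem~\ref{cha} forces $M=\lambda_s$ on $[u,v]$, and the classical recursion applies.
\item Otherwise it picks $t\in D_L(\li Jv^K)\subseteq J$. Since $t$ acts on the left of the $W_J$-component and $M$ acts on its right, $\lambda_t$ commutes with $M$, and $M(v)\neq tv$.
\item If $tu\rhd u$, then $\lambda_t$ is a special matching of $[u,v]$. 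One expands $R_{u,v}$ by the classical $t$-recursion, applies the induction hypothesis for $M$ on subintervals with top $tv$, and regroups with the $t$-recursion again to obtain the formula.
\item If $tu\lhd u$, Theorem~\ref{cha} shows that $\lambda_s^J$ is still special on $[tu,v]$, so $\ell(u)$ drops.
\end{itemize}
These two devices are what turn Theorem~\ref{main} into Brenti's formula: an auxiliary simple-reflection matching commuting with $M$, and the downward extension of $\lambda_s^J$ certified by Theorem~\ref{cha}. Neither appears in your plan.
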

\begin{proof}
We proceed by double induction on $\ell(u,v)$ and $\ell(u)$. The case $\ell(u,v)=1$ is trivial and the case $\ell(u)=0$ is the main result of \cite{BCM1}.

We need to show that, 
for all $x,y$ in $[u,v]$ with $x\leq y$ and $M(y)\lhd y$, we have
\[R_{x,y}(q)=\begin{cases}
    R_{M(x),M(y)}(q)& \textrm{if $M(x)\lhd x$}\\
    (q-1) R_{x,M(y)}(q)+qR_{M(x),M(y)}(q)& \textrm{if $M(x)\rhd x$. }
\end{cases}
\]
Indeed, the other two identities follow from these two.

Let $J=J(M)$ and $s=s(M)$. By Theorem~\ref{main} and Remark~\ref{canonici}, we have 
$M(x) = \lambda_s^J(x)$, i.e., $M(x)=\li J x s \lu J x $ for all $x$ in $[u,v]$. By induction hypothesis, we may suppose $y=v$ and $x\in \{u,M(u)\}$.

By Theorem \ref{cha}, we have $\li J x ^K\leq \li J y ^K$, for all $x,y$ in $[u,v]$ with $x\leq y$.

If $\li J v^K=e$, we have $\li J x ^K=e$ for all $x\in [u,v]$, and therefore $M(x)=sx$ and the result is clear. We may suppose $\li J v^K \neq e$ and set $t\in D_L(\li J v^K)$ (observe clearly $t\in J$).

First, suppose $tu\rhd u$. Then, since $t\in D_L(v)$ the matching  $\lambda_t$ restricts to a special matching of $[u,v]$ by Lemma~\ref{si_restringe}. Observe that $s\in D_R((\li J v)_K)$ (since $s\in D_R(\li J v)\cap K$) and therefore $M(v)\neq tv$. Note also that $M$ and $\lambda_t$ commute by construction: indeed, for all $x$ in $[u,v]$, we have $M\lambda_t(x)= t \li J x s  \lu J x$, since $\li J (tx) = t \li J x$.
In particular,  the orbit of $v$ under the action of $\langle M, \lambda_t \rangle$ contains four elements. If $M(u)\neq tu$, then the orbit of $u$ also contains four elements and we have
\begin{align*}
R_{u,v}&=(q-1)R_{u,tv}+qR_{tu, tv}\\
&=(q-1)\big( (q-1)R_{u,M(tv)}+qR_{M(u),M(tv)} \big)+q\big((q-1)R_{tu,M(tv)}+qR_{M(tu),M(tv)}\big)\\
&=(q-1)\big((q-1)R_{u,tM(v)}+qR_{M(u),tM(v)}\big)+q\big((q-1)R_{tu,tM(v)}+qR_{tM(u),tM(v)}\big)\\
&=(q-1)\big((q-1)R_{u,tM(v)}+qR_{tu,tM(v)}\big)+q\big((q-1)R_{M(u),tM(v)}+qR_{tM(u),tM(v)}\big)\\
&= (q-1) R_{u,M(v)}+qR_{M(u),M(v)}
\end{align*}
(notice that, for the fourth equation, we are not claiming that $R_{M(u),tM(v)}$ coincides with $R_{tu,tM(v)}$ but we are just using the fact that these two polynomials appear in the expression with the same coefficient),
and 
similarly 

\begin{align*}
R_{M(u),v}&=R_{u,M(v)}.
\end{align*}

If $M(u)= tu$, then
\begin{align*}
R_{u,v}&=(q-1)R_{u,tv}+qR_{tu, tv}\\
&=(q-1)\big( (q-1)R_{u,M(tv)}+qR_{M(u),M(tv)} \big)+qR_{M(tu),M(tv)}\\
&=(q-1)\big((q-1)R_{u,tM(v)}+qR_{tu,tM(v)}\big)+qR_{u,tM(v)}\\
&=(q-1) R_{u,M(v)} +qR_{tu,M(v)}\\
&=(q-1) R_{u,M(v)} +qR_{M(u),M(v)}
\end{align*}

and similarly 

\begin{align*}
R_{M(u),v}&=R_{u,M(v)}.
\end{align*}

Suppose now $tu\lhd u$. We claim  that $M$ can be extended to a special matching of $[tu,v]$ letting $M(x)=\lambda_s^J( x)$ for all $x\in [tu,v]$. If the claim holds, then we are done since $\ell(tu) < \ell(u)$.

Now we prove the claim. By Theorem~\ref{cha}, we need to show that, for every $x\lhd y$ in $[tu,v]$, we have $\li J x ^K\leq \li J y ^K$, where $K=\{t\in J: ts=st\}$. If $x\in [u,v]$, then $y\in [u,v]$, and $\li J x ^K\leq \li J y ^K$
holds by Theorem~\ref{cha}. Hence, we can suppose $x\notin [u,v]$, and so $tx\rhd x$ by the Lifting Property. If $y=tx$, then $\li J x\lhd t\li J x=\li J y$, and the assertion follows by Lemma~\ref{projection}. So we can suppose $y\neq tx$, and hence $ty\rhd y$ and $ty\rhd tx$. Observe that $tx,ty\in [u,v]$ and so  $\li J (tx) ^K\leq \li J (ty) ^K$. Also note that $\li J (ty)=t \li J y$ and $\li J (tx)=t \li J x$, since $t\in J$. We have three cases to consider.
\begin{enumerate}
    \item If $\li J (ty)=\li J (tx)$, then $\li J x=\li J y$ and the result is clear.
    \item If $\li J (tx)\lhd \li J (ty)$, then $\li J x\lhd \li J y$ and the result also follows by Lemma~\ref{projection}.
    \item If $\li J (tx)>\li J (ty)$,  then  $\li J (tx)^K=\li J (ty)^K$ by Lemma~\ref{projection}, and so $\li J (tx)=\li J (ty)k$, with $k\in W_K$.
    It follows that $\li J x = \li J y k$ and so $\li J x^K=\li J y ^K$.
\end{enumerate}
Hence, the claim is proved.
\end{proof}

Theorem~\ref{SMcompute} suggests a broader
poset-theoretic approach to the computation of
Kazhdan--Lusztig $R$-polynomials.
We present a reformulation of the Combinatorial Invariance Conjecture for Coxeter groups of type $A$. It provides a conjectural algorithm to compute Kazhdan--Lusztig $R$-polynomials in terms of special matchings.

Let  $I$ be a Bruhat interval of rank $r$. We denote by $\mathcal W_{A,r}$ the class of Coxeter groups of rank $r$ that are direct products of Coxeter groups of type $A$.

By classical results of Dyer (see \cite[Proposition~2.1]{DyeComp1}), there exist $\bar{W} \in \mathcal W_{A,r}$ and  $u,v\in \bar{W}$ such that $[u,v]\cong I$. We define the height of $I$ by
\[
ht(I)=\min\{h\geq 0:\, \textrm{ there exist $\bar{W}\in \mathcal W_{A,r}$ and $u,v\in \bar{W}$ with $\ell(u)=h$ and $[u,v]\cong I$}\}.
\]
It was proved in \cite{BG} (and it is also a consequence of Theorem~\ref{SMcompute}) that the Combinatorial Invariance Conjecture holds for intervals of height 0.

Recall that a filter in a poset $I$ is a subset $F\subseteq I$ such that for all $x,y\in I$, if $x\in F$ and $x\leq y$ then $y\in F$.

\begin{defn}
A partial special matching of $[u,v]$ is a special matching of a non-empty filter of $[u,v]$.\end{defn}

\begin{remark}
Partial special matchings should not be confused with special partial matchings studied in \cite{AHH,AH,CMpir,Mpir}.
\end{remark}

Given a partial special matching $M$ of an interval $I$ defined on a filter $F$, one can define an extension $I^M$ of $I$ by adding a duplicate of rank $r-1$ for every element of rank $r$ in $I\setminus F$ (see Figure \ref{fig:exte} for an example). We extend the partial special matching $M$ to a complete matching of the set $I^M$ (which we still denote by $M$), by matching every element in $I\setminus F$ with its duplicate. Covering relations on $I^M$ are uniquely determined by the requirement that $M$ be a special matching of $I^M$. Namely, we add the following covering relations:
for all $x \in I\setminus F$, we add $M(x)\lhd y$ if and only if either $y=x$, or $M(y)\rhd x$ and $M(y)\rhd y$.

\begin{remark}
By construction, the matching $M$ on $I^M$ is a (complete) special matching.
\end{remark}

\begin{figure}[h] 
\centering
    \begin{tikzpicture}[scale=0.80]
      \draw (0,0) node[fill=white, thick, draw=black, inner sep=0,outer sep=0.5mm, minimum size=2mm, circle](u1) {};
      \draw (-3,2) node[fill=white, thick, draw=black, inner sep=0,outer sep=0.5mm, minimum size=2mm, circle](u2) {};
\draw (-1,2) node[fill=white, thick, draw=black, inner sep=0,outer sep=0.5mm, minimum size=2mm, circle](u3) {};
\draw (1,2) node[fill=white, thick, draw=black, inner sep=0,outer sep=0.5mm, minimum size=2mm, circle](u4) {};
\draw (3,2) node[fill=white, thick, draw=black, inner sep=0,outer sep=0.5mm, minimum size=2mm, circle](u5) {};
\draw (-3,4) node[fill=white, thick, draw=black, inner sep=0,outer sep=0.5mm, minimum size=2mm, circle](u6) {};
\draw (-1,4) node[fill=white, thick, draw=black, inner sep=0,outer sep=0.5mm, minimum size=2mm, circle](u7) {};
\draw (1,4) node[fill=white, thick, draw=black, inner sep=0,outer sep=0.5mm, minimum size=2mm, circle](u8) {};
\draw (3,4) node[fill=white, thick, draw=black, inner sep=0,outer sep=0.5mm, minimum size=2mm, circle](u9) {};
\draw (0,6) node[fill=white, thick, draw=black, inner sep=0,outer sep=0.5mm, minimum size=2mm, circle](u10) {};
\draw (10,0) node[fill=white, thick, draw=black, inner sep=0,outer sep=0.5mm, minimum size=2mm, circle](v1) {};
\draw (7,2) node[fill=white, thick, draw=black, inner sep=0,outer sep=0.5mm, minimum size=2mm, circle](v2) {};
\draw (9,2) node[fill=white, thick, draw=black, inner sep=0,outer sep=0.5mm, minimum size=2mm, circle](v3) {};
\draw (11,2) node[fill=white, thick, draw=black, inner sep=0,outer sep=0.5mm, minimum size=2mm, circle](v4) {};
\draw (13,2) node[fill=white, thick, draw=black, inner sep=0,outer sep=0.5mm, minimum size=2mm, circle](v5) {};
\draw (7,4) node[fill=white, thick, draw=black, inner sep=0,outer sep=0.5mm, minimum size=2mm, circle](v6) {};
\draw (9,4) node[fill=white, thick, draw=black, inner sep=0,outer sep=0.5mm, minimum size=2mm, circle](v7) {};
\draw (11,4) node[fill=white, thick, draw=black, inner sep=0,outer sep=0.5mm, minimum size=2mm, circle](v8) {};
\draw (13,4) node[fill=white, thick, draw=black, inner sep=0,outer sep=0.5mm, minimum size=2mm, circle](v9) {};
\draw (10,6) node[fill=white, thick, draw=black, inner sep=0,outer sep=0.5mm, minimum size=2mm, circle](v10) {};
\draw (12,0) node[fill=white, thick, draw=black, inner sep=0,outer sep=0.5mm, minimum size=2mm, circle](v11) {};
\draw (14,0) node[fill=white, thick, draw=black, inner sep=0,outer sep=0.5mm, minimum size=2mm, circle](v12) {};
\draw (12,-2) node[fill=white, thick, draw=black, inner sep=0,outer sep=0.5mm, minimum size=2mm, circle](v13) {};
\draw (15,2) node[fill=white, thick, draw=black, inner sep=0,outer sep=0.5mm, minimum size=2mm, circle](v14) {};

\draw[thick] (u1)--(u2);
\draw[thick] (u1)--(u3);
\draw[thick] (u1)--(u4);
\draw[thick] (u1)--(u5);
\draw[thick] (u2)--(u6);
\draw[thick] (u3)--(u6);
\draw[thick] (u4)--(u7);
\draw[thick] (u4)--(u9);
\draw[thick] (u5)--(u8);
\draw[thick] (u5)--(u9);
\draw[thick] (u7)--(u10);
\draw[thick] (u8)--(u10);
\draw[thick] (u9)--(u10);
\draw[line width=1mm, color=red] (u2)--(u7);
\draw[line width=1mm, color=red] (u3)--(u8);
\draw[line width=1mm, color=red] (u6)--(u10);

\draw[thick] (v1)--(v2);
\draw[thick] (v1)--(v3);
\draw[thick] (v1)--(v4);
\draw[thick] (v1)--(v5);
\draw[thick] (v2)--(v6);
\draw[thick] (v3)--(v6);
\draw[thick] (v4)--(v7);
\draw[thick] (v4)--(v9);
\draw[thick] (v5)--(v8);
\draw[thick] (v5)--(v9);
\draw[thick] (v7)--(v10);
\draw[thick] (v8)--(v10);
\draw[thick] (v9)--(v10);
\draw[line width=1mm, color=red] (v2)--(v7);
\draw[line width=1mm, color=red] (v3)--(v8);
\draw[line width=1mm, color=red] (v6)--(v10);
\draw[thick] (v11)--(v2);
\draw[thick] (v12)--(v3);
\draw[thick] (v11)--(v14);
\draw[thick] (v12)--(v14);
\draw[thick] (v13)--(v11);
\draw[thick] (v13)--(v12);
\draw[thick] (v14)--(v6);
\draw[line width=1mm, color=red] (v13)--(v1);
\draw[line width=1mm, color=red] (v11)--(v4);
\draw[line width=1mm, color=red] (v12)--(v5);
\draw[line width=1mm, color=red] (v14)--(v9);

\end{tikzpicture}
    \caption{A partial special matching (left) and its extension (right)}
    \label{fig:exte}
\end{figure}
\begin{lem}\label{lempartial}
Let $I=[u,v]$ be a Bruhat interval of rank $r$ and height $h$ with no special matchings. Then there exists a partial special matching $M$ of $I$ such that $I^M$ is isomorphic to a Bruhat interval $[u',v']\subseteq  W(A_r)$ with $\ell(u')=h-1$. In particular, the subinterval $[M(u),M(v)]$ of $I^M$ is isomorphic to a Bruhat interval of height at most $h-1$.
\end{lem}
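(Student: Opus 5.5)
The plan is to realize $I$ with bottom element of minimal length and then push the bottom element down by one left multiplication. By definition of height, choose $\bar W\in\mathcal W_{A,r}$ and $u,v\in\bar W$ with $[u,v]\cong I$ and $\ell(u)=h$. First I will show that $h\geq 1$. If $h=0$, then $u=e$ and $v\neq e$ (otherwise $I$ would be trivial and would have the obvious special matching). Any $s\in D_L(v)$ satisfies $s\notin D_L(e)$, so $\lambda_s$ is a special matching of $I$, contradicting the hypothesis. So $u\neq e$, and I fix $s\in D_L(u)$. By the Lifting Property, $s\in D_L(v)$ as well. Moreover, $s\notin D_L(su)$, so $\lambda_s$ is a special matching of the interval $[su,v]$, which has bottom element of length $h-1$.

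Next I define the filter $F=\{x\in[u,v]:\ sx\in[u,v]\}$ and take $M=\lambda_s|_F$. I will check that $F$ is a filter by cases. Let $x\in F$ and $x\le y\le v$.
\begin{itemize}
\item If $sy\rhd y$, then $sy\ge y\ge u$, and $sy\le v$ by the Lifting Property since $s\in D_L(v)$.
\item If $sy\lhd y$ and $sx\lhd x$, then $sx\le sy$ by the Lifting Property, so $sy\ge sx\ge u$.
\item If $sy\lhd y$ and $sx\rhd x$, then $x\le sy$ by the Lifting Property, so $sy\ge u$.
\end{itemize}
Since $F$ is $\lambda_s$-stable, $M$ is a matching of $F$. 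It is special because covering relations inside $F$ are covering relations in $\bar W$, and $\lambda_s$ satisfies the special-matching condition there by the classical Lifting Property.

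The core step is to identify $I^M$ with $[su,v]$. For the elements, I claim that $[su,v]\setminus[u,v]=\{sx:\ x\in[u,v]\setminus F\}$, with $\ell(sx)=\ell(x)-1$. For the inclusion $\supseteq$: if $x\in[u,v]\setminus F$, then $sx\notin[u,v]$, which forces $sx\lhd x$ (if $sx\rhd x$, then $sx\in[u,v]$ as above). Also $sx\ge su$ by the Lifting Property applied to $u\le x$ with $s\in D_L(u)\cap D_L(x)$. For the inclusion $\subseteq$: let $w\in[su,v]$ with $w\not\ge u$. If $sw\lhd w$, then the Lifting Property for $su\le w$ with $s\notin D_L(su)$ gives $su\le sw$; I expect to need a subword-property argument showing that $s\in D_L(w)$ together with $w\ge su$ forces $w\ge s(su)=u$, which is a contradiction. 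Hence $sw\rhd w$, and the Lifting Property gives $sw\ge u$. So $x=sw\in[u,v]$ and $x\notin F$. This gives a rank-preserving bijection $I^M\to[su,v]$ that sends each duplicate of $x\in I\setminus F$ to $sx$ and matches elements along $\lambda_s$.

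For the covering relations, $\lambda_s$ is a special matching of $[su,v]$. Covers among old elements agree with those of $I$, since $[u,v]$ is a convex subposet of $[su,v]$. Covers between new and old elements are then forced by specialness, exactly as in the rule defining $I^M$. Concretely, for $x\in I\setminus F$, the relation $sx\lhd y$ holds if and only if $y=x$, or $sy\rhd y$ and $x\lhd sy$, by the Lifting Property. Covers among new elements are $sx\lhd sy \iff x\lhd y$. I expect the main obstacle to be verifying this correspondence carefully, including the non-cover case $sx\le y$ with $\ell(y)-\ell(sx)=1$. Finally, I would embed the realization inside $W(A_r)$ via a parabolic embedding of the product of type $A$ groups, so that $[u',v']=[su,v]$ has $\ell(u')=h-1$. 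The last assertion then follows because $[M(u),M(v)]=[su,sv]$ is itself a Bruhat interval with bottom element of length $h-1$.
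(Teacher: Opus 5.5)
You follow the paper's construction: $F=\{x\in[u,v]: sx\in[u,v]\}$, $M=\lambda_s$ on $F$, and $I^M\cong[su,v]$. Once $s\in D_L(u)\cap D_L(v)$ is secured, your verification of the filter property and of the correspondence of elements and covers between $I^M$ and $[su,v]$ is correct, and more detailed than the paper's one-line assertion. The ``subword-property argument'' you anticipate is just the other half of the Lifting Property: $su\le w$ and $s\in D_L(w)\setminus D_L(su)$ give $u=s(su)\le w$.

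The gap is in the choice of $s$. The step ``fix $s\in D_L(u)$; by the Lifting Property, $s\in D_L(v)$ as well'' is false. In type $A_2$, $u=s_1\le v=s_2s_1$, but $D_L(u)=\{s_1\}$ and $D_L(v)=\{s_2\}$. All your later steps use $s\in D_L(v)$: the first case of the filter check, the bound $sw\le v$ when identifying elements, and $\lambda_s$ being a matching of $[su,v]$. For $s\in D_L(u)\setminus D_L(v)$ the construction collapses, since $sv\notin[u,v]$, so $v\notin F$ and $F$ is not a non-empty filter. Reverse the order, as the paper does: take $s\in D_L(v)$. Then $s\in D_L(u)$, since otherwise $\lambda_s$ would be a special matching of $I$. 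This also makes your separate treatment of $h=0$ unnecessary.

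You also never prove $F\neq\emptyset$, which is part of the definition of a partial special matching. Equivalently, you need $v\in F$, i.e.\ $u\le sv$. This is exactly where the minimality $\ell(u)=h$ must be used; you use it only to get $h\ge 1$. Without it the claim is false. Suppose $u_0\le v_0$ lie in $W_{S\setminus\{s\}}$, say with $[u_0,v_0]$ a $4$-crown, so it has no special matchings. Then $[su_0,sv_0]\cong[u_0,v_0]$, and for the realization $u=su_0$, $v=sv_0$ one has $s\in D_L(u)\cap D_L(v)$ but $F=\emptyset$.

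The missing step is the paper's. If $u\not\le sv$, then no $x\in[u,v]$ satisfies $sx\rhd x$, since the Lifting Property would give $u\le x\le sv$. The Lifting Property then shows that $x\mapsto sx$ is an isomorphism $[u,v]\to[su,sv]$. This contradicts $ht(I)=h$, because $\ell(su)=h-1$. With $s\in D_L(v)$ and $v,sv\in F$ established this way, the rest of your argument goes through and agrees with the paper's proof.

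A minor point, which the paper also passes over: a product of $k\ge 2$ type $A$ factors of total rank $r$ is a standard parabolic subgroup of $W(A_{r+k-1})$, not of $W(A_r)$. So a parabolic embedding does not literally land in $W(A_r)$. This is harmless for the height conclusion, since $[su,sv]$ already lives in $\bar W$.
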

\begin{proof}We can clearly assume that $u,v\in W(A_r)$ and $\ell(u)=h$. 
Let $s\in D_L(v)$ and let $F=\{x\in [u,v]:\, sx \in [u,v]\}$. Note that $sv\in [u,v]$ otherwise $[su,sv]\cong[u,v]$ contradicting the condition $ht([u,v])=h$. So $v,sv\in F$. Note that $F$ is a filter in $[u,v]$ by the Lifting Property and, since $[u,v]$ has no special matchings, we have $F \subsetneq [u,v]$. We consider the partial special matching $M$  on $[u,v]$ given by the left multiplication by $s$ on $F$. Then $I^M$ is isomorphic to $[su,v]$ and $[M(u),M(v)]$ is isomorphic to $[su,sv]$. The result follows.
\end{proof}

Theorem~\ref{SMcompute} and Lemma~\ref{lempartial} lead us to consider the following conjectural algorithm for computing $R$-polynomials.

\begin{algo}
    Let $I=[u,v]$ be a Bruhat interval of rank $r$ and height $h$. 
\begin{itemize}
\item If $I$ has a special matching $M$, then
\[
R_{u,v}=(q-1)R_{u,M(v)}+q R_{M(u),M(v)}.
\]
\item If $I$ has no special matchings, let $M$ be a partial special matching of $[u,v]$, $u',v' \in W(A_r)$ with $\ell(u')=h-1$, and $\varphi:I^M \to [u',v']$ be an isomorphism. 
Then
\[
R_{u,v}=R_{\varphi(M(u)),\varphi (M(v))}.
\]
\end{itemize}
\end{algo}

\begin{remark} 
The algorithm terminates since, at each step, either the rank decreases or it remains the same but the height decreases. The algorithm is also effective since it involves a finite number of checks.
\end{remark}

\begin{conj}\label{newconj}
The algorithm computes the $R$-polynomials.
\end{conj}

\begin{remark} 
Implicit in Conjecture \ref{newconj} lies the fact that the algorithm does not depend on the choices of the partial special matchings.
\end{remark}

\begin{prop}
 Conjecture~\ref{newconj} is equivalent to the Combinatorial Invariance Conjecture for Coxeter groups of type $A$.
\end{prop}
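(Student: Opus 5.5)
We prove the two implications separately. The interpretation is this: Conjecture~\ref{newconj} asserts that, whatever choices are made, the output of the algorithm on $I=[u,v]$ equals $R_{u,v}$. In particular, the output depends only on the isomorphism class of $I$ together with the chosen special and partial special matchings and isomorphisms, all of which are poset-theoretic data.

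\emph{Conjecture~\ref{newconj} implies combinatorial invariance.} Let $[u,v]\cong[x,y]$ be type $A$ intervals, and let $\psi:[u,v]\to[x,y]$ be a poset isomorphism. Run the algorithm on $[u,v]$ with some admissible choices. Transporting every choice through $\psi$ gives admissible choices for $[x,y]$. Indeed, a special matching $M$ of $[u,v]$ is sent to the special matching $\psi M\psi^{-1}$ of $[x,y]$. A partial special matching on a filter $F$ is sent to one on the filter $\psi(F)$. The extension satisfies $I^{\psi M\psi^{-1}}\cong I^M$, since it is built purely from the poset and the matching, so the isomorphism $\varphi$ composes with the induced map. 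The rank and the height are isomorphism invariants, so the recursion proceeds identically. By induction on (rank, height), the recursive calls produce the same polynomials, so the outputs coincide. By the conjecture, both outputs equal the true $R$-polynomials, hence $R_{u,v}=R_{x,y}$. (It suffices to know that the conjecture holds for one run on each interval, since the transported run is also a run of the algorithm.)

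\emph{Combinatorial invariance implies Conjecture~\ref{newconj}.} We argue by induction on the pair (rank, height) in lexicographic order, showing that every run returns $R_{u,v}$. If $I$ has a special matching $M$, then Theorem~\ref{SMcompute}, applied with $x=u$ and $y=v$, gives exactly $R_{u,v}=(q-1)R_{u,M(v)}+qR_{M(u),M(v)}$, since $M(u)\rhd u$ and $M(v)\lhd v$. The intervals $[u,M(v)]$ and $[M(u),M(v)]$ have smaller rank, so by induction the algorithm computes their $R$-polynomials. Here combinatorial invariance is used to pass between the abstract intervals the algorithm sees and their Bruhat realizations.

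If $I$ has no special matching, the algorithm picks some partial special matching $M$ and an isomorphism $\varphi:I^M\to[u',v']$ with $\ell(u')=h-1$. The subinterval $[M(u),M(v)]$ of $I^M$ is isomorphic to $[u,v]$. To see this, observe that $M$ is a special matching of $I^M$, hence it gives a poset isomorphism between the upper and lower parts in the standard way; more concretely, the subposet $\{x\in I^M: x\ge M(u),\ x\le M(v)\}$ should be compared with $I$. Then $\varphi$ carries this subinterval to the Bruhat interval $[\varphi(M(u)),\varphi(M(v))]$, and combinatorial invariance gives $R_{u,v}=R_{\varphi(M(u)),\varphi(M(v))}$. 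That interval has height at most $h-1$ (or smaller rank), so the induction hypothesis says the algorithm computes its $R$-polynomial correctly.

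\textbf{Main obstacle.} The delicate step is verifying that $[M(u),M(v)]\subseteq I^M$ is poset-isomorphic to $I=[u,v]$ for an \emph{arbitrary} admissible partial special matching, not only for the left-multiplication one constructed in Lemma~\ref{lempartial}. I would first try to prove that the duplicate construction together with the special matching property forces a rank-shifting isomorphism between $I$ and the interval $[M(u),M(v)]$. If that fails in general, the fallback is to read the algorithm as requiring only that this isomorphism hold. In that reading the argument reduces cleanly to combinatorial invariance plus Theorem~\ref{SMcompute}, and the case $\ell(u)=0$ serves as the base of the height induction.
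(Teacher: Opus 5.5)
Your first implication, and the special-matching branch of the converse, are fine. The first is an expanded version of the paper's remark that the algorithm only sees the isomorphism class of the interval. The gap is in the branch where $I$ has no special matching.

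There you claim that the subinterval $[M(u),M(v)]$ of $I^M$ is isomorphic to $I$. This is not just unproven; it is false every time this branch is used. Since $M(u)$ is the minimum of $I^M$, we have $\varphi(M(u))=u'$ with $\ell(u')=h-1$. So $[M(u),M(v)]$ has height at most $h-1$, and this strict drop is what makes the algorithm terminate. But $I$ has height $h$, and height is an isomorphism invariant, so $[M(u),M(v)]\not\cong I$.

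A concrete instance lives in $S_4$. The interval $[s_2,s_2s_1s_3s_2]$ has four atoms ($s_2s_1,s_2s_3,s_1s_2,s_3s_2$), so it is a $4$-crown and has no special matching. The construction of Lemma~\ref{lempartial} with $s=s_2$ gives $[M(u),M(v)]\cong[e,s_1s_3s_2]$, which is a Boolean $3$-crown. These two intervals have the same $R$-polynomial without being isomorphic, as with $[u,v]$ and $[su,sv]$ for $s\in D_L(u)\cap D_L(v)$. So the identity $R_{u,v}=R_{\varphi(M(u)),\varphi(M(v))}$ can never come from applying combinatorial invariance to these two intervals. Your fallback reading would simply make the second branch of the algorithm vacuous.

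The missing idea is to apply combinatorial invariance to $I$ itself, and Theorem~\ref{SMcompute} to the larger realized interval $[u',v']$.

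First, every covering relation added in $I^M$ has a duplicate as its lower element, so everything below a duplicate is a duplicate. Hence $I$ is exactly the interval between $u$ and $v$ inside $I^M$. Then $\varphi$ restricts to an isomorphism of $I$ onto the Bruhat interval $[\varphi(u),\varphi(v)]\subseteq W(A_r)$, and combinatorial invariance gives $R_{u,v}=R_{\varphi(u),\varphi(v)}$.

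Second, $\varphi M\varphi^{-1}$ is a special matching of the type $A$ interval $[u',v']$ in which both $\varphi(u)$ and $\varphi(v)$ are matched downward. Indeed, $u\notin F$ (otherwise $M$ would be a special matching of $I$), so $u$ is matched with its duplicate. Also $v$ is the maximum of $F$, so it is matched with an element it covers. The first case of Theorem~\ref{SMcompute}, with $x=\varphi(u)$ and $y=\varphi(v)$, then yields $R_{\varphi(u),\varphi(v)}=R_{\varphi(M(u)),\varphi(M(v))}$.

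This is exactly the identity the algorithm uses, and with it your induction on (rank, height) goes through unchanged. This is the paper's argument.
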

\begin{proof}
Since the algorithm depends only on the isomorphism class of the interval,  Conjecture~\ref{newconj} implies the Combinatorial Invariance Conjecture. 

Suppose that the Combinatorial Invariance Conjecture holds. If the interval $I$ has a special matching, then the result follows by  Theorem~\ref{SMcompute}. If $I$ has no special matchings, let $M$  and $\varphi$ as in the statement of the algorithm. 
Then \[
R_{u,v} = R_{\varphi (u),\varphi (v)}=R_{\varphi(M) (\varphi (u)),\varphi(M)( \varphi (v))} = R_{\varphi (M(u)),\varphi (M(v))}  
\]
where the first equality follows from the Combinatorial Invariance Conjecture, the second equality by  Theorem~\ref{SMcompute} applied to the special matching $\varphi(M)$ of $\varphi(I^M)$ defined by $\varphi(M)(\varphi(x))=\varphi(M(x))$ for every $x\in I^M$.
\end{proof}

\bigskip
\section*{Acknowledgments}
This project benefited from stimulating discussions during the scientific meeting
\lq\lq Algebraic Combinatorics in Ancona\rq\rq,
held at DIISM, Universit\`a Politecnica delle Marche, in Ancona, in June 2025.
M.M. is a member of the INDAM research group GNSAGA.
F.C. acknowledges support from PRIN 2022S8SSW2,
\lq\lq Algebraic and Geometric Aspects of Lie Theory\rq\rq.
M.M. acknowledges support from PRIN 2022A7L229,
\lq\lq ALgebraic and TOPological Combinatorics\rq\rq.
F.C. would also like to thank Universit\`a Politecnica delle Marche for its hospitality.


\begin{thebibliography}{xx}

\bibitem{AHH}
N. Abdallah, M. Hansson, A. Hultman, {\em Topology of posets with special partial matchings}, Adv. Math. {\bf 348} (2019), 255–276.

\bibitem{AH}
N. Abdallah, A. Hultman, {\em Combinatorial invariance of Kazhdan–Lusztig–Vogan polynomials for fixed point free involutions}, J. Algebraic Combin. {\bf 47} (2018), 543–560.

\bibitem{BG}
G.T. Barkley, C.  Gaetz, {\em Combinatorial invariance for elementary intervals}, Math. Ann. {\bf 392} (2025), 3299–3317.

\bibitem{BGL} G.T. Barkley, C. Gaetz, T. Lam, {\em Combinatorial invariance for the coefficient of $q$ in Kazhdan-Lusztig polynomials}, arXiv:2601.07793 [math.CO].

\bibitem{BB}
A. Bj\"{o}rner, F. Brenti, {\em Combinatorics of Coxeter Groups}, Graduate Texts in Mathematics, {\bf 231}, Springer-Verlag, New York, 2005.

\bibitem{BBDVW}
 C. Blundell,  L. Buesing, A. Davies,  P. Veli\u ckovi\'c, G. Williamson, {\em  Towards combinatorial invariance for Kazhdan-Lusztig polynomials}, Represent. Theory {\bf 26} (2022), 1145-1191.

\bibitem{Bre94}
F. Brenti, 
{\em A combinatorial formula for Kazhdan-Lusztig polynomials}, 
Invent. Math. {\bf 118} (1994), 371-394.

\bibitem{Bre03}
F. Brenti, 
{\em Kazhdan--Lusztig polynomials: History,
Problems, and Combinatorial Invariance}, 
S\'eminaire Lotharingien de Combinatoire {\bf 49} (2003), Article B49bJ.

\bibitem{Bre04}
F. Brenti, 
{\em The intersection cohomology of Schubert varieties
is a combinatorial invariant}, 
J. European Combin. {\bf 25} (2004), 1151-1167.

\bibitem{BCM1}
F. Brenti, F. Caselli, M. Marietti, {\em Special Matchings and Kazhdan--Lusztig polynomials},  Adv. Math. {\bf 202} (2006), 555-601.

\bibitem{BCM2}
F. Brenti, F. Caselli, M. Marietti, {\em Diamonds and Hecke algebra representations}, Int. Math. Res. Not. IMRN, {\bf 2006} (2006), 29407.

\bibitem{B-M}
F. Brenti, M. Marietti, {\em Kazhdan--Lusztig R-polynomials, combinatorial invariance, and hypercube decompositions}, Math. Z. {\bf 309} 25 (2025).

\bibitem{BLP}
G. Burrull, N. Libedinsky, D. Plaza, 
{\em Combinatorial invariance conjecture for $\tilde{A}_2$}, Int. Math. Res. Not. IMRN, {\bf 2023}, Issue 10, (2023), 8903–8933.

\bibitem{CDM}
F. Caselli, M. D'Adderio, M. Marietti, {\em Weak Generalized Lifting Property, Bruhat Intervals, and Coxeter Matroids},  Int. Math. Res. Not. IMRN, {\bf 2021}, Issue 3, (2021), 1678–1698.

\bibitem{CM1}
F. Caselli, M. Marietti, {\em Special matchings and Coxeter groups},
Europ. J. Combin., {\bf 61} (2017), 151--166.

\bibitem{CM2}
F. Caselli, M. Marietti, {\em A simple characterization of special matchings in lower Bruhat intervals},
Discrete Math., {\bf 341} (2018), 851--862.

\bibitem{CMpir}
F. Caselli, M. Marietti, {\em Pircon kernels and up-down symmetry}, J. Algebra {\bf 565} (2021), 324–352.

\bibitem{DVBBZTTBBJLWHK}
A. Davies, P. Veli\u ckovi\'c, L. Buesing, S. Blackwell, D. Zheng, N. Tomašev, R. Tanburn,  P. Battaglia, C. Blundell, A. Juhász, M. Lackenby, G. Williamson, D. Hassabis,  P. Kohli, {\em Advancing mathematics by guiding human intuition with AI}, Nature, {\bf 600} (2021), 70-74.

\bibitem{Dyeth}
M. J. Dyer, {\em Hecke algebras and reflections in Coxeter groups},  Ph. D. Thesis, University of Sydney, 1987.

\bibitem{DyeComp1}
M. J. Dyer, {\em On the \lq\lq Bruhat graph\rq\rq of a Coxeter system}, Compos. Math., {\bf 78} (1991),  185-191. 

\bibitem{EM}
F. Esposito, M. Marietti, {\em Flipclasses and Combinatorial Invariance for Kazhdan--Lusztig polynomials},  Sel. Math. New Ser. 31, 98 (2025). https://doi.org/10.1007/s00029-025-01099-6

\bibitem{EM2}
F. Esposito, M. Marietti, {\em A note on Combinatorial Invariance of Kazhdan--Lusztig polynomials (with an appendix by  G. T. Barkley and C. Gaetz)}, preprint  arXiv:2404.12834v3.

\bibitem{EMS}
F. Esposito, M. Marietti, Stella, {\em Flip Combinatorial Invariance and Weyl Groups}, preprint arXiv:2509.16433 [math.CO].

\bibitem{G-W}
M. Gurevich, C. Wang, {\em Parabolic recursions for Kazhdan-Lusztig polynomials and the hypercube decomposition}, Sel. Math. New Ser. 30, 81 (2024). https://doi.org/10.1007/s00029-024-00972-0

\bibitem{K-L}
D. Kazhdan, G. Lusztig, {\em Representations of Coxeter groups and Hecke algebras}, Invent. Math. {\bf 53} (1979), 165-184.

\bibitem{MJaco} M. Marietti, {\em Algebraic and combinatorial properties of zircons}, J. Algebraic Combin., {\bf 26} (2007), 363-382.

\bibitem{Mtrans} M. Marietti, {\em Special matchings and parabolic Kazhdan--Lusztig polynomials},  Trans. Amer. Math. Soc. {\bf 368} (2016), no. 7, 5247-5269.

\bibitem{M} M. Marietti, {\em The combinatorial invariance conjecture for parabolic Kazhdan--Lusztig polynomials of lower intervals},  Advances in Math. {\bf 335} (2018), 180-210.

\bibitem{Mpir}
M. Marietti, {\em Kazhdan–Lusztig R-polynomials for pircons}, J. Algebra {\bf 534} (2019), 245–272.

\end{thebibliography}
\end{document}